\numberwithin{figure}{section}
\numberwithin{equation}{section}
\newtheorem{thm}{Theorem}[section]
\newtheorem{lemma}[thm]{Lemma}
\newtheorem{cor}[thm]{Corollary}
\newtheorem{defn}[thm]{Definition}
\newtheorem{example}[thm]{Example}
\newtheorem{remark}[thm]{Remark}
\newtheorem{conjecture}[thm]{Conjecture}
\renewcommand{\epsilon}{\varepsilon}
\newcommand{\Q}{\mathbb{Q}}
\def\Re{{\rm Re}\,}
\def\<#1{\langle #1\rangle}
\begin{document}{\allowdisplaybreaks[4]}

%%%%%%%%%%%%%%%%%%%%%%%%%%%%%
%%%%%%%%%%%%%%%%%%%%%%%%%%%%%

\title{Multiple radial SLE(0) and classical Calogero-Sutherland system}
\author{
    Jiaxin Zhang\footnotemark[1] 
   }
\renewcommand{\thefootnote}{\fnsymbol{footnote}}

\footnotetext[1]{{\bf zhangjx.prob@gmail.com} Department of Mathematics, California Institute of Technology}

\maketitle

\begin{abstract}
 
We develop a theory of multiple radial $\mathrm{SLE}(0)$—a smooth system of curves in a simply connected domain $\Omega$ with marked boundary points $z_1, \ldots, z_n \in \partial \Omega$ and a marked interior point $q$—arising as the deterministic limit of random multiple radial $\mathrm{SLE}(\kappa)$ systems.

We construct multiple radial $\mathrm{SLE}(0)$ systems by starting from the stationary relations, which arise heuristically as the $\kappa \to 0$ limit of partition functions. By constructing the field integrals of motion for the Loewner dynamics, we show that the traces of multiple radial SLE(0) systems are the horizontal trajectories of an equivalence class of quadratic differentials. These trajectories have limiting ends at the boundary points $\{z_1,z_2,\ldots,z_n\}$.

The stationary relations connect the classification of multiple radial SLE(0) systems to the enumeration of critical points of the master function of trigonometric Knizhnik-Zamolodchikov (KZ) equations. 

In the deterministic case of $\kappa=0$, we show that the Loewner dynamics with a common parametrization of capacity form a special class of classical Calogero-Sutherland systems, restricted to a submanifold of phase space defined by the Lax matrix.

\par \
\textbf{Keywords}: Schramm-Loewner evolution (SLE), quadratic differentials, conformal field theory.
\end{abstract}

\newpage
\tableofcontents

\newpage
\section{Introduction}
\subsection{Background}
\
\indent 
The Schramm-Loewner evolution SLE($\kappa$) with $\kappa>0$ is a one-parameter family of random conformally invariant curves in the plane describing interfaces within conformally invariant systems arising from statistical physics, as introduced in \cites{Sch00, LSW04, Smi06, Sch07, SS09}. 
Conformal field theory (CFT), a quantum field theory invariant under conformal transformations, is also widely used to study critical phenomena, see \cites{Car96,FK04}. SLE and the multiple SLE systems can be coupled to conformal field theories (CFT) through the SLE-CFT correspondence, which serves as a powerful tool for predicting phenomena and computing important quantities of SLE($\kappa$) and multiple SLE($\kappa$) systems from the CFT perspective, as demonstrated in references like \cites{BB03a, Car03, FW03, FK04, Dub15a, Pel19}. The parameter $\kappa$ measures the roughness of these fractal curves and determines the central charge $c(\kappa)=(3 \kappa-8)(6-\kappa) / 2 \kappa$ of the associated CFT. 

In recent years, there has been tremendous interest in multiple SLE systems. 
These systems describe families of non-intersecting SLE curves with prescribed pairwise connections among boundary and interior points. 
In particular, multiple chordal SLE---the case with $2n$ marked boundary points and no interior points---has been thoroughly studied.

\begin{itemize}
  \item \textbf{Probabilistic constructions and classification.} 
  Works such as \cites{Dub06, KL07, Law09b, PW20} established partition functions, commutation relations, and the general framework for multiple chordal SLE($\kappa$) systems, thereby providing a rigorous probabilistic basis for the theory.  

  \item \textbf{Connections to CFT.} 
  In parallel, \cites{FK15a, Pel19, Pel20} investigated the correspondence with conformal field theory, interpreting partition functions from the CFT perspective and highlighting their role as conformal blocks.  

  \item \textbf{Deterministic limit.} 
  On the one hand, \cites{PW20} derived large deviation principles for multiple chordal SLE($\kappa$) curves from a probabilistic viewpoint. 
  On the other hand, \cites{ABKM20} identified integrals of motion for multiple chordal SLE($0$) curves via the SLE--CFT correspondence. 
  Together, these complementary approaches give a complete description of the classical limit.
\end{itemize}

Multiple radial SLE is a family of random multi-curve systems in a simply connected domain $\Omega$, with marked boundary points $z_1, \ldots, z_n \in \partial \Omega$ and a marked interior point $q$. 
In contrast to the chordal case, the theory of multiple radial SLE systems has been comparatively less developed.

\begin{itemize}
  \item \textbf{Mathematical progress.} Recent contributions such as \cites{HL21, WW24} initiated the study of multiple radial partition functions and commutation relations in special cases.

  \item \textbf{Physics perspectives.} Parallel discussions in the physics literature \cites{Car04, DC07, SKFZ11, FKZ12} studied the multiple radial SLE systems from the conformal field theory perspective but without full mathematical justification.  
\end{itemize}

Building on the above literature, the present paper advances the study of multiple radial SLE($0$) systems as the deterministic limits of multiple radial SLE($\kappa$) curves. We investigate the structure of the multiple radial SLE(0) from four different perspectives:
\begin{itemize}
    \item Stationary relations and critical points of master functions
    \item Traces as horizontal trajectories of quadratic differentials forming link patterns.
    \item Enumeration and classification. 
    \item Relations to classical Calogero-Sutherland system.
\end{itemize}

The core principle throughout our study of the multiple radial SLE system is the SLE-CFT correspondence. SLE and multiple SLE systems can be coupled to a conformal field in two key aspects:
\begin{itemize} 
\item The level-two degeneracy equations for the conformal fields coincide with the null vector equations for the SLE partition functions. 
\item The correlation functions of the conformal fields serve as martingale observables for the SLE processes. \end{itemize}

\subsection{Multiple radial SLE(0) systems}
We first define multiple radial SLE($0$) curves as natural geometric objects without reference to multiple radial SLE($\kappa$) systems.

The defining properties of this ensemble of curves are geometric commutation and conformal invariance.

\begin{defn}
Let $\gamma_1,\dotsc,\gamma_{n}$ be simple disjoint smooth curves starting from $\{z_1,z_2,\ldots,z_n\}$ which are $n$ distinct points counterclockwise on the unit circle $\partial \mathbb{D}$. 
\begin{itemize}
\item[(i)] Each curve can be individually generated by a Loewner chain. In angular coordinate, let $z_j=e^{i\theta_j}$, then 
the Loewner equation for the covering map $h_t(z)$ of $g_t(z)$ (i.e. $e^{ih_t(z)}= g_t( e^{iz})$) is given by

\begin{equation}
\partial_t h_t(z)=\cot(\frac{h_t(z)-\theta_j(t)}{2}), \quad h_0(z)=z,   
\end{equation}

and the driving function $\theta_j(t)$ evolve as

$$
\left\{\begin{array}{l}
\mathrm{~d} \theta_{j}(t)=U_j\left(\theta_{1}(t),\theta_{2}(t),\ldots, \theta_{j}(t),\ldots,\theta_{n}(t)\right) \mathrm{d} t \\
\mathrm{~d} \theta_{k}(t)=\cot \left(\left(\theta_{k}(t)-\theta_{j}(t)\right) / 2\right) \mathrm{d} t,  k\neq j
\end{array}\right.
$$
where $U_j(\boldsymbol{\theta}): \mathfrak{X}^n \rightarrow \mathbb{R}$ is assumed to be smooth  
in the chamber $$\mathfrak{X}^n=\left\{(\theta_1,\theta_2,\ldots,\theta_n) \in \mathbb{R}^n \mid \theta_1<\theta_2<\ldots<\theta_n<\theta_1+2\pi\right\}$$
\item[(ii)] The curves geometrically commute, meaning that the same collection of curves can be generated by applying the individual Loewner chains in any chosen order.
For example, we can first map out $\gamma_{\left[0, t_i\right]}^{(i)}$ using $h_{t_i}^{(i)}$, then mapping out $h_{t_i}^{(i)}\left(\gamma_{\left[0, t_j\right]}^{(j)}\right)$, or vice versa. The images are the same regardless of the order in which we map out the curves. 
\item[(iii)] Each curve $\gamma_j$ is Möbius invariant in $\mathbb{D}$. This means that if $\gamma_j$ is the curve generated by a Loewner flow and initial data $\boldsymbol{\theta}$, then its image $\phi\left(\gamma_j\right)$ under a conformal automorphism $\phi$ of $\mathbb{D}$ is, up to a time change, generated by the same flow with initial data $\phi(\boldsymbol{\theta})=\left(\phi\left(\theta_1\right), \ldots, \phi\left(\theta_n\right)\right)$.
Our definition for multiple radial SLE(0) can be naturally extended to an arbitrary simply-connected domain $\Omega$ with a marked interior point $u$ via a conformal uniformizing map $\phi: \Omega \rightarrow \mathbb{D}$, sending $u$ to $0$.

\end{itemize}
\end{defn}

Under these dynamics, the driving function $\theta_j(t)$ evolves according to $U_j(\boldsymbol{\theta})$, while the points $\theta_k(t)$, for $k \neq j$, follow the Loewner chain generated by $\theta_j(t)$. We define a differential operator corresponding to the curve $\gamma_j$ by
\[
\mathcal{M}_j = U_j(\boldsymbol{\theta}) \partial_j + \sum_{k \neq j} \cot\left( \frac{\theta_k - \theta_j}{2} \right) \partial_k, \quad j = 1, \ldots, n.
\]

For $\kappa = 0$, we can also derive the commutation relations; see Section~\ref{commutation when kappa=0 section} for details. However, it is important to emphasize a key distinction between the cases $\kappa > 0$ and $\kappa = 0$: in the case $\kappa = 0$, the conditions $\partial_j U_k = \partial_k U_j$ are not consequences of the commutation relations. These conditions are equivalent to the existence of a smooth potential function $\mathcal{U}(\boldsymbol{\theta}) : \mathfrak{X}^n \rightarrow \mathbb{R}$ such that
\[
U_j = \partial_j \mathcal{U},
\]
where the chamber $\mathfrak{X}^n$ is defined by
\[
\mathfrak{X}^n = \left\{ (\theta_1, \theta_2, \ldots, \theta_n) \in \mathbb{R}^n \mid \theta_1 < \theta_2 < \ldots < \theta_n < \theta_1 + 2\pi \right\}.
\]

If we view the multiple radial $\mathrm{SLE}(0)$ system as the classical limit of a random multiple radial $\mathrm{SLE}(\kappa)$ system, then for the latter, we have shown that the drift term $b_j(\boldsymbol{\theta})$ takes the form
\[
b_j(\boldsymbol{\theta}) = \kappa \frac{\partial \log \mathcal{Z}(\boldsymbol{\theta})}{\partial \theta_j} ,
\]
where $\mathcal{Z}(\boldsymbol{\theta})$ is a positive function satisfying the null vector equations. The idea is that, as $\kappa \to 0$, the limit
\[
\lim_{\kappa \to 0} \mathcal{Z}(\boldsymbol{\theta})^\kappa = \mathcal{U}(\boldsymbol{\theta})
\]
exists (at least for suitably chosen partition functions).

Therefore, we typically assume the existence of such a potential $\mathcal{U}(\boldsymbol{\theta})$ with $U_j(\boldsymbol{\theta}) = \partial_j \mathcal{U}(\boldsymbol{\theta})$ when defining a multiple radial $\mathrm{SLE}(0)$ system.

This observation suggests that not all multiple radial $\mathrm{SLE}(0)$ systems admit a quantization or arise as classical limits of random multiple $\mathrm{SLE}(\kappa)$ systems.

\subsection{Stationary relations, integral of motions and quadratic differentials}

We construct multiple radial SLE(0) systems through stationary relations. We heuristically demonstrate how stationary relations naturally emerge when normalizing the partition function for the multiple radial SLE($\kappa$) system as $\kappa \rightarrow 0$, as discussed in Section \ref{classical limit of multiple radial SLE system}.

\begin{defn}[Stationary relations] \label{Stationary relations spin}
Let $\boldsymbol{z} = \{z_1, z_2, \ldots, z_n\}$ be distinct points on the unit circle, and let $\boldsymbol{\xi} = \{\xi_1, \xi_2, \ldots, \xi_m\}$ be a set of involution-symmetric marked points.  
In the unit disk $\mathbb{D}$, the stationary relations are given by  
\begin{equation}
 -\sum_{j=1}^{n} \frac{2}{\xi_k - z_j} + \sum_{l \neq k} \frac{4}{\xi_k - \xi_l} + \frac{n - 2m + 2}{\xi_k} = 0, \quad k = 1, 2, \ldots, m.
\end{equation}
In angular coordinates, setting $z_i = e^{i\theta_i}$ for $i = 1, 2, \ldots, n$ and $\xi_k = e^{i\zeta_k}$ for $k = 1, 2, \ldots, m$, the stationary relations take the form  
\begin{equation}
\sum_{j=1}^{n} \cot\left(\frac{\zeta_k - \theta_j}{2} \right) = \sum_{l \neq k} 2 \cot\left(\frac{\zeta_k - \zeta_l}{2} \right), \quad k = 1, 2, \ldots, m.
\end{equation}  
\end{defn}

Based on the stationary relations, we now define the multiple radial SLE(0) systems.

\begin{defn}[Multiple radial SLE(0) Loewner chain] \label{multiple radial SLE(0) Loewner chain via stationary relations}
Given growth points $\boldsymbol{z} = \{z_1, z_2, \dots, z_n\}$ on the unit circle, a marked interior point $u = 0$, and involution-symmetric screening charges $\{\xi_1, \xi_2, \dots, \xi_m\}$ that solve the \textbf{stationary relations}, we define the multiple radial SLE(0) Loewner chain as follows:

Let $\boldsymbol{\nu} = (\nu_1, \dots, \nu_n)$ be a set of parametrizations for the capacity, where each $\nu_i: [0, \infty) \to [0, \infty)$ is assumed to be measurable.

In the unit disk $\mathbb{D}$ with $u = 0$, we define the multiple radial SLE(0) Loewner chain as a normalized conformal map $g_t = g_t(z)$, with the initial condition $g_0(z) = z$ and the evolution given by the Loewner equation
\begin{equation}
\partial_t g_t(z) = \sum_{j=1}^n \nu_j(t) g_t(z) \frac{z_j(t) + g_t(z)}{z_j(t) - g_t(z)}, \quad g_0(z) = z.
\end{equation}

The Loewner chain for the covering map $h_t(z) = -i \log(g_t(e^{iz}))$ is given by
\begin{equation}
\partial_t h_t(z) = \sum_{j=1}^n \nu_j(t) \cot\left( \frac{h_t(z) - \theta_j(t)}{2} \right), \quad h_0(z) = z.    
\end{equation}

The driving functions $\theta_j(t)$, for $j = 1, \dots, n$, evolve as
\begin{equation}\label{multiple SLE 0 driving}
\dot{\theta}_j = \nu_j(t) \frac{\partial \log \mathcal{Z}(\boldsymbol{\theta}, \boldsymbol{\zeta})}{\partial \theta_j} + \sum_{k \neq j} \nu_k(t) \cot\left( \frac{\theta_j - \theta_k}{2} \right),
\end{equation}
where
\[
\mathcal{Z}(\boldsymbol{\theta},\boldsymbol{\zeta}) := \prod_{1 \leq j < k \leq n} \sin^2\left(\frac{\theta_j - \theta_k}{2}\right) 
\prod_{1 \leq s < t \leq m} \sin^8\left(\frac{\zeta_s - \zeta_t}{2}\right) 
\prod_{k=1}^{n} \prod_{l=1}^m \sin^{-4}\left(\frac{\theta_k - \zeta_l}{2}\right).
\]
The logarithmic derivative of $\mathcal{Z}(\boldsymbol{\theta}, \boldsymbol{\zeta})$ with respect to $\theta_j$ (treating $\theta$ and $\zeta$ as independent variables) is given by:
\begin{equation}
   \frac{\partial \mathcal{Z}(\boldsymbol{\theta},\boldsymbol{\zeta})}{\partial \theta_j}=\sum_{k\neq j} \cot\left(\frac{\theta_j-\theta_k}{2}\right)-2\sum_{l}\cot
   \left(\frac{2}{\theta_j-\zeta_l}\right)
\end{equation}
for $j=1,\ldots,n$,
The flow map $g_t$ is well-defined up to the first time $\tau$ at which $z_j(t) = z_k(t)$ for some $1 \leq j < k \leq n$. For each $z \in \mathbb{C}$, the process $t \mapsto g_t(z)$ is well-defined up to the time $\tau_z \wedge \tau$, where $\tau_z$ is the first time at which $g_t(z) = z_j(t)$. The hull associated with this Loewner chain is denoted by
\[
K_t = \left\{ z \in \overline{\mathbb{D}} : \tau_z \leq t \right\}.
\]

\end{defn}

\begin{remark}
The above definition of the multiple radial SLE(0) system is dynamic and local. We can define a multiple radial SLE(0) system for arbitrary initial positions of involution-symmetric $\boldsymbol{\xi}$ without assuming the stationary relations. When $\boldsymbol{\xi}$ solves the \textbf{stationary relations}, we will show that for any parametrization $\boldsymbol{\nu}(t)$, the traces are always the horizontal trajectories of a quadratic differential $Q(z)dz^2$, as stated in Theorem (\ref{traces as horizontal trajectories}). This provides another understanding of reparametrization symmetry (commutation relations) when $\kappa=0$.

\end{remark}

To characterize the traces of multiple radial SLE(0) systems, we introduce a class of quadratic differentials, denoted \( \mathcal{QD}(\boldsymbol{z}) \), with prescribed zeros at \( \boldsymbol{z} = \{z_1, z_2, \ldots, z_n\} \). These quadratic differentials are defined on the Riemann sphere and exhibit involution symmetry.

\begin{defn}[Quadratic differentials with prescribed zeros]
\label{trace quadratic differential}
Let \( \boldsymbol{z} = \{z_1, z_2, \ldots, z_n \} \) be distinct points on the unit circle. The class of quadratic differentials, denoted by \( \mathcal{QD}(\boldsymbol{z}) \).

\begin{enumerate}
    \item symmetric under the involution \( z^* = \frac{1}{\bar{z}} \), meaning
    \[
    \overline{Q(z^*)}\overline{(dz^*)^2} = Q(z)dz^2.
    \]

    \item  distinct zeros at \( \{z_1, z_2, \ldots, z_n\} \), each of order 2.

    \item  distinct finite poles at \( \{\xi_1, \ldots, \xi_m\} \), each of order 4, and the residues vanish (Residue-free condition):
    \[
    \text{Res}_{\xi_j}(\sqrt{Q(z)}dz) = 0, \quad \text{for } j = 1, \ldots, m.
    \]

    \item  poles of order \( n + 2 - 2m \) at the marked points \( 0 \) and \( \infty \). This ensures the total difference between the number of zeros and poles is \( -4 \).
\end{enumerate}

Here, the poles \( \{\xi_1, \ldots, \xi_m\} \) are finite, meaning they do not coincide with \( 0 \) or \( \infty \). The quadratic differential $Q(z) \in \mathcal{QD}(\boldsymbol{z})$ must take the following form:
\end{defn}
 $$
Q(z)= 
\frac{\prod_{k=1}^{m}\xi_{k}^2}{ \prod_{j=1}^{n}z_j}
z^{2m-n-2}\frac{\prod_{j=1}^{ n}\left(z-z_j\right)^2}{\prod_{k=1}^{m}\left(z-\xi_k\right)^4},
$$

By considering the primitive of $F(z)=\int \sqrt{Q(z)}dz$, we find that the residue-free quadratic differentials are natural generalization of rational functions, specifically designed to address the monodromy at 0, see section (\ref{residue free quadratic differential}).

The geometry of the horizontal trajectories of a quadratic differential \( Q(z) \in \mathcal{QD}(\boldsymbol{z}) \) is described as follows:

In the main theorem (\ref{traces as horizontal trajectories}), we show that the traces of the multiple radial SLE(0) systems correspond precisely to the horizontal trajectories of the class of residue-free quadratic differentials $Q(z) \in \mathcal{QD}(\boldsymbol{z})$ with limiting ends at $\boldsymbol{z}=\{z_1,z_2,\ldots,z_n\}$.

\begin{thm}\label{traces as horizontal trajectories}
Let $\boldsymbol{z}=\{z_1,z_2,\ldots,z_{n}\}$ be distinct growth points on the unit circle and screening charges $\xi =\{\xi_1,\xi_2,\ldots,\xi_{m} \}$ involution symmetric and solve the stationary relations.

There exists an $Q(z) \in \mathcal{QD}(\boldsymbol{z})$ with $\boldsymbol{\xi}$ as poles and $\boldsymbol{z}$ as zeros, 
the hulls $K_t$ generated by the Loewner flows with parametrization $\boldsymbol{\nu}(t)$ are subsets of the horizontal trajectories of $Q(z)dz^2$ with limiting ends at $\boldsymbol{z}$, up to any time $t$ before the collisions of any poles or critical points. Up to any such time
$$
Q(z) \circ g_t^{-1} \in \mathcal{QD}(\boldsymbol{z}(t)) 
$$
where $\boldsymbol{z}(t)$ is the location of the critical points at time $t$ under the multiple radial Loewner flow with parametrization $\boldsymbol{\nu}(t)$ .

\end{thm}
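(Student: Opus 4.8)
The plan is to build a \emph{field integral of motion} $N_t(z)$, one for each $z\in\overline{\mathbb D}$, whose constancy simultaneously yields both conclusions; this transplants to the radial setting the method of \cite{ABKM20}. Write $V(w)=\sum_{j=1}^{n}\nu_j(t)\,w\,\frac{z_j(t)+w}{z_j(t)-w}$ for the radial Loewner field, so that $\partial_t g_t=V\circ g_t$, let the screening charges be transported by the flow, $\xi_k(t)=g_t(\xi_k)$ (hence $\dot\xi_k=V(\xi_k(t))$), and let $\hat Q_t$ be the rational quadratic differential of Theorem~\ref{horizontal trajectories form link pattern} built from the current data $\boldsymbol z(t),\boldsymbol\xi(t)$. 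Put
$$
N_t(z)=\hat Q_t\bigl(g_t(z)\bigr)\,g_t'(z)^2,\qquad N_0(z)=Q(z).
$$
This is precisely the $\kappa\to0$ limit of the stress--tensor martingale observable $\mathbf E[T(g_t(z))]\,g_t'(z)^2+\tfrac{c}{12}\{g_t,z\}$ attached to the partition function, so its conservation is the deterministic shadow of the null vector equations of Theorems~\ref{null vector equations for cbtau over Cb} and \ref{null vector equations for cbtau}.

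Conservation, $\tfrac{d}{dt}N_t\equiv0$, is equivalent to the transport identity $\dot{\hat Q}_t+\hat Q_t'\,V+2\hat Q_t\,V'=0$, i.e.\ that $\hat Q_t$ is Lie--dragged by $V$; concretely this says $\hat Q_t=(g_t)_{*}Q$. I would verify it by matching principal parts: at each $\xi_k$ the leading poles cancel because $\dot\xi_k=V(\xi_k)$, and the residue-free condition persists because the period $\oint\sqrt{\hat Q_t}\,dw$ encircling $\xi_k$ is a conformal invariant transported unchanged by $g_t$; by Theorem~\ref{Stationary- residue free} this is exactly the statement that the stationary relations between $\boldsymbol z(t)$ and $\boldsymbol\xi(t)$ are preserved in time, so $\hat Q_t$ stays in $\mathcal{QD}(\boldsymbol z(t))$. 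The order-four poles at $\xi_k(t)$ and the order $n+2-2m$ poles at $0,\infty$ match by construction, and the double zeros sit at $z_j(t)$ because $g_t$ has the square-root behavior $g_t(z)-z_j(t)\sim C_j(z-\gamma_j(t))^{1/2}$ at each tip, so that $g_t'(z)^2\sim(z-\gamma_j(t))^{-1}$ blows up exactly as needed to keep $N_t$ finite and nonzero there. This establishes the last display, $Q\circ g_t^{-1}\in\mathcal{QD}(\boldsymbol z(t))$.

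It remains to see that the traces are horizontal trajectories. Since $g_t$ is conformal and $g_t^{*}\hat Q_t=Q$, horizontal trajectories of $\hat Q_t$ pull back to those of $Q$, so it suffices to check that in the uniformized picture the hull grows at $z_j(t)$ along the trajectory prong of $\hat Q_t$ that enters $\mathbb D$. The growth direction is governed by the subleading jet of $\hat Q_t$ at its double zero, and differentiating the explicit product formula gives
$$
\partial_w\log\frac{\hat Q_t(w)}{(w-z_j(t))^2}\Big|_{w=z_j(t)}=\sum_{i\ne j}\frac{2}{z_j-z_i}-\sum_{l=1}^{m}\frac{4}{z_j-\xi_l}+\frac{2m-n-2}{z_j},
$$
which in the angular coordinate reproduces the multiple radial SLE(0) drift $\partial_j\log\mathcal Z$ of Definition~\ref{multiple radial SLE(0) Loewner chain via stationary relations}. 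Thus the prescribed drift is precisely the condition that the trace follows the horizontal prong; pulling back, each $\gamma_j$ is a horizontal trajectory of $Q$ with limiting end the zero $z_j$ at $t=0$. Feeding this into Theorem~\ref{horizontal trajectories form link pattern} identifies the closure of the traces with a radial link pattern in $\mathrm{LP}(n,m)$ when $2m\le n$ and in $\mathrm{LP}(n,n-m)$ when $2m>n$, valid up to the first collision of poles or critical points.

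The step I expect to be the main obstacle is this last matching at the growth points: relating the Loewner drift to the subleading jet of the quadratic differential at a boundary double zero requires a careful local analysis of the square-root tip asymptotics of $g_t$ (to confirm an honest order-two zero and to single out the correct trajectory prong), and it is the one place where the specific drift $\partial_j\log\mathcal Z$ --- hence the stationary relations --- is genuinely used rather than following formally from conformal invariance. A secondary point is to make rigorous the identification of $N_t$ with the classical limit of the stress-tensor observable, and to ensure the conservation is propagated only up to, and not through, the collision time beyond which $\hat Q_t$ leaves the class $\mathcal{QD}$.
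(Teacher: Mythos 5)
Your overall strategy coincides with the paper's: both proofs hinge on a field integral of motion $N_t(z)$ of the form (quadratic differential at time $t$) evaluated at $g_t(z)$ times $g_t'(z)^2$, up to a deterministic real normalization, and both deduce $Q\circ g_t^{-1}\in\mathcal{QD}(\boldsymbol z(t))$ from its conservation together with the observation that residues of $\sqrt{Q_t}$ are transported by $g_t$, so the residue-free (stationary) condition persists. However, your write-up leaves the two load-bearing steps unproved. First, conservation of $N_t$ is asserted via "matching principal parts" and an appeal to the null vector equations, but the actual content is a direct ODE computation: one must check the transport identity not only at the poles $\xi_k(t)$ (where $\dot\xi_k=V(\xi_k)$ makes the cancellation easy) but also at the double zeros $z_j(t)$, where the driving function carries the nontrivial drift $\partial_j\log\mathcal Z$ rather than the vector field $V$; this is exactly where the explicit computation in the paper's proof of Theorem \ref{integral of motion in D} does the work, and it is notable that conservation holds for \emph{arbitrary} initial $\boldsymbol\xi$, the stationary relations entering only to guarantee $Q_0\in\mathcal{QD}(\boldsymbol z(0))$. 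A Liouville-type argument comparing principal parts could be made to work, but you would still have to carry out the jet computation at $z_j(t)$ that you defer to the end.

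Second, for the identification of the hulls with horizontal trajectories you propose a local analysis of the square-root tip asymptotics and a matching of the Loewner drift with the subleading jet of $\hat Q_t$ at its boundary double zero, and you correctly flag this as the main obstacle — but the paper sidesteps it entirely with a global argument: since $N_t\equiv N_0$, the primitive $F\circ g_t^{-1}$ is (up to a real multiplicative constant, handled in the angular coordinate to kill multivaluedness) a primitive of $\sqrt{Q_t}$; the map $g_t$ sends $K_t$ into $\partial\mathbb{D}$, and $\partial\mathbb{D}$ lies in the real locus of that primitive by involution symmetry, hence $K_t$ lies in the real locus of $F$, i.e.\ in the horizontal trajectories of $Q\,dz^2$. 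No tip asymptotics or prong selection is needed. As it stands your proposal has a genuine gap at both of these points; closing the first requires the explicit Loewner computation, and the second is most cleanly closed by replacing your local matching with the paper's pullback-to-the-circle argument.
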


The key ingredient in the proof of theorem (\ref{traces as horizontal trajectories}) is the integral of motion for the Loewner flows. This integral of motion, denoted by $N_t(z)$, arises as the classical limit of a martingale observable inspired by conformal field theory. A detailed explanation of this construction can be found in Section \ref{Multiple radial Martingale Observable}. This approach can also be extended to various multiple SLE systems. For systematic and rigorous study of such conformal field theories, please refer to \cites{KM13,KM21}.

The closure of horizontal trajectories of $Q(z) \in \mathcal{QD}(\boldsymbol{z})$ with limiting ends at zeros $\{z_1,z_2,\ldots,z_n\}$ form radial topological link patterns, see theorem (\ref{horizontal trajectories form link pattern}) for detailed proof and section \ref{underscreening}, section \ref{overscreening} for figures illustrating the traces of the multiple radial SLE(0) systems.

The classification of multiple radial SLE($0$) is linked to the enumeration of the master function for trigonometric KZ equations. This connection touches on a rich and intricate area of enumerative geometry. 

As shown in \cites{S02a, S02b, SV03}, these studies establish connections between the space of critical points and tensor products of Verma modules and other algebraic structures, providing deeper insights into the monodromy of the KZ equations. They also demonstrate that the critical points of the master function can be interpreted as solutions to the Bethe Ansatz equations, thereby linking the study of KZ equations with integrable systems. 

Furthermore, when all ${z_1, z_2, \ldots, z_n}$ lie on the real line, it is shown in \cite{MTV09} that the enumeration of these critical points is equivalent to the real Shapiro-Shapiro conjecture in real enumerative geometry.

This is part of our ongoing research, and based on \cite{MV08}, we propose several illuminating conjectures about the enumeration problem in section \ref{Enumerative geometry of radial multiple SLE(0)}.

\subsection{Relations to classical Calegero-Sutherland systems}

From the Hamiltonian point of view, we show that the multiple radial SLE(0) Loewner growing with common parametrization of capacity (i.e. $\nu_j(t)=1$) are a special type of classical Calogero-Sutherland system. 

The \textbf{stationary relations} can be interpreted as initial conditions for the particles and $n$ quadratic null vector equations as $n$ null vector Hamiltonians, which are related to the classical Calegro-Sutherland Hamiltonian via the lax pair.
Furthermore, these null vector Hamiltonians induce commuting Hamiltonian flows along the submanifolds defined as the intersection of their level sets.

\begin{thm}  
\label{CS results kappa=0}  
From a dynamical system perspective, the driving functions of multiple radial SLE(0) systems are given by:
\[  
\dot{\theta}_j = U_j(\boldsymbol{\theta}) + \sum_{k \neq j} \cot\left(\frac{\theta_j - \theta_k}{2}\right),  
\]  
where \( U_j \) satisfies the quadratic null vector equation for a constant \( h \):  
\begin{equation}  
\frac{1}{2}U_j^2 + \sum_{k \neq j} \cot\left(\frac{\theta_k - \theta_j}{2}\right) U_k - \sum_{k \neq j} \frac{3}{2 \sin^2\left(\frac{\theta_j - \theta_k}{2}\right)} = h.  
\end{equation}  

\begin{itemize}  

\item[(i)] By introducing the momentum function \( p_j \), defined as  
\begin{equation}  
p_j = U_j + \sum_{k \neq j} \cot\left(\frac{\theta_j - \theta_k}{2}\right),  
\end{equation}  
we can reformulate the multiple radial SLE(0) system as a Calogero-Sutherland system. The momentum \( p_j \) satisfies the null vector Hamiltonian equation:  
\begin{equation}  
\mathcal{H}_j(\boldsymbol{\theta}, \boldsymbol{p}) = \frac{1}{2} p_j^2 - \sum_{k \neq j} \left(p_j + p_k\right) f_{jk} + \sum_k \sum_{l \neq k} f_{jk} f_{jl} - 2 \sum_{k \neq j} f_{jk}^2 = h - \frac{3(n - 1)}{2} - C_{n-1}^2.  
\end{equation}  

The total null vector Hamiltonian \( \mathcal{H} = \sum_j \mathcal{H}_j \) is equivalent to the classical Calogero-Sutherland Hamiltonian:  
\begin{equation}  
\mathcal{H} = \sum_j \frac{p_j^2}{2} - \sum_{1 \leq j < k \leq n} \frac{4}{\sin^2\left(\frac{\theta_j - \theta_k}{2}\right)} = n h - \frac{n(n^2 - 1)}{6}.  
\end{equation}  

\item[(ii)] The commutation relations between different growth pairs are expressed in terms of the Poisson bracket:  
\begin{equation}  
\left\{\mathcal{H}_j, \mathcal{H}_k\right\} = \frac{1}{f_{jk}^2} \left(\mathcal{H}_k - \mathcal{H}_j\right).  
\end{equation}  

Consequently, the vector flows \( X_{\mathcal{H}_j} \) induced by the Hamiltonians \( \mathcal{H}_j \) commute along the submanifolds \( N_c \):  
\begin{equation}  
N_c = \left\{ (\boldsymbol{\theta}, \boldsymbol{p}) : \mathcal{H}_j(\boldsymbol{\theta}, \boldsymbol{p}) = c, \, \text{for all } j \right\}.  
\end{equation}  

\end{itemize}  
\end{thm}

This relationship is a classical analog of the relation between multiple radial $SLE(\kappa)$ and quantum Calogero-Sutherland system, first discovered in \cite{DC07}.

\newpage

\section{Coulomb gas correlation and rational $SLE(\kappa)$}

\subsection{Schramm Loewner evolutions}

In this section, we briefly recall the basic defintions and properties of the chordal and radial SLE. We will describe the radial Loewner chain in $\mathbb{D}$, where $\mathbb{D}=\{z \in \mathbb{C}|| z \mid<1\}$ and chordal Loewner chain in $\mathbb{H}=\{ \textnormal{Im}(z) >0\}$.

\begin{defn}[Conformal radius]
The conformal radius of a simply connected domain $\Omega$ with respect to a point $z \in \Omega$, defined as
$$
\operatorname{CR}(\Omega, z):=\left|f^{\prime}(0)\right|,
$$
where $f: \mathbb{D} \rightarrow \Omega$ is a conformal map from the open unit disk $\mathbb{D}$ onto $\Omega$ with $f(0)=z$.
\end{defn}
\begin{defn}[Capacity in $\mathbb{D}$]

For any compact subset $K$ of $\overline{\mathbb{D}}$ such that $\mathbb{D} \backslash K$ is simply connected and contains 0 , let $g_K$ be the unique conformal map $\mathbb{D} \backslash K \rightarrow \mathbb{D}$ such that $g_K(0)=0$ and $g_K^{\prime}(0)>0$ . The conformal radius of $\mathbb{D} \backslash K$ is
$$
\operatorname{CR}(\mathbb{D} \backslash K):=\left(g_K^{\prime}(0)\right)^{-1} .
$$

The capacity of $K$ is
$$
\operatorname{cap}(K)=\log g_K^{\prime}(0)=-\log \mathrm{CR}(\mathbb{D} \backslash K,0).
$$

\end{defn}

\begin{defn}[Capacity in $\mathbb{H}$]
    For any compact subset $K \subset \overline{\mathbb{H}}$ such that $\mathbb{H} \backslash K$ is a simply connected domain. The half-plane capacity of a hull $K$ is the quantity
$$
\operatorname{hcap}(K):=\lim _{z \rightarrow \infty} z\left[g_K(z)-z\right] \text {, }
$$
where $g_K: \mathbb{H} \backslash K \rightarrow \mathbb{H}$ is the unique conformal map satisfying the hydrodynamic normalization $g(z)=z+O\left(\frac{1}{z}\right)$ as $z \rightarrow \infty$.
\end{defn}

\begin{defn}[Radial Loewner chain] Let $g_t$ satisfies the radial Loewner equation

\begin{equation}
\partial_t g_t(z)=g_t(z) \frac{\mathrm{e}^{\mathrm{i} \theta_t}+g_t(z)}{\mathrm{e}^{\mathrm{i} \theta_t}-g_t(z)}, \quad g_0(z)=z,
\end{equation}

where $t \mapsto \theta_t$ is real continuous and called the driving function. 
Let $K_t$ be the set of points $z$ in $\mathbb{D}$ such that the solution $g_s(z)$ blows up before or at time $t$.  $K_t$ is called the radial SLE hull driven by $\theta_t$.

Radial Loewner chain in arbitrary simply connected domain $\Omega \subsetneq \mathbb{C}$ with a marked interior point $u \in D$, is defined via a conformal map from $\mathbb{D}$ onto $\Omega$ sending 0 to $u$.

\end{defn}
\begin{defn}[Radial SLE($\kappa$)]
For $\kappa \geq 0$, the radial SLE($\kappa$) is the random Loewner chain in $\mathbb{D}$ from $1$ to $0$ driven by:
\begin{equation}
    \theta_t =\sqrt{\kappa}B_t,
\end{equation}
where $B_t$ is the standard Brownian motion.
\end{defn}

\begin{defn}[Characterization of radial SLE]
The radial SLE is a family $\mathbb{P}(\mathbb{D};\zeta,0)$ of probability measures on curves $\eta:[0, \infty) \rightarrow \overline{\mathbb{D}}$ with $\eta(0)=\zeta$ and parametrized by capacity satisfies the following properties:
\begin{itemize}
    \item (Conformal invariance) For all $a \in \mathbb{R}$, let $\rho_a(z)=\mathrm{e}^{\mathrm{i} a} z$ be the rotation map $\mathbb{D} \rightarrow \mathbb{D}$, the pullback measure $\rho_a^* \mathbb{P}(\mathbb{D};\zeta,0)=\mathbb{P}(\mathbb{D};e^{-ia}\zeta,0)$. From this, we may extend the definition to $\mathbb{P}(\Omega; a,b)$ in any simply connected domain $\Omega$ with an interior marked point $u$ by pulling back using a uniformizing conformal map $\Omega \rightarrow \mathbb{D}$ sending $u$ to 0.
\item (Domain Markov property) given an initial segment $\gamma[0, \tau]$ of the radial $\operatorname{SLE}_\kappa$ curve $\gamma \sim \mathbb{P}(\Omega ; x, y)$ up to a stopping time $\tau$, the conditional law of $\gamma[\tau, \infty)$ is the law $\mathbb{P}\left(\Omega \backslash K_\tau ; \gamma(\tau), 0\right)$ of the $\mathrm{SLE}_\kappa$ curve in the complement of the hull $K_\tau$ from the tip $\gamma(\tau)$ to $0$.
\item (Reflection symmetry) Let $\iota: z \mapsto \bar{z}$ be the complex conjugation, then $\mathbb{P}(\zeta,0) \sim \iota^* \mathbb{P}(\overline{\zeta},0)$.
\end{itemize}
\end{defn}

\begin{defn}[Chordal Loewner chain] Let $g_t$ satisfies the chordal Loewner equation

\begin{equation}
\partial_t g_t(z)= \frac{2}{g_t(z)-\xi(t)}, \quad g_0(z)=z,
\end{equation}

where $t \mapsto \xi_t$ is continuous and called the driving function. 
Let $K_t$ be the set of points $z$ in $\mathbb{H}$ such that the solution $g_s(z)$ blows up before or at time $t$.  $K_t$ is called the chordal SLE hull driven by $\xi_t$

Chordal Loewner chain in arbitrary simply connected domain $\Omega \subsetneq \mathbb{C}$ from $a$ to $b$, is defined via a uniformizing conformal map from $\Omega$ onto $\mathbb{H}$ sending $a$ to $0$ and $b$ to $\infty$.
\end{defn}

\begin{defn}[Chordal SLE($\kappa$)]
For $\kappa \geq 0$, the chordal SLE($\kappa$) is the random Loewner chain in $\mathbb{H}$ from $0$ to $\infty$ driven by
\begin{equation}
    \xi_t =\sqrt{\kappa}B_t,
\end{equation}
where $B_t$ is the standard Brownian motion.
\end{defn}

\begin{defn}[Characterization of Chordal SLE]
Chordal SLE is a family of probability measures on curves $\mathbb{P}(\mathbb{H};a,b)$ $\eta:[0, \infty] \rightarrow \overline{\mathbb{H}}$ with $\eta(0)=a, \eta(\infty)=b$ and parametrized by capacity satisfies the following properties:
    \begin{itemize}
     \item (Conformal invariance) $\rho(z)\in {\rm Aut}(\mathbb{H})$, the pullback measure $\rho^* \mathbb{P}(a,b)=\mathbb{P}(\mathbb{H};\rho(a),\rho(b))$. From this, we may extend the definition of to  $\mathbb{P}(\mathbb{H};z_1,z_2)$ in any simply connected domain $\Omega$ with two boundary points $z_1,z_2$  by pulling back using a uniformizing conformal map $\Omega \rightarrow \mathbb{H}$ sending $z_1$ to $a$ and $z_2$ to $b$.
    \item (Domain Markov property) given an initial segment $\gamma[0, \tau]$ of the $\operatorname{SLE}_\kappa$ curve $\gamma \sim \mathbb{P}(\Omega ; x, y)$ up to a stopping time $\tau$, the conditional law of $\gamma[\tau, \infty)$ is the law $\mathbb{P}\left(\Omega \backslash K_\tau ; \gamma(\tau), y\right)$ of the $\mathrm{SLE}_\kappa$ curve in the complement of the hull $K_\tau$ from the tip $\gamma(\tau)$ to $y$.
    \end{itemize} 
\end{defn}
\subsection{Coulomb gas correlation on Riemann sphere}

To define more general SLE processes beyond the chordal and radial SLEs, we introduce the concept of Coulomb gas correlations. These correlations serve as partition functions for various SLE processes and play a central role in conformal field theory.

We define the Coulomb gas correlations as the (holomorphic) differentials with conformal dimensions $\lambda_j=\sigma_j^2 / 2-\sigma_j b$ at $z_j$ (including infinity) and with values
$$
\prod_{\substack{j<k \\ z_j, z_k \neq \infty}}\left(z_j-z_k\right)^{\sigma_j \sigma_k}, \quad\left(z_j \in \widehat{\mathbb{C}}\right)
$$
in the identity chart of $\mathbb{C}$ and the chart $z \mapsto-1 / z$ at infinity. If $\sigma_j\sigma_k \notin 2\mathbb{Z}$, the Coulomb gas differential is multi-valued; in this case, we choose a single-valued branch.
After explaining this definition, we prove that under the neutrality condition, $\sum \sigma_j=2 b$, the Coulomb gas correlation functions are conformally invariant with respect to the Möbius group ${\rm Aut}(\widehat{\mathbb{C}})$.

\begin{defn}[Differential]\label{differential}
A local coordinate chart on a Riemann surface $M$ is a conformal map $\phi: U \rightarrow \phi(U) \subset \mathbb{C}$ on an open subset $U$ of $M$. A differential $f$ is an assignment of a smooth function $(f \| \phi): \phi(U) \rightarrow \mathbb{C}$ to each local chart $\phi: U \rightarrow \phi(U)$.  $f$ is a differential of conformal dimensions $\left[\lambda, \lambda_*\right]$ if for any two overlapping charts $\phi$ and $\tilde{\phi}$, we have:

\begin{equation}
(f \| \phi)=\left(h^{\prime}\right)^\lambda\left(\overline{h^{\prime}}\right)^{\lambda_*} (\tilde{f} \circ h \| \tilde{\phi}),
\end{equation}

where $h=\tilde{\phi} \circ \phi^{-1}: \phi(U \cap \tilde{U}) \rightarrow \tilde{\phi}(U \cap \tilde{U})$ is the transition map.

\begin{defn}[Neutrality Condition]
A divisor $\boldsymbol{\sigma} : \widehat{\mathbb{C}} \to \mathbb{R}$ is said to satisfy the \emph{neutrality condition} $\mathrm{(NC)}_b$ if
\begin{equation}
\int \boldsymbol{\sigma} = 2b,
\end{equation}
for some $b \in \mathbb{R}$. In the context of $\mathrm{SLE}_\kappa$, the parameter $b$ is related to $\kappa > 0$ by
\begin{equation}
b = \sqrt{\frac{8}{\kappa}} - \sqrt{\frac{\kappa}{2}}.
\end{equation}
\end{defn}

\begin{defn}[Coulomb gas correlations for a divisor on the Riemann sphere] Let the divisor
$$
\boldsymbol{\sigma}=\sum \sigma_j \cdot z_j,
$$
where $\left\{z_j\right\}_{j=1}^n$ is a finite set of distinct points on $\widehat{\mathbb{C}}$.
The Coulomb gas correlation $C_{(b)}[\boldsymbol{\sigma}]$ is a differential of conformal dimension $\lambda_j$ at $z_j$, given by

\begin{equation}
\lambda_j=\lambda_b\left(\sigma_j\right) \equiv \frac{\sigma_j^2}{2}-\sigma_j b ,
\end{equation}

where $\lambda_b(\sigma)=\frac{\sigma^2}{2}-\sigma b \quad(\sigma \in \mathbb{C})$
whose value is given by

\begin{equation}
C_{(b)}[\boldsymbol{\sigma}] =\prod_{j<k}\left(z_j-z_k\right)^{\sigma_j \sigma_k} ,
\end{equation}
where the product is taken over all finite $z_j$ and $z_k$. 

This defines a holomorphic function of $\boldsymbol{z}$ on the configuration space
\[
\mathbb{C}_{\mathrm{distinct}}^n = \left\{ \boldsymbol{z} = (z_1, \ldots, z_n) \in \mathbb{C}^n \,\middle|\, z_j \neq z_k \text{ for } j \neq k \right\}.
\]
In general, the function is multivalued, and one must choose a single-valued branch for each factor $(z_j-z_k)^{\sigma_j\sigma_k}$, except in special cases where all $\sigma_j$ are integers. If all $\sigma_j$ are even integers, the function becomes single-valued and independent of the ordering of the product. In the special case where $\sigma_j = 1$ for all $j$, the correlation function coincides with the Vandermonde determinant.

\end{defn}

\begin{thm}[see \cite{KM21} thm (\textcolor{red}{2.2})]
Under the neutrality condition $\left(\mathrm{NC}_b\right)$, the differentials $C_{(b)}[\boldsymbol{\sigma}]$ are Möbius invariant on $\hat{\mathbb{C}}$.
\end{thm}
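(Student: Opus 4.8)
The plan is to reduce to the generators of $\mathrm{Aut}(\widehat{\mathbb{C}})$ and to verify the defining covariance law of a conformal differential under each generator, tracking where the neutrality condition $\left(\mathrm{NC}_b\right)$, i.e. $S := \sum_j \sigma_j = 2b$, is forced. Recall that $\mathrm{Aut}(\widehat{\mathbb{C}})$ is generated by the translations $\mu(z)=z+a$, the scalings $\mu(z)=\rho z$ with $\rho\in\mathbb{C}^{*}$, and the inversion $J(z)=-1/z$. Since a differential is determined by its values in any atlas and the transformation factor is multiplicative under composition of transition maps, it suffices to check each generator. Writing everything in the identity chart at the finite marked points, Möbius invariance is the statement $C_{(b)}[\boldsymbol{\sigma}](\boldsymbol{z}) = \prod_j \mu'(z_j)^{\lambda_j}\, C_{(b)}[\boldsymbol{\sigma}](\mu(\boldsymbol{z}))$, where $\mu(\boldsymbol{z})=(\mu(z_1),\dots,\mu(z_n))$.

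For translations one has $\mu'\equiv 1$ and $\mu(z_j)-\mu(z_k)=z_j-z_k$, so invariance is immediate and needs no hypothesis. For a scaling $\mu(z)=\rho z$ the difference factors contribute $\rho^{E}$ with $E=\sum_{j<k}\sigma_j\sigma_k=\tfrac12\bigl(S^2-\sum_j\sigma_j^2\bigr)$, while the dimension factors contribute $\rho^{\Lambda}$ with $\Lambda=\sum_j\lambda_j=\tfrac12\sum_j\sigma_j^2-bS$. The total exponent is $E+\Lambda=\tfrac12 S(S-2b)$, which vanishes exactly under $\left(\mathrm{NC}_b\right)$. The inversion $J(z)=-1/z$ is the substantive algebraic case: for marked points that remain finite and nonzero I would use $J(z_j)-J(z_k)=(z_j-z_k)/(z_j z_k)$ and $J'(z_j)=1/z_j^{2}$, so that the spurious powers of each $z_l$, coming from the denominators $z_j z_k$ together with the factor $J'(z_l)^{\lambda_l}=z_l^{-2\lambda_l}$, collect into $z_l^{\sigma_l(2b-S)}$, which is trivial precisely when $S=2b$. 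Again neutrality is the exact requirement.

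The main obstacle is the treatment of the two points $0$ and $\infty$ that $J$ genuinely moves between the finite plane and infinity, where one must use the distinguished chart $z\mapsto -1/z$ at infinity rather than the identity chart. To handle a marked point $z_n=\infty$ I would compute the value of the differential in the coordinate $w=-1/z$ through the transition map $h(z)=-1/z$, $h'(z)=1/z^{2}$, so that the chart-at-infinity value is $\lim_{R\to\infty} R^{2\lambda_n}\prod_{j<n}(z_j-R)^{\sigma_j\sigma_n}$ times the finite Vandermonde-type product over the remaining points. The exponent of $R$ in this limit is $2\lambda_n+\sigma_n(S-\sigma_n)=\sigma_n(S-2b)$, which vanishes under neutrality; hence the limit is finite and equals the product over the finite points, matching the stated value and confirming that the assigned dimension $\lambda_\infty=\sigma_\infty^{2}/2-\sigma_\infty b$ is exactly the one forced by Möbius covariance. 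The symmetric point $0\mapsto\infty$ is handled identically. Assembling the three generator computations then yields invariance for all of $\mathrm{Aut}(\widehat{\mathbb{C}})$; throughout, the only genuinely delicate issues are the consistent bookkeeping of the chart at infinity and the choice of single-valued branch of the multivalued product, both of which are controlled by $\left(\mathrm{NC}_b\right)$.
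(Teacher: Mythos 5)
The paper does not prove this statement itself — it is quoted directly from \cite{KM21} (Theorem 2.2) — so there is no in-paper argument to compare against. Your proof is correct and is precisely the standard generator-by-generator verification used in the reference: translations are free, scalings force $\tfrac12 S(S-2b)=0$, inversion forces $\sigma_l(2b-S)=0$ at each marked point, and the chart $z\mapsto -1/z$ at infinity absorbs the remaining divergence with exponent $\sigma_n(S-2b)$; the only caveat is that the single-valued-branch issue is not really "controlled by'' $(\mathrm{NC}_b)$ but is a separate convention, which the paper itself also leaves at the level of "interpret formulas in terms of single-valued branches.''
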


\subsection{Coulomb gas correlation in a simply connected domain}

In this section, we define the Coulomb gas correlation differential in a simply connected domain.

\begin{defn}[Symmetric Riemann surface]
A symmetric Riemann surface is a pair $(S, j)$ consisting of a Riemann surface $S$ and an anticonformal involution $j$ on $S$. The latter means that $j$ : $S \rightarrow S$ is an anti-analytic map with $j \cdot j=$ id (the identity map). 
\end{defn}

The principal example for us is the symmetric Riemann surface obtained by taking the Schottky double of a simply connected domain domain. The construction of this is briefly as follows. (See section \textcolor{red}{2.2}, \cite{SS54}, \textcolor{red}{II.3E}, \cite{AS60} for details.)    

\begin{defn}[Schottky double]
Let $\Omega \subsetneq \mathbb{C}$ be a simply connected domain in $\mathbb{C}$ with $\Gamma=\partial \Omega$ consisting of prime ends. Take copy $\tilde{\Omega}$ of $\Omega$ and weld $\Omega$ and $\tilde{\Omega}$ together along $\Gamma$ so that a compact topological surface $\Omega^{Double}=\Omega \cup \Gamma \cup \tilde{\Omega}$ is obtained. If $z \in \Omega$ let $\tilde{z}$ denote the corresponding point on $\tilde{\Omega}$. Then an involution $j$ on $\Omega^{Double}$ is defined by
$$
\begin{array}{ll}
j(z)=\tilde{z} & \text { and } \\
j(\tilde{z})=z & \text { for } z \in \Omega, \\
j(z)=z & \text { for } z \in \Gamma .
\end{array}
$$

The conformal structure on $\tilde{\Omega}$ will be the opposite to that on $\Omega$, which means that the function $\tilde{z} \mapsto \overline{z}$ serves as a local variable on $\tilde{\Omega}$, and $j$ becomes anti-analytic. 

For $p\in \partial \Omega$, let $\phi:U\subset \overline{\Omega} \rightarrow \phi(U)$ be a local boundary chart at $p$, 
let $\tilde{U}$ be the corresponding subset in $\tilde{\Omega}$, 
then $\tilde{\phi}:\tilde{U} \subset \tilde{\Omega} \rightarrow \overline{\phi}(\tilde{U})$ is a local chart at $\tilde{p}$.
Then we can define a local chart $\tau$ for $\Omega^{Double}$ at boundary point $p$ by 
$$ \tau(z)=\left\{
\begin{aligned}
&\phi(z), z \in U \\
& \overline{\phi(z)}, z \in \tilde{U}.
\end{aligned}
\right.
$$

Thus, the conformal structure on $\Omega^{Double}$, inherited from $\mathbb{C}$, extends in a natural way across $\Gamma$ to a conformal structure on all of $\Omega^{Double}$. This makes $\Omega^{Double}$ into a symmetric Riemann sphere.

For example, we identify $\widehat{\mathbb{C}}$ with the Schottky double of $\mathbb{H}$ or that of $\mathbb{D}$. Then the corresponding involution $j$ is $j_{\mathbb{H}}: z \mapsto z^*=\bar{z}$ for $\Omega=\mathbb{H}$ and $j_{\mathbb{D}}: z \mapsto z^*=1 / \bar{z}$ for $\Omega=\mathbb{D}$.
  
\end{defn}

\begin{defn}[Double divisor]
Suppose $\Omega$ is a simply connected domain $(\Omega \subsetneq \mathbb{C})$.

A double divisor $\left(\boldsymbol{\sigma^{+}}, \boldsymbol{\sigma^{-}}\right)$ is a pair of divisor in  $\overline{\Omega}$

 \begin{equation}
  \boldsymbol{\sigma}^{+}=\sum \sigma_j^{+} \cdot z_j, \boldsymbol{\sigma}^{-}=\sum \sigma_j^{-} \cdot z_j.
 \end{equation}

We introduce an equivalence relation for double divisors:

\begin{equation}
    \left(\boldsymbol{\sigma_{1}^{+}}, \boldsymbol{\sigma_{1}^{-}}\right) \sim \left(\boldsymbol{\sigma_{2}^{+}}, \boldsymbol{\sigma_{2}^{-}}\right)
\end{equation}
 if and only if
 \begin{equation}
    \boldsymbol{\sigma_{1}^{+}}+ \boldsymbol{\sigma_{1}^{-}} = \boldsymbol{\sigma_{2}^{+}}+ \boldsymbol{\sigma_{2}^{-}} \quad on \ \partial \Omega.
\end{equation}
Thus, we may choose a representative $\boldsymbol{\sigma}^{-}$ from each equivalence class  that is supported in $\Omega$, i.e., $\sigma_j^{-}=0$ if $z_j \in \partial \Omega$ .

\end{defn}

\begin{defn}
Suppose $\Omega$ is a simply connected domain $(\Omega \subsetneq \mathbb{C})$, let $\partial \Omega$ be its Carathéodory boundary (prime ends) and consider the Schottky double $S=\Omega^{\text {double }}$, which equips with the canonical involution $\iota \equiv \iota_\Omega: S \rightarrow S, z \mapsto z^*$. 

Then, for a double divisor $\left(\boldsymbol{\sigma}^{+}, \boldsymbol{\sigma}^{-}\right)$, we define the associated divisor on the Schottky double $S$ by
\begin{equation}
\boldsymbol{\sigma} = \boldsymbol{\sigma}^{+} + \boldsymbol{\sigma}_*^{-}, \quad \text{where} \quad \boldsymbol{\sigma}_*^{-} := \sum \sigma_j^{-} \cdot z_j^*,
\end{equation}
and each $z_j^*$ denotes the image of $z_j$ under the canonical involution $\iota$ of $S$. Accordingly, $\boldsymbol{\sigma}_*^{-}$ is the pushforward of $\boldsymbol{\sigma}^{-}$ under $\iota$.

\end{defn}

\begin{defn}[Neutrality condition]
 A double divisor $\left(\boldsymbol{\sigma}^{+}, \boldsymbol{\sigma}^{-}\right)$ satisfies the neutrality condition $\left(\mathrm{NC}_b\right)$  if 
\begin{equation}
\int\boldsymbol{\sigma}=\int \boldsymbol{\sigma}^{+}+ \int \boldsymbol{\sigma}^{-}=2b.
\end{equation}   
\end{defn}

\begin{defn}[Coulomb gas correlation for a double divisor in a simply connected domain]
For a double divisor $(\boldsymbol{\sigma^+},\boldsymbol{\sigma^-})$, let $\boldsymbol{\sigma}=\boldsymbol{\sigma^+}+ \boldsymbol{\sigma}_*^{-}$ be its corresponding divisor in the Schottky double $S$, we define the Coulomb gas correlation of the double divisor $(\boldsymbol{\sigma^+},\boldsymbol{\sigma^-})$ by
\end{defn}
\begin{equation}
 C_{\Omega}\left[\boldsymbol{\sigma}^{+}, \boldsymbol{\sigma}^{-}\right](\boldsymbol{z}):=C_S[\boldsymbol{\sigma}] .  
\end{equation}

We often omit the subscripts $\Omega, S$ to simplify the notations. 

If the double divisor $(\boldsymbol{\sigma}^{+}, \boldsymbol{\sigma}^{-})$ satisfies the neutrality condition $(\mathrm{NC}_b)$, then the Coulomb gas correlation function
$
C_{\Omega}\left[\boldsymbol{\sigma}^{+}, \boldsymbol{\sigma}^{-}\right]
$
is a well-defined differential on $\Omega$, with conformal weights $\left[\lambda_j^{+}, \lambda_j^{-}\right]$ at each point $z_j \in \Omega$.

If $z_j \in \partial \Omega$, then the differential is with respect to a boundary chart: that is, a local conformal map from a neighborhood of $z_j$ in $\Omega$ to the upper half-plane $\mathbb{H}$, sending $z_j$ to a boundary point of $\mathbb{H}$. The derivative $\partial_{z_j}$ is then defined as the holomorphic derivative in this local coordinate.

\begin{equation}
\lambda^{+}_{j}=\lambda_b\left(\sigma^{+}_j\right) \equiv \frac{(\sigma^{+}_j)^2}{2}-\sigma_{j}^{+} b,\quad
\lambda^{-}_{j}=\lambda_b\left(\sigma_j\right) \equiv \frac{(\sigma^{-}_{j})^2}{2}-\sigma^{-}_j b.
\end{equation}

By conformal invariance of the Coulomb gas correlation differential $C_S[\boldsymbol{\sigma}]$ on the Riemann sphere under M{\"o}bius transformation, the Coulomb gas correlation differential $C_{\Omega}\left[\boldsymbol{\sigma}^{+}, \boldsymbol{\sigma}^{-}\right](\boldsymbol{z})$ is invariant under $Aut(\Omega)$.
\end{defn}

\begin{thm}[see \cite{KM21} thm (\textcolor{red}{2.4})]
 Under the neutrality condition $\left(\mathrm{NC}_b\right)$, the value of the differential $C_{\mathbb{H}}\left[\boldsymbol{\sigma}^{+}, \boldsymbol{\sigma}^{-}\right]$ in the identity chart of $\mathbb{H}$ (and the chart $z \mapsto-1 / z$ at infinity) is given by
 
 \begin{equation}
C_{\mathbb{H}}\left[\boldsymbol{\sigma}^{+}, \boldsymbol{\sigma}^{-}\right]=\prod_{j<k}\left(z_j-z_k\right)^{\sigma_j^{+} \sigma_k^{+}}\left(\bar{z}_j-\bar{z}_k\right)^{\sigma_j^{-} \sigma_k^{-}} \prod_{j, k}\left(z_j-\bar{z}_k\right)^{\sigma_j^{+} \sigma_i^{-}},
\end{equation}

where the products are taken over finite $z_j$ and $z_k$.
\end{thm}

\begin{example}
We have
\begin{itemize}
    \item[(i)] if $\boldsymbol{\sigma}^{-}=\mathbf{0}$, then (up to a phase)
$$
C_{\mathbb{H}}\left[\boldsymbol{\sigma}^{+}, \mathbf{0}\right]=\prod_{j<k}\left(z_j-z_k\right)^{\sigma_j^{+} \sigma_k^{+}};
$$
\item[(ii)] if $\boldsymbol{\sigma}^{-}=\overline{\boldsymbol{\sigma}^{+}}$, then (up to a phase)
$$
C_{\mathbb{H}}\left[\boldsymbol{\sigma}^{+}, \overline{\boldsymbol{\sigma}^{+}}\right]=\prod_{j<k}\left|\left(z_j-z_k\right)^{\sigma_j^{+} \sigma_k^{+}}\left(z_j-\bar{z}_k\right)^{\sigma_j^{+} \overline{\sigma_k^{+}}}\right|^2 \prod_{\operatorname{Im} z_j>0}\left(2 \operatorname{Im} z_j\right)^{\left|\sigma_j^{+}\right|^2} ;
$$
\item[(iii)] if $\boldsymbol{\sigma}^{-}=-\overline{\boldsymbol{\sigma}^{+}}$, then (up to a phase)
$$
C_{\mathbb{H}}\left[\boldsymbol{\sigma}^{+},-\overline{\boldsymbol{\sigma}^{+}}\right]=\prod_{j<k}\left|\left(z_j-z_k\right)^{\sigma_j^{+} \sigma_k^{+}}\left(z_j-\overline{z_k}\right)^{-\sigma_j^{+} \overline{\sigma_k^{+}}}\right|^2 \prod_{\operatorname{Im} z_j>0}\left(2 \operatorname{Im} z_j\right)^{-\left|\sigma_j^{+}\right|^2} .
$$

where the products are taken over finite $z_j$ and $z_k$.
\end{itemize}
\end{example}

\begin{thm}[see \cite{KM21} thm (\textcolor{red}{2.5})]
Under the neutrality condition $\left(\mathrm{NC}_b\right)$, the value of the differential $C_{\mathbb{D}}\left[\boldsymbol{\sigma}^{+}, \boldsymbol{\sigma}^{-}\right]$ in the identity chart of $\mathbb{D}$ is given by
\begin{equation}
C_{\mathbb{D}}\left[\boldsymbol{\sigma}^{+}, \boldsymbol{\sigma}^{-}\right]=\prod_{j<k}\left(z_j-z_k\right)^{\sigma_j^{+} \sigma_k^{+}}\left(\bar{z}_j-\bar{z}_k\right)^{\sigma_j^{-} \sigma_k^{-}} \prod_{j, k}\left(1-z_j \bar{z}_k\right)^{\sigma_j^{+} \sigma_k^{-}},
\end{equation}
where the product is taken over finite $z_j$ and $z_k$.
\end{thm}

\subsection{Rational $SLE_\kappa[\boldsymbol{\sigma}]$}

\begin{defn}[Rational SLE] \label{rational SLE}
In the unit disk $\mathbb{D}$, let $e^{i\theta} \in \partial\mathbb{D}$ be the growth point, and let $u_1 = e^{i\theta_1}, u_2 = e^{i\theta_2}, \dots, u_k = e^{i\theta_k} \in \overline{\mathbb{D}}$ be marked points. A symmetric double divisor $(\boldsymbol{\sigma}^+, \boldsymbol{\sigma}^-)$ assigns a charge distribution on $e^{i\theta}$ and $\{u_1, \dots, u_k\}$, where
\[
\boldsymbol{\sigma}^+ = a \cdot e^{i\theta} + \sum_{j=1}^{k} \sigma_j \cdot u_j,
\quad \text{and} \quad
\boldsymbol{\sigma}^- = \overline{\boldsymbol{\sigma}^+}|_{\mathbb{D}},
\]
and the total charge satisfies the neutrality condition $(\mathrm{NC}_b)$.

We define the rational $\mathrm{SLE}_\kappa[\boldsymbol{\sigma}]$ as a random normalized conformal map $g_t(z)$, with initial condition $g_0(z) = z$ and normalization $g_t'(0) = e^{-t}$. It evolves according to the radial Loewner equation:
\[
\partial_t g_t(z) = g_t(z) \frac{e^{i\theta(t)} + g_t(z)}{e^{i\theta(t)} - g_t(z)}, \quad g_0(z) = z.
\]

Let $h_t(z)$ be the covering map of $g_t(z)$, defined via
\[
e^{i h_t(z)} = g_t(e^{i z}),
\]
so that $h_0(z) = z$, and
\[
\partial_t h_t(z) = \cot\left( \frac{h_t(z) - \theta(t)}{2} \right).
\]

The driving function $\theta(t)$ evolves as
\[
d\theta(t) = \frac{\partial \log \mathcal{Z}(\theta)}{\partial \theta} \, dt + \sqrt{\kappa} \, dB_t,
\]
where the Coulomb gas partition function is
\begin{equation} \label{Coulomb-gas-theta}
\mathcal{Z}(\boldsymbol{\theta}) = \prod_{j < k} \sin\left( \frac{\theta_j - \theta_k}{2} \right)^{\sigma_j \sigma_k} 
\cdot \prod_j e^{\frac{i}{2} \sigma_j (\sigma_0 - \sigma_\infty)\theta_j}.
\end{equation}

The flow map $g_t$ is well-defined up to the first time $\tau$ when $\zeta(t) = g_t(w)$ for some $w$ in the support of $\boldsymbol{\sigma}$. For any $z \in \mathbb{D}$, the process $t \mapsto g_t(z)$ is defined up to time $\tau_z \wedge \tau$, where $\tau_z$ is the first time such that $g_t(z) = e^{i\theta(t)}$. Define the associated hull by
\[
K_t = \left\{ z \in \overline{\mathbb{D}} : \tau_z \le t \right\}.
\]

Furthermore, the law of the rational $\mathrm{SLE}_\kappa[\boldsymbol{\sigma}]$ Loewner chain is invariant under Möbius transformations $\mathrm{Aut}(\mathbb{D})$, up to a time change, due to the conformal covariance of the Coulomb gas correlation functions. Thus, we define rational $\mathrm{SLE}_\kappa[\boldsymbol{\sigma}]$ in a general simply connected domain $\Omega$ via a conformal map $\phi: \Omega \to \mathbb{D}$ by pulling back the flow.
\end{defn}

In definition (\ref{rational SLE}), we define the rational SLE from the perspective of the partition function. This approach helps us to understand the SLE within the framework of conformal field theory and can be naturally extended to various settings, including multiple SLE($\kappa$) systems.

\begin{example}Double divisor for chordal and radial $\mathrm{SLE}(\kappa, \rho)$, where $\xi$ denotes the growth point and $q$ is the marked boundary point (in the chordal case) or interior point (in the radial case). 

\end{example}

\begin{figure}[htbp]
\centering
\begin{minipage}[b]{0.49\textwidth}
\centering
\includegraphics[width=5.9cm]{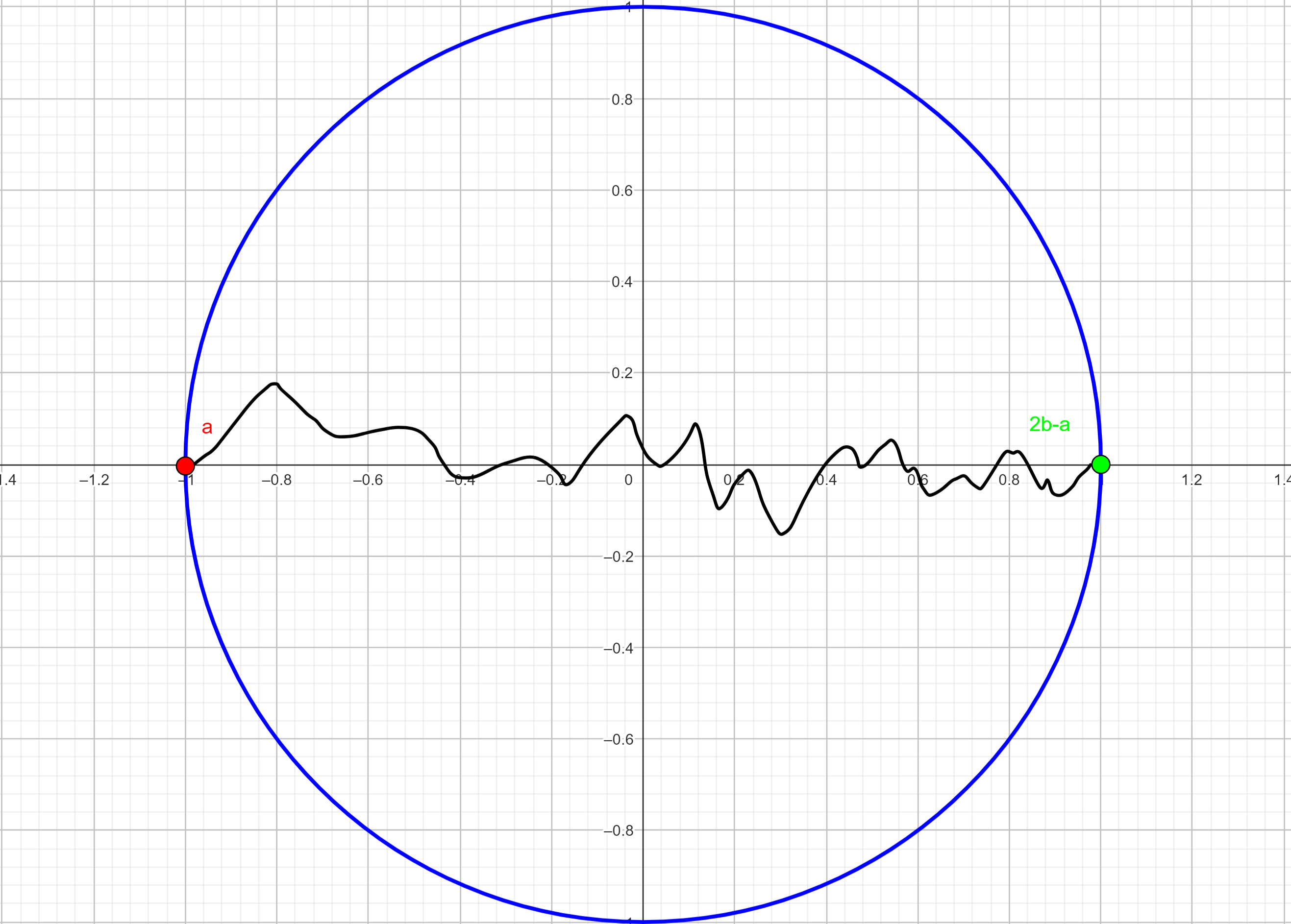}
\caption{Chordal SLE($\kappa$) $\boldsymbol{\sigma}^+= a \cdot \xi+ (2b-a) \cdot q$,
$\boldsymbol{\sigma}^-= 0$}
\end{minipage}
\begin{minipage}[b]{0.49\textwidth}
\centering
\includegraphics[width=5.9cm]{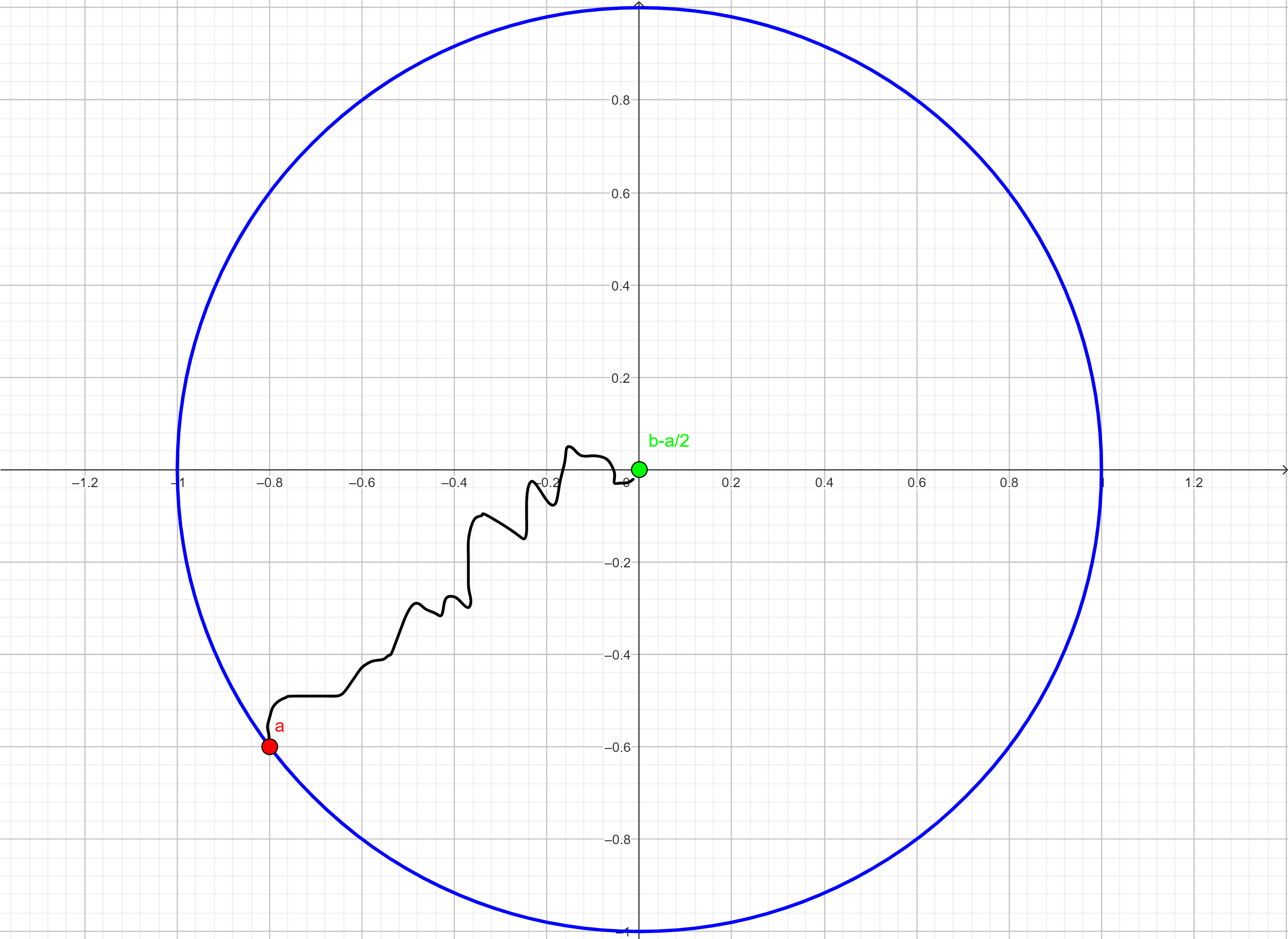}
\caption{Radial SLE($\kappa$)
$\boldsymbol{\sigma}^+= a \cdot \xi+ (b-a) \cdot q$,
$\boldsymbol{\sigma}^-= b \cdot q$}
\end{minipage}

\end{figure}

In addition to the aforementioned definition, another widely used equivalent is known as SLE($\kappa$,$\rho$). We prove the equivalence between rational $SLE_{\kappa}[\boldsymbol{\sigma^+},\boldsymbol{\sigma^-}]$ and SLE($\kappa$,$\rho$) in the following theorem.

\begin{defn}[Radial SLE($\kappa,\rho$)] Let $\xi$ be the growth point on the unit circle, and let
\[
\boldsymbol{\rho} = \sum_{j=1}^{n} \rho_j \delta_{u_j} + \sigma_0 \cdot \delta_0 + \sigma_{\infty} \cdot \delta_{\infty}
\]
be a divisor on $\widehat{\mathbb{C}}$, where $\rho_j \in \mathbb{C}$, and the divisor $\boldsymbol{\rho}$ is symmetric under inversion, i.e.,
\[
\boldsymbol{\rho}(z) = \overline{\boldsymbol{\rho}\left(\frac{z}{|z|^2}\right)} \quad \text{for all } z \in \widehat{\mathbb{C}}.
\]
We say $\boldsymbol{\rho}$ satisfies the neutrality condition for $\mathrm{SLE}(\kappa,\rho)$ if
\[
\int \boldsymbol{\rho} = \kappa - 6.
\].

Define the radial $\operatorname{SLE}(\kappa, \xi, \boldsymbol{\rho})$ Loewner chain by

\begin{equation}
\partial_t g_t(z)=g_t(z)\frac{\xi(t)+g_t(z)}{\xi(t)-g_t(z)}, \quad g_0(z)=z.
\end{equation}

Let $\xi(t)=e^{i\theta(t)}$, $u_j=e^{iq_j}$ and $h_t(z)$ be the covering map of $g_t(z)$ (i.e. $h_t(z)=g_t(e^{iz})$) , then the Loewner differential equation for $h_t(z)$ is given by
\begin{equation}
\partial_t h_t(z)=\cot(\frac{h_t(z)-\theta(t)}{2}), \quad h_0(z)=z,  
\end{equation}
 the driving function $\theta(t)$ evolves as

\begin{equation}
d\theta(t)=\sqrt{\kappa}dB_t+\sum_j \rho_j \cot( \frac{\theta(t)-q_j(t)}{2}).
\end{equation}

\end{defn}
Note that although the lifts of $\theta(t)$ in universal cover are not unique, different lifts lead to the same differential equation for $h_t(z)$ by periodicity $\cot(z+k\pi)=\cot(z)$, $k \in \mathbb{Z}$.

\begin{thm}
For a symmetric double divisor  $\boldsymbol{\sigma^+}=a\cdot \xi+ \sum \sigma_j \cdot u_j$ and $\boldsymbol{\sigma^-} = \overline{\boldsymbol{\sigma}^+}|_{\Omega}$ satisfying neutrality condition ($NC_{b}$), let $\boldsymbol{\rho}=\sum_{j=1}^{m}\rho_j \cdot u_j$ where $\rho_j = (\kappa a )\sigma_j$. Then two definitions $SLE_{\kappa}[\boldsymbol{\sigma^+},\boldsymbol{\sigma^-}]$ and SLE($\kappa,\rho$) are equivalent.
\end{thm}

\begin{proof}
The equivalence in one chart can be verified by directly computing the drift term in the Loewner equation. The conformal invariance of $\mathrm{SLE}(\kappa,\rho)$ under the neutrality condition ($NC_b$), where the divisor $\boldsymbol{\rho}$ consists of real charges, is established in \cite{SW05}. Moreover, their argument extends naturally to the case where the charges $\boldsymbol{\rho}$ are complex.
\end{proof}

\subsection{Classical limit of Coulomb gas correlation}
Now, we extend our definition of Coulomb gas correlation to $\kappa=0$ by normalizing the Coulomb gas correlation.

\begin{defn}[Normalized Coulomb gas correlations for a divisor on the Riemann sphere] Let the divisor
$$
\boldsymbol{\sigma}=\sum \sigma_j \cdot z_j,
$$
where $\left\{z_j\right\}_{j=1}^n$ is a finite set of distinct points on $\widehat{\mathbb{C}}$.
The normalized Coulomb gas correlation $C[\boldsymbol{\sigma}]$ is a differential of conformal dimension $\lambda_j$ at $z_j$ by

Let $\lambda(\sigma)=\sigma^2+2\sigma  \quad(\sigma \in \mathbb{R})$.
\begin{equation}
\lambda_j=\lambda_b\left(\sigma_j\right) \equiv \sigma_j^2+2\sigma_j  ,
\end{equation}

whose value is given by

\begin{equation}
C[\boldsymbol{\sigma}] =\prod_{j<k}\left(z_j-z_k\right)^{2\sigma_j \sigma_k} ,
\end{equation}
where the product is taken over all finite $z_j$ and $z_k$.

\end{defn}

\begin{remark}
    The normalized Coulomb gas correlation can be viewed as taking the $\kappa \rightarrow 0$ limit of the divisor $\sqrt{2\kappa}\boldsymbol{\sigma}$, the Coulomb gas correlation function $C_{(b)}[\boldsymbol{\sigma}]^{\kappa}$, and conformal dimension $\kappa \lambda_j$.
\end{remark}

\begin{defn}[Neutrality condition] A divisor $\boldsymbol{\sigma}: \widehat{\mathbb{C}} \rightarrow \mathbb{R}$ satisfies the neutrality condition if
\begin{equation}
\int \boldsymbol{\sigma}=-2.
\end{equation}
\begin{thm}
Under the neutrality condition $\int\boldsymbol{\sigma}=-2$, the normalized Coulomb gas correlation differentials $C[\boldsymbol{\sigma}]$ are Möbius invariant on $\hat{\mathbb{C}}$.
\end{thm}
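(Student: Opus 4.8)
The plan is to verify directly that $C[\boldsymbol{\sigma}]$ obeys the transformation rule of a holomorphic differential of dimensions $\lambda_j$ at each $z_j$ under the Möbius group, i.e.\ that for every $w\in{\rm Aut}(\widehat{\mathbb{C}})$,
\begin{equation}
C[\boldsymbol{\sigma}]\bigl(w(z_1),\ldots,w(z_n)\bigr)=\prod_{j}w'(z_j)^{-\lambda_j}\,C[\boldsymbol{\sigma}](z_1,\ldots,z_n),
\end{equation}
which is exactly the assertion that the differential is invariant. Since every Möbius map can be normalized as $w(z)=\tfrac{az+b}{cz+d}$ with $ad-bc=1$, the two elementary facts I would lean on are
\begin{equation}
w'(z)=(cz+d)^{-2},\qquad w(z_j)-w(z_k)=\frac{z_j-z_k}{(cz_j+d)(cz_k+d)}=(z_j-z_k)\,w'(z_j)^{1/2}w'(z_k)^{1/2}.
\end{equation}

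First I would treat the case in which all $z_j$ are finite. Substituting the difference identity into the product form of $C[\boldsymbol{\sigma}]$ gives
\begin{equation}
\prod_{j<k}\bigl(w(z_j)-w(z_k)\bigr)^{2\sigma_j\sigma_k}=\prod_{j<k}(z_j-z_k)^{2\sigma_j\sigma_k}\prod_{j<k}\bigl(w'(z_j)w'(z_k)\bigr)^{\sigma_j\sigma_k}.
\end{equation}
The exponent of $w'(z_j)$ in the last product is $\sum_{k\neq j}\sigma_j\sigma_k=\sigma_j\bigl(\textstyle\sum_k\sigma_k-\sigma_j\bigr)$, and here the neutrality condition $\sum_k\sigma_k=-2$ enters decisively: it turns this into $\sigma_j(-2-\sigma_j)=-(\sigma_j^2+2\sigma_j)=-\lambda_j$. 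This is the crux of the argument --- without neutrality a residual global power of the $w'(z_j)$ would survive and the object would fail to transform covariantly. Hence the displayed transformation law holds for all finite configurations.

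The remaining, and genuinely delicate, point is the behaviour at $\infty$, since the value of a differential there is read off in the chart $z\mapsto -1/z$ rather than the identity chart, while the product is only written over finite points. I would reduce to this using that ${\rm Aut}(\widehat{\mathbb{C}})$ is generated by translations, dilations/rotations, and the inversion $\iota:z\mapsto -1/z$: translations and dilations fix $\infty$ and the computation above applies verbatim, so only $\iota$ needs separate handling, and for $\iota$ one checks the chart-value at the point sent to $\infty$ by the appropriate limit, where once again $\sum_j\sigma_j=-2$ supplies exactly the weight $\lambda$ demanded by the chart. A parallel subtlety is that the product is multivalued for non-integer $\sigma_j$, so the identity must be read on fixed single-valued branches with the branch of $w'(z_j)^{1/2}$ chosen consistently; tracking these phases is the main bookkeeping obstacle, but it is routine. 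Alternatively --- and this is the conceptually cleanest route --- the statement descends from the already-established $\kappa>0$ invariance (the theorem of \cite{KM21}) by the substitution recorded in the Remark: replacing $\boldsymbol{\sigma}$ by $\sqrt{2\kappa}\,\boldsymbol{\sigma}$ and choosing $b=-\sqrt{2\kappa}$ turns $(\mathrm{NC}_b)$ into $\sum_j\sigma_j=-2$, while $C_{(b)}[\sqrt{2\kappa}\,\boldsymbol{\sigma}]^{1/\kappa}$ equals $C[\boldsymbol{\sigma}]$ and carries dimension $\lambda_j=\kappa\lambda_j^{(b)}/\kappa$, so raising the $\kappa>0$ transformation law to the power $1/\kappa$ yields the claim (modulo the same branch bookkeeping).
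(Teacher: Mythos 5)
Your proof is correct and follows essentially the same route as the paper, whose proof of this statement is simply ``By direct computation, similar to the $\kappa>0$ case'' — your main computation (the difference identity plus the neutrality condition turning the exponent of $w'(z_j)$ into $-\lambda_j$) is exactly that computation, and your fallback via $C_{(b)}[\sqrt{2\kappa}\,\boldsymbol{\sigma}]^{1/\kappa}$ matches the paper's own Remark. The extra care you take with the chart at $\infty$ and with branches is more than the paper itself records, but it is consistent with its intended argument.
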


\begin{proof}
By direct computation, similar to the $\kappa>0$ case.
\end{proof}

\begin{defn}[Coulomb gas correlation for a double divisor in a simply connected domain]
For a double divisor $(\boldsymbol{\sigma^+},\boldsymbol{\sigma^-})$, let $\boldsymbol{\sigma}=\boldsymbol{\sigma^+}+ \boldsymbol{\sigma}_*^{-}$ be its corresponding divisor in the Schottky double $S$, we define the Coulomb gas correlation of the double divisor $(\boldsymbol{\sigma^+},\boldsymbol{\sigma^-})$ by:
\end{defn}
\begin{equation}
 C_{\Omega}\left[\boldsymbol{\sigma}^{+}, \boldsymbol{\sigma}^{-}\right](\boldsymbol{z}):=C_S[\boldsymbol{\sigma}] .  
\end{equation}

We often omit the subscripts $\Omega, S$ to simplify the notations. 

If the double divisor $(\boldsymbol{\sigma}^{+},\boldsymbol{\sigma}^{-})$ satisfies the neutrality condition, then $C\left[\boldsymbol{\sigma}^{+}, \boldsymbol{\sigma^{-}}\right]$is a well-defined differential with conformal dimensions $\left[\lambda_j^{+}, \lambda_j^{-}\right]$at $z_j$.

\begin{equation}
\lambda^{+}_{j}=\lambda\left(\sigma^{+}_j\right) \equiv \frac{(\sigma^{+}_j)^2}{2}+2\sigma_{j}^{+} ,\quad
\lambda^{-}_{j}=\lambda \left(\sigma_j\right) \equiv \frac{(\sigma^{-}_{j})^2}{2}+2\sigma^{-}_j .  
\end{equation}

By conformal invariance of the Coulomb gas correlation differential $C_S[\boldsymbol{\sigma}]$ on the Riemann sphere under M{\"o}bius transformation, the Coulomb gas correlation differential $C_{\Omega}\left[\boldsymbol{\sigma}^{+}, \boldsymbol{\sigma}^{-}\right](\boldsymbol{z})$ is invariant under $Aut(\Omega)$.
\end{defn}

\begin{defn}[Neutrality condition]
 A double divisor $\left(\boldsymbol{\sigma}^{+}, \boldsymbol{\sigma}^{-}\right)$ satisfies the neutrality condition  if 
\begin{equation}
\int\boldsymbol{\sigma}=\int \boldsymbol{\sigma}^{+}+ \int \boldsymbol{\sigma}^{-}=-2.
\end{equation}   
\end{defn}

\begin{thm}
 Under the neutrality condition $\int \boldsymbol{\sigma}^{+}+ \int \boldsymbol{\sigma}^{-}=-2$ , the value of the differential $C_{\mathbb{H}}\left[\boldsymbol{\sigma}^{+}, \boldsymbol{\sigma}^{-}\right]$ in the identity chart of $\mathbb{H}$ (and the chart $z \mapsto-1 / z$ at infinity) is given by
 
 \begin{equation}
C_{\mathbb{H}}\left[\boldsymbol{\sigma}^{+}, \boldsymbol{\sigma}^{-}\right]=\prod_{j<k}\left(z_j-z_k\right)^{2\sigma_j^{+} \sigma_k^{+}}\left(\bar{z}_j-\bar{z}_k\right)^{2\sigma_j^{-} \sigma_k^{-}} \prod_{j, k}\left(z_j-\bar{z}_k\right)^{2\sigma_j^{+} \sigma_i^{-}},
\end{equation}

where the products are taken over finite $z_j$ and $z_k$.
\end{thm}

\begin{thm}
Under the neutrality condition $\int \boldsymbol{\sigma}^{+}+ \int \boldsymbol{\sigma}^{-}=-2$, the value of the differential $C_{\mathbb{D}}\left[\boldsymbol{\sigma}^{+}, \boldsymbol{\sigma}^{-}\right]$ in the identity chart of $\mathbb{D}$ is given by
\begin{equation}
C_{\mathbb{D}}\left[\boldsymbol{\sigma}^{+}, \boldsymbol{\sigma}^{-}\right]=\prod_{j<k}\left(z_j-z_k\right)^{2\sigma_j^{+} \sigma_k^{+}}\left(\bar{z}_j-\bar{z}_k\right)^{2\sigma_j^{-} \sigma_k^{-}} \prod_{j, k}\left(1-z_j \bar{z}_k\right)^{2\sigma_j^{+} \sigma_k^{-}} ,
\end{equation}
where the product is taken over finite $z_j$ and $z_k$.
\end{thm}

\subsection{Rational $SLE_{0}[\boldsymbol{\sigma}]$}
\begin{defn}[Rational $\mathrm{SLE}_0$] \label{rational SLE0}
In the unit disk $\mathbb{D}$, let $e^{i\theta} \in \partial \mathbb{D}$ be the growth point, and let $u_1, u_2, \dots, u_m \in \overline{\mathbb{D}}$ be marked points. A symmetric double divisor $(\boldsymbol{\sigma}^+, \boldsymbol{\sigma}^-)$ assigns a charge distribution on $e^{i\theta}$ and $\{u_1, \dots, u_k\}$, where
\[
\boldsymbol{\sigma}^+ = a \cdot e^{i\theta} + \sum_{j=1}^{k} \sigma_j \cdot u_j,
\quad \text{and} \quad
\boldsymbol{\sigma}^- = \overline{\boldsymbol{\sigma}^+}|_{\mathbb{D}},
\]
and the total charge satisfies the neutrality condition $\int\boldsymbol{\sigma}=-2$.

We define the rational $\mathrm{SLE}_0[\boldsymbol{\sigma}]$ Loewner chain as a normalized conformal map $g_t(z)$ with initial conditions $g_0(z) = z$ and $g_t'(0) = e^{-t}$. The evolution of $g_t$ is governed by the Loewner differential equation:
\[
\partial_t g_t(z) = g_t(z) \frac{e^{i\theta(t)} + g_t(z)}{e^{i\theta(t)} - g_t(z)}, \quad g_0(z) = z.
\]

In the angular coordinate, let $h_t(z)$ be the covering map of $g_t(z)$ defined by $e^{i h_t(z)} = g_t(e^{i z})$. Then $h_t(z)$ evolves according to
\[
\partial_t h_t(z) = \cot\left( \frac{h_t(z) - \theta(t)}{2} \right), \quad h_0(z) = z.
\]

The driving function $\theta(t)$ evolves according to
\[
d\theta(t) = \frac{\partial \log \mathcal{Z}(\boldsymbol{\theta})}{\partial \theta} \, dt.
\]

where the Coulomb gas partition function is
\begin{equation} 
\mathcal{Z}(\boldsymbol{\theta}) = \prod_{j < k} \sin\left( \frac{\theta_j - \theta_k}{2} \right)^{\sigma_j \sigma_k} 
\cdot \prod_j e^{\frac{i}{2} \sigma_j (\sigma_0 - \sigma_\infty)\theta_j}.
\end{equation}

The flow $g_t$ is well-defined up to the first time $\tau$ at which $w(t) = g_t(w)$ for some $w$ in the support of $\boldsymbol{\sigma}$. For each $z \in \overline{\mathbb{D}}$, the process $t \mapsto g_t(z)$ is well-defined up to $\tau_z \wedge \tau$, where $\tau_z$ is the first time such that $g_t(z) = e^{i\theta(t)}$. Denote
\[
K_t = \left\{ z \in \overline{\mathbb{D}} : \tau_z \leq t \right\}
\]
as the hull associated with the Loewner chain.

Furthermore, the rational $\mathrm{SLE}_0$ Loewner chain is invariant under Möbius transformations in $\mathrm{Aut}(\mathbb{D})$ (up to a time reparameterization), due to the conformal invariance of the Coulomb gas correlation. Consequently, rational $\mathrm{SLE}_0[\boldsymbol{\sigma}]$ in any simply connected domain $\Omega$ is defined by pulling back via a conformal map $\phi: \Omega \rightarrow \mathbb{D}$.
\end{defn}

In definition (\ref{rational SLE0}), we introduce the definition of $SLE_{0}[\beta]$ as a natural extension of $SLE_{\kappa}[\beta]$ to $\kappa=0$. The main ingredient in our definition is the normalized Coulomb gas as the partition function.

Now, we introduce another widely used definition ${\rm SLE}(0,\boldsymbol{\rho})$ which is a natural extension of ${\rm SLE}(\kappa,\boldsymbol{\rho})$ to $\kappa=0$.
We prove the equivalence between rational $SLE_0[\boldsymbol{\sigma}]$ and ${\rm SLE}(0,\boldsymbol{\rho})$ in the end.

\begin{defn}[${\rm SLE}(0,\boldsymbol{\rho})$]
 Let $w$ be the growth point on $\partial \mathbb{D}$ and $\boldsymbol{\rho}=\sum_{i=1}^{n} \rho_j \delta_{u_j} + \sigma_0 \cdot 0 + \sigma_{\infty} \cdot \infty$ be a divisor on $\widehat{\mathbb{C}}$ that is symmetric under involution, i.e. $\boldsymbol{\rho}(z)=\boldsymbol{\rho}(\frac{z}{|z|^2})$ for all $z$ and $\int \boldsymbol{\rho}= -6$. Define the radial $\operatorname{SLE}(0,w, \boldsymbol{\rho})$ Loewner chain by
\begin{equation}
\partial_t g_t(z)=g_t(z)\frac{w(t)+g_t(z)}{w(t)-g_t(z)}, \quad g_0(z)=z,
\end{equation}
where the driving function $w(t)$ evolves as

\begin{equation}
\dot{w}(t)=w(t)\sum_j \rho_j\frac{g_t(u_j)+w(t)}{g_t(u_j)-w(t)}, \quad z(0)=z_0.
\end{equation}

In the angular coordinate, $w(t)=e^{i\theta(t)}$ and $u_j(t)= e^{i q_j(t)}$, let $h_t(z)$ be the covering conformal map of $g_t(z)$ (i.e. $e^{ih_t(z)}=g_t(e^{iz}))$.

Then the Loewner differential equation for $h_t(z)$ is 

\begin{equation}
\partial_t h_t(z)=\cot(\frac{h_t(z)-\theta_t}{2}), \quad h_0(z)=z,     z\in{\overline{\mathbb{H}}},
\end{equation}

where the driving function $\theta_t$ evolves as
\begin{equation}
\dot{\theta}_t=\sum_j \rho_j \cot( \frac{\theta_t-q_j(t)}{2}), \quad x(0)=x_0.
\end{equation}

\end{defn}

\begin{thm}
  For an involution symmetric divisor $\boldsymbol{\sigma}= w + \sum_{j=1}^{m}\sigma_j \cdot z_j$ satisfying neutrality condition $\int \boldsymbol{\sigma}= -2$, let $\boldsymbol{\rho}=2\sum_{j=1}^{m}\sigma_j \cdot z_j$, then  $\int \boldsymbol{\rho}= -6$ and two definitions $SLE_{0}[\sigma]$ and SLE($0,\rho$) are equivalent.  
\end{thm}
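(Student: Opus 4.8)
The plan is to reduce the claimed equivalence to a single computation: the two Loewner equations are already literally the same, so only the driving functions need to be matched. Both $SLE_0[\boldsymbol\sigma]$ (Definition \ref{rational SLE0}) and ${\rm SLE}(0,\boldsymbol\rho)$ are radial Loewner chains in $\D$ governed by $\partial_t g_t(z)=g_t(z)\frac{w(t)+g_t(z)}{w(t)-g_t(z)}$, with covering-map form $\partial_t h_t(z)=\cot\!\big(\frac{h_t(z)-\theta(t)}{2}\big)$. Thus it suffices to show that the drift $\frac{\partial}{\partial\theta}\log\mathcal{Z}(\boldsymbol\theta)$ defining the former coincides with $\sum_j\rho_j\cot\!\big(\frac{\theta-q_j}{2}\big)$ defining the latter, once we set $\rho_j=2\sigma_j$.

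First I would substitute the explicit product formula for the partition function. By Theorem \ref{trigonometric Coulomb gas kappa=0}, writing the growth point as $w=e^{i\theta}$ and the marked points as $z_j=e^{iq_j}$, the normalized angular Coulomb gas correlation is
\[
\mathcal{Z}=C_{angular}[\boldsymbol\sigma^{+},\boldsymbol\sigma^{-}]=2^{\sum\sigma_j\sigma_k}\prod_{j<k}\sin\!\Big(\tfrac{\theta_j-\theta_k}{2}\Big)^{2\sigma_j\sigma_k}\prod_{j}e^{\,i\sigma_j(\sigma_0-\sigma_\infty)\theta_j}.
\]
Taking logarithms gives $\log\mathcal{Z}=(\text{const})+\sum_{j<k}2\sigma_j\sigma_k\log\sin\!\big(\tfrac{\theta_j-\theta_k}{2}\big)+\sum_j i\sigma_j(\sigma_0-\sigma_\infty)\theta_j$. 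Differentiating in the growth angle and using the identity $\frac{d}{d\theta}\log\sin\!\big(\tfrac{\theta-q}{2}\big)=\tfrac12\cot\!\big(\tfrac{\theta-q}{2}\big)$ collapses each sine factor into a cotangent term, producing a drift of the form $\sum_j c_j\cot\!\big(\tfrac{\theta-q_j}{2}\big)$ together with a contribution $i(\sigma_0-\sigma_\infty)$ coming from the phase factor attached to the interior charge at the origin and its mirror at infinity.

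Next I would pin down the constants. Reality of the driving function forces the origin contribution to vanish, and this is automatic because the divisor is involution symmetric, so the charge $\sigma_0$ at $0$ equals the charge $\sigma_\infty$ at its mirror point $\infty$. The remaining coefficients $c_j$ are read off from the normalized exponent $2\sigma_j\sigma_k$ together with the growth charge, and comparison with the ${\rm SLE}(0,\boldsymbol\rho)$ drift yields exactly $\rho_j=2\sigma_j$. For the neutrality bookkeeping, the condition $\int\boldsymbol\sigma=-2$ together with $\boldsymbol\rho=2\sum_j\sigma_j\cdot z_j$ gives $\int\boldsymbol\rho=2\sum_j\sigma_j=-6$, which matches the requirement $\int\boldsymbol\rho=-6$ in the definition of ${\rm SLE}(0,\boldsymbol\rho)$. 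As a consistency check, this is precisely the $\kappa\to0$ specialization of the $SLE(\kappa,\rho)$ relation $\rho_j=\kappa a\,\sigma_j$, which tends to $2\sigma_j$ in the normalized variables.

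The main obstacle is the charge bookkeeping rather than any analytic difficulty: one must track the involution images of each marked point, the interior charges at $0$ and $\infty$, and the precise normalization of the angular Coulomb gas exponent, so that the cotangent coefficients come out to exactly $2\sigma_j$ and the spurious imaginary term cancels. Once these are organized, the identification of the two drifts, and hence of the two Loewner chains up to the time change already built into the definitions, is immediate.
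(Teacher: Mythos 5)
Your approach is the same as the paper's: the paper's entire proof is the single sentence that ``by direct computation, the driving functions for the two Loewner chains are the same,'' and your proposal is precisely that computation carried out -- substitute the angular Coulomb gas formula, take $\partial_\theta\log$, convert each $\log\sin$ into a cotangent, and match coefficients. The strategy is correct and this is the intended argument.

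One bookkeeping point you should resolve before writing this up, because your two conclusions currently pull in opposite directions. Differentiating the term $2\sigma_w\sigma_j\log\sin\bigl(\tfrac{\theta-q_j}{2}\bigr)$ gives the cotangent coefficient $\sigma_w\sigma_j$, where $\sigma_w$ is the charge at the growth point; to obtain $\rho_j=2\sigma_j$ you therefore need $\sigma_w=2$, which is what Definition \ref{rational SLE0} assigns ($\boldsymbol{\sigma}^+=2\cdot e^{i\theta}+\sum\sigma_j\cdot u_j$). But your neutrality count $\int\boldsymbol{\rho}=2\sum_j\sigma_j=-6$ uses $\sum_j\sigma_j=-3$, which forces $\sigma_w=1$ in $\int\boldsymbol{\sigma}=\sigma_w+\sum_j\sigma_j=-2$; with $\sigma_w=2$ one instead gets $\sum_j\sigma_j=-4$ and $\int\boldsymbol{\rho}=-8$. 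This tension originates in the theorem statement itself (it writes the divisor as $w+\sum\sigma_j\cdot z_j$ while the definition it invokes uses $2\cdot e^{i\theta}$), so it is not an error you introduced, but a complete proof must fix one convention for $\sigma_w$ and state which of the two numerical claims ($\rho_j=2\sigma_j$ versus $\int\boldsymbol{\rho}=-6$) is adjusted accordingly. The remainder of your argument -- the vanishing of the imaginary phase term by involution symmetry ($\sigma_0=\sigma_\infty$) and the identity of the two Loewner ODEs -- is fine.
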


\begin{proof}
The equivalence in one chart can be verified by directly computing the drift term in the Loewner equation. The conformal invariance of $\mathrm{SLE}(\kappa,\rho)$ under the neutrality condition ($NC_b$), where the divisor $\boldsymbol{\rho}$ consists of real charges, is established in \cite{SW05}. Moreover, their argument extends naturally to the case where the charges $\boldsymbol{\rho}$ are complex.
\end{proof}

\section{Commutation relations and conformal invariance} \label{Commutation relations and conformal invariance}

\subsection{Transformation of Loewner flow under coordinate change}
\label{transformation of Loewner under coordinate change}

In this section we show that the Loewner chain of a curve, when viewed in a different coordinate chart, is a time reparametrization of the Loewner chain in the standard coordinate chart but with different initial conditions. This result serves as a preliminary step towards understanding the local commutation relations and the conformal invariance of multiple SLE($\kappa$) systems.

We are particularly interested the following cases, which 
\begin{itemize}
    \item For local commutation relations, we study the Loewner chain for $\gamma_j$ in the coordinates induced by the conformal maps $h_{k, t}$ for $k \neq j$, 
    \item For conformal invariance with respect to ${\rm Aut} (\mathbb{D})$, we study the Loewner chain for each $\gamma_j$ in the coordinate induced by Möbius transformations $\tau \in {\rm}\mathbb{D}$.

\end{itemize}

Let us briefly review how Loewner chains transform under coordinate changes.

\begin{thm}[Deterministic Loewner chain coordinate change]
\label{Loewner coordinate change in angular coordinate}
In the angular coordinate, we assume that $\gamma(0)=\theta \in \mathbb{R}$, $\gamma(t)$ is generated by the Loewner chain:

\begin{equation}
\quad \partial_t h_t(z)=\cot(\frac{h_t(z )-W_t}{2}), \dot{\theta}_t=b\left(W_t ; h_t\left(W_1\right), \ldots, h_t\left(W_n\right)\right) \quad h_0(z)=z, W_0=\theta
\end{equation}

where $\dot{W}_t=b\left(W_t , g_t\left(z_1\right), \ldots, g_t\left(W_m\right)\right)$ for some $b: \mathbb{R} \times \mathbb{C}^n \rightarrow \mathbb{R}$. 

Under a conformal transformation $\tau: \mathcal{N} \rightarrow \mathbb{H}$, defined in a neighborhood $\mathcal{N}$ of $\theta$ such that $\gamma[0, T] \subset \mathcal{N}$ and such that $\tau$ sends $\partial \mathcal{N} \cap \mathbb{R}$ to $\mathbb{R}$, the Loewner chain of the image curve $\tilde{\gamma}(t)=\tau \circ \gamma(t)$ is as follows for $0 \leq t \leq T$. Let $\tilde{h}_t$ denote the unique conformal transformation associated to $\tilde{\gamma}[0, t]$ and $\Psi_t=\tilde{h}_t \circ \tau \circ h_t^{-1}$, then it can be computed that $\tilde{h}_t(z)$ evolves as

\begin{equation}
\partial_t \tilde{h}_t(z)=\cot(\frac{\tilde{h}_t(z)-\tilde{W}_t}{2}) \Psi_t^{\prime}\left(W_t\right)^2, \quad \tilde{h}_0(z)=z, \widetilde{W}_0=\tau(W_0),
\end{equation}

where $\widetilde{W}_t=\widetilde{h}_t \circ \tau \circ \gamma(t)=\widetilde{h}_t \circ \tau \circ h_t^{-1}\left(W_t\right)=\Psi_t\left(W_t\right)$ is the driving function for the new flow. Note that $\widetilde{W}_0=\tau\left(W_0\right)$. The equation for $\partial_t \widetilde{h}_t(z)$ shows that $\widetilde{\gamma}$ is parameterized so that its unit disk capacity satisfies $\operatorname{hcap}(\widetilde{\gamma}[0, t])=2 \sigma(t)$, where

\begin{equation}
\sigma(t):=\int_0^t \Psi_s^{\prime}\left(W_s\right)^2 d s .
\end{equation}
\end{thm}

\begin{thm}[Stochastic Loewner chain coordinate change]
\label{random coordinate change}
If the driving function $W_t$ is given by:
\begin{equation}
dW_t= \sqrt{\kappa}dB_t+ b(W_t;\Psi_t(W_1),\ldots,\Psi_t(W_n))
\end{equation}
Let $\Psi_t$ defined as above and
$$\widetilde{W}_t=\Psi_t\left(W_t\right)$$ then, using the identity 
$\left(\partial_t \Psi_t\right)\left(W_t\right)= -3\Psi_{t}''(W_t)$, see proposition (4.43) in \cite{Law05}
\begin{equation}
\begin{aligned}
d \widetilde{W}_t= &\left(\partial_t \Psi_t\right)\left(W_t\right) d t+\Psi_t^{\prime}\left(W_t\right) d W_t+\frac{\kappa}{2} \Psi_t^{\prime \prime}\left(W_t\right) d t \\
=&\sqrt{\kappa}h_{t}'(W_t)dB_t+ \Psi_{t}'(W_t)b(W_t;\Psi_t(W_1),\ldots,\Psi_t(W_n))+\frac{\kappa-6}{2} \Psi_t^{\prime \prime}\left(W_t\right) d t 
\end{aligned}
\end{equation}

By changing time from $t$ to $s(t)=\int|\Psi_t^{\prime}\left(W_t\right)|^2 dt$, 
we have

\begin{equation}
d \widetilde{W}_s= \sqrt{\kappa}dB_s+ \Psi_{t(s)}^{\prime}(W_s)^{-1} b(W_{s};\Psi_{t(s)}(W_1),\ldots,\Psi_{t(s)}(W_n))ds+\frac{\kappa-6}{2} \frac{\Psi_{t(s)}^{\prime \prime}\left(W_{s}\right)}{\Psi_{t(s)}^{\prime }(W_s)^2} d s
\end{equation}
\end{thm}

\begin{remark}\label{drift term pre schwarz form}
By theorem (\ref{random coordinate change}), for conformal transformation $\tau$, the drift term in the marginal law is a pre-schwarz form, i.e.
$b=\tau^{\prime} \widetilde{b} \circ \tau+ \frac{6-\kappa}{2}\left(\log \tau^{\prime}\right)^{\prime}$.
\end{remark}

\begin{cor}\label{gamma 1 gamma2 capacity change}
Let $\gamma$, $\tilde{\gamma}$ be two hulls starting at $e^{ix} \in \partial \mathbb{D}$ and $e^{iy} \in \partial \mathbb{D}$ with capacity $\epsilon$ and $c \epsilon$ , let $g_{\epsilon}$ be the normalized map removing $\gamma$ and $\tilde{\epsilon}= \operatorname{hcap}(g_{\epsilon}\circ \gamma(t))$, then we have:

\begin{equation}
\tilde{\varepsilon}=
c \varepsilon\left(1-\frac{\varepsilon}{\sin^2(\frac{x-y}{2})}\right)+o\left(\varepsilon^2\right)
\end{equation}

\end{cor}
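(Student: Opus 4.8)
The plan is to read the result off the capacity coordinate-change formula of Theorem~\ref{Loewner coordinate change in angular coordinate} and then extract only the first two orders of the relevant boundary derivative. I work in the angular coordinate, writing $H_\epsilon$ for the covering map of $g_\epsilon$ (so $e^{iH_\epsilon(z)}=g_\epsilon(e^{iz})$), and I regard $\tilde\gamma$ as generated by its own angular Loewner flow $h_s$, $s\in[0,c\epsilon]$, with base point $W_0=y$. Since $\gamma$ is anchored at $e^{ix}$ while $\tilde\gamma$ shrinks to $e^{iy}\neq e^{ix}$, the map $g_\epsilon$ is conformal in a fixed neighbourhood of $e^{iy}$ for all small $\epsilon$, so Theorem~\ref{Loewner coordinate change in angular coordinate} applies with $\tau=H_\epsilon$ and yields
\[
\tilde\epsilon=\mathrm{hcap}\bigl(g_\epsilon(\tilde\gamma)\bigr)=\sigma(c\epsilon)=\int_0^{c\epsilon}\Psi_s'(W_s)^2\,ds,\qquad \Psi_s=\tilde h_s\circ H_\epsilon\circ h_s^{-1}.
\]

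Next I expand this integral in its small upper limit $c\epsilon$. The leading contribution is $c\epsilon\,\Psi_0'(W_0)^2$, and since $\Psi_0=H_\epsilon$ and $W_0=y$ this equals $c\epsilon\,H_\epsilon'(y)^2$. The first correction is $\tfrac{(c\epsilon)^2}{2}\,\partial_s\bigl[\Psi_s'(W_s)^2\bigr]\big|_{s=0}$. The key observation is that this $s$-derivative vanishes identically when $\epsilon=0$: there $g_\epsilon=\mathrm{id}$, so $\tilde h_s=h_s$ and $\Psi_s\equiv\mathrm{id}$, whence $\Psi_s'(W_s)\equiv 1$. By smoothness in $\epsilon$ the bracket is therefore $O(\epsilon)$, so the correction term is $O(\epsilon^3)=o(\epsilon^2)$ and may be discarded. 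Thus $\tilde\epsilon=c\epsilon\,H_\epsilon'(y)^2+o(\epsilon^2)$, and it remains only to compute $H_\epsilon'(y)$ to first order in $\epsilon$.

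Finally I obtain $H_\epsilon'(y)$ from the angular Loewner equation $\partial_t h_t(z)=\cot\!\bigl(\tfrac{h_t(z)-\theta_t}{2}\bigr)$ with capacity $t=\epsilon$ and $\theta_0=x$. Differentiating gives $\partial_t h_t'(z)=-\tfrac12\csc^2\!\bigl(\tfrac{h_t(z)-\theta_t}{2}\bigr)h_t'(z)$, and since the drift of $\theta_t$ over time $\epsilon$ enters only at order $\epsilon^2$, integrating from $t=0$ produces
\[
H_\epsilon'(y)=1-\frac{\epsilon}{2\sin^2\!\bigl(\tfrac{x-y}{2}\bigr)}+O(\epsilon^2),\qquad H_\epsilon'(y)^2=1-\frac{\epsilon}{\sin^2\!\bigl(\tfrac{x-y}{2}\bigr)}+O(\epsilon^2).
\]
Substituting into $\tilde\epsilon=c\epsilon\,H_\epsilon'(y)^2+o(\epsilon^2)$ and noting that the $O(\epsilon^2)$ error inside the square contributes $c\epsilon\cdot O(\epsilon^2)=o(\epsilon^2)$ yields the claimed formula.

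The one genuinely delicate point—the rest being routine Taylor expansion—is the justification that the order-$s^2$ contribution to $\sigma(c\epsilon)$ is negligible at order $\epsilon^2$. Naively scaling capacity by $H_\epsilon'(y)^2$ would seem to leave an $O((c\epsilon)^2)$ ambiguity of exactly the order we are after; the resolution is that this ambiguity is governed by $\partial_s[\Psi_s'(W_s)^2]\big|_{s=0}$, which carries an extra factor of $\epsilon$ because $\Psi_s$ degenerates to the identity as $\epsilon\to 0$. I would make this quantitative via a bound on this derivative that is uniform in small $\epsilon$ and in $s\le c\epsilon$, using that $H_\epsilon$ and its derivatives converge to those of the identity at rate $O(\epsilon)$ on the fixed neighbourhood of $e^{iy}$ where all maps involved remain conformal.
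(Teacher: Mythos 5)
Your proposal is correct and follows essentially the same route as the paper: compute $h_\epsilon'(y)=1-\tfrac{\epsilon}{2\sin^2((x-y)/2)}+o(\epsilon)$ from the differentiated angular Loewner equation, then scale the capacity by $h_\epsilon'(y)^2$ up to an $o(\epsilon^2)$ error. Your justification of that error term via the integral $\sigma(c\epsilon)=\int_0^{c\epsilon}\Psi_s'(W_s)^2\,ds$ and the degeneration of $\Psi_s$ to the identity as $\epsilon\to 0$ is a more careful version of the step the paper states without comment.
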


\begin{proof}
Locally, we can define $h_t(z)=-i\log(g_t(e^{iz}))$.
Then from the Loewner equation, $\partial_t h_t^{\prime}(w)=- \frac{h_t^{\prime}(w)}{2\sin^2\left(\frac{h_t(w)-x_t}{2}\right)}$, which implies $h_{\varepsilon}^{\prime}(y)=1-\frac{\varepsilon}{2\sin^2(\frac{y-x}{2})}+o(\varepsilon)$. By applying conformal transformation $h_{\epsilon}(y)$, we get:
$$
\tilde{\varepsilon}=c\epsilon( h'_{\epsilon}(y)^2+ o(\epsilon)) =c \varepsilon\left(1-\frac{\varepsilon}{\sin^2(\frac{x-y}{2})}\right)+o\left(\varepsilon^2\right)
$$

\end{proof}

\subsection{Local commutation relation and null vector equations in $\kappa=0$ case}
\label{commutation when kappa=0 section}

When $\kappa=0$, we also derive the commutation relations for multiple radial SLE(0) systems which is self-consistent without relying on the results for $\kappa > 0$

\begin{thm} \label{commutation for kappa=0}
Let $\gamma_1,\dotsc,\gamma_{n}$ be simple curves that are generated by Loewner flows and $\mathcal{U}(\boldsymbol{\theta}): \mathfrak{X}^n \rightarrow \mathbb{R}$ is $C^2$ smooth. We define the differential operator 
$\mathcal{M}_j =U_j\partial_j +\sum_{k \neq j} \cot(\frac{\theta_k-\theta_j}{2})\partial_k$. If the curves locally geometrically commute, then the vector fields $\mathcal{M}_j$ satisfy the commutation relations.

\begin{equation} \label{commutation relation for generators kappa=0}
[\mathcal{M}_i,\mathcal{M}_j]= \frac{1}{\sin^2(\frac{\theta_i-\theta_j}{2})}(\mathcal{M}_j-\mathcal{M}_i)
\end{equation}

Moreover, under the additional assumption that $\partial_j U_k= \partial_{k} U_j$ for all $j,k$, then there exists a smooth function $\mathcal{U}(\boldsymbol{\theta})$ such that $U_j =\partial_j \mathcal{U}$.
The commutation relations hold for $\mathcal{L}_j$ if and only if there exists a common constant $h$ such that
\begin{equation} \label{null vector equation for kappa 0}
\frac{1}{2}U^2_j+ \sum_{k\neq j}\cot(\frac{\theta_k-\theta_j}{2})U_k-\sum_{k\neq j}\frac{3}{2\sin^2{(\frac{\theta_j-\theta_k}{2}})}= h
\end{equation} 
\end{thm}

\begin{proof}[Proof of theorem \ref{commutation for kappa=0}]

To study the commutation relations, we focus on growing two hulls from growth points $x,y$ and relabeling other points as marked points $\boldsymbol{z}$. 

The definition of commutation implies that for sufficiently small $s, t>0$ the normalizing map for the hull $\gamma_1[0, t] \cup \gamma_2[0, s]$ is the composition of the Loewner maps for each individual hull $\gamma_1[0, t]$ or $\gamma_2[0, s]$, when applied in either order. In removing $\gamma_1[0, t]$ we are considering the coordinate change $h_{1, t}$, which leads to
\begin{equation} \label{cap change}
\sigma_{1, 2}^{t, s}=\operatorname{hcap}\left(\tilde{\gamma}_2[0, s]\right)=\int_0^s\left(f_{1, 2}^{t, u}\right)^{\prime}\left(y(u)\right)^2 d u 
\end{equation}
where $y(u)$ is the position of $y$ at time $u$.
In this case $f_{1, 2}^{t, s}=\tilde{h}_{2, s} \circ h_{1, t} \circ h_{2, s}^{-1}$. With this notation in hand, commutation implies that
$$
h_{2, \sigma_{1, 2}^{t, s}} \circ h_{1, t}=h_{1, \sigma_{2, 1}^{s, t}} \circ h_{2, s}.
$$
On the left-hand side the driving function first evolves according to the dynamics of $\mathcal{L}_x$ for $t$ units of time and then $\mathcal{L}_y$ for $\sigma_{1, 2}^{t, s}$ units of time. The right-hand side is analogous. These Loewner maps can be the same only if the driving function move to the same position when the maps are applied in either order. In our setup, the motion of the driving functions is fully determined by the motion of the points in $\theta$, so a necessary condition for these maps to be the same is that
\begin{equation} \label{cm flow}
e^{\sigma_{1, 2}^{t, s} \mathcal{M}_y} e^{t \mathcal{M}_x}=e^{\sigma_{2, 1}^{s, t} \mathcal{M}_x} e^{s \mathcal{M}_y}
\end{equation}
where $t \mapsto e^{t \mathcal{M}}$ denotes the flow map corresponding to the dynamics $\mathcal{M}$. The commutation relation (\ref{commutation for kappa=0}), as we now explain, is a straightforward consequence of this identity. From (\ref{cap change}) we obtain
$$
\begin{aligned}
\sigma_{1, 2}^{t, s}=s\left(\left(f_{1, 2}^{t, 0}\right)^{\prime}\left(y\right)+O(s)\right)=s\left(h_{1, t}^{\prime}\left(x\right)+O(s)\right) & =s\left(1-\frac{t}{\sin^2(\frac{x-y} {2})}+o(t)+O(s)\right) \\
& =s-\frac{st}{\sin^2(\frac{\theta_{x}-\theta_y} {2})}+o(s t)+O\left(s^2\right)
\end{aligned}
$$
where the constants in the error terms may depend on $x$ and $y$. Now use the above to expand (\ref{cm flow}) in powers of $s$ and $t$, and compare coefficients of $st$, we obtain the desired commutation relations for generators (\ref{commutation relation for generators kappa=0})
\begin{equation} 
[\mathcal{M}_x,\mathcal{M}_y]= \frac{1}{\sin^2(\frac{x-y}{2})}(\mathcal{M}_y-\mathcal{M}_x)
\end{equation}

Expanding the infinitesimal commutation relation:
\begin{equation}\label{expanding commutation kappa =0}
\begin{aligned}
&[\mathcal{M}_x, \mathcal{M}_y]+\frac{1}{\sin^2(\frac{y-x}{2})}(\mathcal{M}_x-\mathcal{M}_y)=
\\ 
& \left[\cot(\frac{y-x}{2})\frac{\partial U_{ x}}{\partial x}+\sum_i \cot(\frac{y-z_i}{2})\frac{\partial U_{i}}{\partial x}-\frac{1}{2}\frac{\partial (U_y)^2}{\partial x}+\frac{U_x}{2\sin^2(\frac{y-x}{2})}-\frac{3\cos(\frac{x-y}{2})}{2\sin^3(\frac{x-y}{2})}\right] \partial_x \\
-&\left[\cot(\frac{x-y}{2})\frac{\partial U_{ y}}{\partial y}+\sum_i \cot(\frac{x-z_i}{2})\frac{\partial U_{i}}{\partial y}-\frac{1}{2}\frac{\partial (U_x)^2}{\partial y}+\frac{U_y}{2\sin^2(\frac{y-x}{2})}-\frac{3\cos(\frac{y-x}{2})}{2\sin^3(\frac{y-x}{2})} \right] \partial_y
\end{aligned}
\end{equation}
The right hand side of (\ref{expanding commutation kappa =0}) equal to $0$ implies the null vector equations
\begin{equation}
\left\{\begin{array}{l}
\frac{1}{2} U_{x}^2 +\sum_i \cot(\frac{z_i-x}{2}) U_i+\cot(\frac{y-x}{2}) U_y+ \left(-\frac{3}{2\sin^2(\frac{y-x}{2})}+h_1(x, z)\right) =0 \\
\frac{1}{2}U_{y}^2 +\sum_i \cot(\frac{z_i-y}{2}) U_i+\cot(\frac{x-y}{2}) U_x+\left(-\frac{3}{2\sin^2(\frac{y-x}{2})}+h_2(y, z)\right)=0
\end{array}\right.
\end{equation}

Note that $\partial_j U_k= \partial_{k} U_j$ do not naturally follow from the commutation relation. This condition is equivalent to
 the existence of a function $\mathcal{U}(\boldsymbol{\theta})$ such that
$$
U_j=\partial_j \mathcal{U}(\boldsymbol{\theta})
$$
 $\mathcal{U}: \mathfrak{Y}^n \rightarrow \mathbb{R}$ is smooth 
in the chamber $$\mathfrak{Y}^n=\left\{(\theta_1,\theta_2,\ldots,\theta_n) \in \mathbb{R}^n \mid \theta_1<\theta_2<\ldots<\theta_n<\theta_1+2\pi\right\}$$

\begin{lemma}\label{two point commutation kappa=0}
Suppose there exists $\mathcal{U}$ such that $U_j = \partial_j \mathcal{U}$,  then for adjacent $x,y$ (no marked points are between $x$,$y$), if the system
\begin{equation}
\left\{\begin{array}{l}
\frac{1}{2} U_{x}^2 +\sum_i \cot(\frac{z_i-x}{2}) U_i+\cot(\frac{y-x}{2}) U_y+ \left(-\frac{3}{2\sin^2(\frac{y-x}{2})}+h_1(x, z)\right) =0 \\
\frac{1}{2}U_{y}^2 +\sum_i \cot(\frac{z_i-y}{2}) U_i+\cot(\frac{x-y}{2}) U_x+\left(-\frac{3}{2\sin^2(\frac{y-x}{2})}+h_2(y, z)\right)=0
\end{array}\right.
\end{equation}
admits a non-vanishing solution, then:

The functions $h_1, h_2$ can be written as $h_1(x, z)=h(x, z), h_2(y, z)=h(y, z)$

\end{lemma}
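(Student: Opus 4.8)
The plan is to adapt the proof of the $\kappa>0$ radial statement, Lemma~\ref{two point commutation}, to the classical setting, replacing the operator/ideal argument by a scalar functional equation for $h_1,h_2$ followed by a singularity analysis at the chamber wall $x=y$. First I would solve the two given null vector equations pointwise for the unknown functions, writing
\[
h_1(x,z)=-\tfrac12 U_x^2-\sum_i\cot(\tfrac{z_i-x}{2})U_i-\cot(\tfrac{y-x}{2})U_y+\tfrac{3}{2\sin^2(\tfrac{y-x}{2})},
\]
together with the symmetric expression for $h_2(y,z)$. The force of the hypotheses is that the right-hand side of the first identity is independent of $y$ and that of the second is independent of $x$; these independence conditions are exactly the vanishing of the $\partial_x$- and $\partial_y$-coefficients produced when the classical commutation relation (\ref{commutation relation for generators kappa=0}) is expanded, so they are available without extra work.

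Next I would derive the classical analogue of the functional equation obtained in Lemma~\ref{two point commutation}, namely
\[
\mathcal{M}_x h_2-\mathcal{M}_y h_1+\frac{h_1-h_2}{\sin^2(\tfrac{x-y}{2})}=0 ,
\]
with the same singular coefficient as in (\ref{commutation relation for generators kappa=0}), where $\mathcal{M}_x,\mathcal{M}_y$ are the generators of Theorem~\ref{commutation for kappa=0}. To obtain it I would apply $\mathcal{M}_x$ to the expression for $h_2$ and $\mathcal{M}_y$ to that for $h_1$, and simplify the resulting genuinely second-order terms in $\mathcal{U}$ using the integrability relations $\partial_jU_k=\partial_kU_j$ (guaranteed by $U_j=\partial_j\mathcal{U}$) together with the commutation relation (\ref{commutation relation for generators kappa=0}). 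These are precisely the classical shadows of the operator identity $[\mathcal{L}_1,\mathcal{L}_2]+\tfrac{1}{\sin^2}(\mathcal{L}_1-\mathcal{L}_2)$ being of order zero in the quantum argument. As a consistency check one recovers the same equation by passing to the limit $\kappa\to0$ in the quantum functional equation, using $b_i\to U_i$ and $\kappa\,h_i^{\,q}\to h_i$; but I would present the direct derivation in order to keep the statement self-contained, as the section demands.

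Finally I would run the pole analysis. Since $h_2=h_2(y,z)$ is smooth and $x,y$ are adjacent with no marked point between them, $\mathcal{M}_x h_2=\cot(\tfrac{y-x}{2})\partial_y h_2+\sum_i\cot(\tfrac{z_i-x}{2})\partial_i h_2$ has at most a simple pole as $x\to y$, and likewise for $\mathcal{M}_y h_1$; whereas the last term carries a double pole, $\tfrac{h_1-h_2}{\sin^2(\frac{x-y}{2})}\sim \tfrac{4\,(h_1(x,z)-h_2(y,z))}{(x-y)^2}$. Because the whole expression vanishes identically, the coefficient of the double pole must vanish, which forces $h_1(y,z)=h_2(y,z)$ for every $y$; renaming the variable yields a common function $h$ with $h_1(x,z)=h(x,z)=h_2(x,z)$, as claimed.

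The main obstacle is the middle step: the bookkeeping that collapses $\mathcal{M}_x h_2-\mathcal{M}_y h_1$, combined with the two quadratic null vector relations, into the clean form above — that is, verifying that every contribution which is genuinely second order in $\mathcal{U}$ cancels by integrability and commutation, leaving only the first-order action on $h_1,h_2$ and the singular multiple of $h_1-h_2$. Equivalently, one must rule out any uncontrolled double pole in $x-y$ hidden inside the $U_x,U_y$ terms, and it is exactly here that the adjacency hypothesis and the absence of marked points between $x$ and $y$ are indispensable.
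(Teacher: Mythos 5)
Your proposal is correct and follows essentially the same route as the paper: the paper packages the computation as the order-zero operator $[\mathcal{L}_1+h_1,\mathcal{L}_2+h_2]+\frac{1}{\sin^2(\frac{x-y}{2})}\bigl((\mathcal{L}_1+h_1)-(\mathcal{L}_2+h_2)\bigr)$ annihilating $\mathcal{U}$, which is the same functional equation you obtain by applying the generators to the solved-for expressions of $h_1,h_2$, and it concludes with the identical double-pole analysis at $x=y$ under the adjacency hypothesis. The only difference is presentational (commutator of first-order operators versus direct differentiation of $h_1,h_2$ along the vector fields), and both hinge on the same cancellation of second-order terms via $\partial_jU_k=\partial_kU_j$.
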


Define two operators $\mathcal{L}_1$,$\mathcal{L}_2$ by:
$$
\mathcal{L}_1=\frac{U_x}{2} \partial_{x} +\sum_i \cot(\frac{z_i-x}{2})\partial_i +\cot(\frac{y-x}{2})\partial_y- \frac{3}{2\sin^2(\frac{y-x}{2})}
$$
$$
\mathcal{L}_2= \frac{U_y}{2}\partial_{y} +\sum_i \cot(\frac{z_i-y}{2}) \partial_i +\cot(\frac{x-y}{2}) \partial_x -\frac{3}{2\sin^2(\frac{x-y}{2})}
$$
$\mathcal{U}(\boldsymbol{\theta})$ is annihilated by all operators in the left ideal generated by $(\mathcal{L}_1+h_1), (\mathcal{L}_2+h_2)$, including in particular their commutator:

$$
\begin{aligned}
\mathcal{L} & =\left[\mathcal{L}_1+h_1, \mathcal{L}_2+h_2\right]+\frac{1}{\sin^2(\frac{x-y}{2})} ((\mathcal{L}_1+h_1)-(\mathcal{L}_2+h_2)) \\
&=\left(\left(\cot(\frac{y-x}{2}) \partial_y+\sum_i \cot(\frac{z_i-x}{2}) \partial_i\right) h_2(y,z)-\left(\cot(\frac{x-y}{2}) \partial_x+\sum_i \cot(\frac{z_1-y}{2}) \partial_i\right) h_1(x,z)\right)\\
&+\frac{4\left(h_1-h_2\right)}{\sin^2(\frac{x-y}{2})}
\end{aligned}
$$

The operator $\mathcal{L}$ is an operator of order 0, thus a function that must vanish identically. 

If $x,y$ are adjacent (no marked points are between $x$,$y$), consider the pole of $\mathcal{L}$ at $x=y$.  The second-order pole must  vanish, this implies $h_1(x, z)=h(x, z), h_2(y, z)=h(y, z)$ for some function $h$.

Let us return to the proof of the theorem (\ref{commutation for kappa=0}) for multiple radial SLE(0) systems with $n$ distinct growth points $z_1=e^{i\theta_1},z_2=e^{i\theta_2},\ldots,z_n=e^{i\theta_n}$.

We grow two SLEs from $\theta_i,\theta_j$, $i\neq j$ and treat the rest as marked points denoted by $z$. 

The commutation relation between two SLEs implies that the infinitesimal generator satisfies
\begin{equation}
[\mathcal{M}_i, \mathcal{M}_j]=\frac{1}{\sin^2(\frac{\theta_i-\theta_j}{2})}(\mathcal{M}_j-\mathcal{M}_i)
\end{equation}

and the null vector equations 

\begin{equation} \label{trigonometric integrability kappa=0}
\left\{\begin{array}{l}
\frac{1}{2} U_{i}^2+\sum_k \cot(\frac{z_k-\theta_i}{2})U_k+\cot(\frac{\theta_j-\theta_i}{2})U_j+\left(  \frac{3}{2\sin^2(\frac{\theta_j-\theta_i}{2})}+h_i(\theta_i, \boldsymbol{z})\right)=0 \\
\frac{1}{2} U_{j}^2+\sum_k \cot(\frac{z_k-\theta_j}{2}) U_k+\cot(\frac{\theta_i-\theta_j}{2}) U_i+\left( \frac{3}{2\sin^2(\frac{\theta_i-\theta_j}{2})}+h_j(\theta_j,  \boldsymbol{z})\right)=0
\end{array}\right.
\end{equation}

We may write the first equation in (\ref{trigonometric integrability kappa=0}) as
\begin{equation}
    \frac{1}{2} U_{i}^2+\sum_k \cot(\frac{z_k-\theta_i}{2})U_k+\cot(\frac{\theta_j-\theta_i}{2})U_j=- \frac{3}{2\sin^2(\frac{\theta_j-\theta_i}{2})}-h_i(\theta_i,  z)
\end{equation}
By the integrability condition theorem (\ref{two point commutation kappa=0}), $h_i(\theta_i,z)$ does not depend on $\theta_j$,

Since integrability conditions hold for all $ j \neq i$, by subtracting all $ \frac{3}{2\sin^2(\frac{\theta_j-\theta_i}{2})}$ term, we obtain that 

\begin{equation}
    \frac{1}{2} U_{i}^2+\sum_j \cot(\frac{\theta_j-\theta_i}{2})U_j=-\sum_j \frac{3}{2\sin^2(\frac{\theta_j-\theta_i}{2})}-h_i(\theta_i)
\end{equation}

where $h_i=h_i(\theta_i)$ only depends on $\theta_i$.

Moreover, by the integrability condition,  $h_i=h_{i+1}$ for every pair of $1\leq i \leq n-1$, this implies $h_1=h_2=\ldots=h_n= h$.

\begin{equation}
   h(\theta_i)=- \frac{1}{2} U_{i}^2-\sum_j \cot(\frac{\theta_j-\theta_i}{2})U_j-\sum_{j\neq i} \frac{3}{2\sin^2(\frac{\theta_j-\theta_i}{2})}
\end{equation}
Rotation invariance of $U_i$ and $U_j$ implies that $h(\theta_i)$ must also be rotation invariant and thus a constant. This completes the proof of the theorem (\ref{commutation for kappa=0}).

\end{proof}

\section{Multiple radial SLE(0) system as the limit of multiple radial SLE($\kappa$) system}

\subsection{Classical limits and stationary relations} \label{classical limit of multiple radial SLE system}

In this section, we construct multiple radial SLE(0) systems by heuristically taking the classical limits of multiple radial SLE($\kappa$) systems. Our construction of multiple radial SLE(0) systems are self-consistent and does not depend on resolving the classical limit.

A key concept in this construction is the stationary relations that naturally emerge as a result of normalizing the partition functions.
For multiple radial SLE($\kappa$) system, we have shown that the drift term $b_j(\boldsymbol{\theta})$ is given by 
$$b_j(\boldsymbol{\theta}) = \kappa \frac{\partial \log \mathcal{Z}(\boldsymbol{\theta})}{\partial \theta_j}$$
where $\mathcal{Z}(\boldsymbol{\theta})$ is a positive function satisfying the null vector equations. 

As $\kappa \rightarrow 0$, we need to normalize the partition function. We expect that, for some suitably chosen partition functions, the limit $\mathcal{Z}(\boldsymbol{\theta})^{\kappa}$ exists as $\kappa \rightarrow 0$.

Recall that the radial ground solution is given by the multiple contour integral:
\begin{equation}
\mathcal{J}_{\alpha}(\boldsymbol{\theta})=\oint_{\mathcal{C}_1} \ldots \oint_{\mathcal{C}_n} \Phi_\kappa(\boldsymbol{\theta}, \boldsymbol{\zeta}) d \zeta_m \ldots d \zeta_1 .
\end{equation}
where $\Phi_\kappa(\boldsymbol{\theta}, \boldsymbol{\zeta})$ is the multiple radial SLE($\kappa$) master function, and  $\mathcal{C}_1,\mathcal{C}_2,\ldots,\mathcal{C}_m$ are non-intersecting Pochhammer contours. Pure partition functions $\mathcal{Z}(\boldsymbol{\theta})_{\alpha}$ are linear combinations of radial ground solutions $\mathcal{J}_{\alpha}(\boldsymbol{\theta})$. Therefore, heuristically by the steepest decent method,

\begin{equation}
\lim_{\kappa \rightarrow 0}\mathcal{Z}_\alpha(\boldsymbol{\theta})^{\kappa}= \lim_{\kappa \rightarrow 0}\left(\oint_{\mathcal{C}_1} \ldots \oint_{\mathcal{C}_m} \Phi(\boldsymbol{\theta}, \boldsymbol{\zeta})^{\frac{1}{\kappa}}d\boldsymbol{\zeta}\right)^{\kappa}
\end{equation}
where $\Phi(\boldsymbol{\theta},\boldsymbol{\zeta})$ is the multiple radial SLE(0) master function,
\begin{equation} \label{multiple radial SLE(0) master function}
\Phi(\boldsymbol{\theta},\boldsymbol{\zeta}) := 
\prod_{1 \leq j < k \leq n} \sin^2\left( \frac{\theta_j - \theta_k}{2} \right)
\prod_{1 \leq s < t \leq m} \sin^8\left( \frac{\zeta_s - \zeta_t}{2} \right)
\prod_{k=1}^{n} \prod_{l=1}^{m} \sin^{-4}\left( \frac{\theta_k - \zeta_l}{2} \right).
\end{equation}

The contour integral $\oint_{\mathcal{C}_1} \ldots \oint_{\mathcal{C}_m} \Phi(\boldsymbol{\theta}, \boldsymbol{\zeta})^{\frac{1}{\kappa}}d\boldsymbol{\zeta}$ is approximated by the value of $\Phi(\boldsymbol{\theta}, \boldsymbol{\zeta})$ at the stationary phase, i.e., the critical points of $\Phi(\boldsymbol{\theta}, \boldsymbol{\zeta})$.

\begin{conjecture} We conjecture that for pure partition function  As $\kappa \rightarrow 0$, the limit  $\underset{\kappa \rightarrow 0}{\lim}\mathcal{Z}_{\alpha}^{\kappa}(\boldsymbol{\theta}) $ exist and concentrate on critical points of the master function. i.e.

 \begin{equation}
\underset{\kappa \rightarrow 0}{\lim}\mathcal{Z}_{\alpha}(\boldsymbol{\theta})^{\kappa}=  \Phi(\boldsymbol{\theta}, \boldsymbol{\zeta})     
\end{equation}   
 where $\boldsymbol{\zeta}$ is a critical point of the multiple radial SLE(0) master function $\Phi(\boldsymbol{\theta}, \boldsymbol{\zeta})$. 
\end{conjecture}

\begin{defn}[Stationary relations] \label{Stationary relations}
Let $\boldsymbol{z} = \{z_1, z_2, \ldots, z_n\}$ be distinct points on the unit circle, $\boldsymbol{\xi} = \{\xi_1, \xi_2, \ldots, \xi_m\}$ be involution-symmetric and $\eta$ be the spin. 
\begin{itemize}
    \item In the unit disk, 
\begin{equation}
 -\sum_{j=1}^{n}\frac{2}{\xi_k-z_j}  +\sum_{l\neq k} \frac{4}{\xi_k-\xi_l}+\frac{n -2m+2}{\xi_k}=0
\end{equation}

\item  In angular coordinates,  let $z_i = e^{i\theta_i}$ for $i = 1, 2, \ldots, n$ and $\xi_k = e^{i\zeta_k}$ for $k = 1, 2, \ldots, m$. 

\begin{equation}
\sum_{j=1}^{n}\cot(\frac{\zeta_k-\theta_j}{2})=\sum_{l\neq k} 2\cot(\frac{\zeta_k-\zeta_l}{2})
\end{equation} 

\end{itemize}
\end{defn}

\begin{thm}[Conformal Invariance of Stationary Relations] 
    The stationary relations are preserved under M{\"o}bius transformations.
\end{thm}

\begin{proof}
    The M{\"o}bius invariance of the stationary relations follows naturally from the M{\"o}bius invariance of the master functions as Coulomb gas correlation functions.
\end{proof}

\begin{defn}[Multiple radial SLE($0$)  Loewner chain]
Let $\Omega$ be a simply connected domain,  $z_1,z_2,\ldots,z_n \in \partial \Omega$ be growth points, $u \in \Omega$ a marked interior point, and $\boldsymbol{\nu}=\left(\nu_1, \ldots, \nu_n\right)$ be a set of parametrization of capacity, where each $\nu_i:[0, \infty) \rightarrow[0, \infty)$ is assumed to be measurable. 

In the unit disk $\Omega=\mathbb{D}$, $u=0$, we define the multiple radial SLE($\kappa$) 
Loewner chain as a random normalized conformal map $g_t=g_t(z)$, with $g_{0}(z)=z$ and $g_{t}'(0)=e^{-\int_{0}^{t}\sum_{j}\nu_j(s)ds}$ whose evolution is described by the Loewner differential equation:
\begin{equation}
\partial_t g_t(z)=\sum_{j=1}^n \nu_j(t)g_t(z)\frac{z_j(t)+g_t(z)}{z_j(t)-g_t(z)}, \quad g_0(z)=z,
\end{equation}

The flow map $g_t$ is well-defined up until the first time $\tau$ at which $z_j(t)=z_k(t)$ for some $1 \leq j < k \leq n$, while for each $z \in \mathbb{C}$ the process $t \mapsto g_t(z)$ is well-defined up until $\tau_z \wedge \tau$, where $\tau_z$ is the first time at which $g_t(z)=z_j(t)$. Denote $K_t=\left\{z \in \overline{\mathbb{H}}: \tau_z \leq t\right\}$ be the hull associated to this Loewner chain.

In angular coordinate, let $z_j=e^{i\theta_j}$, then
the Loewner equation for $h_t(z)= -i\log (g_t( e^{iz}))$ is given by

\begin{equation}
\partial_t h_t(z)=\sum_{j=1}^n \nu_j(t)\cot(\frac{h_t(z)-\theta_j(t)}{2}), \quad h_0(z)=z,   
\end{equation}

and the driving functions $\theta_j(t), j=1, \ldots, n$, evolve as

\begin{equation}
\dot{\theta}_j= \nu_j(t) \frac{\partial {\rm log} \mathcal{Z}(\boldsymbol{\theta})}{\partial \theta_j}+\sum_{k \neq j} \nu_k(t)\cot(\frac{\theta_j-\theta_k}{2})+ \sqrt{\kappa \nu_j(t)}dB_j(t)
\end{equation}
where $\mathcal{Z}(\boldsymbol{\theta})$ is the partition function for the multiple radial SLE($\kappa$) system.

Moreover, the law of the curves is conformally invariant (up to a time change) and reparametrization symmetric.
See section (\ref{commutation when kappa=0 section}) for a detailed explanation.

Our definition for multiple radial SLE($0$) Loewner chain can be naturally extended to an arbitrary simply-connected domain $\Omega$ with a marked interior point $u$ via a conformal uniformizing map $\phi: \Omega \rightarrow \mathbb{D}$, sending $u$ to $0$.
\end{defn}

\subsection{Stationary relations imply commutation relations in $\kappa=0$ case} \label{Stationary relations imply commutation}
Our definition of multiple radial SLE(0) system views it as a dynamical system of $\boldsymbol{\xi}$ and $\boldsymbol{z}$, and we input stationary relations as constraints for the initial position of $\boldsymbol{\xi}$.
In section \ref{Stationary relations imply commutation}, we show that the stationary relations between \( \boldsymbol{\zeta} \) and \( \boldsymbol{\theta} \) naturally imply the existence of partition functions only depends on $\boldsymbol{\theta}$. Thus, our construction of multiple radial SLE(0) systems involving $n+m$ points, can be reduced to a system only involve the growth points $\boldsymbol{z}$, this fits into the general framework of multiple radial SLE(0) systems.

 \begin{thm} \label{Stationary relation imply partition function}
Let \( z_i = e^{i\theta_i} \) for \( i = 1, 2, \ldots, n \) represent distinct boundary points in angular coordinates, and let \( \xi_k = e^{i\zeta_k} \) for \( k = 1, 2, \ldots, m \). Suppose \( \boldsymbol{\zeta} \) depends smoothly on \( \boldsymbol{\theta} \) as a solution to the stationary relations. Define the partition function
\[
\mathcal{Z}(\boldsymbol{\theta}) := \prod_{1 \leq j < k \leq n} \sin^2\left(\frac{\theta_j - \theta_k}{2}\right) 
\prod_{1 \leq s < t \leq m} \sin^8\left(\frac{\zeta_s(\boldsymbol{\theta}) - \zeta_t(\boldsymbol{\theta})}{2}\right) 
\prod_{k=1}^{n} \prod_{l=1}^m \sin^{-4}\left(\frac{\theta_k - \zeta_l(\boldsymbol{\theta})}{2}\right).
\]
Then \( \mathcal{Z}(\boldsymbol{\theta}) \) is strictly positive and invariant under rotation of each coordinate of \( \boldsymbol{\theta} \).  

Moreover, define \( \mathcal{U}(\boldsymbol{\theta}) = \log \mathcal{Z}(\boldsymbol{\theta}) \), and for \( j = 1, 2, \ldots, n \), let
\[
U_j(\boldsymbol{\theta}) = \partial_j \mathcal{U} = \sum_{k \neq j} \cot\left(\frac{\theta_j - \theta_k}{2}\right) - 2 \sum_{l=1}^m \cot\left(\frac{\theta_j - \zeta_l(\boldsymbol{\theta})}{2}\right).
\]
Then \( U_j(\boldsymbol{\theta}) \) is real-valued and satisfies the following system of null vector equations:
\[
\frac{1}{2}U_j^2 + \sum_{k \neq j} \cot\left(\frac{\theta_k - \theta_j}{2}\right) U_k 
- \sum_{k \neq j} \frac{3}{2 \sin^2\left(\frac{\theta_j - \theta_k}{2}\right)} = -\frac{(2m - n)^2}{2} + \frac{1}{2}.
\]
\end{thm}

\begin{thm}[Ward's Identities]
The functions \( U_j \) satisfy the following identity:
\[
\sum_{j=1}^n U_j = 0.
\]
\end{thm}
\begin{proof}[Proof of theorem (\ref{Stationary relation imply partition function})]

Strict positivity of $\mathcal{Z}(\boldsymbol{\theta})$ follows from the $\theta_j$ assumed to be real and distinct and the $\zeta_k$ always appearing in complex conjugate pairs. For the differentiability of $\mathcal{Z}(\boldsymbol{\theta})$, we observe that the smoothness of the pole functions follows from the fact that each $\zeta$ obeys the stationary relations. The stationary relations, together with the implicit function theorem, imply that there is a neighborhood of $\theta$ on which the poles $\zeta_{k}=\zeta_{k}(\boldsymbol{\theta})$ are smooth functions of the critical points. We use the existence and continuity of the first and second-order partial derivatives to compute $\partial_j \log \mathcal{Z}(\boldsymbol{\theta})$. Indeed, using
expression for $\mathcal{Z}(\boldsymbol{\theta})$ and computing $\partial_j \log \mathcal{Z}$ directly we have
$$
\begin{aligned}
\partial_j \log \mathcal{Z}(\boldsymbol{\theta})= & \sum_{k\neq j} \cot\frac{\theta_j-\theta_k}{2}+\sum_{i=1}^m 2\cot(\frac{\zeta_i-\theta_j}{2})+2 \sum_{k=1}^{n} \sum_{l=1}^m \cot(\frac{\theta_k-\zeta_l}{2})\partial_{\theta_j} \zeta_l \\
+& 4 \sum_{1 \leq l < s \leq m} \cot(\frac{\zeta_l-\zeta_s}{2})(\partial_{\theta_j} \zeta_l-\partial_{\theta_j} \zeta_s) 
\end{aligned}
$$
For the last two terms use the stationary relation to obtain
$$
\begin{aligned}
\sum_{l=1}^m \partial_{\theta_j} \zeta_l \sum_{k=1}^{n} \cot(\frac{\theta_k-\zeta_l}{2})= & -2\sum_{l=1}^n \partial_{\theta_j} \zeta_l \sum_{s \neq l} \cot(\frac{\zeta_l-\zeta_s}{2}) \\
=& -2\sum_{1 \leq l<s \leq m} \cot(\frac{\zeta_l-\zeta_s}{2})(\partial_{\theta_j} \zeta_l-\partial_{\theta_j} \zeta_s)
\end{aligned}
$$
Thus the last two terms in the above expression vanish, thereby proving $\partial_j \log \mathcal{Z}=U_j$.

Real-valuedness of $U_j$ follows from $\theta_k \in \mathbb{R}$ and the $\zeta_k$ occurring in complex conjugate pairs. 

For the translation invariance, note that the pole functions $\zeta_{k}$ clearly satisfy $\zeta_{k}(\boldsymbol{\theta}+h)=\zeta_{ k}(\boldsymbol{\theta})+h$ for $h \in \mathbb{R}$.
 
For the proof that the $U_j$ satisfy the mull vector equations it is convenient to introduce
$$
u_j:=U_j-\sum_{k \neq j} \cot(\frac{\theta_j-\theta_k}{2}) .
$$
$$
\begin{aligned}
&\frac{1}{2}U_{j}^2+\sum_{k\neq j}\cot(\frac{\theta_k-\theta_j}{2})U_k-\sum_{k\neq j} \frac{3}{2\sin^2(\frac{\theta_j-\theta_k}{2})}\\
& = \frac{1}{2}u_{j}^2+ \sum_{k\neq j}\cot(\frac{\theta_j-\theta_k}{2})
(u_j-u_k)+ \frac{1}{2}\sum_{k\neq j\neq l} \cot(\frac{\theta_j-\theta_k}{2})\cot(\frac{\theta_j-\theta_l}{2})+ \\
&\sum_{k\neq j\neq l} \cot(\frac{\theta_k-\theta_j}{2})\cot(\frac{\theta_k-\theta_l}{2}) 
+\sum_{k\neq j} \frac{3}{2}\cot^2(\frac{\theta_j-\theta_k}{2}) -\sum_{k\neq j} \frac{3}{2\sin^2(\frac{\theta_j-\theta_k}{2})} \\
&= \frac{1}{2} u_j^2+ \sum_{k\neq j}\cot(\frac{\theta_j-\theta_k}{2})(u_j-u_k)
-C_{n-1}^{2}-\frac{3}{2}(n-1)
\end{aligned}
$$
All we need is to compute
$$
\frac{1}{2} u_j^2+ \sum_{k\neq j}\cot(\frac{\theta_j-\theta_k}{2})(u_j-u_k).
$$
Using the stationary relation, we have
$$
\begin{aligned}
&\frac{1}{2}\left(\frac{1}{2} u_j^2+ \sum_{k \neq j} (u_j-u_k)\cot(\frac{\theta_j-\theta_k}{2})\right) \\
&=\left(\sum_{l=1}^m \cot(\frac{\zeta_l-\theta_j}{2})\right)^2+\sum_{k \neq j} \sum_{l=1}^m \cot\left(\frac{\zeta_l-\theta_j}{2}\right)\cot\left(\frac{\zeta_l-\theta_k}{2}\right) \\
& =\sum_{l=1}^m \cot(\frac{\zeta_l-\theta_j}{2}) \sum_{k=1}^{n} \cot(\frac{\zeta_l-\theta_k}{2})+2 \sum_{k<l} \cot(\frac{\zeta_k-\theta_j}{2}) \cot(\frac{\zeta_l-\theta_j}{2})+m(n-1) \\
& =2\sum_{l=1}^m \cot(\frac{\zeta_l-\theta_j}{2}) \sum_{k\neq l} \cot(\frac{\zeta_l-\zeta_k}{2})+2 \sum_{k<l} \cot(\frac{\zeta_k-\theta_j}{2}) \cot(\frac{\zeta_l-\theta_j}{2})+m(n-1)\\
& =-2C_{m}^{2}+m(n-1) 
\end{aligned}
$$
In above computation, we repeatedly applied the trigonometric identity:
$$
\cot(A)\cot(A+B)+\cot(B)\cot(A+B)-\cot(A)\cot(B)=-1
$$
Thus we obtain that
\begin{equation}
\label{m screening null vector constant}
\begin{aligned}
& \frac{1}{2}U_{j}^2+\sum_{k\neq j}\cot(\frac{\theta_k-\theta_j}{2})U_k-\sum_{k\neq j} \frac{3}{2\sin^2(\frac{\theta_j-\theta_k}{2})} \\
& = -4 C_{m}^{2}+2m(n-1)-C_{n-1}^{2}-\frac{3}{2}(n-1) = -\frac{(2m-n)^2}{2}+\frac{1}{2}
\end{aligned}
\end{equation}

\end{proof}

\begin{thm} The functions $U_{j}(\boldsymbol{\theta})$ satisfy 
$$
\sum_{j=1}^{ n} U_{j}=0
$$
\end{thm}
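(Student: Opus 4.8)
The plan is to use the explicit formula for $U_j$ established in Theorem~(\ref{Stationary relation imply partition function}), namely
$$
U_j(\boldsymbol{\theta})=\sum_{k \neq j} \cot\!\left(\frac{\theta_j-\theta_k}{2}\right)-2\sum_{l=1}^m \cot\!\left(\frac{\theta_j-\zeta_{l}(\boldsymbol{\theta})}{2}\right),
$$
and to sum over $j$, splitting the result into a pure growth-point double sum and a mixed growth-point/screening-charge double sum. First I would write
$$
\sum_{j=1}^{n}U_j=\sum_{j=1}^{n}\sum_{k\neq j}\cot\!\left(\frac{\theta_j-\theta_k}{2}\right)-2\sum_{j=1}^{n}\sum_{l=1}^{m}\cot\!\left(\frac{\theta_j-\zeta_{l}}{2}\right),
$$
and treat the two terms separately.

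The first double sum vanishes by the oddness of the cotangent: for each unordered pair $\{j,k\}$ the contributions $\cot\bigl((\theta_j-\theta_k)/2\bigr)$ and $\cot\bigl((\theta_k-\theta_j)/2\bigr)$ cancel, since $\cot(-x)=-\cot(x)$. Thus $\sum_{j}\sum_{k\neq j}\cot\bigl((\theta_j-\theta_k)/2\bigr)=0$ by antisymmetry alone, with no appeal to the stationary relations. For the mixed term I would invoke the trigonometric stationary relation~(\ref{tri stat}), which reads $\sum_{j=1}^{n}\cot\bigl((\zeta_k-\theta_j)/2\bigr)=2\sum_{l\neq k}\cot\bigl((\zeta_k-\zeta_l)/2\bigr)$ for each $k$. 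Summing this identity over $k=1,\dots,m$ and using oddness of $\cot$ once more on the right-hand side (so that the $\zeta$-pair sum $\sum_{k}\sum_{l\neq k}\cot\bigl((\zeta_k-\zeta_l)/2\bigr)$ collapses to zero by the same pairwise cancellation), I obtain
$$
\sum_{k=1}^{m}\sum_{j=1}^{n}\cot\!\left(\frac{\zeta_k-\theta_j}{2}\right)=0.
$$
Since $\cot\bigl((\zeta_k-\theta_j)/2\bigr)=-\cot\bigl((\theta_j-\zeta_k)/2\bigr)$, this is precisely the statement that the mixed double sum $\sum_{j}\sum_{l}\cot\bigl((\theta_j-\zeta_l)/2\bigr)$ vanishes, so the second term in $\sum_j U_j$ is also zero, completing the argument.

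This proof is essentially a two-line computation once the formula for $U_j$ and the stationary relations are in hand, so there is no substantive obstacle; the only point requiring care is making sure the stationary relations are applied in the correct orientation (i.e.\ keeping track of signs under $\cot(-x)=-\cot(x)$) and that the $\zeta_l$ are genuinely real or occur in conjugate pairs so that every cotangent appearing is well-defined and finite along the relevant configuration. I would remark that the vanishing of $\sum_j U_j$ is the rotation-invariance (conservation-of-total-angular-momentum) statement, consistent with the rotation invariance of $\mathcal{Z}(\boldsymbol{\theta})$ already proved, and in fact it can alternatively be read off from $U_j=\partial_j\log\mathcal{Z}$ together with the rotational symmetry $\mathcal{Z}(\boldsymbol{\theta}+h\mathbf{1})=\mathcal{Z}(\boldsymbol{\theta})$, giving an independent check.
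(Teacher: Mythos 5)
Your proposal is correct and follows essentially the same route as the paper: the growth-point double sum cancels by antisymmetry of the cotangent, and the mixed sum is converted via the stationary relations into a $\zeta$-pair sum $4\sum_{k}\sum_{l\neq k}\cot\bigl(\tfrac{\zeta_k-\zeta_l}{2}\bigr)$, which again vanishes by antisymmetry. The closing remark identifying $\sum_j U_j=0$ with rotation invariance of $\mathcal{Z}(\boldsymbol{\theta})$ is a valid independent check consistent with the paper.
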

\begin{proof}
By direct computation,
$$
\sum_{j=1}^{n} U_j=\sum_{j=1}^{n} \sum_{k \neq j} \cot(\frac{\theta_j-\theta_k}{2})+2\sum_{j=1}^{n} \sum_{k=1}^m \cot(\frac{\zeta_k-\theta_j}{2})=4\sum_{k=1}^m \sum_{l \neq k} \cot(\frac{\zeta_k-\zeta_l}{2})=0
$$
\end{proof}

\subsection{Residue-free quadratic differentials with prescribed  zeros} \label{residue free quadratic differential}
\
\indent
The locus of real rational functions effectively characterizes the traces of multiple chordal SLE(0) systems. However, in the radial case, rational functions alone are not enough to fully characterizze these traces.

To address this limitation, we propose introducing a new class of analytic functions that extends beyond rational functions. This extended class would capture the behavior near zero, including monodromy that can not described by rational functions.

We first introduce an equivalence class of residue-free quadratic differentials (Definition \ref{trace quadratic differential}), denoted by $\mathcal{QD}(\boldsymbol{z})$. 
For such quadratic differential $Q(z)dz^2 \in \mathcal{QD}(\boldsymbol{z})$, there exists a unique primitive of $\sqrt{Q(z)}$ (up to real additive constant), denoted by $F(z)$, is involution symmetric and locally a meromorphic function with monodromy around $0$. This is precisely the class of functions we need. The advantage of introducing quadratic differential is that the expression of such quadratic differentials can be explicitly formulated in terms of growth point $\boldsymbol{z}=\{z_1,z_2,\ldots,z_n\}$ and poles $\boldsymbol{\xi}=\{\xi_1,\xi_2,\ldots,\xi_m\}$.

\begin{thm}[Stationary Relations and Residue-Free Condition] \label{Stationary- residue free}

The following statements are equivalent:
\begin{enumerate}
    \item The points \( \boldsymbol{\xi} \) are symmetric under the involution \( z^* = \frac{1}{\bar{z}} \), and the zeros \( \boldsymbol{z} \) on the unit circle satisfy the stationary relations.
    \item There exists a quadratic differential \( Q(z)dz^2 \in \mathcal{QD}(\boldsymbol{z}) \) with zeros at \( \boldsymbol{z} \) and poles at \( \boldsymbol{\xi} \).
\end{enumerate}
\end{thm}

\begin{proof}[Proof of theorem (\ref{Stationary- residue free})] Theorem  (\ref{Stationary- residue free}) is equivalent to (iii) in the following lemma
\begin{lemma} \label{partial fraction of H}

 Fix $n \geq 1$ and let $\boldsymbol{z}=\{z_1, \ldots, z_n\}$ be distinct points on the unit circle. The following statements are true.
 
\begin{itemize}
\item[(i)]
 Up to a real multiplicative constant, for any $Q(z)dz^2 \in \mathcal{QD}(\boldsymbol{z})$, $Q(z)$ factorizes as
$$
Q(z)= 
\frac{\prod_{k=1}^{m}\xi_{k}^2}{ \prod_{j=1}^{n}z_j}
z^{2m-n-2}\frac{\prod_{j=1}^{ n}\left(z-z_j\right)^2}{\prod_{k=1}^{m}\left(z-\xi_k\right)^4},
$$
where $\left\{\xi_1, \ldots, \xi_{m}\right\}$ are involution symmetric poles of $Q(z)$ and $\xi_k \neq 0,\infty$, $k=1,2,\ldots,m$.

\item[(ii)] Consider $\sqrt{Q(z)}$

\begin{equation}
\sqrt{Q(z)}=\frac{\prod_{k=1}^{m}\xi_{k}}{ \sqrt{\prod_{j=1}^{n}z_j}} z^{m-\frac{n}{2}-1}\frac{\prod_{j=1}^{n}\left(z-z_j\right)}{\prod_{l=1}^m\left(z-\xi_l\right)^2}
\end{equation}

If $\xi_1, \ldots, \xi_m \in \mathbb{C}$ are all distinct then $\sqrt{Q(z)}$ has a Laurant expansion at $\xi_k$
$$
\sqrt{Q(z)}=g(z-\xi_k)+\frac{A_k}{\left(z-\xi_k\right)^2}+\frac{B_k}{z-\xi_k},
$$
where $g(z-\xi_k)$ is a holomorphic function in the neighbourhood of $\xi_k$, and the constants $A_k$ and $B_k$ are given by:

\begin{equation} 
A_k=\xi_{k}^{m-\frac{n}{2}-1}\frac{\prod_{j=1}^{ n}\left(\xi_k-z_j\right)}{\prod_{l \neq k}\left(\xi_k-\xi_l\right)^2},
\end{equation}

\begin{equation} 
B_k=\left(\sum_{j=1}^{n} \frac{1}{\xi_k-z_j}-2\sum_{l \neq k} \frac{1}{\xi_k-\xi_l}- \frac{\frac{n}{2}-m+1}{\xi_k}\right) A_k, \quad k=1, \ldots, m .
\end{equation}

\item[(iii)] If $\xi_1, \ldots, \xi_m \in \mathbb{C}$ are all distinct,  $Res_{\xi_k}(\sqrt{Q(z)}dz)=0$  if and only if all $B_k=0$ which are equivalent to the \textbf{stationary relations}.

\end{itemize}
\end{lemma}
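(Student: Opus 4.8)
The plan is to prove the three parts of Lemma~\ref{partial fraction of H} in order, noting that part~(iii) already contains Theorem~\ref{Stationary- residue free} once the explicit form of $Q$ from part~(i) is available. For part~(i) I would identify the Schottky double of $\mathbb{D}$ with the Riemann sphere $\widehat{\mathbb{C}}$, so that any element of $\mathcal{QD}(\boldsymbol{z})$ is a meromorphic quadratic differential $Q(z)\,dz^2$ on $\widehat{\mathbb{C}}$, i.e. a rational function times $dz^2$. Such a differential is determined by its divisor up to a multiplicative constant, since the ratio of two differentials with the same divisor is a meromorphic function on $\widehat{\mathbb{C}}$ with neither zeros nor poles, hence constant. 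First I would check the divisor bookkeeping: double zeros at the $n$ points $z_j$, order-four poles at the $m$ points $\xi_k$, and poles of order $n+2-2m$ at $0$ and $\infty$ give total degree $2n-4m-2(n+2-2m)=-4$, as required for a quadratic differential on $\widehat{\mathbb{C}}$. The most general candidate is then
$$Q(z)=C\,z^{2m-n-2}\,\frac{\prod_{j=1}^{n}(z-z_j)^2}{\prod_{k=1}^{m}(z-\xi_k)^4}, \qquad C\in\mathbb{C}^{*},$$
and a short check in the coordinate $w=1/z$ confirms the prescribed pole order at $\infty$. The remaining step is to impose the involution symmetry $\overline{Q(z^{*})}\,\overline{(dz^{*})^2}=Q(z)\,dz^2$ with $z^{*}=1/\bar z$: using $z_j^{*}=z_j$ (the $z_j$ lie on the unit circle) and the fact that the poles occur in involution-symmetric pairs $\{\xi_k,1/\bar\xi_k\}$, this pins $C$ down to a real multiple of $\prod_k\xi_k^2/\prod_j z_j$, the asserted normalization.

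For part~(ii), since each $\xi_k$ is an even-order (order four) pole of $Q$, the square root $\sqrt{Q}$ is single-valued in a punctured neighbourhood of $\xi_k$ with a pole of order two, so it has a Laurent principal part $A_k/(z-\xi_k)^2+B_k/(z-\xi_k)$. Writing $h(z)=(z-\xi_k)^2\sqrt{Q(z)}$, which is holomorphic and nonvanishing at $\xi_k$, I would read off $A_k=h(\xi_k)$ by direct substitution into the product formula for $\sqrt{Q}$, and obtain $B_k=h'(\xi_k)$ from the logarithmic derivative
$$\frac{h'(z)}{h(z)}=\frac{m-\tfrac{n}{2}-1}{z}+\sum_{j=1}^{n}\frac{1}{z-z_j}-2\sum_{l\neq k}\frac{1}{z-\xi_l},$$
evaluated at $z=\xi_k$ and multiplied by $A_k$. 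This reproduces the stated formulas, using $(m-\tfrac{n}{2}-1)/\xi_k=-(\tfrac{n}{2}-m+1)/\xi_k$.

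For part~(iii) and the theorem, the residue of $Q(z)\,dz^2$ at the even-order pole $\xi_k$ is by definition the residue of the locally single-valued form $\sqrt{Q(z)}\,dz$, which is exactly $B_k$ up to the global sign coming from the branch of $\sqrt{Q}$. Because $\xi_k\notin\{z_1,\dots,z_n\}\cup\{0\}$, the leading coefficient $A_k$ is nonzero, so $\mathrm{Res}_{\xi_k}=0$ if and only if the bracket defining $B_k$ vanishes; multiplying that bracket by $-2$ yields
$$-\sum_{j=1}^{n}\frac{2}{\xi_k-z_j}+\sum_{l\neq k}\frac{4}{\xi_k-\xi_l}+\frac{n-2m+2}{\xi_k}=0,$$
which is precisely the stationary relation in $\mathbb{D}$. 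Imposing this for every $k$ identifies residue-freeness of $Q\in\mathcal{QD}(\boldsymbol{z})$ with the full system of stationary relations; combined with part~(i), which produces from involution-symmetric $\boldsymbol{\xi}$ and $\boldsymbol{z}$ on the unit circle a unique candidate $Q$ of the displayed form, this gives both directions of Theorem~\ref{Stationary- residue free}.

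The main obstacle I anticipate is not computational but lies in part~(i): carefully justifying that the involution symmetry forces the constant $C$ to be a \emph{real} multiple of the stated prefactor. This requires tracking how the anticonformal involution permutes the poles and acts on each factor, and verifying that the phase ambiguity of $\sqrt{Q}$ together with the half-integer exponent $z^{m-n/2-1}$ (responsible for the monodromy at $0$) do not interfere with the residue computations at the $\xi_k$, which remain well-defined up to a single global sign.
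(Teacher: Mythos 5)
Your proposal is correct and follows essentially the same route as the paper's proof: write down the general form of $Q$ from its divisor data, pin down the constant via the involution symmetry $\overline{Q(z^*)}\overline{(dz^*)^2}=Q(z)dz^2$, extract $A_k$ and $B_k$ by evaluating $(z-\xi_k)^2\sqrt{Q(z)}$ and its derivative at $\xi_k$ (your logarithmic-derivative bookkeeping is just a reorganization of the paper's direct differentiation), and observe that $A_k\neq 0$ so that $B_k=0$ is equivalent, after multiplying by $-2$, to the stationary relation. Your added divisor-degree check and the uniqueness-up-to-constant remark are minor expository refinements, not a different argument.
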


\begin{proof}
\begin{itemize}

\item[(i)]

By the fact that the double zeros of $Q(z)dz^2$ are precisely $\boldsymbol{z}$, poles of order 4 at $\boldsymbol{\xi}$, poles of order $2m-n-2$ at $0$ and $\infty$ we have:
$$Q(z)= \lambda z^{b}\frac{\prod_{j=1}^{ n}\left(z-z_j\right)}{\prod_{k=1}^{m}\left(z-\xi_k\right)^2} $$

Note that $Q(z)$ is involution symmetric, $\overline{Q(z^*)}\overline{(dz^{*})^2}= Q(z)dz^2$, thus 
$$Q(z)=\overline{\lambda}\frac{1}{z^4}\overline{Q(\frac{1}{\overline{z}})}=\overline{\lambda}z^{4m-2n-b-4}\frac{\prod_{j=1}^{ n}\left(1-\overline{z_j}z\right)^2}{\prod_{k=1}^{m}\left(1-\overline{\xi_k} z\right)^4}$$

By comparing the exponents of $z$, we obtain that $b = 2m-n-2$.

By comparing the constant, we obtain that $\lambda$ is the real constant times of $(-1)^{2m-n-1}
\frac{\prod_{k=1}^{m}\xi_k^2}{ \prod_{j=1}^{n}z_j}$

\item[(ii)]
$\xi_k$ is a second order pole of $\sqrt{Q(z)}$, thus by the laurant expansion,
$$
\sqrt{Q(z)}=g(z-\xi_k)+\frac{A_k}{\left(z-\xi_k\right)^2}+\frac{B_k}{z-\xi_k},
$$
where $g(z-\xi_k)$ is a holomorphic function in the neighbourhood of $\xi_k$.
\begin{equation}
\sqrt{Q(z)}(z-\xi_k)^2= g(z-\xi_k)(z-\xi_k)^2+A_k+ B_k(z-\xi_k)
\end{equation}

By differentiating both sides, we obtain that
\begin{equation}
\left(\sqrt{Q(z)}(z-\xi_k)^2\right)'= g'(z-\xi_k)(z-\xi_k)^2+2g(z-\xi_k)(z-\xi_k)+B_k
\end{equation}

Let $z= \xi_k$, we obtain that

\begin{equation} 
\begin{aligned}
A_k &=(\sqrt{Q(z)}(z-\xi_k)^2\rvert_{z=\xi_k} \\
&=\xi_{k}^{m-\frac{n}{2}-1}\frac{\prod_{j=1}^{ n}\left(\xi_k-z_j\right)}{\prod_{l \neq k}\left(\xi_k-\xi_l\right)^2}, \quad k=1, \ldots, m .
\end{aligned}
\end{equation}

\begin{equation}\label{value of Bk}
\begin{aligned}
B_k &=(\sqrt{Q(z)}(z-\xi_k)^2)'\rvert_{z=\xi_k} \\
&=\left(\sum_{j=1}^{n} \frac{1}{\xi_k-z_j}-2\sum_{l \neq k} \frac{1}{\xi_k-\xi_l}- \frac{\frac{n}{2}-m+1}{\xi_k}\right) A_k, \quad k=1, \ldots, m 
\end{aligned}
\end{equation}
\item[(iii)] Note that  $Res_{\xi_k}(\sqrt{Q(z)}dz)=0$ is equivalent to $B_k=0$, and by equation (\ref{value of Bk}), equivalent to the stationary relations .

\end{itemize}

\end{proof}

\end{proof}

To better understand the horizontal trajectories of this equivalence class of quadratic differentials, we introduce
a class of multi-valued analytic functions $F(z)$ with monodromy at $0$.

$F(z)$ is exactly the primitive of the differential $\sqrt{Q(z)}dz$. However, this primitive only exists locally, and there is monodromy at $z=0$.

\begin{thm} \label{quadratic differential analytic function}
Let $\boldsymbol{z}=\{z_1,z_2,\ldots,z_n \}$ be distinct points on the unit circle,  for a quadratic differential $Q(z)\in \mathcal{QD}(\boldsymbol{z})$ :
\begin{itemize}
    \item 
    when $n$ is an even integer, there exists a unique (up to a real additive constant)
    $$F(z) = R(z)+ i clog(z)$$ 
    whose finite critical points are precisely $\boldsymbol{z}=\{z_1,z_2,\ldots,z_n\}$
    where $R(z)$ is a rational function, such that $F(z)$ is  involution symmetric ($F(z^*)=\overline{F(z)}$ where $z^*=\frac{1}{\bar{z}}$) and 
    
    $Q(z)dz^2=(F^{\prime}(z))^2dz^2$
    \item 
    when $n$ is an odd integer, there exists a unique
    $$F(z)=\sqrt{z}R(z)$$
    whose finite critical points are precisely $\boldsymbol{z}=\{z_1,z_2,\ldots,z_n\}$
    where $R(z)$ is rational function, such that $F(z)$ is involution symmetric ($F(z^*)=\overline{F(z)}$ where $z^*=\frac{1}{\bar{z}}$) and $Q(z)dz^2=(F^{\prime}(z))^2dz^2$.
    
\end{itemize}  
  Note that the involution symmetry of $F(z)=R(z)+iclogz$ implies $c \in \mathbb{R}$. 
 Although $F(z)$ is multivalued, branches only differ by integer multiples of $2\pi c$,  $F'(z)$ are the same in every branch. Consequently, the critical points where $F'(z)=0$ are well-defined.
  
  For $F(z)=\sqrt{z}R(z)$, branches only differ by signs, and $F'(z)$ also differ by signs, ensuring that the critical points where $F'(z)=0$ are well-defined.

  By considering $F(z^2)$ and taking the double cover of $\mathbb{D}\backslash 0$, the odd $n$ case can be reduced to the even $n$ case.
\end{thm}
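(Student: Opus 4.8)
The plan is to construct $F$ directly as a primitive of the meromorphic $1$-form $\omega=\sqrt{Q(z)}\,dz$ and to read off every required property from the explicit factorization of Lemma (\ref{partial fraction of H}) together with the residue-freeness guaranteed by the stationary relations. First I would record that, by Lemma (\ref{partial fraction of H})(iii), $\omega$ has vanishing residue at each finite pole $\xi_k$ (this is precisely $B_k=0$). Hence the only obstructions to a single-valued primitive of $\omega$ on $\widehat{\mathbb{C}}\setminus\{0,\infty\}$ sit at $0$ and $\infty$; since the sum of all residues on the sphere vanishes and the $\xi_k$ contribute nothing, $\operatorname{Res}_0\omega=-\operatorname{Res}_\infty\omega$.

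For the even case, when $n$ is even the exponent $m-\tfrac{n}{2}-1$ in the formula for $\sqrt{Q}$ is an integer, so $\sqrt{Q}$ is a genuine rational function and $\omega$ a meromorphic $1$-form on $\widehat{\mathbb{C}}$. Integrating out the residue-free part gives $F(z)=R(z)+\rho_0\log z$ with $R$ rational and $\rho_0=\operatorname{Res}_0\omega$. The relation $F'=\sqrt{Q}$ makes the finite critical points of $F$ exactly the simple zeros of $\sqrt Q$, namely $z_1,\dots,z_n$, and these are well defined because $(\log z)'=1/z$ is single-valued. To obtain involution symmetry and identify $\rho_0$, I would use the involution symmetry of $Q\,dz^2$: taking square roots, $\overline{\sqrt{Q(z^*)}}\,\overline{dz^*}$ is a square root of $Q\,dz^2$, hence equals $\pm\omega$, and evaluating on the fixed-point circle $\partial\mathbb{D}$ (where $z^*=z$) pins the sign to $+$. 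Then $G(z):=\overline{F(1/\bar z)}$ satisfies $G'=\omega=F'$, so $G=F+\text{const}$, and absorbing the (necessarily real) constant yields $F(1/\bar z)=\overline{F(z)}$. Matching the coefficients of $\log\bar z$ in this identity gives $-\rho_0=\overline{\rho_0}$, so $\rho_0=ic$ with $c\in\mathbb{R}$ and $R(1/\bar z)=\overline{R(z)}$, exactly the claimed form $F=R(z)+ic\log z$. Uniqueness up to a real additive constant is immediate, since two primitives differ by a constant and symmetry forces it real.

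For the odd case I would follow the hint and pass to the double cover $z=w^2$, under which $\omega$ pulls back to $\tilde\omega=\sqrt{\tilde Q(w)}\,dw$ with $\tilde Q(w)=4w^2Q(w^2)$. A short check shows $\tilde Q\,dw^2\in\mathcal{QD}(\tilde{\boldsymbol z})$ where $\tilde{\boldsymbol z}=\{\pm\sqrt{z_j}\}$ consists of $2n$ (an even number of) points on the circle, with poles $\{\pm\sqrt{\xi_k}\}$, so the even case applies and produces $\tilde F(w)=\tilde R(w)+i\tilde c\log w$. The decisive extra input is parity: because $n$ is odd, $m-\tfrac{n}{2}-1$ is a half-integer, so $\sqrt Q(z)=z^{-1/2}\tilde R_0(z)$ with $\tilde R_0$ rational, whence $\sqrt{\tilde Q(w)}=2\tilde R_0(w^2)$ is an \emph{even} rational function of $w$. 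Thus $\tilde\omega$ is odd under $w\mapsto-w$ and carries no $1/w$ term, forcing $\tilde c=0$ and (after fixing the constant) $\tilde F$ odd; an odd rational function is $w$ times an even rational function, i.e. $\tilde F(w)=wR(w^2)$, which in the $z$-variable reads $F(z)=\sqrt{z}\,R(z)$. The critical points are again well defined since the two branches of $\sqrt z$ alter $F$ and $F'$ only by an overall sign.

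The main obstacle I anticipate is the involution-symmetry bookkeeping in the even case: correctly fixing the global sign in $\overline{\sqrt{Q(z^*)}}\,\overline{dz^*}=\pm\omega$ from the behavior on the fixed-point circle $\partial\mathbb{D}$, and then extracting cleanly that $\operatorname{Res}_0\omega$ is purely imaginary rather than real. In the odd case the analogous delicate point is the verification that the pulled-back form is even in $w$, which is exactly what guarantees the absence of the logarithmic monodromy term at $w=0$ and hence the genuine form $F(z)=\sqrt{z}\,R(z)$; this is where the half-integer exponent $m-\tfrac{n}{2}-1$ is used decisively.
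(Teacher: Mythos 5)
Your proposal is correct and follows essentially the same route as the paper: in the even case you take the primitive of $\sqrt{Q}\,dz$, use the vanishing residues at the finite poles $\xi_k$ (Lemma \ref{partial fraction of H}(iii)) to reduce the singular part to a single logarithm at $0$, and in the odd case you pass to the double cover $z=u^2$ and use the evenness of the pulled-back integrand to kill the residue at the origin and obtain an odd rational primitive $uR(u^2)$. The only difference is that you spell out the sign-pinning on $\partial\mathbb{D}$ and the matching of logarithm coefficients that force $c\in\mathbb{R}$, a step the paper's proof asserts without detail; this is a welcome addition rather than a departure.
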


\begin{proof}
Given a quadratic differential $Q(z)dz^2 \in \mathcal{QD}(\boldsymbol{z})$:
\begin{itemize}
    \item For $n$ even and $2m \leq n$ note that 
\begin{equation}
\sqrt{Q(z)}=\frac{\prod_{k=1}^{m}\xi_{k}}{ \sqrt{\prod_{j=1}^{n}z_j}}z^{m-\frac{n}{2}-1}\frac{\prod_{j=1}^{n}\left(z-z_j\right)}{\prod_{l=1}^m\left(z-\xi_l\right)^2}
\end{equation}
is a rational function. By lemma (\ref{partial fraction of H}),  since $\{\xi_1,\xi_2,\ldots,\xi_m\}$ solve the \textbf{stationary relations},
$$Res_{\xi_j}(\sqrt{Q(z)}dz)=0$$
which implies that $F(z)$ as the primitive of $\sqrt{Q(z)}$ can only have logarithmic singularity at $0$, thus $F(z)=R(z)+ci\log(z)$.
The involution symmetry implies the constant $c \in \mathbb{R}$.
\item For $n$ even and $2m > n$,
\begin{equation}
\sqrt{Q(z)}=\frac{\prod_{k=1}^{m}\xi_{k}}{ \sqrt{\prod_{j=1}^{n}z_j}}z^{m-\frac{n}{2}-1}\frac{\prod_{j=1}^{n}\left(z-z_j\right)}{\prod_{l=1}^m\left(z-\xi_l\right)^2}
\end{equation}
is a rational function. By lemma (\ref{partial fraction of H}),  since $\{\xi_1,\xi_2,\ldots,\xi_m\}$ solve the \textbf{stationary relations},
$$Res_{\xi_j}(\sqrt{Q(z)}dz)=0$$
and since $2m>n$, $z=0$ is a zero of $\sqrt{Q(z)}$ not a pole. $\sqrt{Q(z)}$ has vanishing residues at all its poles $\{\xi_1,\xi_2,\ldots,\xi_m\}$. Therefore, $F(z)$ as the primitive of $\sqrt{Q(z)}$ is a rational function.

\item For $n$ odd, by change of variable, $z = u^2$, we denote:
$$z^{m-\frac{n}{2}-1}\frac{\prod_{j=1}^{n}\left(z-z_j\right)}{\prod_{l=1}^m\left(z-\xi_l\right)^2}dz=2 u^{2m-n-1}\frac{\prod_{j=1}^{n}\left(u^2-z_j\right)}{\prod_{l=1}^m\left(u^2-\xi_l\right)^2}du= S(u^2)du$$

The residue-free condition implies that
$$Res_{\pm \sqrt{\xi_j}}(S(u^2))=Res_{\xi_j}(\sqrt{Q(z)})=0$$
Since $S(u^2)=S((-u)^2)$ is an even rational function,  $$Res_{0}(S(u^2))=0$$

Therefore, the primitive of $S(u^2)$ can be chosen as an odd rational function $uR(u^2)$. Consquently, $F(z)=\sqrt{z}R(z)$. 
\end{itemize}

\end{proof}

\begin{lemma}\label{real locus well defined}
    The real locus $\Gamma(F(z)):=\{ z\in \mathbb{C}| F(z)\in \widehat{\mathbb{R}}\}$ is well-defined.
\end{lemma}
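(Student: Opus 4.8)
The only thing at stake is that $F$ is multivalued, so a priori the truth of the condition $F(z)\in\widehat{\mathbb{R}}$ could depend on which local branch of the primitive $F(z)=\int\sqrt{Q(z)}\,dz$ one selects. The plan is therefore to use the explicit normal forms for $F$ furnished by Theorem~\ref{quadratic differential analytic function} to describe exactly how two branches of $F$ differ, and then to check that the resulting ambiguity is an automorphism of $\widehat{\mathbb{C}}$ that preserves the circle $\widehat{\mathbb{R}}=\mathbb{R}\cup\{\infty\}$. Once this invariance is in hand, the set $\{z:F(z)\in\widehat{\mathbb{R}}\}$ does not see the branch choice and is a genuine subset of $\widehat{\mathbb{C}}$.

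For even $n$, Theorem~\ref{quadratic differential analytic function} gives $F(z)=R(z)+ic\log z$ with $R$ rational and, by involution symmetry, $c\in\mathbb{R}$. Passing from one branch of $\log z$ to another replaces $\log z$ by $\log z+2\pi i k$ for some $k\in\mathbb{Z}$, so $F$ is replaced by $F-2\pi c k$; since $c$ is real this is translation by a real constant. As translation by a real number fixes $\infty$ and maps $\mathbb{R}$ onto $\mathbb{R}$, it preserves $\widehat{\mathbb{R}}$, and hence the condition $F(z)\in\widehat{\mathbb{R}}$ is branch-independent. The poles of $F$, where the value is $\infty\in\widehat{\mathbb{R}}$, are the poles of $R$ together with $z=0$, all of which are branch-independent as well.

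For odd $n$, Theorem~\ref{quadratic differential analytic function} gives $F(z)=\sqrt z\,R(z)$, and the two branches of $\sqrt z$ differ by a sign, so the branches of $F$ are $F$ and $-F$. Since $z\mapsto -z$ fixes $\infty$ and maps $\mathbb{R}$ onto $\mathbb{R}$, it too preserves $\widehat{\mathbb{R}}$, and the condition $F(z)\in\widehat{\mathbb{R}}$ is again independent of the branch. Alternatively, this case reduces to the even case by pulling back along the double cover $z\mapsto z^2$, as already indicated after Theorem~\ref{quadratic differential analytic function}, since $F(z^2)$ is then single-valued up to real translations.

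The one point that requires genuine input rather than bookkeeping — and which I expect to be the crux — is the reality of the monodromy data: in the even case everything hinges on $c\in\mathbb{R}$, and in the odd case on the branch ambiguity being exactly $\pm 1$. Both are consequences of the involution symmetry $F(z^*)=\overline{F(z)}$ (with $z^*=1/\bar z$) established in Theorem~\ref{quadratic differential analytic function}, so the remaining work is only to record that real translations and the reflection $z\mapsto -z$ stabilize $\widehat{\mathbb{R}}$, which is immediate.
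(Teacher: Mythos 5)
Your proposal is correct and is essentially the paper's own argument: the paper lifts $F$ to the universal cover via $z\mapsto e^{iz}$ and notes that $\tilde F(z+2\pi)=\tilde F(z)-2\pi c$ with $c\in\mathbb{R}$ (resp. $\tilde F(z+\pi)=-\tilde F(z)$ for odd $n$), so the real locus is $2\pi$-periodic and descends; you state the same facts directly as branch-independence of the condition $F(z)\in\widehat{\mathbb{R}}$ under the real translation $F\mapsto F-2\pi ck$ and the sign flip $F\mapsto -F$. Both hinge on exactly the same input, namely $c\in\mathbb{R}$ and the $\pm1$ ambiguity coming from the involution symmetry in Theorem~\ref{quadratic differential analytic function}.
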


\begin{proof} 
\begin{itemize}
\item If $n$ is even, $F(z)=ic\log(z)+R(z)$, where $c \in \mathbb{R}$.
Since $F(z)$ is a multivalued function, we consider the $\tilde{F}(z)$, the lift of $F(z)$ to the universal cover of $\mathbb{C} \backslash \{0\}$. $\tilde{F}(z)=F(e^{iz})=-cz+R(e^{iz})$, the projection map $\rho(z):\widehat{\mathbb{C}} \rightarrow \mathbb{C} \backslash 0$ is given by $\rho(z)=e^{iz}$.

$\tilde{F}(z+2\pi)= \tilde{F}(z)- 2\pi c$, 
$2\pi c \in \mathbb{R}$, thus the real locus $\Gamma(\tilde{F})$ has translation period of $2\pi$. Since the projection map $\rho(z)=e^{iz}$ also has a translation period of $2\pi$.
Therefore, $\Gamma(F)$ as the image of $\Gamma(\tilde{F})$ under projection $\rho(z)$ is well defined.

\item If $n$ is odd, $F(z)=\sqrt{z}R(z)$.
Similarly when $n$ odd, $\tilde{F}(z+\pi)=-\tilde{F}(z)$, then $\tilde{F}(z+2\pi)=\tilde{F}(z)$, thus the real locus $\Gamma(\tilde{F})$ has translation period of $2\pi$.
Therefore, $\Gamma(F)$ as the image of $\Gamma(\tilde{F})$ under $\rho(z)$ is well defined.
\end{itemize}
\end{proof}

\begin{lemma}\label{horizontal trajectories level lines}
Given a quadratic differential $Q(z) \in \mathcal{QD}(\boldsymbol{z})$ associate to it a vector field $v_Q$ on $\mathbb{C}$ defined by
$$
v_Q(z)=\frac{1}{F^{\prime}(z)}=\frac{1}{\sqrt{Q(z)}}
$$

  Then  the flow lines of $\dot{z} = \frac{1}{\sqrt{Q(z)}}  $ are level lines of ${\rm Im}(F(z))$ and the horizontal trajectories of $Q(z)dz^2$.
\end{lemma}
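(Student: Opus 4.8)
The plan is to unpack the two claims in Lemma \ref{horizontal trajectories level lines}—that the flow lines of $\dot z = 1/\sqrt{Q(z)}$ are (a) level lines of $\Im F$ and (b) the horizontal trajectories of $Q(z)\,dz^2$—and to show that both reduce to the same elementary computation involving $F'(z)=\sqrt{Q(z)}$. Recall that, by Theorem \ref{quadratic differential analytic function}, $F$ is a local primitive of $\sqrt{Q(z)}$, so $F'(z)=\sqrt{Q(z)}$ wherever a branch of $F$ is chosen; although $F$ is multivalued, $F'$ and hence the vector field $v_Q(z)=1/\sqrt{Q(z)}$ are single-valued by the discussion following that theorem, so everything below is independent of the branch.

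First I would verify claim (a). Let $z(s)$ solve $\dot z = 1/\sqrt{Q(z)} = 1/F'(z)$. Then by the chain rule
\begin{equation}
\frac{d}{ds}F(z(s)) = F'(z(s))\,\dot z(s) = F'(z(s))\cdot \frac{1}{F'(z(s))} = 1.
\end{equation}
Hence $F(z(s)) = F(z(0)) + s$, so along the flow the real part $\Re F$ increases linearly at unit speed while $\Im F(z(s)) = \Im F(z(0))$ stays constant. This shows the flow lines lie on level sets of $\Im F$, and moreover gives the natural parametrization in which $F$ is real-translation by $s$. One should note here that $F$ is well-defined only locally, but $\Im F$ is globally well-defined on $\mathbb{D}\setminus\{0\}$ in the sense made precise by Lemma \ref{real locus well defined}: in the even-$n$ case the branches of $F$ differ by integer multiples of $2\pi c$ with $c\in\mathbb{R}$, so they share the same imaginary part up to these real shifts, and in the odd-$n$ case the branches differ only by sign on the double cover, so the level-set structure descends correctly. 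I would record this compatibility explicitly so that the level lines of $\Im F$ are unambiguous.

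Next I would connect this to the standard definition of horizontal trajectories. By definition, a horizontal trajectory of the quadratic differential $Q(z)\,dz^2$ is a maximal curve along which $Q(z)\,dz^2 > 0$, i.e. along which $Q(z)\,(dz)^2$ is real and positive. Writing $dz = \dot z\, ds$ and using $Q(z) = F'(z)^2$, along the flow we have
\begin{equation}
Q(z)\left(\frac{dz}{ds}\right)^2 = F'(z)^2 \cdot \frac{1}{F'(z)^2} = 1 > 0,
\end{equation}
so the flow lines satisfy exactly the horizontal-trajectory condition; conversely, parametrizing a horizontal trajectory by arc length of $|F|$ recovers the equation $\dot z = 1/\sqrt{Q(z)}$ up to orientation. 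This identifies the flow lines of $v_Q$ with the horizontal trajectories. The routine part is checking that the two parametrizations match and that the condition $\Im F = \text{const}$ is equivalent to $Q\,dz^2>0$, which follows since $d(\Im F)=\Im(\sqrt{Q}\,dz)=0$ along a curve is precisely the statement that $\sqrt{Q}\,dz$ is real, i.e. $Q\,dz^2 \ge 0$.

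The main obstacle is not the core computation, which is short, but the bookkeeping around the multivaluedness of $F$ and the singular points. I expect the delicate step to be arguing that the vector field $v_Q = 1/\sqrt{Q(z)}$ and the level-set foliation of $\Im F$ are genuinely well-defined across the branch cut at $0$ and near the poles $\xi_k$ and zeros $z_j$ of $Q$—the zeros of $Q$ are exactly the critical points $\boldsymbol{z}$ where $v_Q$ blows up, and the poles of $Q$ are where $v_Q$ vanishes, so the trajectory structure degenerates there and must be handled by the standard local normal-form analysis of quadratic differentials. I would therefore restrict the flow to $\mathbb{D}\setminus(\{0\}\cup\{z_j\}\cup\{\xi_k\})$, invoke Lemma \ref{real locus well defined} for the global consistency of the foliation, and treat the behavior at the critical points separately as limiting ends, deferring the precise trajectory structure near $\boldsymbol{z}$ to Theorem \ref{horizontal trajectories form link pattern}.
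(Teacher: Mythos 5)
Your proposal is correct and follows essentially the same route as the paper's proof: the paper establishes the level-line property via the Cauchy--Riemann orthogonality $v_Q \perp \nabla\,{\rm Im}(F)$, which is just a rephrasing of your chain-rule computation $\frac{d}{ds}F(z(s))=1$, and the horizontal-trajectory claim is verified in both cases by the identical identity $Q(z)\,v_Q(z)^2=1>0$. Your additional remarks on branch consistency and the singular set are consistent with how the paper defers those issues to Lemma \ref{real locus well defined} and Theorem \ref{horizontal trajectories form link pattern}.
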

\begin{proof}
By Cauchy-Riemann equation,
$$v_{Q}(z)=\frac{1}{F'(z)} \perp \nabla {\rm Im}(F(z))$$
implies flow lines of $v_Q(z)$ are level lines of ${\rm Im}(F)$.

$$Q(z)v_{Q}(z)^2 =1 >0$$
implies flow lines of $v_Q(z)$ are horizontal trajectories of $Q(z)dz^2$.
\end{proof}

Next, we characterize the geometry of the horizontal trajectories of $Q(z)dz^2 \in \mathcal{QD}(\boldsymbol{z})$.

\begin{thm} \label{horizontal trajectories form link pattern}
Let \( \boldsymbol{z} = \{z_1, z_2, \ldots, z_n \} \) be distinct points on the unit circle. Consider a quadratic differential \( Q(z) \in \mathcal{QD}(\boldsymbol{z}) \) defined by:
\[
Q(z) = \frac{\prod_{k=1}^{m} \xi_{k}^2}{\prod_{j=1}^{n} z_j} z^{2m-n-2} \frac{\prod_{j=1}^{n} (z - z_j)^2}{\prod_{k=1}^{m} (z - \xi_k)^4},
\]
where \( \{\xi_1, \ldots, \xi_m\} \) are involution-symmetric finite poles of \( Q(z) \), and \( \xi_k \neq 0, \infty \) for \( k = 1, 2, \ldots, m \).

The horizontal trajectories of \( Q(z) \), denoted as \( \Gamma(Q) \), are the trajectories of \( Q(z)dz^2 \in \mathcal{QD}(\boldsymbol{z}) \) that with limiting ends at the zeros \( \{z_1, z_2, \ldots, z_n \} \). These trajectories satisfy the following properties:
\begin{itemize}
    \item[\rm(1)] If \( 2m \leq n \), the trajectories \( \Gamma(Q) \) form a topological radial link pattern in \( \text{LP}(n, m) \).
    \item[\rm(2)] If \( 2m > n \), the trajectories \( \Gamma(Q) \) form a topological radial link pattern in \( \text{LP}(n, n-m) \).
\end{itemize}
\end{thm}

To prepare for this, we first introduce some fundamental concepts from the theory of quadratic differentials.

\begin{defn} \label{C and H}
For a quadratic differential $Q(z)dz^2$ on a Riemann surface $S$, we denote the zeros and simple poles of $Q(z)$ by set $C$ and poles of order at least 2  by set $H$.
\end{defn}
\begin{defn}[F-set]
 A set $K$ on a Riemann surface $S$ is called an $F$-set (with respect to $\left.Q(z) d z^2\right)$ if any trajectory of $Q(z) d z^2$ which meets $K$ lies entirely in $K$.   
\end{defn} 

\begin{defn}[Inner closure]
  By the inner closure of a set on $\Re$ we mean the interior of the closure of the set. The inner closure of a set $K$ will be denoted by $\hat{K}$.  
\end{defn} 

In the following four definitions, we understand in each case $S$ to be a finite oriented Riemann surface, $Q(z) d z^2$ to be a quadratic differential on $S$.

\begin{defn}[End domain]
An end domain U (relative to $Q(z) d z^2$)) is a maximal connected open $F$-set on $S$ with the properties:
\begin{itemize}
\item [(i)] U contains no critical point of $Q(z) d z^2$,
\item[(ii)] U is swept out by trajectories of $Q(z) d z^2$ each of which has a limiting end point in each of its possible senses at a given point $A$ in $H$,
\item[(iii)] U is mapped by $F(z)=\int(Q(z))^{1 / 2} d z$ conformally onto an upper or lower half-plane.

\end{itemize}
\end{defn} 

\begin{defn}[Strip domain]
 A strip domain $U$ (relative to $\Q(z) d z^2$) is a maximal connected open $F$-set on $S$ with the properties:
 \begin{itemize}
     \item [(i)] U contains no critical point of $Q(z) d z^2$,
\item[(ii)] U is swept out by trajectories of $Q(z) d z^2$ each of which has at one point $A$ in $H$ in the one sense a limiting end point and at another (possibly coincident) point $B$ in $H$ in the other sense a limiting end point,
\item[(iii)] U is mapped by $F(z)=\int(Q(z))^{1 / 2} d z$ conformally onto a strip $a< {\rm Im} F<b$, a, b are finite real numbers, $a<b$.
\end{itemize}
   
\end{defn}
\begin{defn}[Circle domain]
  A circle domain $U$ (relative to $\left.Q(z) d z^2\right)$ is a maximal connected open $F$-set on $S$ with the properties
\begin{itemize}
    \item[(i)] U contains a single double pole $A$ of $Q(z) d z^2$,
\item[(ii)] $U-A$ is swept out by trajectories of $Q(z) d z^2$ each of which is a Jordan curve separating $A$ from the boundary of $\mathcal{S}$,
\item[(iii)] for a suitably chosen purely imaginary constant c the function

$$
w=\exp \left\{c \int(Q(z))^{1 / 2} d z\right\}
$$
\end{itemize}
extended to have the value zero at $A$ maps U conformally onto a circle $|w|<R$, $A$ going into the point $w=0$.  
\end{defn}

\begin{defn}[Ring domain]
 A ring domain $U$ (relative to $\left.Q(z) d z^2\right)$ is a maximal connected open $F$-set on $S$ with the properties:

 \begin{itemize}
\item [(i)] $U$ contains no critical point of $Q(z) d z^2$,
\item[(ii)] $U$ is swept out by trajectories of $Q(z) d z^2$ each of which is a Jordan curve,
\item[(iii)] for a suitably chosen purely imaginary constant $c$ the function

$$
w=\exp \left\{c \int(Q(z))^{1 / 2} d z\right\}
$$

maps $U$ conformally onto a circular ring

$$
r_1<|w|<r_2\left(0<r_1<r_2\right)
$$
 \end{itemize}

\end{defn}
In \cite{J12} thm 3.5, the author proves a general result for positive quadratic differentials on finite Riemann surface $S$. In our setting, we only need to consider a special case $S=\widehat{\mathbb{C}}$ where all quadratic differentials are positive.
\begin{thm}[Basic Structure Theorem, thm 3.5 in \cite{J12}] \label{basic structure theorem}
 Let $S$ be a Riemann sphere and $Q(z) d z^2$ a  quadratic differential on $S$ where we exclude the following possibilities and all configurations obtained from them by conformal equivalence:
 \begin{itemize}
     \item[(i)]  S the $z$-sphere, $Q(z) d z^2=d z^2$,
     \item[(ii)] S the $z$-sphere, $Q(z) d z^2=K e^{i x} d z^2 / z^2$, $\alpha$ real, $K$ positive,
 \end{itemize}
Let $\Gamma(Q)$ denote the union of all trajectories which have a limiting end point at a point of $C$ (see definition (\ref{C and H}). Then
\begin{itemize}
    \item [(i)] $S-\Gamma(Q)$ consists of a finite number of end, strip, circle and ring domains,
    \item[(ii)] each such domain is bounded by a finite number of trajectories together with the points at which the latter meet; every boundary component of such a domain contains a point of $C$; for a strip domain the two boundary elements arising from points of $H$ (see definition (\ref{C and H})) divide the boundary into two parts on each of which is a point of $C$,
\item[(iii)] every pole of $Q(z) d z^2$ of order $m$ greater than two has a neighborhood covered by the inner closure of $m-2$ end domains and a finite number (possibly zero) of strip domains,
\item[(iv)] every pole of $Q(z) d z^2$ of order two has a neighborhood covered by the inner closure of a finite number of strip domains or has a neighborhood contained in a circle domain,

\item[(v)] the inner closure $\widehat{\Gamma(Q)}$ of $\Gamma(Q)$ is an $F$-set consisting of a finite number of domains on $S$ each with a finite number (possibly zero) of boundary components,
\item[(vi)] each boundary component of such a domain is a piecewise analytic curve composed of trajectories and their limiting end points in $C$.
\end{itemize}
\end{thm}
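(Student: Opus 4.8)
The plan is to reduce the global statement to a collection of local normal forms glued together by a finiteness argument, working throughout in the \emph{natural parameter} $w=\int\sqrt{Q(z)}\,dz$, a locally defined conformal coordinate (away from the zeros and poles) in which $Q(z)\,dz^2=dw^2$, so that horizontal trajectories become the horizontal lines $\mathrm{Im}\,w=\text{const}$. The entire analysis is local except for the classification of how individual trajectories terminate, and the crucial simplification in our situation is that $S=\widehat{\mathbb{C}}$ has genus $0$: this is what ultimately excludes recurrent (everywhere-dense) trajectories and forces the decomposition to be finite.

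First I would record the local trajectory structure near the three kinds of special points. Near a regular point the trajectories are parallel analytic arcs. Near a point of $C$ --- a zero of order $k\ge 1$ or a simple pole --- the map $w$ behaves like $z^{(k+2)/2}$, so exactly $k+2$ critical trajectory rays emanate (one ray for a simple pole). Near a point of $H$ one distinguishes: a double pole has $w\sim c\log z$, so after exponentiating the neighborhood is either a \emph{circle domain} (purely imaginary leading coefficient, trajectories nested Jordan curves) or is covered by finitely many \emph{strip domains}; a pole of order $m>2$ has $w\sim z^{-(m-2)/2}$, giving $m-2$ sectors each mapped to a half-plane, i.e.\ covered by the inner closure of $m-2$ \emph{end domains} together with finitely many strips. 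In our application the zeros are the $z_j$ of order $2$, the $\xi_k$ are poles of order $4$, and $0,\infty$ are poles of order $n+2-2m$, so each local model is one of these explicit cases.

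Next I would classify the global behavior of a non-critical trajectory $\gamma$: by the local models its closure $\overline{\gamma}$ either (a) is a closed Jordan curve, (b) tends to points of $H$ in both senses, or (c) is \emph{recurrent}, limiting on a set of positive area. Ruling out (c) is the heart of the matter. Here I would invoke the standard minimal-set argument: a recurrent trajectory would sweep out a spiral domain whose existence forces a nontrivial contribution to $\int\mathrm{Im}\sqrt{Q(z)}\,dz$ around a nonseparating cycle, which cannot occur on the genus-zero surface $\widehat{\mathbb{C}}$ once the prescribed residue-free local data are imposed. With recurrence excluded, every trajectory is of type (a) or (b), and a maximal connected coherent family of such trajectories is, by its boundary behavior, precisely an end, strip, circle, or ring domain; this yields parts (i), (iii), (iv) together with the boundary statements (ii), (v), (vi).

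The remaining step, and the one I expect to be the genuine obstacle, is \textbf{finiteness}: showing that $S\setminus\Gamma(Q)$ has only finitely many components and that $\Gamma(Q)$ is a finite piecewise-analytic graph. Since $Q(z)\,dz^2$ has only finitely many zeros and poles, there are finitely many critical rays; the difficulty is to prove that each critical ray actually lands on another point of $C$ or enters a pole of $H$ in finite natural length, rather than accumulating on itself. I would handle this as in \cite{J12}: combine the exclusion of recurrence with a compactness argument on the finite natural-parameter lengths, using that the two excluded configurations $Q(z)\,dz^2=dz^2$ and $Q(z)\,dz^2=Ke^{i\alpha}dz^2/z^2$ are exactly the degenerate cases in which a single end or ring domain fills the whole sphere. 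This establishes that the critical graph is finite and that the complementary domains are the four listed types; for the complete argument in the general finite-Riemann-surface setting I would defer to Jenkins' original proof.
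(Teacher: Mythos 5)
This theorem is not proved in the paper at all: it is quoted essentially verbatim from Jenkins \cite{J12} (Theorem 3.5) and used as a black box, with the paper only remarking that the sphere case is the one needed. So there is no internal proof to compare against, and your decision to defer the complete argument to Jenkins is, in effect, exactly what the paper does. Your local analysis (natural parameter $w=\int\sqrt{Q}\,dz$, the $k+2$ rays at a zero of order $k$, the dichotomy at a double pole between circle-domain and strip-domain behavior, the $m-2$ end domains at a pole of order $m>2$) is a correct account of the standard local models.

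However, the one substantive step you add --- ruling out recurrent trajectories because $\widehat{\mathbb{C}}$ has genus zero --- is wrong on two counts. First, it is false: a quadratic differential on the sphere with four or more poles can have recurrent (dense) trajectories; for instance, one with four simple poles pulls back to the square of an abelian differential on a torus double cover, whose horizontal foliation is minimal for irrational slope, and this descends to a dense trajectory structure on the sphere. There are no nonseparating cycles on $\widehat{\mathbb{C}}$ itself, but the relevant periods live on the branched double cover, so the genus-zero argument does not apply. The correct non-recurrence statement is Jenkins' Three Pole Theorem (no recurrent trajectories when $Q\,dz^2$ has at most three distinct poles), which is a separate and harder result. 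Second, excluding recurrence is not actually part of the Basic Structure Theorem: clause (v), about the inner closure $\widehat{\Gamma(Q)}$ consisting of finitely many domains, is precisely Jenkins' accommodation of density (spiral) domains, so the theorem is formulated to hold even when recurrence occurs. Your sketch therefore misrepresents the logical architecture of the result it is trying to establish; if you want to present an outline rather than a bare citation, the decomposition should be into end/strip/circle/ring domains \emph{plus} possible density domains, with non-recurrence established separately (and only under the appropriate hypotheses) when it is needed downstream.
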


\begin{proof}[Proof of theorem (\ref{horizontal trajectories form link pattern})]
We characterize the geometry of $\Gamma(Q)$ by discussing the following cases:
\begin{itemize}
      \item If $n$ is even and $2m<n$, then the poles of $Q(z)$ at $0$ and $\infty$ are of order $n+2-2m \geq 4$

    By the basic structure theorem (\ref{basic structure theorem}), $\widehat{\mathbb{C}} - \Gamma(Q)$, consists of a finite number of end, strip, circle and ring domains.

    Now, we prove that there can not be strip or ring domains.
   For $Q(z)dz^2 \in \mathcal{QD}(z)$, by lemma (\ref{quadratic differential analytic function}) and lemma (\ref{horizontal trajectories level lines}),  $F(z)=\int \sqrt{Q(z)}dz$ take real values on $\Gamma(Q)$.  

   Therefore, $F(z)$ can not map $U$ to a strip domain, in this case on $\partial U$, ${\rm Im}F$ take 2 differential values. Similarly, $F(z)$ can not map $U$ to a ring domian, in this case on $\partial U$, ${\rm Im}F$ also take 2 differential values.

    When $n>2m$, the degree of pole at $z=0$ is a least $4$, so there also can not be any circle domains.

    In conclusion, all domains are end domains. We denote finite domains by
      $\{U_1,U_2,\ldots,U_s\}$ which are bounded away from $0$ and $\infty$, infinite domains by $\{V_1,V_2,\ldots,V_t\}$ whose closure contain $0$ or $\infty$ .
    
     For each finite domain $U_i$, since $F(z)$ maps $U_i$ to the upper half plane or lower half plane and continuously extend to the boundary, there must be a pole $\xi_l$ on $\partial U_i$, and each pole $\xi_l$ must belong to the boundary of two adjoint finite domains. Since we have exactly $m$ poles $\{\xi_1,\xi_2,\ldots,\xi_m \}$, there are $2m$ finite domains, so $s=2m$.

   Since the pole at $z=0$ is of order $2m-n-2$, by the basic structure theorem (\ref{basic structure theorem}), there are $n-2m$ end domains whose closure contains $0$,  by involution symmetry, there are $n-2m$ end domains whose closure contains $\infty$, so $t=2n-4m$. 
   
   Note that each finite domain $U_i$ is curved out by disjoint arcs connecting pairs of zeros in $\{z_1,z_2,\ldots,z_n\}$.  By involution symmetry, there are $m$ finite domains in $\mathbb{D}$. Consequently, there are exactly $m$ arcs in $\Gamma(Q)$ connecting $m$ pairs of zeros.

  Since there are exactly $n-2m$ infinite domains, we have $n-2m$ trajectories in $\Gamma(Q)$ with limiting ends at $0$ and with limiting tangential directions forming equal angles $\frac{2\pi}{n-2m}$. These trajectories will end at the remaining $n-2m$ zeros on the boundary .
   
  $\Gamma(Q)$ form a radial $(n,m)$ link pattern.

      \item If $n$ is even and $n=2m$, which implies that, the poles of $Q(z)$ at $0$ and $\infty$ are both of degree $2$. 

      By the basic structure theorem (\ref{basic structure theorem}), $\widehat{\mathbb{C}} - \Gamma(Q)$, consists of a finite number of end, strip, circle and ring domains.

    Now, we prove that there can not be strip or ring domains.
   For $Q(z)dz^2 \in \mathcal{QD}(z)$, by lemma (\ref{quadratic differential analytic function}) and lemma (\ref{horizontal trajectories level lines}),  $F(z)=\int \sqrt{Q(z)}dz$ take real values on $\Gamma(Q)$.  

   Therefore, $F(z)$ can not map $U$ to a strip , in this case on $\partial U$, ${\rm Im}F$ take 2 differential values. Similarly, $F(z)$ can not map $U$ to a ring domian, in this case on $\partial U$, ${\rm Im}F$ also take 2 differential values.

    When $n=2m$, the degree of pole at $z=0$ and $z=\infty$ are exactly $2$, so there is exactly one circle domain at $z=0$ and involution symmetrically one circle domain at $z=\infty$.

    In conclusion, we have two circle domains and others are end domains.

     We denote these end domains by
      $\{U_1,U_2,\ldots,U_s\}$, which are bounded away from $0$ and $\infty$,
    
     For each end domain $U_i$, since $F(z)$ maps $U_i$ to the upper half plane or lower half plane and continuously extends to the boundary, there must be a pole $\xi_l$ on $\partial U_i$, and each pole $\xi_l$ must belong to the boundary of two adjoint finite domains. Since we have exactly $m$ poles $\{\xi_1,\xi_2,\ldots,\xi_m \}$, there are $2m$ finite domains, so $s=2m$. 

      Note that each finite domain $U_i$ is curved out by disjoint arcs connecting pairs of zeros in $\{z_1,z_2,\ldots,z_n\}$.  By involution symmetry, there are $m$ finite domains in $\mathbb{D}$. Consequently, there are exactly $m$ arcs in $\Gamma(Q)$ connecting $m$ pairs of zeros.
    
    $\Gamma(F)$ form a radial $(n,m)$ link pattern.
    
      \item If $n$ is even and $2m>n$, consider the function $F(z)$ which is the primitive of $\sqrt{Q(z)}$. In this case, by theorem (\ref{quadratic differential analytic function}), 
      $F(z)=R(z)$ is a rational function.
      The degree of $F(z)$ at $0$ is $m-\frac{n}{2}>0$. The real locus $\Gamma(F)$ are thus regular near 0. 
      There are $2m-n$ trajectories with ends at $0$,
 and with limiting tangential directions that make equal angles $\frac{2\pi}{2m-n}$
 with each other.
 
     $\Gamma(F)$ form a radial $(n,n-m)$ link pattern.
      \item If $n$ is odd,  consider the primitive of $\sqrt{Q(z)}$, $F(z)=\int \sqrt{Q(z)}dz$. In this case, by theorem (\ref{quadratic differential analytic function}), 
      $F(z)=\sqrt{z}R(z)$ where $R(z)$ is a rational function.
      
      By taking the double cover of $\mathbb{D}$,  $F(z^2)=zR(z^2)$ is a rational function.
      The degree of $F(z^2)$ at $0$ is $|2m-n|$. 
       There are $|2n-4m|$  trajectories with ends at 0.
    
      By projecting back, we see that $\Gamma(F(z))$ has $|n-2m|$  trajectories with limiting ends at 0
      and with limiting tangential directions forming equal angles $\frac{2\pi}{|n-2m|}$
 with each other.
      
      Therefore if $n>2m$, $\Gamma(F)$ form a radial $(n,m)$ link pattern;  if $n<2m$, $\Gamma(F)$ form a radial $(n,n-m)$ link pattern.
      
  \end{itemize}  
\end{proof}

\subsection{Field integral of motion and horizontal trajectories as flow lines}
\begin{thm}\label{integral of motion in D} 
In the unit disk \( \mathbb{D} \), let \( z_1, z_2, \dots, z_n \) be distinct growth points on \( \partial \mathbb{D} \). For each \( z \in \overline{\mathbb{D}} \), define the following:

\[
\left\{
\begin{aligned}
A(t) &= \frac{\prod_{j=1}^{m} \xi_k^2(t)}{ \prod_{k=1}^{n} z_k(t)}, \\
B_t(z) &= e^{-(2m-n)\left(\int_0^t \sum_j \nu_j(s) \, ds\right)} \, g_t(z)^{2m-n-2} (g'_t(z))^2 \frac{\prod_{k=1}^{n} (g_t(z) - z_k(t))^2}{\prod_{j=1}^{m} (g_t(z) - \xi_j(t))^4}, \\
N_t(z) &= A(t) B_t(z) = e^{-(2m-n)\left(\int_0^t \sum_j \nu_j(s) \, ds\right)} \frac{\prod_{j=1}^{m} \xi_k(t)^2}{\prod_{k=1}^{n} z_k(t)} \, g_t(z)^{2m-n-2} (g'_t(z))^2 \frac{\prod_{k=1}^{n} (g_t(z) - z_k(t))^2}{\prod_{j=1}^{m} (g_t(z) - \xi_j(t))^4}.
\end{aligned}
\right.
\]

Then, \( A(t) \), \( B_t(z) \), and \( N_t(z) \) are field integrals of motion on the interval \( [0, \tau_t \wedge \tau) \) for the multiple radial SLE(0) Loewner flows with parametrization \( \nu_j(t) \), \( j = 1, \dots, n \).
\end{thm}

    \( N_t(z) \) is a field integral of motion for arbitrary initial positions of screening charges \( \boldsymbol{\xi} \) even without assuming stationary relations. The stationary relations imply the existence of a quadratic differential \( Q(z) dz^2 \in \mathcal{QD}(\boldsymbol{z}) \), see Theorem \ref{traces as horizontal trajectories}.

    The integral of motion is motivated by a martingale observable in conformal field theory. For a field \( \mathcal{X} \) in the OPE family \( F_{\beta} \),
    \[
    \hat{\mathbf{E}}[\mathcal{X}] := \frac{\mathbf{E}[\mathcal{X} \mathcal{O}_{\beta}]}{\mathbf{E}[\mathcal{O}_{\beta}]}
    \]
    is a martingale observable where \( \mathcal{O}_{\beta} \) is a vertex field. In our situation, we choose \( \mathcal{X} \) to be the chiral vertex field and take the classical limit as \( \kappa \to 0 \). The martingale observable degenerates to the integral of motion. We will discuss the construction of the field \( \mathcal{X} \) in Section \ref{Multiple radial Martingale Observable}.

In the proof of Theorem \ref{traces as horizontal trajectories}, we also need to consider \( \sqrt{N_t(z)} \) as a field integral of motion. However, an obstacle arises in the expression
\[
\sqrt{N_t(z)} = e^{-(m - \frac{n}{2})\left( \int_0^t \sum_j \nu_j(s) \, ds \right)} \, g_t(z)^{m - \frac{n}{2} - 1} \, g'_t(z) \, \frac{\prod_{k=1}^{n} (g_t(z) - z_k(t))}{\prod_{j=1}^{m} (g_t(z) - \xi_j(t))^2},
\]
where the term \( g_t(z)^{m - \frac{n}{2} - 1} \) becomes multivalued when \( n \) is an odd integer, thus \( \sqrt{N_t(z)} \) is in fact not well defined. To resolve this technical problem, we introduce the angular coordinate.

\begin{cor} \label{angular integral of motion}
    In the angular coordinate, by changing variables, let \( \xi_k = e^{i\zeta_k} \), \( z_k = e^{i\theta_k} \), and \( h_t(z) \) be the covering map of \( g_t(z) \) (i.e., \( e^{ih_t(z)} = g_t(e^{iz}) \)). For each \( z \in \overline{\mathbb{H}} \), we define:
    \begin{equation}
    \left\{
    \begin{aligned}
    A^{ang}(t) &= \frac{\prod_{j=1}^{m} e^{i\zeta_{k}(t)}}{ \prod_{k=1}^{n} e^{i\frac{\theta_k(t)}{2}}}, \\
    B^{ang}_t(z) &= e^{-(m-\frac{n}{2})\left( \int_0^t \sum_j \nu_j(s) \, ds \right)} g_t(z)^{m-\frac{n}{2}-1} e^{i(m-\frac{n}{2}-1) h_t(z)} h_{t}'(z) e^{i h_t(z)} \frac{\prod_{k=1}^{n} \left(e^{i h_t(z)} - e^{i\theta_k(t)}\right)}{\prod_{j=1}^{m} \left(e^{i h_t(z)} - e^{i\zeta_{j}(t)}\right)^2}, \\
    N_{t}^{ang}(z) &= A^{ang}(t) B^{ang}_t(z) \\
    &= e^{-(m-\frac{n}{2})\left( \int_0^t \sum_j \nu_j(s) \, ds \right)} \frac{\prod_{j=1}^{m} e^{i\zeta_{k}(t)}}{ \prod_{k=1}^{n} e^{i \frac{\theta_k(t)}{2}}} e^{i(m-\frac{n}{2}-1) h_t(z)} h_{t}'(z) e^{i h_t(z)} \frac{\prod_{k=1}^{n} \left(e^{i h_t(z)} - e^{i \theta_k(t)}\right)}{\prod_{j=1}^{m} \left(e^{i h_t(z)} - e^{i \zeta_{j}(t)}\right)^2}.
    \end{aligned}
    \right.
    \end{equation}
    Then, \( A^{ang}(t) \), \( B^{ang}_t(z) \), and \( N^{ang}_t(z) \) are field integrals of motion on the interval \( [0, \tau_t \wedge \tau) \) for the multiple radial SLE(0) Loewner flows with parametrization \( \nu_j(t) \), \( j = 1, \dots, n \).
\end{cor}

For a multiple radial SLE(0) with growth points $\{z_1,z_2,\ldots,z_n\}$ and screening charges $\{\xi_1,\xi_2,\ldots,\xi_m\}$, we construct a quadratic differential $Q(z)dz^2 \in \mathcal{QD}(\boldsymbol{z})$, where
$$
Q(z)= 
\frac{\prod_{k=1}^{m}\xi_{k}^2}{ \prod_{j=1}^{n}z_j}
z^{2m-n-2}\frac{\prod_{j=1}^{ n}\left(z-z_j\right)^2}{\prod_{k=1}^{m}\left(z-\xi_k\right)^4},
$$
In this section, we will show that the traces of a multiple radial SLE(0) system coincide with the horizontal trajectories of $Q(z)dz^2 \in \mathcal{QD}(\boldsymbol{z})$ with ends at $\{z_1,z_2,\ldots,z_n\}$ (which are double zeros of $Q(z)$). These trajectories are also part of the real locus of $F(z)=\int \sqrt{Q(z)}dz$.

From the dynamical point of view, the key ingredient in our proof of the main theorem (\ref{traces as horizontal trajectories}) is the field of integral of motions for the multiple radial Loewner flow. This field integral of motion can be derived as the classical limit of a martingale observable constructed via conformal field theory, see section \ref{Multiple radial Martingale Observable}.

\begin{lemma}
In the unit disk $\mathbb{D}$, let $z_1,z_2,\ldots,z_n$ be distinct growth points on the unit circle, and $\xi_1,\xi_2,\ldots,\xi_m$ be marked points, for each $z \in \overline{\mathbb{D}}$, we denote 
\begin{equation}
\left\{
\begin{aligned}
&A(t)= \frac{\prod_{j=1}^{m}\xi_{k}^2(t)}{ \prod_{k=1}^{n}z_k(t)}\\
&B_t(z)=e^{-(2m-n)t}g_t(z)^{2m-n-2}(g'_{t}(z))^2\frac{\prod_{k=1}^{n}(g_t(z)-z_k(t))^2}{\prod_{j=1}^{m}(g_t(z)-\xi_j(t))^4} \\
&N_{t}(z)=A(t)B_t(z)=e^{-(2m-n)t}
\frac{\prod_{j=1}^{m}\xi_k(t)^2}{ \prod_{k=1}^{n}z_k(t)}
g_t(z)^{2m-n-2}(g'_{t}(z))^2\frac{\prod_{k=1}^{n}(g_t(z)-z_k(t))^2}{\prod_{j=1}^{m}(g_t(z)-\xi_j(t))^4}
\end{aligned}
\right.
\end{equation}

then $A(t)$, $B(t)$, $N(t)$ are integrals of motion on the interval $[0,\tau_t \wedge \tau)$ for the multiple radial Loewner flows with parametrization $\nu_j(t)=1$, $\nu_k(t)=0$, $k\neq j$, i.e., only the $j$-th curve is growing
\end{lemma}
\begin{proof}

By the Loewner equation, the following identities hold:
\begin{equation}
\left\{
    \begin{aligned}
&\frac{dz_j(t)}{dt} =\sum_{k\neq j}z_j(t)\frac{z_j(t)+z_k(t)}{z_{k}(t)-z_j(t)}- 2\sum_{l}z_j(t)\frac{z_j(t)+\xi_l(t)}{\xi_{l}(t)-z_j(t)} \\
 &\frac{dz_{k}(t)}{dt}= z_{k}(t)\frac{z_j(t)+z_{k}(t)}{z_j(t)-z_{k}(t)},  k \neq j \\
  &  \frac{d\xi_{l}(t)}{dt}= \xi_{l}(t)\frac{z_j(t)+\xi_{l}(t)}{z_j(t)-\xi_{l}(t)} \\
    \end{aligned}
\right.
\end{equation}
By substituting above equations into $\frac{d \log A(t)}{dt}$, we obtain that:

\begin{equation}
\log A(t)=2\sum_{j=1}^{m}\log\xi_k(t)-\sum_{k=1}^{n}\log z_k(t)
\end{equation}

\begin{equation}\label{d Aj t}
\begin{aligned}
\frac{d \log A(t)}{dt} & =   2\sum_{l=1}^{m} \frac{d\log \xi_l(t)}{dt}- \sum_{j=1}^{n} \frac{d\log z_j(t)}{dt}
\\
&=\underbrace{ 2\sum_{l=1}^{m}\frac{z_j(t)+\xi_{l}(t)}{z_j(t)-\xi_{l}(t)} + \sum_{k\neq j}\frac{z_j(t)+z_{k}(t)}{z_j(t)-z_{k}(t)}+ \sum_{k\neq j}\frac{z_j(t)+z_k(t)}{z_{k}(t)-z_j(t)}- 2\sum_{l}\frac{z_j(t)+\xi_l(t)}{\xi_{l}(t)-z_j(t)} }_{A^{j}(t)}\\
&=0
\end{aligned}
\end{equation}

We denote the sum in the second line of the equation (\ref{d Aj t})  by $A^{j}(t)$. After simplifying, it is clear that the sum $A^{j}(t)=0$.

Again, by Loewner equation, the following identities hold:
\begin{equation}
\left\{
\begin{aligned}
     &  \frac{dg_t(z)}{dt} = g_t(z) \frac{z_j(t)+g_t(z)}{z_j(t)-g_t(z)} \\
    & \frac{d\log g_{t}'(z)}{dt}= \frac{z_j(t)+g_t}{z_j(t)-g_t}+\frac{2z_j(t) g_t}{(z_j(t)-g_t)^2} \\
    & \frac{d\log g_t(z)}{dt}= \frac{z_j(t)+g_t(z)}{z_j(t)-g_t(z)} \\
    &\frac{d\log(g_t'(z)/g_t)}{dt}= \frac{2g_t(z)z_j(t)}{(z_j(t)-g_t)^2}=
    \frac{g_t(z)(g_{t}(z)+z_j(t))}{(z_j(t)-g_t(z))^2}+ \frac{g_t(z)}{z_j(t)-g_t(z)}
\end{aligned}
\right.
\end{equation}

\begin{equation} \label{derivatives of terms}
\left\{
\begin{aligned}
    & \frac{d\log(z_k(t) - g_t(z))}{dt}=\frac{1}{z_k-g_t}(z_{k}(t)\frac{z_j(t)+z_{k}(t)}{z_j(t)-z_{k}(t)}-g_t(z) \frac{z_j(t)+g_t(z)}{z_j(t)-g_t(z)})\\
    &=-\frac{z_j(t)(z_j(t)+z_k(t))}{(z_j(t)-g_t(z))(z_k(t)-z_j(t))} +\frac{g_t}{z_j(t)-g_t(z)},   k\neq j \\
    & \frac{d\log(\xi_l(t) - g_t(z))}{dt}=
     \frac{1}{\xi_l-g_t}(\xi_l(t)\frac{z_j(t)+\xi_{l}(t)}{z_j(t)-\xi_{l}(t)}-g_t(z) \frac{z_j(t)+g_t(z)}{z_j(t)-g_t(z)})
     \\
     &= -\frac{z_j(t)(z_j(t)+\xi_l(t))}{(z_j(t)-g_t(z))(\xi_l(t)-z_j(t))} +\frac{g_t}{z_j(t)-g_t(z)}, \\  
& \frac{d\log(z_j(t)-g_t(z))}{dt}=\frac{1}{z_j(t)-g_t}\left(z_j(t)\sum_{k\neq j}\frac{z_j(t)+z_k(t)}{z_k(t)-z_j(t)}-2\sum_{l}z_j(t)\frac{z_j(t)+\xi_l(t)}{\xi_{l}(t)-z_j(t)} -g_t(z)\frac{z_j(t)+g_t(z)}{z_j(t)-g_t} \right) \\
\end{aligned}
\right.
\end{equation}

\begin{equation} \label{Bjt}
\log B_t(z)=-(2m-n)t+(2m-n-2)\log g_t(z)+ 2\log (g'_{t}(z))+2\sum_{k=1}^{n}\log(g_t(z)-z_k(t))-4\sum_{j=1}^{m}\log(g_t(z)-\xi_j(t))
\end{equation}
By substituting equations (\ref{derivatives of terms}) into equation (\ref{Bjt}).
\begin{equation} \label{dBj t}
\frac{d\log B_t(z)}{dt} =
 \underbrace{\begin{aligned}
 &-(2m-n)+(2m-n-2)\frac{z_j(t)+g_t(z)}{z_j(t)-g_t(z)}+2\left( \frac{z_j(t)+g_t}{z_j(t)-g_t}+\frac{2z_j(t) g_t}{(z_j(t)-g_t)^2}\right)+ \\
&+2\sum_{k=1}^{n}\left(-\frac{z_j(t)(z_j(t)+z_k(t))}{(z_j(t)-g_t(z))(z_k(t)-z_j(t))} +\frac{g_t}{z_j(t)-g_t(z)} \right)\\
&-4 \sum_{j=1}^{m}\frac{1}{z_j(t)-g_t}\left(z_j(t)\sum_{k\neq j}\frac{z_j(t)+z_k(t)}{z_k(t)-z_j(t)}-2\sum_{l}z_j(t)\frac{z_j(t)+\xi_l(t)}{\xi_{l}(t)-z_j(t)} -g_t(z)\frac{z_j(t)+g_t(z)}{z_j(t)-g_t} \right) 
\end{aligned}}_{B^{j}_{t}(z)}
\end{equation}
We denote the sum on the right hand side of (\ref{dBj t}) by $B^{j}_{t}(z)$.
By direct computations we obtain that all terms canceled out,
$$\frac{d \log B_t(z)}{dt}=B^{j}_{t}(z)=0 $$

which implies
$$\frac{d \log N_t(z)}{dt} =\frac{d \log A(t)}{dt}+\frac{d \log B_t(z)}{dt}= 0$$

\end{proof}

\begin{proof}[Proof of theorem (\ref{integral of motion in D})] 
Note that for $\nu(t)$ parametrization
$$
\partial_t g_t(z)=\sum_{j=1}^n \nu_j(t)g_t(z)\frac{z_j(t)+g_t(z)}{z_j(t)-g_t(z)}, \quad g_0(z)=z,
$$

$$
\left\{
\begin{aligned}
    &\frac{dz_j(t)}{dt} =\nu_j(t)\left(\sum_{k\neq j}z_j(t)\frac{z_j(t)+z_k(t)}{z_{k}(t)-z_j(t)}+ 2\sum_{l}z_j(t)\frac{z_j(t)+\xi_l(t)}{\xi_{l}(t)-z_j(t)}\right)+\sum_{k\neq j}\nu_k(t)\left( z_{j}(t)\frac{z_j(t)+z_{k}(t)}{z_k(t)-z_{j}(t)}\right) \\
   & \frac{d\xi_{l}(t)}{dt}=\sum_{j}\nu_j(t)\left( \xi_{l}(t)\frac{z_j(t)+\xi_{l}(t)}{z_j(t)-\xi_{l}(t)}\right)
   \end{aligned}
\right.
$$
$$
\left\{
 \begin{aligned}
    & \frac{dg_t}{dt} =\sum_{j}\nu_j(t)\left( g_t(z) \frac{z_j(t)+g_t(z)}{z_j(t)-g_t(z)}\right) \\
   &  \frac{d\log g_{t}'(z)}{dt}= \sum_{j}\nu_j(t)\left(\frac{z_j(t)+g_t}{z_j(t)-g_t}+\frac{2z_j(t) g_t}{(z_j(t)-g_t)^2}\right) \\
    & \frac{d\log g_t(z)}{dt}= \sum_{j}\nu_j(t)\left(\frac{z_j(t)+g_t(z)}{z_j(t)-g_t(z)} \right)\\
    &\frac{d\log(g_t'(z)/g_t)}{dt}= \sum_{j}\nu_j(t)\frac{2g_t(z)z_j(t)}{(z_j(t)-g_t)^2}= \sum_{j=1}\nu_j(t) \left(
    \frac{g_t(z)(g_{t}(z)+z_j(t))}{(z_j(t)-g_t(z))^2}+ \frac{g_t(z)}{z_j(t)-g_t(z)}\right)
\end{aligned}
\right.
$$
$$
\left\{
\begin{aligned}
    & \frac{d\log(\xi_l(t) - g_t(z))}{dt}=
     \frac{1}{\xi_l-g_t}\sum_{j}\nu_j(t)\left(\xi_l(t)\frac{z_j(t)+\xi_{l}(t)}{z_j(t)-\xi_{l}(t)}-g_t(z) \frac{z_j(t)+g_t(z)}{z_j(t)-g_t(z)}\right)
     \\
     &= \sum_{j}\nu_{j}(t)\left(\frac{z_{j}^2+g_tz_j+\xi_lz_j-\xi_lg_t}{(z_j(t)-\xi_l(t))(z_j(t)-g_t)}\right)= \sum_{j}\nu_{j}(t)\left(-\frac{z_j(t)(z_j(t)+\xi_l(t))}{(z_j(t)-g_t(z))(\xi_l(t)-z_j(t))} +\frac{g_t}{z_j(t)-g_t(z)}\right)   \\
     & \frac{d\log(z_j(t)-g_t(z))}{dt}=\frac{1}{z_j(t)-g_t}\nu_{j}(t)\left(z_j(t)\sum_{k\neq j}\frac{z_j(t)+z_k(t)}{z_k(t)-z_j(t)}-2\sum_{l}z_j(t)\frac{z_j(t)+\xi_l(t)}{\xi_{l}(t)-z_j(t)} -g_t\frac{z_j(t)+g_t(z)}{z_j(t)-g_t}\right) \\
     &+\frac{1}{z_j(t)-g_t}\sum_{k\neq j}\nu_k(t)\left( z_{j}(t)\frac{z_j(t)+z_{k}(t)}{z_k(t)-z_{j}(t)} -g_t\frac{z_j(t)+g_t(z)}{z_j(t)-g_t}\right) 
     \end{aligned} 
     \right.
$$

By plugging in these identities, we obtain that 
$$\frac{d \log A(t)}{dt}= \sum_{j=1}^{n}\nu_j(t)  A^{j}(t)=0$$
$$\frac{d \log B_t(z)}{dt}= \sum_{j=1}^{n}\nu_j(t) B^{j}_t(z)=0$$
$$\frac{d \log N_t(z)}{dt}= \sum_{j=1}^{n}\nu_j(t) \frac{d \log N^{j}_t(z)}{dt}=0$$
\end{proof}

\begin{proof}{Theorem (\ref{traces as horizontal trajectories})}
We first prove that $Q(z) \circ g_t^{-1}$ is in $\mathcal{QD}(\boldsymbol{z}(t))$. 

Since at $t=0$, the screening charges $\boldsymbol{\xi}$ are assumed to be involution symmetric and solve the stationary relations, stationary relations- residue free theorem guarantees the existence of an $Q_0(z)dz^2 \in \mathcal{QD}(\boldsymbol{z}(0))$ with $\boldsymbol{\xi}(0)$ as poles and $\boldsymbol{z}(0)$ as zeros. 

Moreover, $Q_0(z)$ factors as
\begin{equation}
Q_0(z)= 
\frac{\prod_{k=1}^{m}\xi_{k}^2(0)}{ \prod_{j=1}^{n}z_j(0)}
z^{2m-n-2}\frac{\prod_{j=1}^{ n}\left(z-z_j(0)\right)^2}{\prod_{k=1}^{m}\left(z-\xi_k(0)\right)^4},
\end{equation}

Using the integral of motion $N_{\mathrm{t}}(z)$ , we have
\begin{equation}
 \begin{aligned}
N_t(z)= & \frac{\prod_{j=1}^{m}\xi_{k}^2(t)}{ \prod_{k=1}^{n}z_k(t)} e^{-(2m-n-2)(\int_{0}^{t}\sum_{j}\nu_j(s)ds)} g_{t}^{2m-n-2}(z)(g_t^{\prime}(z))^2 \frac{\prod_{j=1}^{n}\left(g_t(z)-z_j(t)\right)^2}{\prod_{k=1}^{m}\left(g_t(z)-g_t\left(\xi_k\right)\right)^4} \\
& =\frac{\prod_{k=1}^{m}\xi_{k}^2(0)}{ \prod_{j=1}^{n}z_j(0)}z^{2m-n-2}\frac{\prod_{j=1}^{n}\left(z-y_j\right)^2}{\prod_{k=1}^{m}\left(z-\xi_k\right)^4}=N_0(z) .
\end{aligned}   
\end{equation}

Denote the constant $\mu(t)=e^{-(m-\frac{n}{2}-1) (\int_{0}^{t}\sum_{j}\nu_j(s)ds)}$.

Let $f_t=g_t^{-1}$, and since the above holds everywhere evaluate it at $f_t(z)$ to obtain

\begin{equation} \label{integral of motion ft}
\begin{aligned}
& \mu^2(t)\frac{\prod_{j=1}^{m}\xi_{k}^2(t)}{ \prod_{k=1}^{n}z_k(t)}
z^{2m-n-2}\frac{\prod_{j=1}^{n}\left(z-z_j(t)\right)^2}{\prod_{k=1}^{m}\left(z-g_t\left(\xi_k\right)\right)^4} \\
=&\frac{\prod_{k=1}^{m}\xi_{k}^2(0)}{ \prod_{j=1}^{n}z_j(0)} f_t^{\prime}(z)^2 f_{t}^{2m-n-2}(z) \frac{\prod_{j=1}^{n}\left(f_t(z)-z_j\right)^2}{\prod_{k=1}^{m}\left(f_t(z)-\zeta_t\right)^4}=f_t^{\prime}(z)^2 Q_0\left(f_t(z)\right)
\end{aligned}
\end{equation}

The left-hand side is exactly
\begin{equation}
     \mu^2(t)z^{m-\frac{n}{2}-1}\frac{\prod_{j=1}^{n}\left(z-z_j(t)\right)}{\prod_{k=1}^{m}\left(z-g_t\left(\xi_k\right)\right)^2}=\mu(t)Q_t(z)
\end{equation}

 we obtain that 
\begin{equation} \label{residue identity}
   \pm \mu(t) Res_{\xi(t)}(\sqrt{Q_t(z)}) = Res_{\xi(0)}(\sqrt{Q_0(f_t(z))} = 0
\end{equation}
The stationary relations at $t=0$ give the last equality. Thus, the residue-free condition holds, and clearly, the involution symmetry of $\boldsymbol{\xi}$ is preserved, which implies $Q(z) \circ g_t^{-1} \in \mathcal{QD}(\boldsymbol{z}(t))$.

Finally, we prove that the hull $K_t$ is a subset of
the horizontal trajectories of $Q(z)dz^2$ with limiting ends at $\{z_1,z_2,\ldots,z_n\}$. By theorem (\ref{horizontal trajectories level lines}), equivalently, we can show that $K_t$ is a subset of the real locus of the $F(z)=\int \sqrt{Q(z)}dz$ (which is well defined as shown in lemma (\ref{real locus well defined})).

Note that $F(z)$ is a multivalued function. To deal with the multi-valuedness, let $\rho(z)=e^{iz}$ be the exponential covering map, and we consider $h_t(z)$ the lifting map of $g_t(z)$ (i.e. $e^{ih_t(z)}=g_t(e^{iz})$).
We denote the lifting of $F(z)$ by $\tilde{F}(z)$, which is now a single-valued function, and the lifting of $Q_t(z)$ by
$\tilde{Q}_t(z)=-Q_t(e^{iz})e^{2iz}dz$, the lifting of the hull $K_t$ by $\tilde{K}_t$.

By the integral of motion in angular coordinate (\ref{angular integral of motion}) 

\begin{equation}
\begin{aligned}
N_{t}^{ang}(z)=&\mu(t)
\frac{\prod_{j=1}^{m}e^{i\zeta_{k}(t)}}{ \prod_{k=1}^{n}e^{i\frac{\theta_k(t)}{2}}}
e^{i(m-\frac{n}{2}-1)h_t(z)}h_{t}'(z)e^{ih_t(z)}\frac{\prod_{k=1}^{n}(e^{ih_t(z)}-e^{i\theta_k(t)})}{\prod_{j=1}^{m}(e^{ih_t(z)}-e^{i\zeta_{j}(t)})^2} \\
=& \frac{\prod_{j=1}^{m}e^{i\zeta_{k}(0)}}{ \prod_{k=1}^{n}e^{i\frac{\theta_k(0)}{2}}}
e^{i(m-\frac{n}{2}-1)z}e^{iz}\frac{\prod_{k=1}^{n}(e^{iz}-e^{i\theta_k(0)})}{\prod_{j=1}^{m}(e^{iz}-e^{i\zeta_{j}(0)})^2}= N^{ang}_{0}(z)
\end{aligned}
\end{equation}
Let $s_t=h_t^{-1}$, and since the above holds everywhere evaluate it at $s_t(z)$ to obtain
\begin{equation}
\begin{aligned}
\mu(t)\sqrt{\tilde{Q}_t(z)}&=\mu(t)
\frac{\prod_{j=1}^{m}e^{i\zeta_{k}(t)}}{ \prod_{k=1}^{n}e^{i\frac{\theta_k(t)}{2}}}
e^{i(m-\frac{n}{2}-1)z}e^{iz}\frac{\prod_{k=1}^{n}(e^{iz}-e^{i\theta_k(t)})}{\prod_{j=1}^{m}(e^{iz}-e^{i\zeta_{j}(t)})^2} \\
&= \frac{\prod_{j=1}^{m}e^{i\zeta_{k}(0)}}{ \prod_{k=1}^{n}e^{i\frac{\theta_k(0)}{2}}}
e^{i(m-\frac{n}{2}-1)s_t(z)}e^{is_t(z)}\frac{\prod_{k=1}^{n}(e^{is_t(z)}-e^{i\theta_k(0)})}{\prod_{j=1}^{m}(e^{is_t(z)}-e^{i\zeta_{j}(0)})^2}(h_{t}^{-1}(z))'\\
&= \sqrt{Q_0(s_t(z))}(s_t(z))'\\
&=(\tilde{F}(s_t(z)))'
\end{aligned}
\end{equation}

Since $\mu(t)$ is a real constant, $\tilde{F}(s_t(z))$ is the primitive of $\sqrt{\tilde{Q}_t(z)}$ (up to a real multiplicative constant).
It suffices to show that $\tilde{K_t}$ is the real locus of $\tilde{F}(z)$

Recall that $g_t$ is the unique conformal map from $\mathbb{D} \backslash K_t$ onto $\mathbb{D}$ with the hydrodynamic normalization. Therefore, $g_t(z)$ maps the subset $K_t$ to $\partial\mathbb{D}$ and $h_t(z)$ maps the subset $\tilde{K_t}$ to the real line.

Since $\tilde{F}(s_t(z))$ is the primitive of $\sqrt{\tilde{Q_t}(z)}$ (up to a real multiplicative constant), the real line is part of the real locus $\tilde{F}(s_t(z))$.  Since $h_t(z)$ maps the subset $\tilde{K_t}$ to the real line,
it follows that $\tilde{K_t}$ is a subset of the real locus of $\tilde{F}(z)$.
\end{proof}

\begin{remark}
 The principle underlying is that for $Q(z) \in \mathcal{QD}(\boldsymbol{z})$ then $\sqrt{Q(z)}$ has a local meromorphic primitive in $\mathbb{D}\backslash 0$, and therefore its residue at all non-zero poles must be zero. When the poles are distinct from the critical points this principle expresses itself algebraically in the form of the stationary relation. When, however, a pole overlaps with a critical point (in which case the pole is necessary of order two) then the partial fraction expansion of $\sqrt{Q(z)}$ becomes more complicated as does the corresponding algebra. However, the underlying principle remains the same.    
\end{remark}

\begin{example}
    
     The traces of $n$-braids multiple SLE(0) in $\mathbb{D}$.
\end{example}
\begin{proof}
For $n$-braids multiple radial SLE(0), $m=0$, and there are no poles, thus $Q(z)dz^2$ is given by
\begin{equation}
Q(z)=c z^{-n-2}\prod_{j=1}^{n}\left(z-z_j\right)^2
\end{equation}
 The $n$-braids multiple SLE(0) has $n$ trajectories with limiting ends at $z=0$, and with limiting tangential directions that forms an equal $\frac{2\pi}{n}$ with each other.

\end{proof}

\subsection{Classical limit of martingale observables* } \label{Multiple radial Martingale Observable}
In this section, we discuss how the field integral of motion is heuristically derived as the classical limit of martingale observables constructed via conformal field theory.

Based on the SLE-CFT correspondence, the multiple radial SLE($\kappa$) system can be coupled to a conformal field theory. We will construct this conformal field theory using vertex operators, following the approach in \cite{KM13, KM21}.

\begin{defn}[Vertex operator]
For a background charge $\boldsymbol{\beta}=\sum_k \beta_k \cdot q_k$ with the neutrality condition $\left(\mathrm{NC}_b\right)$ and  divisor $\boldsymbol{\tau}=\sum_j \tau_j \cdot z_j$ with the neutrality condition $\left(\mathrm{NC}_0\right)$.  
We define the vertex operator $\mathcal{O}_{\boldsymbol{\beta}}[\boldsymbol{\tau}]$ as
 \begin{equation}
 \mathcal{O}_{\boldsymbol{\beta}}[\boldsymbol{\tau}]:=\frac{C_{(b)}[\boldsymbol{\tau}+\boldsymbol{\beta}]}{C_{(b)}[\boldsymbol{\beta}]} e^{\odot i \Phi^{+}[\boldsymbol{\tau}]}.
 \end{equation}
 where $\Phi^{+}[\boldsymbol{\tau}]:=\sum \tau_j \Phi^{+}\left(z_j\right)$ is the chirdal bosonic field and $\odot$ is the wick product.

\end{defn}

\begin{defn}[$n$-leg operator with screening charges]
    
Consider the following charge distribution on the Riemann sphere.

$$
\boldsymbol{\beta}=b \delta_{0}+b\delta_{\infty}
$$

$$\boldsymbol{\tau_1}=\sum_{j=1}^{n} a \delta_{z_j}-\sum_{k=1}^m 2 a \delta_{\xi_k}-(\frac{n-2m}{2})a \delta_{0}-(\frac{n-2m}{2})a\ \delta_{\infty}$$

The $n$-leg operator with screening charges $\boldsymbol{\xi}$ and background charge $\boldsymbol{\beta}$ is given by the OPE exponential: 

\begin{equation}
\mathcal{O}_{\boldsymbol{\beta}}[\boldsymbol{\tau_1}]=\frac{C_{(b)}[\boldsymbol{\tau_1}+\boldsymbol{\beta}]}{C_{(b)}[\boldsymbol{\beta}]} \mathrm{e}^{\odot i \Phi[\boldsymbol{\tau_1}]}.
\end{equation}

\end{defn}

For each link pattern $\alpha$, we can choose closed contours $\mathcal{C}_1, \ldots, \mathcal{C}_n$ along which we may integrate the $\boldsymbol{\xi}$ variables to screen the vertex fields.
Let $\mathcal{S}$ be the screening operator. We define the screening operation as 

\begin{equation}
\mathcal{S}_{\alpha}\mathcal{O}_{\boldsymbol{\beta}}[\boldsymbol{\tau_1}]=\oint_{\mathcal{C}_1} \ldots \oint_{\mathcal{C}_n} \mathcal{O}_{\boldsymbol{\beta}}[\boldsymbol{\tau_1}].
\end{equation}

We integrate the correlation function $\mathbf{E}\mathcal{O}_{\boldsymbol{\beta}}[\boldsymbol{\tau_1}]=\Phi_\kappa(\boldsymbol{z}, \boldsymbol{\xi})$, the conformal dimension is 1 at the $\boldsymbol{\xi}$ points, i.e. since $\lambda_b(-2 a)=1$. This leads to the partition function for the corresponding multiple radial SLE($\kappa$) system:

\begin{equation}
\mathcal{J}_{\alpha}(\boldsymbol{z}):=\mathbf{E}\mathcal{S}_{\alpha}\mathcal{O}_{\boldsymbol{\beta}}[\boldsymbol{\tau_1}]=\oint_{\mathcal{C}_1} \ldots \oint_{\mathcal{C}_n} \Phi_\kappa(\boldsymbol{z}, \boldsymbol{\xi}) d \xi_n \ldots d \xi_1 .
\end{equation}

\begin{thm}[Martingale observable]
 For any tensor product $X$ of fields in the OPE family $\mathcal{F}_{\boldsymbol{\beta}}$ of $\Phi_{\boldsymbol{\beta}}$,
\begin{equation}
M_t(X)=\frac{\mathbf{E}(\mathcal{S}_{\alpha} \mathcal{O}_{\boldsymbol{\beta}}[\boldsymbol{\tau_1}] X)}{\mathbf{E}\mathcal{S}_{\alpha} \mathcal{O}_{\boldsymbol{\beta}}[\boldsymbol{\tau_1}]} \| g_t^{-1}
\end{equation}
is a local martingale, where $g_t(z)$ is the Loewner map for multiple radial SLE($\kappa$) system associated to $\mathcal{J}_{\alpha}(\boldsymbol{z})=\mathbf{E}\mathcal{S} \mathcal{O}_{\boldsymbol{\beta}}[\boldsymbol{\tau_1}]$

\end{thm}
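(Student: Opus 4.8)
The goal is to prove that the ratio $M_t(X) = \frac{\mathbf{E}(\mathcal{S}_{\alpha}\mathcal{O}_{\boldsymbol{\beta}}[\boldsymbol{\tau_1}]X)}{\mathbf{E}\mathcal{S}_{\alpha}\mathcal{O}_{\boldsymbol{\beta}}[\boldsymbol{\tau_1}]}\,\|\,g_t^{-1}$ is a local martingale for the multiple radial SLE($\kappa$) system whose driving function is determined by the partition function $\mathcal{J}_{\alpha}(\boldsymbol{z}) = \mathbf{E}\,\mathcal{S}_{\alpha}\mathcal{O}_{\boldsymbol{\beta}}[\boldsymbol{\tau_1}]$. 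The plan is to follow the now-standard strategy for establishing SLE--CFT martingale observables: reduce the statement to a computation of the drift of the process $t\mapsto M_t(X)$ under the Loewner flow, and then show that this drift vanishes precisely because of the level-two degeneracy (null vector) equations together with the conformal Ward identities derived earlier in the paper.

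First I would fix the setup carefully. Since the screening contours $\mathcal{C}_1,\dots,\mathcal{C}_m$ are chosen so that the screened correlation $\mathcal{J}_\alpha(\boldsymbol{z})$ satisfies both the null vector equations (\ref{null vector equation for Screening solutions}) and the Ward identities (\ref{Ward identities for screening solutions}), the driving function in the definition of the multiple radial SLE($\kappa$) Loewner chain is $d\theta_j = \nu_j(t)\,\partial_j\log\mathcal{J}_\alpha\,dt + \sum_{k\neq j}\nu_k(t)\cot\!\bigl(\tfrac{\theta_j-\theta_k}{2}\bigr)dt + \sqrt{\kappa\nu_j(t)}\,dB_j(t)$. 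I would then write $M_t(X)$ as a numerator $\mathbf{E}(\mathcal{S}_\alpha\mathcal{O}_{\boldsymbol{\beta}}[\boldsymbol{\tau_1}]X)\|g_t^{-1}$ divided by the denominator $\mathcal{J}_\alpha$ evaluated along the flow, and apply Itô's formula to the evolution of the growth points $\theta_j(t)$, the marked point, and the screening charges under the flow. The time evolution of a field correlation pulled back by $g_t^{-1}$ has a well-understood form: the $dt$ contribution splits into a first-order transport piece governed by the Loewner vector field (the Lie derivative $\check{\mathcal{L}}_{k_z}^{+}$ appearing in Theorem \ref{BPZ for general X}) and a second-order piece of size $\tfrac{\kappa}{2}\nu_j\,\partial_{jj}$ coming from the quadratic variation of the Brownian driving term.

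The key step is to recognize that the combined drift is exactly the operator appearing in the BPZ/null vector equation. Concretely, I would invoke Theorem \ref{BPZ for general X}, which states that $\tfrac{1}{2a^2}\partial_z^2\,\mathbf{E}\mathcal{O}_\beta(z)\mathcal{X}_\beta = \mathbf{E}T_\beta(z)\,\mathbf{E}\mathcal{O}_\beta(z)\mathcal{X}_\beta + \mathbf{E}\check{\mathcal{L}}_{k_z}^{+}\mathcal{O}_\beta(z)\mathcal{X}_\beta$, applied now with a nontrivial insertion $\mathcal{X} = X$ rather than $\mathcal{X}\equiv 1$. The level-two degeneracy of the vertex operator at each growth point $z_j$ (guaranteed by the choice $\sigma_j = a$ with $2a(a+b)=1$) converts the second-order derivative $\partial_{jj}$ acting on the numerator into the singular transport term. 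Dividing by the denominator $\mathcal{J}_\alpha$ and using the logarithmic derivative identity $b_j = \kappa\,\partial_j\log\mathcal{J}_\alpha$, the drift terms organize into the null vector operator $\mathcal{L}_j$ applied to the ratio; since both numerator and denominator are annihilated by $\mathcal{L}_j + h$ with the same constant $h$ and the same Ward identities, the drift cancels identically. The screening integrals commute with this computation because the contours $\mathcal{C}_i$ are closed and do not depend on $t$ after pullback, so differentiation under the integral sign (justified by the non-intersection of contours and the resulting continuity, as in Section \ref{Screening method}) is legitimate.

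I expect the main obstacle to be the bookkeeping of the interior marked point and the screening charges under the flow, rather than the degeneracy computation itself. The marked point $u=0$ (and its dual $u^*=\infty$) carries its own conformal weight $\lambda_{(b)}(u)$, and its evolution under the radial Loewner flow contributes additional terms of the form $\tfrac{2}{u-z_j}\partial_u$ that must match exactly the marked-point terms in the null vector equation (\ref{null vector equation for Screening solutions}); verifying this match requires care with the radial (as opposed to chordal) normalization and the angular coordinate. Likewise, although the screening charges are integrated out, one must confirm that the Leibniz rule exchange of $\partial_t$ with $\oint_{\mathcal{C}_i}$ is valid and that the boundary terms $\partial_k(-\Phi/(z_k-z_j))$ generated by the degeneracy relation integrate to zero along each closed Pochhammer contour---this is precisely the mechanism, already established in Theorem \ref{Coulomb gas integral screening}, that makes $\mathcal{J}_\alpha$ solve the null vector PDEs in the first place. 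Once these two compatibility checks are in place, the martingale property follows, and I would conclude by noting that $M_t(X)$, being a drift-free Itô process, is a local martingale up to the stopping time $\tau_z\wedge\tau$.
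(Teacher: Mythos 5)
The paper itself supplies no proof of this theorem: Section 5.5 is starred as heuristic, the statement is given without argument, and the concluding remark explicitly says the paper ``does not depend on the detailed clarification'' of this material, deferring in effect to the vertex-field framework of \cite{KM13,KM21}. So there is no in-paper proof to compare against; your outline is the standard It\^o-plus-BPZ argument from that framework, and the ingredients you list (level-two degeneracy at the growth points from $2a(a+b)=1$, the Ward identities, differentiation under the screening integrals, the transport of the marked point and screening charges under the flow) are the right ones.

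That said, your sketch has one genuine imprecision at the crucial step. You write that the drift cancels because ``both numerator and denominator are annihilated by $\mathcal{L}_j + h$ with the same constant $h$.'' This is not the correct mechanism when the insertion $X$ is nontrivial: the numerator $\mathbf{E}(\mathcal{S}_\alpha\mathcal{O}_{\boldsymbol{\beta}}[\boldsymbol{\tau_1}]X)$ does \emph{not} satisfy the homogeneous null vector equation. Rather, Theorem (\ref{BPZ for general X}) gives $\frac{1}{2a^2}\partial_{z_j}^2(\text{numerator}) = \mathbf{E}T_\beta(z_j)\cdot(\text{numerator}) + \mathbf{E}\check{\mathcal{L}}_{k_{z_j}}^{+}(\cdots)$, where the Lie-derivative term now also acts on the nodes of $X$; those extra terms are precisely what is matched by the deterministic transport of $X$'s nodes under the Loewner flow in It\^o's formula, and it is this matching — not a common eigenvalue equation — that kills the drift. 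You do describe the transport piece correctly earlier in the same paragraph, but the cancellation as you finally state it would only work for $X\equiv 1$ (i.e.\ it reproves that $\mathcal{J}_\alpha$ solves the null vector PDEs, which is Theorem (\ref{null vector equations for cbtau over Cb})). To close the argument you would need to write out the drift of $M_t(X)$ for each growth point $j$, substitute the BPZ identity with insertion, and verify term by term that the $\check{\mathcal{L}}_{k_{z_j}}^{+}$ contributions on the $X$-nodes, the marked-point terms $\frac{2}{u-z_j}\partial_u$, and the logarithmic-derivative drift $b_j=\kappa\,\partial_j\log\mathcal{J}_\alpha$ sum to zero; this computation is asserted but not performed in your proposal.
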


\begin{remark}
The structure of multiple radial $\mathrm{SLE}(\kappa)$ systems is not yet fully understood. We do not provide a rigorous justification of this theorem in this paper, nor is the validity of our results dependent on this. In particular, the integral of motion used in our arguments can be directly verified independently.

Moreover, the Martingale Observable Theorem can be extended to linear combinations of screening fields of the form
\[
\mathcal{S} \mathcal{O}_{\boldsymbol{\beta}} := \sum_{\alpha} \sigma_{\alpha} \, \mathcal{S}_{\alpha} \mathcal{O}_{\boldsymbol{\beta}}[\boldsymbol{\tau}_1],
\]
where each $\mathcal{S}_{\alpha}$ corresponds to a distinct choice of integration contours associated with a link pattern $\alpha$, and $\sigma_{\alpha} \in \mathbb{R}$ are real coefficients.
\end{remark}

\begin{cor}
Let the divisor $ \boldsymbol{\tau_2}=-\frac{\sigma}{2} \delta_{0}-\frac{\sigma}{2}\ \delta_{\infty}+\sigma \delta_z$ where the parameter $\sigma= \frac{1}{a}$, and insert $ X = \mathcal{O}_{\boldsymbol{\beta}}[\boldsymbol{\tau_2}] $

\begin{equation}
M_{t,\kappa}(z)=\frac{\mathbf{E}\mathcal{S} \mathcal{O}_{\boldsymbol{\beta}}[\boldsymbol{\tau_1}]\mathcal{O}_{\boldsymbol{\beta}}[\boldsymbol{\tau_2}]}
{\mathbf{E}\mathcal{S} \mathcal{O}_{\boldsymbol{\beta}}[\boldsymbol{\tau_1}]} \| g_t^{-1}
\end{equation}
is local martingale where $g_t(z)$ is the Loewner map for multiple radial SLE($\kappa$) system associated to $\mathcal{Z}_\kappa(\boldsymbol{z})=\mathbf{E}\mathcal{S} \mathcal{O}_{\boldsymbol{\beta}}[\boldsymbol{\tau_1}]$.
\end{cor}

Explicit computation shows that
$$
\begin{aligned}
    &\mathbf{E}\mathcal{S}_{\alpha}\mathcal{O}_{\beta}[\boldsymbol{\tau_1}]=\mathbf{E}\oint_{\mathcal{C}_1} \ldots \oint_{\mathcal{C}_n} \mathcal{O}_{\boldsymbol{\beta}}[\boldsymbol{\tau_1}]\mathcal{O}_{\boldsymbol{\beta}}[\boldsymbol{\tau_2}]\\
    &=\oint_{\mathcal{C}_1} \ldots \oint_{\mathcal{C}_n} \prod_{1 \leq i<j \leq n}(z_i-z_j)^{a^2} \prod_{1 \leq i<j \leq m}(\xi_i-\xi_j)^{4 a^2} \prod_{i=1}^{n} \prod_{j=1}^m\left(z_i-\xi_j\right)^{-2 a^2} \\
    & \prod_j z_j^{a(b-\frac{n-2m}{2}a-\frac{\sigma}{2})} \prod_k \xi_k^{-2a(b-\frac{n-2m}{2}a-\frac{\sigma}{2})} z^{\sigma(b-\frac{n-2m}{2}a)}
g^{\prime}(z_j) ^{\lambda_b(a)}g^{\prime}(z) ^{\lambda_b(\sigma)} \\
& (z-z_j)^{\sigma a}(z-\xi_k)^{-2\sigma a}|g^{\prime}(0)|^{2\lambda_b(b+\frac{2m-n}{2}a-\frac{\sigma}{2})}
\end{aligned}
$$
$$
\begin{aligned}
    &\mathbf{E}\oint_{\mathcal{C}_1} \ldots \oint_{\mathcal{C}_n} \mathcal{O}_{\boldsymbol{\beta}}[\boldsymbol{\tau_1}] \\
    &=\oint_{\mathcal{C}_1} \ldots \oint_{\mathcal{C}_n} \prod_{1 \leq i<j \leq n}(z_i-z_j)^{a^2} \prod_{1 \leq i<j \leq m}(\xi_i-\xi_j)^{4 a^2} \prod_{i=1}^{n} \prod_{j=1}^m\left(z_i-\xi_j\right)^{-2 a^2} \\
    & \prod_j z_j^{a(b-\frac{n-2m}{2})} \prod_k \xi_k^{-2a(b-\frac{n-2m}{2}a)} z^{\sigma(b-\frac{n-2m}{2}a)}
g^{\prime}(z_j) ^{\lambda_b(a)}|g^{\prime}(0)|^{2\lambda_b(b-\frac{2m-n}{2}a)}.
\end{aligned}
$$

\begin{conjecture}
 
As $\kappa \rightarrow 0$, the contour integral concentrate on the critical points of the master function.

\begin{equation}
\begin{aligned}
N_t(z)=& M_{t,0}(z)=\lim_{\kappa \rightarrow 0}M_{t,\kappa}(z)= \lim_{\kappa \rightarrow 0}\frac{\mathbf{E}\oint_{\mathcal{C}_1} \ldots \oint_{\mathcal{C}_n} \mathcal{O}_{\boldsymbol{\beta}}[\boldsymbol{\tau_1}]\mathcal{O}_{\boldsymbol{\beta}}[\boldsymbol{\tau_2}]}
{\mathbf{E}\oint_{\mathcal{C}_1} \ldots \oint_{\mathcal{C}_n} \mathcal{O}_{\boldsymbol{\beta}}[\boldsymbol{\tau_1}]}\\
&=|g^{\prime}(0)|^{-(m-\frac{n}{2})}\frac{\prod_{j=1}^{m}\xi_k}{ \sqrt{\prod_{k=1}^{n}z_k}}
z^{m-\frac{n}{2}-1}g'(z)\frac{\prod_{k=1}^{n}(z-z_k)}{\prod_{j=1}^{m}(z-\xi_j)^2},
\end{aligned}
\end{equation}
\end{conjecture}

where $ \boldsymbol{\xi}$ solve the \textbf{stationary relations}. This is exactly the integral of motion $N_t(z)$ in the proof of the theorem.

$M_{t,\kappa}(z)$ is a $(\lambda_b(\sigma),0)$ differential with respect to $z$, where $\lambda_b(\sigma)= \frac{1}{2a^2}-\frac{b}{a}$.
By taking the limit $\kappa \rightarrow 0$,
$\lim_{\kappa \rightarrow 0}\lambda_b(\sigma)=1$, and thus $M_{t,0}(z)$ is a (1,0) differential.

\begin{remark}
The integral of motion $N_t(z)$ can be verified through direct computation. This heuristic argument provides valuable insight and motivation for constructing the integral of motion. It is worth noting that our paper is self-consistent and does not depend on the detailed clarification of the classical limits.
\end{remark}

\subsection{Enumerative algebraic geometry and link pattern}\label{Enumerative geometry of radial multiple SLE(0)}

In this section, we propose several illuminating conjectures for the classification of the quadratic differential $Q(z)dz^2\in \mathcal{QD}(\boldsymbol{z})$ and equivalently the critical points of the trigonometric KZ equations:

In the chordal setting, multiple chordal $\mathrm{SLE}(0)$ systems have been constructed and analyzed in detail by \cite{PW20}, and the corresponding stationary (commutation) relations have been completely solved and classified in \cite{SV03, S02a, S02b}. Motivated by these results, we propose the following conjectures concerning the structure of multiple radial $\mathrm{SLE}(0)$ systems.

\begin{conjecture}[$n$ even]\label{radial conjecture n even quadratic differential}

Let \( Q(z)\,dz^2 \in \mathcal{QD}(\boldsymbol{z}) \) be an involution symmetric meromorphic quadratic differential with \( n \) simple zeros located on the unit circle (with even \( n \)) and \( m \) poles. Then, up to multiplication by a nonzero real constant, the horizontal trajectory \( \Gamma(Q) \) with limiting ends at $\boldsymbol{z}$ satisfies:

\begin{itemize}
    \item (Underscreening) If \( m \leq \frac{n}{2} \), then \( \Gamma(Q) \) consists of \( m \) disjoint arcs connecting distinct pairs of zeros, forming a radial \((n,m)\)-link. For each such link pattern, there exists a unique differential \( Q \in \mathcal{QD}(\boldsymbol{z}) \) (up to scaling) whose horizontal trajectories form this pattern.

    \item (Overscreening) If \( \frac{n+1}{2} \leq m \leq n \), then \( \Gamma(Q) \) consists of \( n - m \) disjoint arcs connecting pairs of zeros, forming a radial \((n,n-m)\)-link. For each such link pattern, there exists a continuous family of differential \( Q \in \mathcal{QD}(\boldsymbol{z}) \) (up to scaling) whose horizontal trajectories form this pattern.

    \item (Upper bound) If \( m > n \), there exists no such quadratic differential \( Q \in \mathcal{QD}(\boldsymbol{z}) \).
\end{itemize}

\end{conjecture}

\begin{conjecture}[$n$ odd]\label{radial conjecture n odd quadratic differential}
Let \( Q(z)\,dz^2 \in \mathcal{QD}(\boldsymbol{z}) \) be an involution symmetric meromorphic quadratic differential with \( n \) simple zeros located on the unit circle (with odd \( n \)) and \( m \) poles. Then, up to multiplication by a nonzero real constant, the horizontal trajectory \( \Gamma(Q) \) with limiting ends at $\boldsymbol{z}$ satisfies:

\begin{itemize}
    \item (Underscreening) If \( m \leq \frac{n}{2} \), then \( \Gamma(Q) \) consists of \( m \) disjoint arcs connecting distinct pairs of zeros, forming a radial \((n,m)\)-link. For each such link pattern, there exists a unique differential \( Q \in \mathcal{QD}(\boldsymbol{z}) \) (up to scaling) whose horizontal trajectories form this pattern.

    \item (Overscreening) If \( \frac{n+1}{2} \leq m \leq n \), then \( \Gamma(Q) \) consists of \(n- m \) disjoint arcs connecting pairs of zeros, forming a radial \((n,n-m)\)-link. For each such link pattern, there exists a unique differential \( Q \in \mathcal{QD}(\boldsymbol{z}) \) (up to scaling) whose horizontal trajectories form this pattern.

    \item (Upper bound) If \( m > n \), there exists no such quadratic differential \( Q \in \mathcal{QD}(\boldsymbol{z}) \).
\end{itemize}

\end{conjecture}

\begin{remark}
    When $n$ is an even integer, in the overscreening case, $Q(z)dz^2 = R'(z)^2 dz^2$, where $R(z)$ is an involution symmetric rational function with $\boldsymbol{z}$ as critical points. In this case, the continuous family of solutions can be obtained by post-composition with M{\"o}bius transformations.
\end{remark}

We can equivalently reformulate our conjectures concerning the critical points of the master functions.

\begin{conjecture} \label{ conjugation symmetry of critical points}
    For generic $\boldsymbol{z}$ on the unit circle, critical points $\boldsymbol{\xi}$ of the master function $\Phi_{m,n}(\boldsymbol{z},\boldsymbol{\xi})$ are involution symmetric.
\end{conjecture}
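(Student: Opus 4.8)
The plan is to exhibit an antiholomorphic involution on the critical set of $\Phi_{m,n}(\boldsymbol{z},\cdot)$ whose fixed points are exactly the involution-symmetric configurations, and then to argue that for generic $\boldsymbol{z}$ every critical point is fixed. First I would introduce the map $\iota(\boldsymbol{\xi}) = \{1/\bar{\xi}_1, \ldots, 1/\bar{\xi}_m\}$ and verify it preserves the critical set. Because each $z_j$ lies on the unit circle we have $\bar{z}_j = 1/z_j$, so complex-conjugating the stationary relations (which are holomorphic in $\boldsymbol{\xi}$ with real coefficient $n-2m+2$) sends a solution $\boldsymbol{\xi}$ for the data $\boldsymbol{z}$ to the solution $\bar{\boldsymbol{\xi}}$ for the data $\{1/z_j\}$. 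Composing with the M\"obius inversion $w \mapsto 1/w$, which interchanges the two marked points $0$ and $\infty$ as a set and returns $\{1/z_j\}$ to $\{z_j\}$, and invoking the M\"obius invariance of the stationary relations established above, I obtain that $\iota$ carries critical points for $\boldsymbol{z}$ to critical points for $\boldsymbol{z}$. Since $\iota \circ \iota = \mathrm{id}$ and $\iota$ is antiholomorphic, it is an involution of the (generically finite) critical set, and $\boldsymbol{\xi}$ is involution symmetric precisely when $\iota(\boldsymbol{\xi}) = \boldsymbol{\xi}$.

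With the involution in hand, the conjecture becomes the assertion that $\iota$ acts trivially on the critical set for generic $\boldsymbol{z}$, equivalently that it fixes the maximal number of points and swaps none, which I would attack by matching two counts. On one side, Varchenko's theory of critical points of master functions (see \cites{S02a,S02b,SV03}) should give, for generic $\boldsymbol{z}$ and the weights appearing in $\Phi_{m,n}$, that all critical points are nondegenerate and that their number equals $\binom{n}{m}$. On the other side, Theorem \ref{Stationary- residue free} identifies involution-symmetric critical points with residue-free quadratic differentials $Q(z)\,dz^2 \in \mathcal{QD}(\boldsymbol{z})$, and Theorem \ref{horizontal trajectories form link pattern} together with the enumeration Conjectures \ref{radial conjecture n even quadratic differential} and \ref{radial conjecture n odd quadratic differential} organizes these by radial link patterns, producing $|\mathrm{LP}(n,m)| = \binom{n}{m}$ distinct symmetric solutions. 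If the two counts coincide, then every critical point is $\iota$-fixed and the conjecture follows.

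The crux, and the reason this remains a conjecture, is establishing that the symmetric solutions already exhaust the full count $\binom{n}{m}$, i.e. that no genuinely non-symmetric (hence $\iota$-paired) complex critical point occurs for generic $\boldsymbol{z}$. This is a reality statement exactly parallel to the Shapiro--Shapiro conjecture, whose $\boldsymbol{z}\subset\mathbb{R}$ degeneration of the present trigonometric problem is treated in \cite{MTV09}. Two routes seem feasible. The first is a continuity argument: start from a highly symmetric base configuration of $\boldsymbol{z}$ on the circle at which all $\binom{n}{m}$ critical points can be written down explicitly and shown symmetric and nondegenerate, then track them as $\boldsymbol{z}$ varies, noting that an $\iota$-fixed critical point can leave the symmetric locus only by colliding with another fixed point there, so one must show the discriminant locus of such collisions is avoided for generic $\boldsymbol{z}$. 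The second route is to import the Bethe-ansatz self-adjointness machinery underlying the MTV reality theorem, transporting the Gaudin/transfer-matrix operators to the trigonometric Knizhnik--Zamolodchikov setting and using self-adjointness with respect to a positive-definite form to force reality of the spectrum and hence symmetry of the critical points. I expect this reality input to be the genuinely hard step; by contrast, the construction of $\iota$ and the enumeration of symmetric solutions are comparatively routine given the results already established.
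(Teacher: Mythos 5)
The statement you are proving is presented in the paper as a \emph{conjecture}: the paper offers no proof of it, so there is no argument of the author's to compare yours against. Judged on its own terms, your proposal is a sensible research plan rather than a proof. The first step --- constructing the antiholomorphic involution $\iota(\boldsymbol{\xi})=\{1/\bar{\xi}_1,\ldots,1/\bar{\xi}_m\}$ on the critical set by conjugating the stationary relations, using $\bar{z}_j=1/z_j$, and invoking the M\"obius invariance of the stationary relations --- is correct and routine; it reduces the conjecture to the reality statement that $\iota$ acts without free orbits for generic $\boldsymbol{z}$. You correctly identify that reality statement as the crux.

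The gap is that neither of your two routes closes it, and the counting argument as written is circular within the paper. Your count of involution-symmetric critical points rests on Conjectures \ref{radial conjecture n even quadratic differential} and \ref{radial conjecture n odd quadratic differential} (equivalently \ref{conjecture n even critical points}, \ref{conjecture n odd critical points}), which are themselves unproven and are essentially equivalent to, or stronger than, the statement you are trying to establish; and the assertion that the total number of critical points is $\binom{n}{m}$ for generic $\boldsymbol{z}$ is not supplied by any result in the paper and would itself require a Varchenko-type nondegeneracy and B\'ezout-style count for the trigonometric master function. The continuity route also has a specific flaw: openness of the non-symmetric locus means a symmetric critical point can only become non-symmetric by crossing the collision discriminant, but that discriminant is a real-codimension-one set, so its complement in the configuration space of $\boldsymbol{z}$ need not be connected; avoiding it \emph{at} a generic point does not let you transport the all-symmetric property from your explicit base configuration to an arbitrary generic one. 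This is precisely why the Mukhin--Tarasov--Varchenko proof of the Shapiro--Shapiro conjecture does not proceed by continuity but by self-adjointness of the Bethe algebra, and adapting that machinery to the trigonometric Knizhnik--Zamolodchikov setting with the marked points $0,\infty$ is the genuinely open problem here. Your proposal is a fair map of the terrain, but it should be presented as a reduction to open problems, not as a proof.
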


\begin{conjecture}[$n$ even] \label{conjecture n even critical points}
For generic $\boldsymbol{z}$ on the unit circle:
\begin{itemize}
    \item (Underscreening) If $m \leq \frac{n}{2}$, $\Phi_{m,n}(\boldsymbol{z},\boldsymbol{\xi})$ has exactly $|LP(n,m)|$ isolated critical points.
    \item (Overscreening) If $\frac{n+1}{2} \leq m \leq n$,  $\Phi_{m,n}(\boldsymbol{z},\boldsymbol{\xi})$ has non-isolated critical points.
    
    Let $\lambda_1=\sum \xi_i, \lambda_2=\sum \xi_i \xi_j, \ldots, \lambda_m=\xi_1 \cdots \xi_m$ be the standard symmetric functions of $\xi_1, \ldots, \xi_m$. Denote $\mathbb{C}_\lambda^m$ the space with coordinates $\lambda_1, \ldots, \lambda_m$.
    Then written in symmetric coordinates $\lambda_1, \ldots, \lambda_m$, the critical points consist of $|LP(n,n-m)|$ straight lines in the space $\mathbb{C}_\lambda^m$.
    \item (Upperbound) If $m >n$, $\Phi_{m,n}(\boldsymbol{z},\boldsymbol{\xi})$ has no critical points.
\end{itemize}

\end{conjecture}

\begin{conjecture}[$n$ odd]\label{conjecture n odd critical points}
For $\boldsymbol{\xi}$ and generic $\boldsymbol{z}$ on the unit circle:
\begin{itemize}
    \item (Underscreening) If $m \leq \frac{n}{2}$, $\Phi_{m,n}(\boldsymbol{z},\boldsymbol{\xi})$ has  $|LP(n,m)|$ isolated critical points.
    \item (Overscreening) If $\frac{n+1}{2} \leq m \leq n$,   $\Phi_{m,n}(\boldsymbol{z},\boldsymbol{\xi})$ has $|LP(n,n-m)|$ isolated critical points.
    \item (Upperbound) If $m >n$, $\Phi_{m,n}(\boldsymbol{z},\boldsymbol{\xi})$ has no critical points.
\end{itemize}

\end{conjecture}

\subsection{Examples: underscreening}
\label{underscreening}
In this section, we provide a series of figures to illustrate the trace configurations arising from various multiple radial $\mathrm{SLE}(0)$ systems.

For multiple radial SLE(0) system with growth points $\boldsymbol{z}$ and screening charges $\boldsymbol{\xi}$,
the corresponding quadratic differential is given by:
$$Q(z)dz^2= \frac{\prod_{j=1}^{m}\xi_{k}^{2}}{\prod_{k=1}^{n}z_{k}}
z^{2m-n-2}\frac{\prod_{k=1}^{n}(z-z_k)^2}{\prod_{j=1}^{m}(z-\xi_j)^4}dz^2$$

\begin{thm}
Given $Q(z) \in \mathcal{QD}(\boldsymbol{z})$ associate to it a vector field $v_Q$ on $\widehat{\mathbb{C}}$ defined by
\begin{equation}
 v_Q(z)=\frac{1}{\sqrt{Q(z)}}  
\end{equation}    
where $$
\sqrt{Q(z)}=\frac{\prod_{j=1}^{m}\xi_{k}}{\sqrt{\prod_{k=1}^{n}z_{k}}}
z^{m-\frac{n}{2}-1}\frac{\prod_{k=1}^{n}(z-z_k)}{\prod_{j=1}^{m}(z-\xi_j)^2}
$$
 The flow lines of $\dot{z} =  v_Q(z)$ are the horizontal trajectories of $Q(z)dz^2$.
\end{thm}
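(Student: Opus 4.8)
The plan is to show that the integral curves of the vector field $v_Q(z) = 1/\sqrt{Q(z)}$ coincide exactly with the horizontal trajectories of the quadratic differential $Q(z)\,dz^2$. This is essentially a restatement of Lemma \ref{horizontal trajectories level lines} applied to the explicit factorized form of $\sqrt{Q(z)}$, so the work is to verify the two defining properties that characterize horizontal trajectories via the local natural parameter $F(z) = \int \sqrt{Q(z)}\,dz$.

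First I would recall that, away from the zeros and poles of $Q$, the function $F(z) = \int \sqrt{Q(z)}\,dz$ is a local conformal coordinate (it is locally single-valued up to the monodromy at $0$ discussed in Theorem \ref{quadratic differential analytic function} and Lemma \ref{real locus well defined}, which does not affect $F'(z) = \sqrt{Q(z)}$). A curve is by definition a horizontal trajectory of $Q(z)\,dz^2$ precisely when $\operatorname{Im} F(z)$ is constant along it, equivalently when $Q(z)\,\dot z^2 > 0$ along the curve. I would then compute directly that along a flow line of $\dot z = v_Q(z) = 1/\sqrt{Q(z)}$ one has
\begin{equation}
Q(z)\,\dot z^2 = Q(z)\cdot \frac{1}{Q(z)} = 1 > 0,
\end{equation}
which is the horizontal condition, and simultaneously
\begin{equation}
\frac{d}{dt}\,F(z(t)) = F'(z(t))\,\dot z = \sqrt{Q(z)}\cdot \frac{1}{\sqrt{Q(z)}} = 1,
\end{equation}
so that $\operatorname{Im} F(z(t))$ is constant and $\operatorname{Re} F(z(t))$ increases at unit speed. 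This is exactly the content of Lemma \ref{horizontal trajectories level lines}: by the Cauchy--Riemann equations $v_Q(z) = 1/F'(z)$ is orthogonal to $\nabla \operatorname{Im} F(z)$, so the flow lines are level lines of $\operatorname{Im} F$, hence horizontal trajectories.

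The remaining step is a bookkeeping check that the explicit expression for $\sqrt{Q(z)}$ given in the statement is a genuine (single-valued-up-to-sign) square root of the $Q(z)$ defining $\mathcal{QD}(\boldsymbol{z})$, and that the well-definedness of the flow is unaffected by the branch issues at $z=0$ when $n$ is odd. Here I would invoke Theorem \ref{quadratic differential analytic function} and Lemma \ref{real locus well defined}, which already establish that $F(z)$ and its real locus are well defined on $\mathbb{D}\setminus\{0\}$ (passing to the double cover $z = u^2$ in the odd case), so that $v_Q = 1/F'$ is an unambiguous meromorphic vector field whose integral curves are well defined. The main—and in fact only—subtlety is the behavior of the flow near the critical points $\boldsymbol{z}$ (double zeros of $Q$, where $v_Q$ has poles and trajectories bifurcate) and near the poles $\boldsymbol{\xi}$ and the origin; away from these the equivalence is the purely local computation above. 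Since the statement concerns the \emph{regular} horizontal trajectories (flow lines of $v_Q$ in the complement of zeros and poles), no genuine obstacle arises, and the proposition follows immediately from the two identities displayed together with Lemma \ref{horizontal trajectories level lines}.
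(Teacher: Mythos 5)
Your proposal is correct and follows essentially the same route as the paper: the statement is a restatement of Lemma \ref{horizontal trajectories level lines}, whose proof is exactly your two displayed identities — $Q(z)\,\dot z^2 = Q(z)v_Q(z)^2 = 1 > 0$ gives the horizontal condition, and $v_Q = 1/F'$ being orthogonal to $\nabla\,\mathrm{Im}\,F$ (Cauchy--Riemann) shows the flow lines are level lines of $\mathrm{Im}\,F$. Your additional care about the branch and monodromy issues at $z=0$ (via Theorem \ref{quadratic differential analytic function} and Lemma \ref{real locus well defined}) is consistent with how the paper handles these points separately.
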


\begin{remark}
    This lemma provides an elementary way to plot the horizontal trajectories of $Q(z)dz^2$
\end{remark}

In the following figures, The zeros are marked red, the poles are marked yellow, and the marked point $u=0$ is marked green.

\begin{figure}[ht]
\begin{minipage}[t]{0.43\linewidth}
    \centering
    \includegraphics[width=6cm]{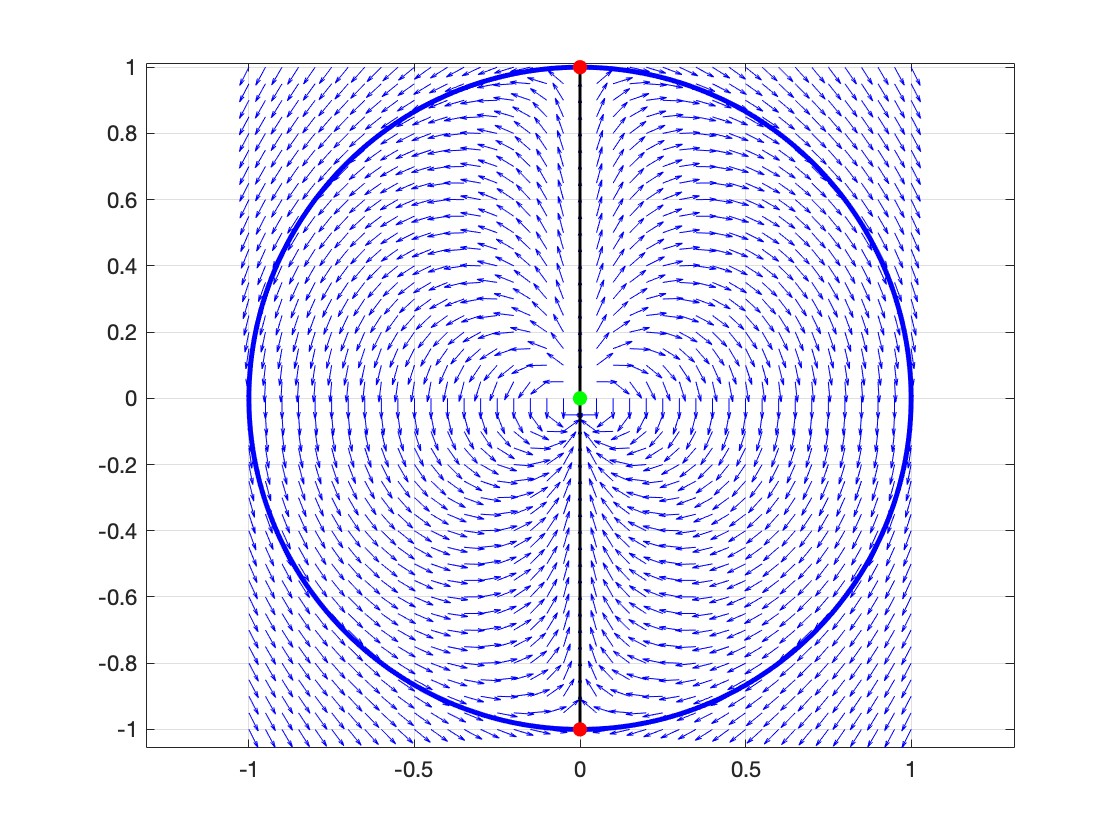}
    \caption{$x_1=i,x_2=-i$}
 
\end{minipage}
\begin{minipage}[t]{0.43\linewidth}
    \centering
	\includegraphics[width=6cm]{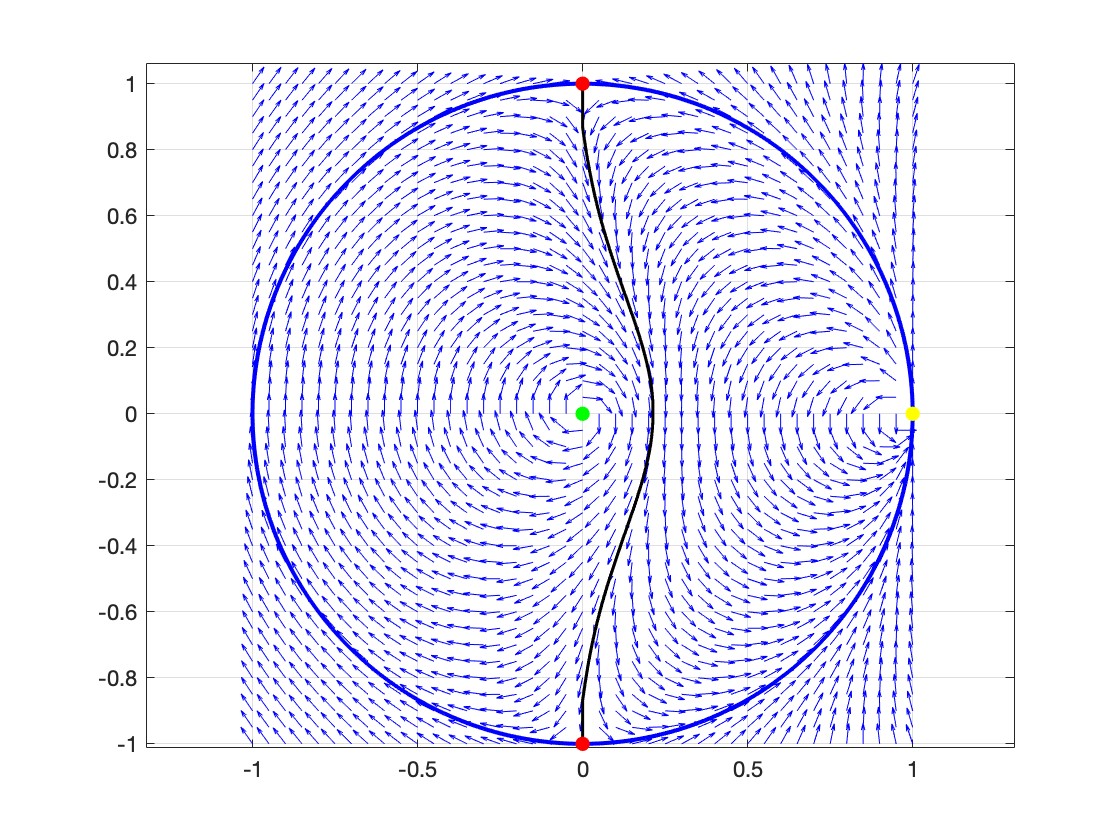}
    \caption{$x_1=i,x_2=-i,\xi_1=-1$}
    
\end{minipage}
\end{figure}
In Figure 4.1, $n=2$,$m=1$,$z_1=i$,$z_2=-i$, the SLE(0) curve connects $z_1$ to $0$ and $z_2$ to $0$.
$$\sqrt{Q(z)} = iz^{-2}(z-i)(z+i) $$

In Figure 4.2, $n=2$,$m=1$,$z_1=i$,$z_2=-i$,$\xi=1$ the SLE(0) curve connects $z_1$ and $z_2$, the curve does not surround $0$.
$$\sqrt{Q(z)} = iz^{-1}\frac{(z-i)(z+i)}{(z-1)^2} $$

\begin{figure}[ht]
\begin{minipage}[t]{0.43\linewidth}
    \centering
    \includegraphics[width=6cm]{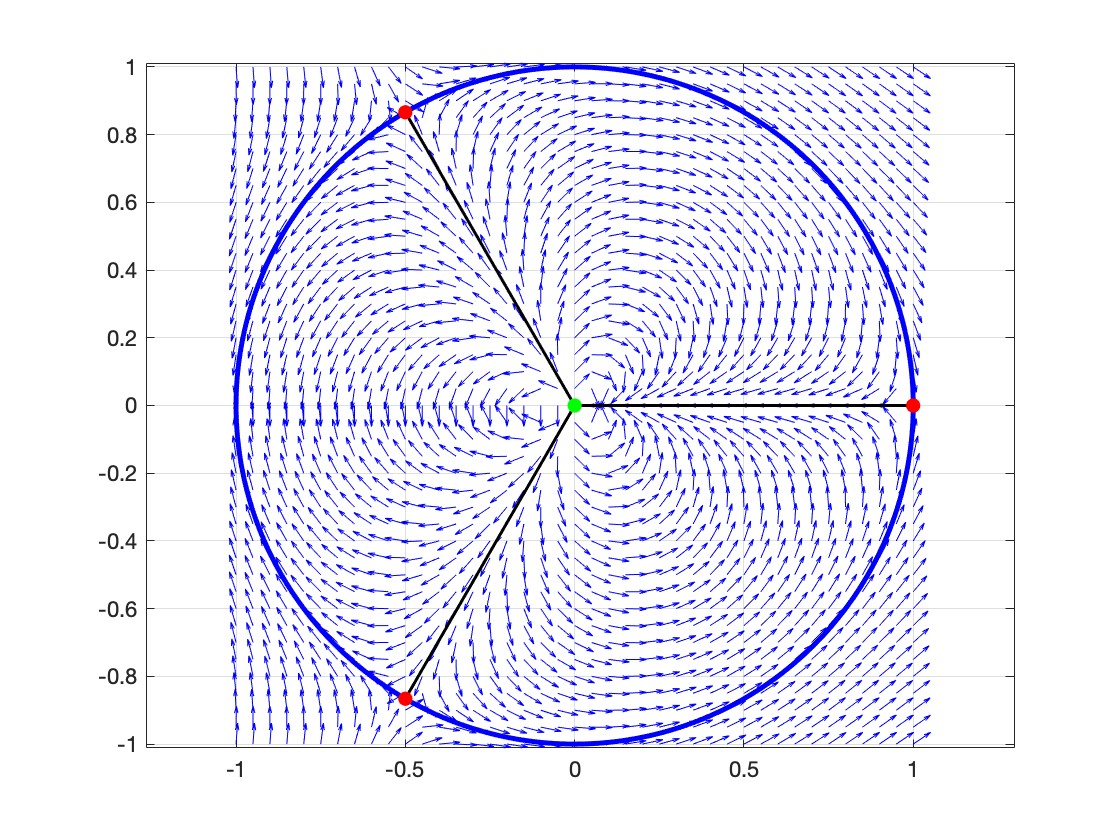}
    \caption{$x_1=1,x_2=e^{\frac{2\pi i}{3}},x_3=e^{\frac{-2\pi i}{3}}$}
    \label{30}
\end{minipage}
\begin{minipage}[t]{0.43\linewidth}
    \centering
	\includegraphics[width=6cm]{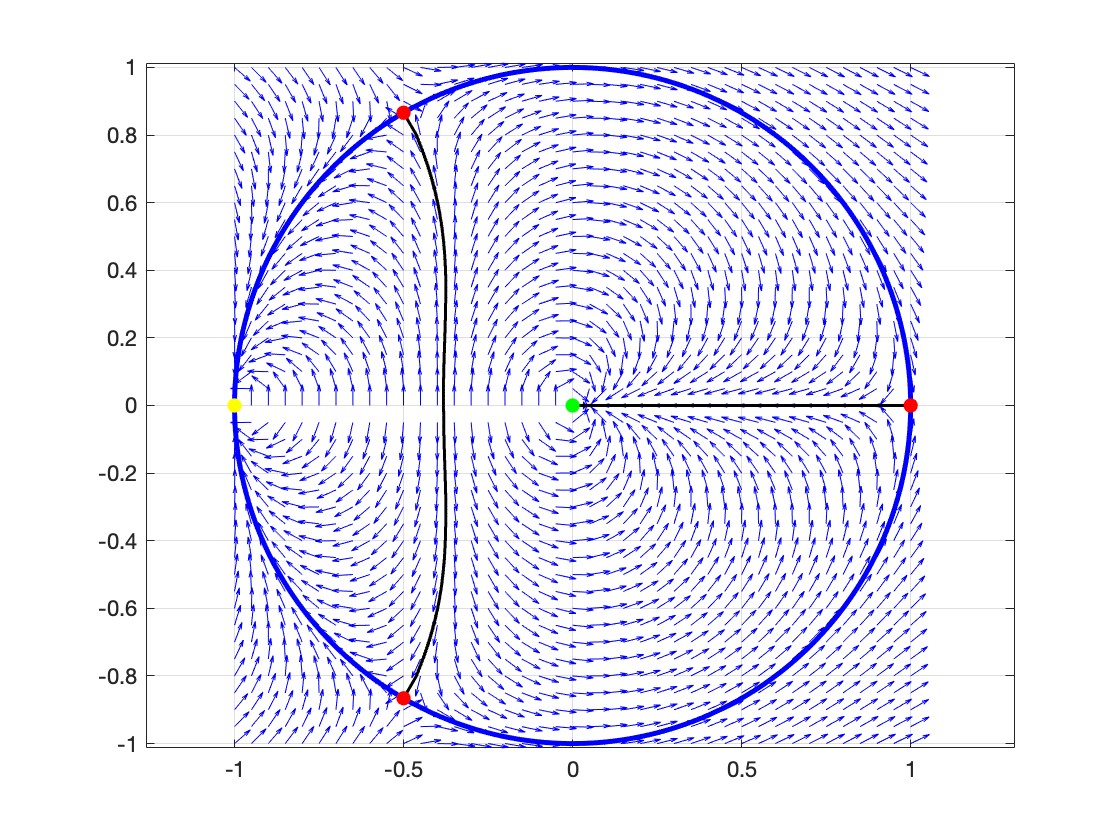}
    \caption{$x_1=1,x_2=e^{\frac{2\pi i}{3}},x_3=e^{\frac{-2\pi i}{3}},\xi_1=-1$}
    \label{31a}
\end{minipage}
\end{figure}
In Figure 4.3, $n=3$,$m=0$,$z_1=i$,$z_2=e^{\frac{2\pi i}{3}}$,$z_3=e^{\frac{4\pi i}{3}}$ the SLE(0) curves connect $z_1$ to $0$, $z_2$ to $0$ and $z_3$ to $0$.

$$\sqrt{Q(z)} = iz^{-2.5}(z-1)(z-e^{\frac{2\pi i}{3}})(z-e^{\frac{4\pi i}{3}}) $$

In Figure 4.4, $n=3$,$m=1$,$z_1=1$,$z_2=e^{\frac{2\pi i}{3}}$,$z_3=e^{\frac{4\pi i}{3}}$, $\xi=-1$ the SLE(0) curves connect $z_2$ and $z_3$ and connect $z_1$ and $0$.
$$\sqrt{Q(z)} = iz^{-\frac{3}{2}}\frac{(z-1)(z-e^{\frac{2\pi i}{3}})(z-e^{\frac{4\pi i}{3}})}{(z-1)^2} $$

\begin{figure}[ht]
\begin{minipage}[t]{0.43\linewidth}
    \centering
    \includegraphics[width=6cm]{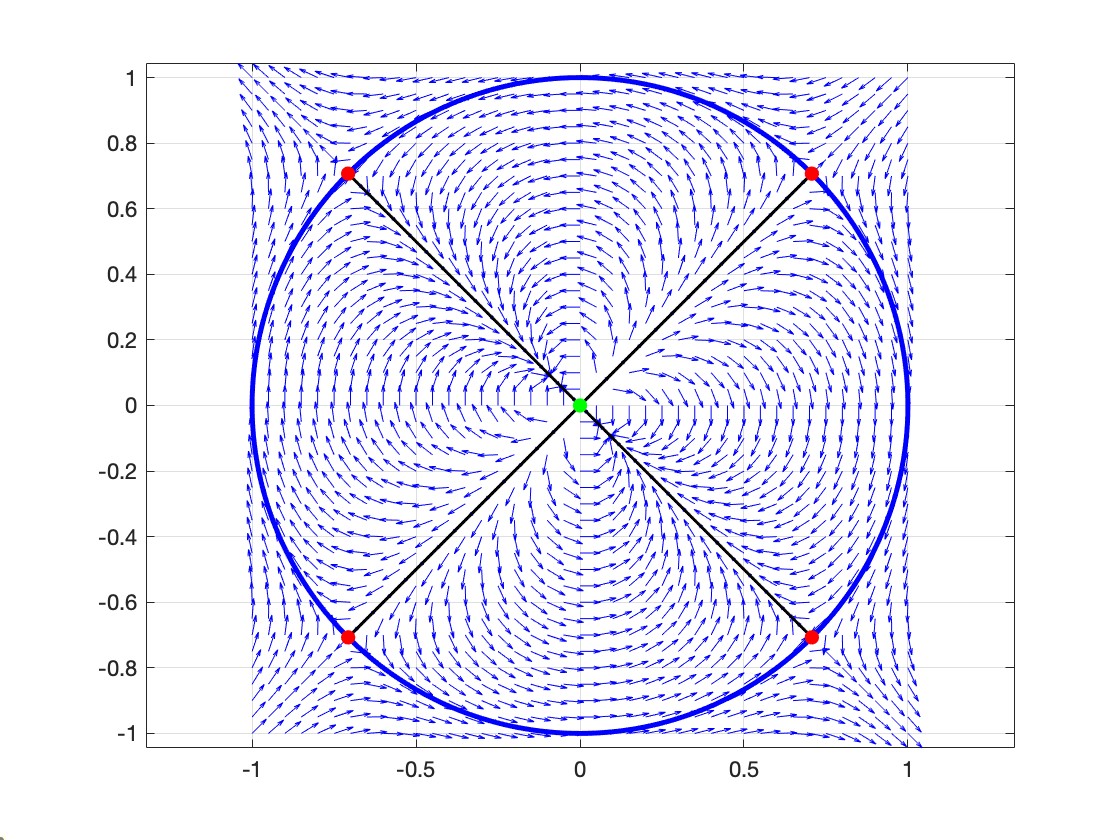}
    \caption{ $x_k=e^{\frac{(2k+1)\pi i}{4}}, k=1,2,3,4$}
 
\end{minipage}
\begin{minipage}[t]{0.43\linewidth}
    \centering
	\includegraphics[width=6cm]{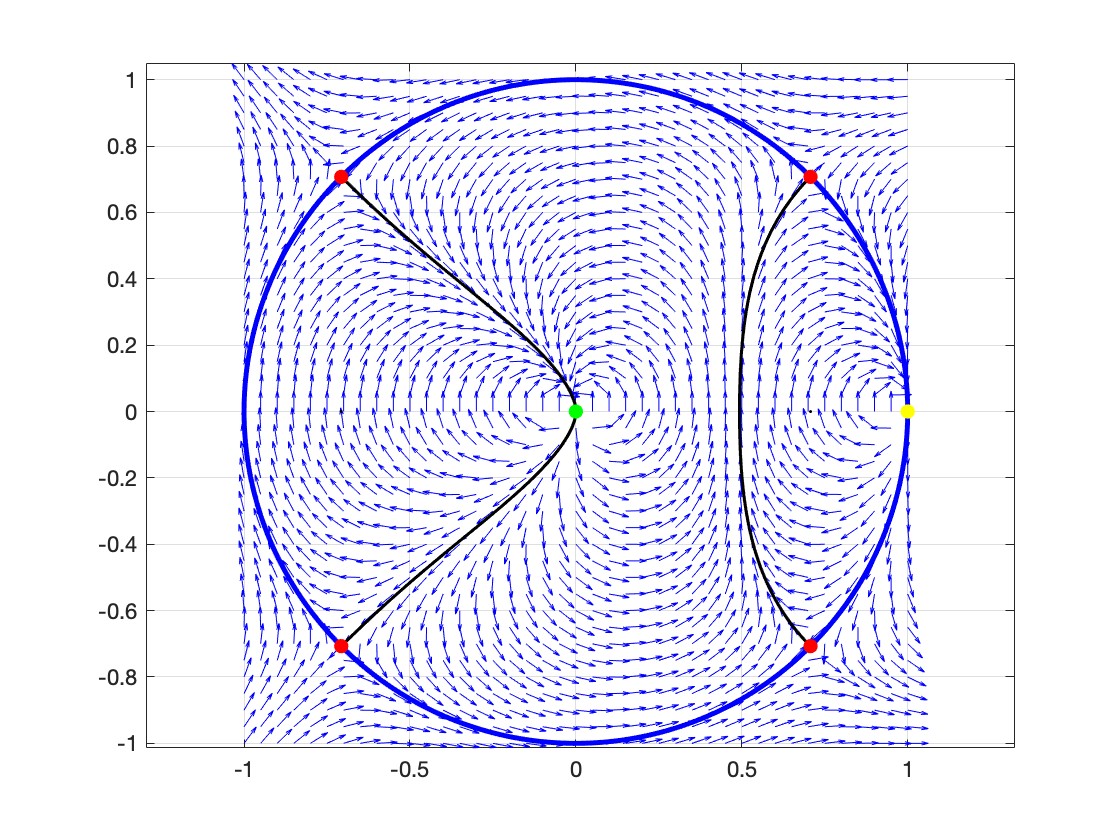}
    \caption{$x_k=e^{\frac{(2k+1)\pi i}{4}}, k=1,2,3,4, \xi_1=1$}
   
\end{minipage}
\end{figure}
In Figure 4.5, $n=4$,$m=0$,$z_k=e^{\frac{(2k+1)\pi i}{4}}$,$k=1,2,3,4$. The SLE(0) curves connect $z_k$ to $0$.

$$\sqrt{Q(z)} = iz^{-3}(z-e^{\pi i/4})(z-e^{3\pi i/4})(z-e^{5 \pi i/4})(z-e^{7\pi i/4})$$

In Figure 4.6, $n=4$,$m=1$,$z_k=e^{\frac{(2k+1)\pi i}{4}}$,$k=1,2,3,4$, $\xi=1$ the SLE(0) curves connect $z_3$ and $z_4$ to $0$ and connect $z_1$ to $z_2$.
$$\sqrt{Q(z)} = -iz^{-2}\frac{(z-e^{\pi i/4})(z-e^{3\pi i/4})(z-e^{5 \pi i/4})(z-e^{7\pi i/4})}{(z-1)^2} $$

\begin{figure}[ht]
\begin{minipage}[h]{0.43\linewidth}
    \centering
    \includegraphics[width=6cm]{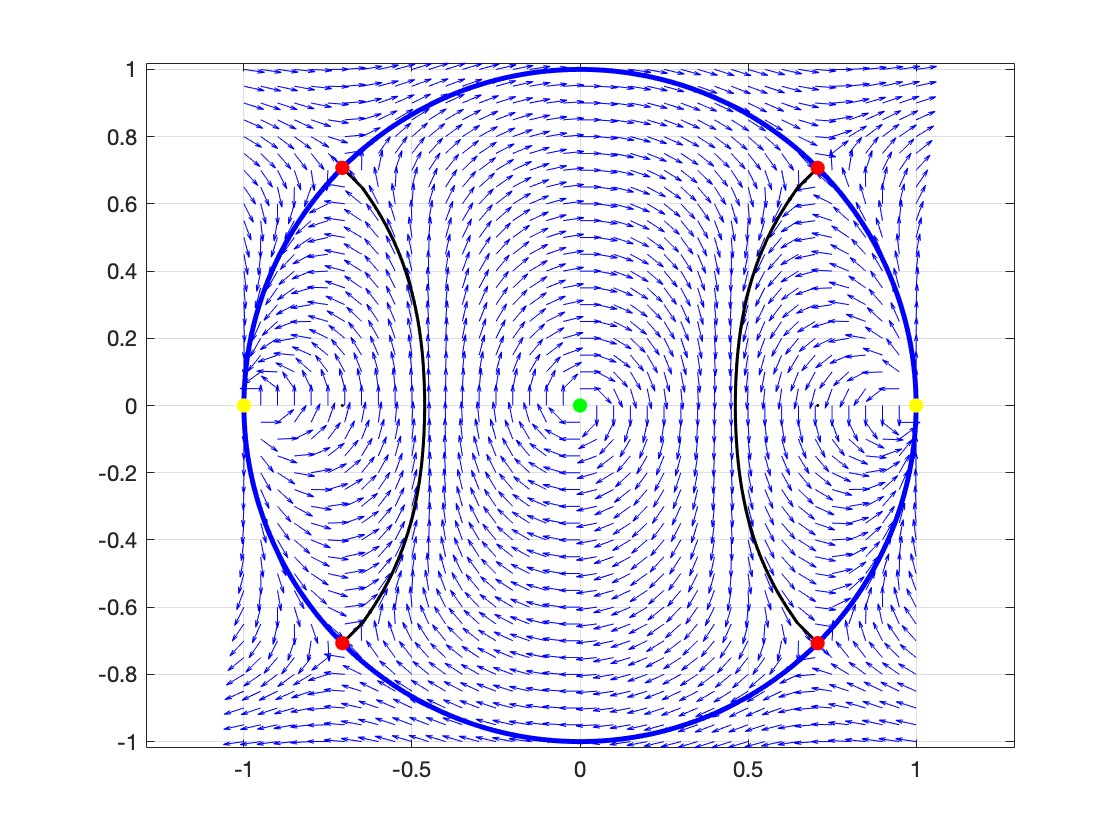}
    \caption{$x_k=e^{\frac{k\pi i}{4}}, k=1,2,3,4$, $\xi_1=1$,$\xi_2=-1$}
 
\end{minipage}
\begin{minipage}[h]{0.43\linewidth}
    \centering
	\includegraphics[width=6cm]{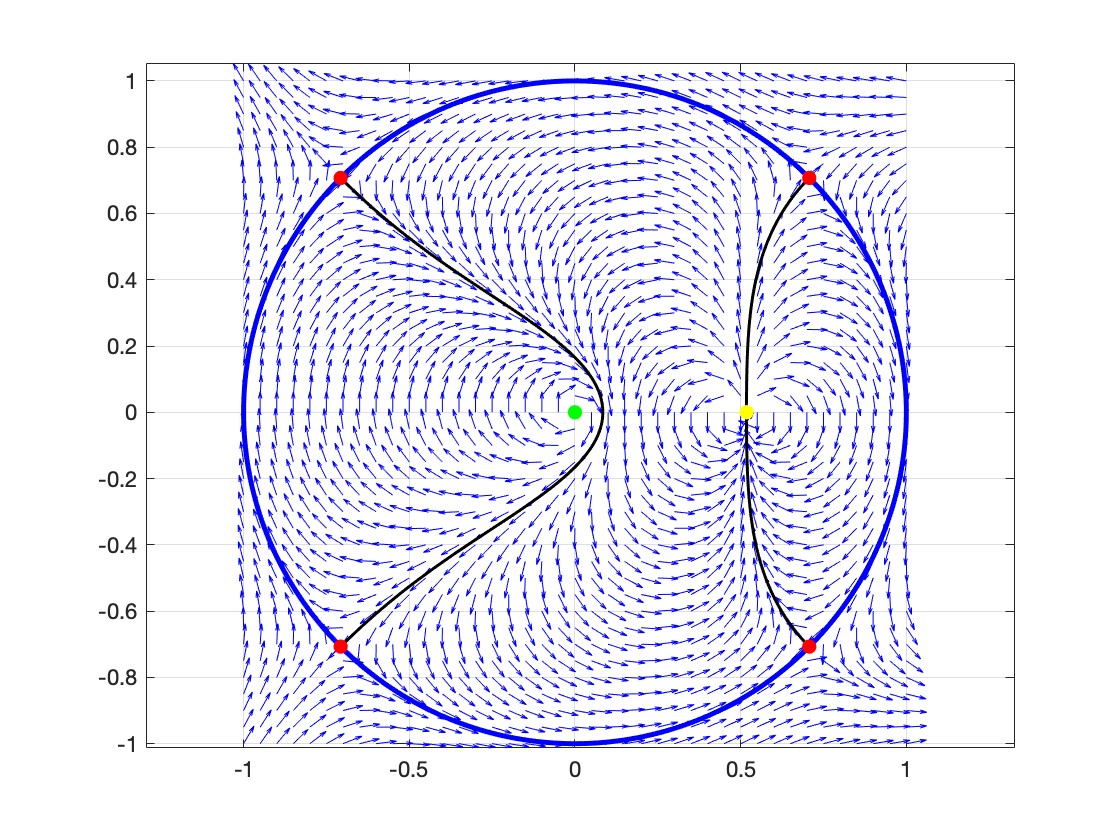}
    \caption{$x_k=e^{\frac{(2k+1)\pi i}{4}}$,$k=0,1,2,3, \xi_1=-1$}

\end{minipage}
\end{figure}

In Figure 4.7, $n=4$,$m=2$,$z_k=e^{\frac{(2k+1)\pi i}{4}}$,$k=1,2,3,4$,$\xi_1=-1$,$\xi_2=1$. The SLE(0) curves connect $z_1$ and $z_4$, $z_2$ and $z_3$.

$$\sqrt{Q(z)} = iz^{-1}\frac{(z-e^{\pi i/4})(z-e^{3\pi i/4})(z-e^{-\pi i/4})(z-e^{-3 \pi i/4})}{(z-1)^2(z+1)^2} $$

In Figure 4.8, $n=4$,$m=2$,$z_i=e^{\frac{(2k+1)\pi i}{4}}$,$k=1,2,3,4$, $\xi_1=\sqrt{2 - \sqrt{3}}$, $\xi_2=\sqrt{2 + \sqrt{3}}$ the SLE(0) curve connects $z_3$ and $z_4$ to $0$ and connects $z_1$ to $z_2$.
$$\sqrt{Q(z)} = iz^{-1}\frac{(z-e^{\pi i/4})(z-e^{3\pi i/4})(z-e^{-\pi i/4})(z-e^{-3 \pi i/4})}{(z-\sqrt{2 - \sqrt{3}})^2(z-\sqrt{2 + \sqrt{3}})^2} $$

\subsection{Examples: overscreening}
\label{overscreening}
Let us recall the definition of the trace quadratic differential \begin{defn} 
[An equivalence class of quadratic differentials with prescribed zeros] 
  Let $\boldsymbol{z}=\{z_1,z_2,\ldots,z_n \}$ be distinct points on the unit circle,\
 a class of quadratic differentials with prescribed zeros denoted by $\mathcal{QD}(\boldsymbol{z})$: 
  \begin{itemize}
 
      \item[{\rm(1)}] involution symmetric:
      $\overline{Q(z^*)}\overline{(dz^{*})^2}= Q(z)dz^2$, where $z^*=\frac{1}{\bar{z}}$
       \item[{\rm (2)}] zeros of order 2 at $\{z_1,z_2,\ldots,z_n \}$
      \item[{\rm (3)}] $\{\xi_1,\ldots,\xi_m\}$ are poles of order 4  and $Res_{\xi_j}(\sqrt{Q}dz)=0$, $j=1,\ldots,m$. (\textcolor{red}{Residue-free})
      \item[{\rm (4)}] poles of order $n+2-2m$ at marked points $0$ and $\infty$

  \end{itemize}

\end{defn}

Note that when $m > \frac{n}{2}+1$, the poles at $0$ and $\infty$ are in fact zeros. $m=\frac{n}{2}+1$ is a threshold for screening.

\begin{figure}[ht]
\begin{minipage}[t]{0.43\linewidth}
    \centering
    \includegraphics[width=6cm]{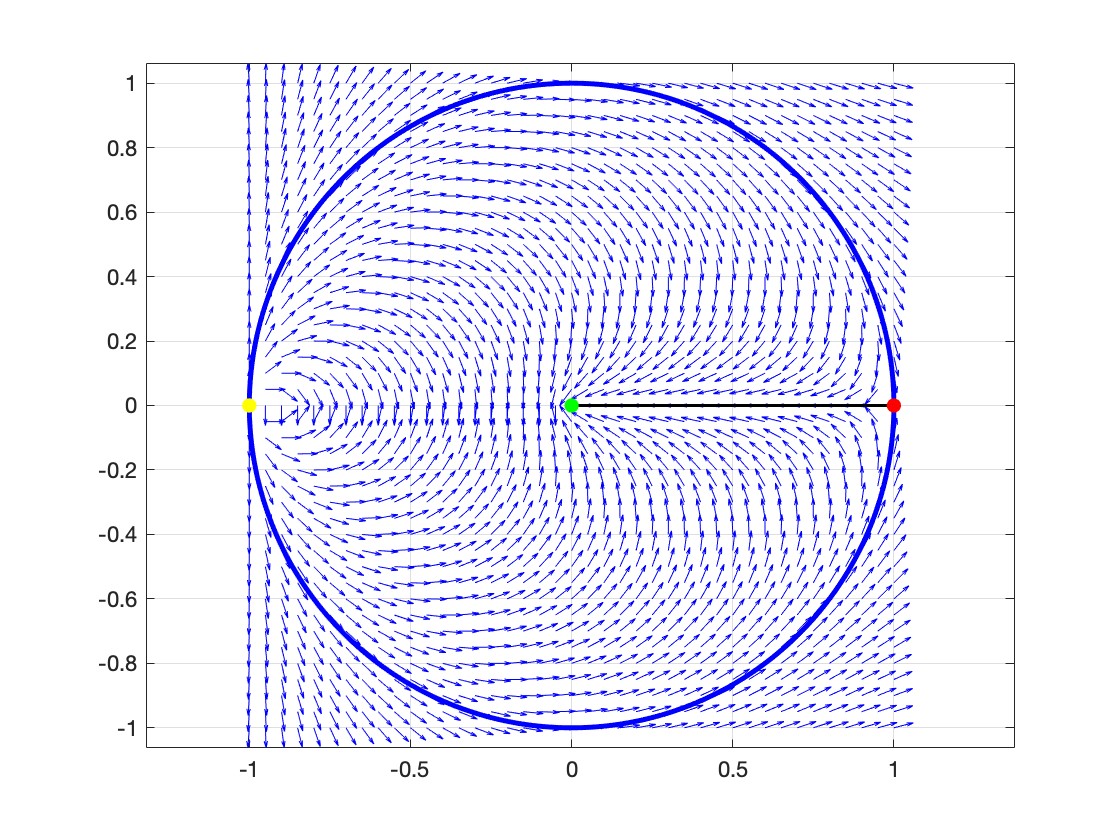}
    \caption{$x_1=1,\xi_1=-1$}
    \label{11}
\end{minipage}
\begin{minipage}[t]{0.43\linewidth}
    \centering
    \includegraphics[width=6cm]{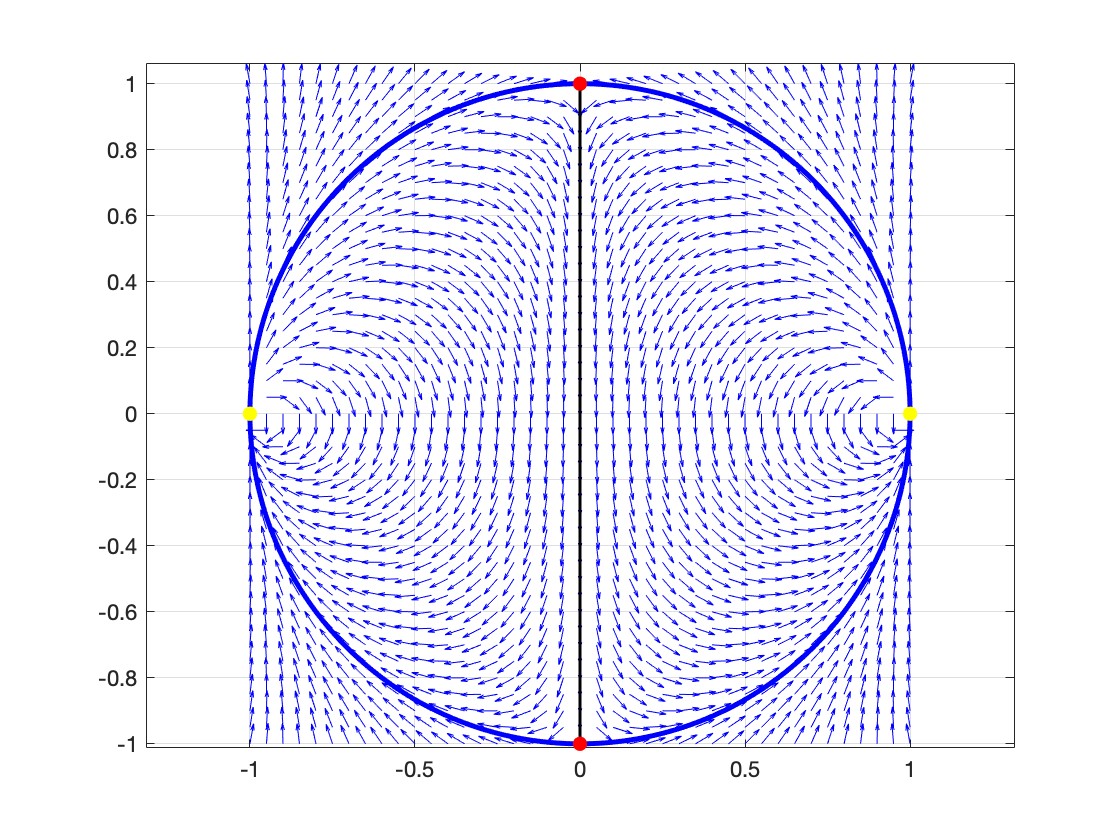}
    \caption{$x_1=i,x_2=-i,\xi_1=1,\xi_2=-1$}
    \label{2c}
\end{minipage}

\end{figure}
In Figure 4.9, $n=1$,$m=1$,$z_1=1$,$\xi_1=-1$. The SLE(0) curve connects $z_1$ and $0$.

$$\sqrt{Q(z)} = z^{-\frac{1}{2}}\frac{(z-1)}{(z-i)^2}$$

In Figure 4.10, $n=2$,$m=2$,$z_1=-i$,$z_2=i$,$\xi_1=-1$,$\xi_2=1$. The SLE(0) curve connects $z_1$ and $z_2$.

$$\sqrt{Q(z)} = i\frac{(z-i)(z+i)}{(z-1)^2(z+1)^2}$$

\begin{figure}[ht]
\begin{minipage}[t]{0.43\linewidth}
    \centering
	\includegraphics[width=6cm]{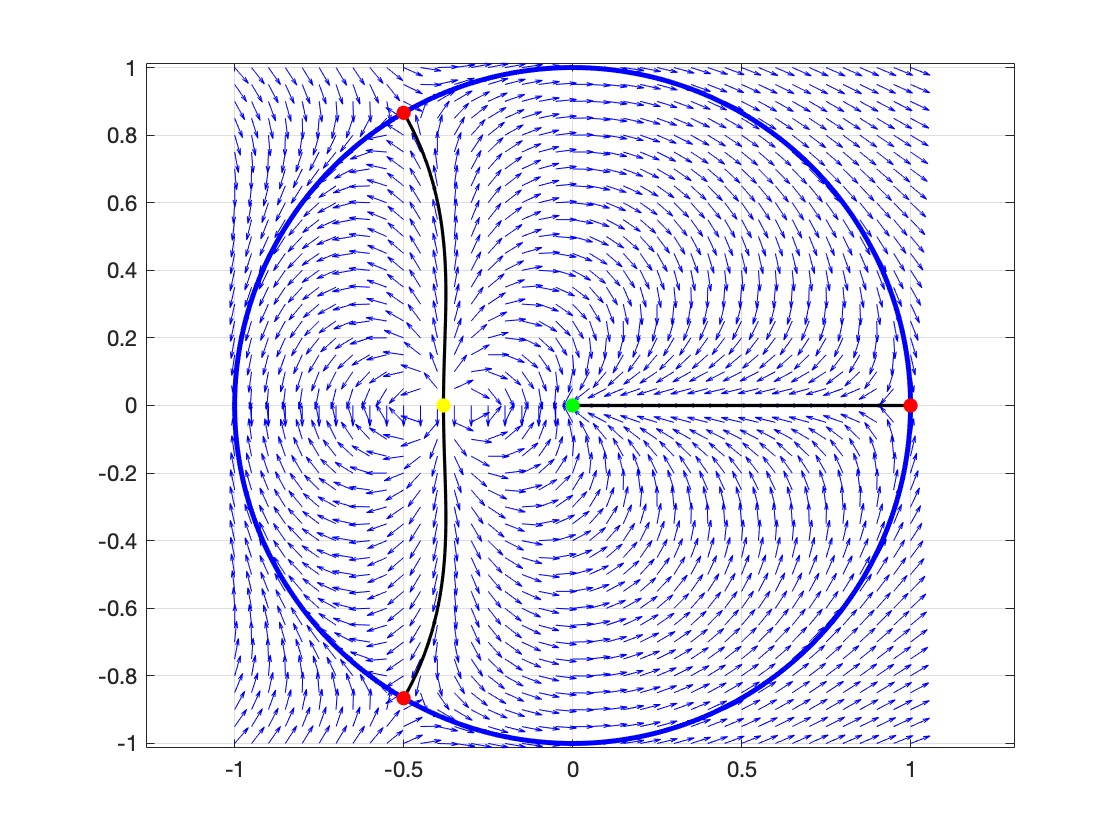}
    \caption{$x_k=e^{\frac{2k\pi i}{3}},k=0,1,2$,$\xi_1=-\frac{3}{2}+\frac{\sqrt{5}}{2},\xi_2=-\frac{3}{2}-\frac{\sqrt{5}}{2}$}
    \label{32a}
\end{minipage}
\begin{minipage}[t]{0.43\linewidth}
    \centering
	\includegraphics[width=6cm]{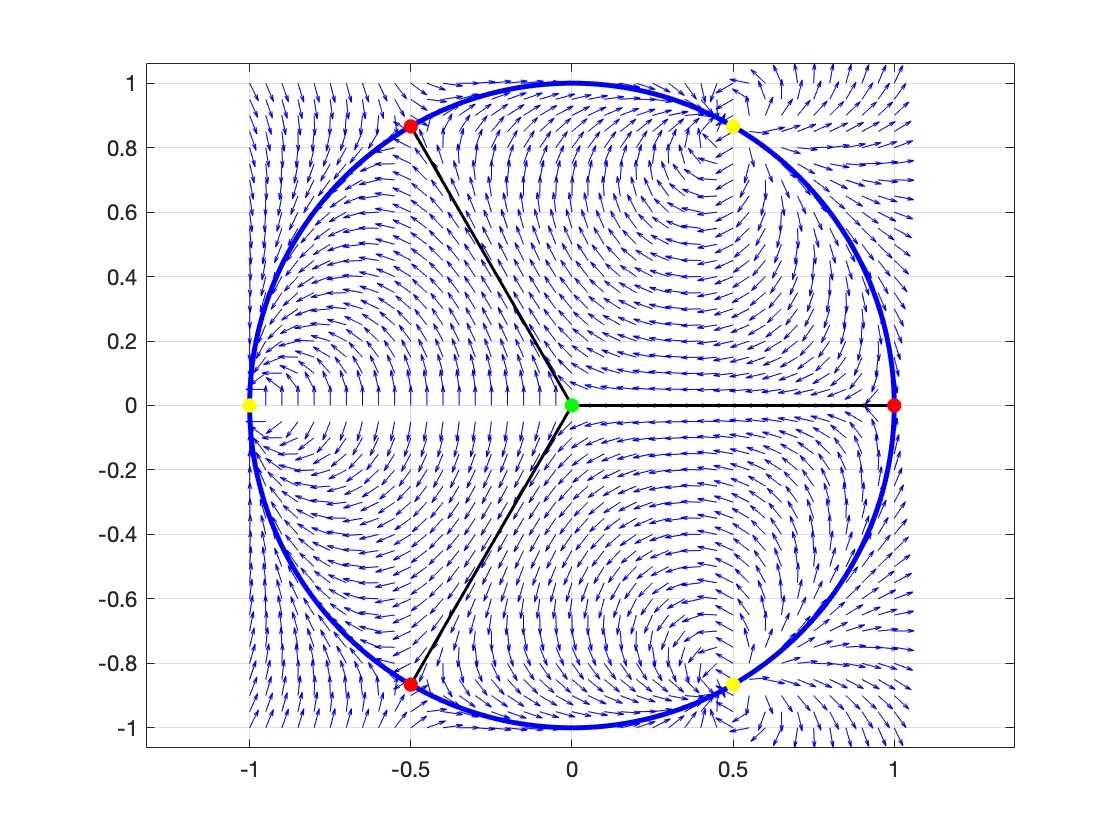}
    \caption{$x_k=e^{\frac{2k\pi i}{3}},\xi_k=e^{\frac{(2k-1)\pi i}{3}},k=0,1,2$}
    \label{33}
\end{minipage}
\end{figure}

In Figure 4.11, $n=3$,$m=2$,$z_k=e^{\frac{k\pi i}{4}}$,$k=0,1,2$, $\xi_1=-\frac{3}{2}+\frac{\sqrt{5}}{2}$,$\xi_2=-\frac{3}{2}-\frac{\sqrt{5}}{2}$ the SLE(0) curve connects $z_3$ and $z_4$ to $0$ and connects $z_1$ to $z_2$.
$$\sqrt{Q(z)} = z^{-1/2}\frac{(z-1)(z-e^{2\pi i/3})(z-e^{4 \pi i/3})}{(z+\frac{3}{2}-\frac{\sqrt{5}}{2})^2(z+\frac{3}{2}+\frac{\sqrt{5}}{2})^2} $$

 In Figure 4.12, $n=3$,$m=3$,$z_k=e^{\frac{2k\pi i}{3}}$,$\xi_k=e^{\frac{(2k-1)\pi i}{3}}$,$k=0,1,2$, the SLE(0) curve connects $z_3$ and $z_4$ to $0$ and connects $z_1$ to $z_2$.
$$\sqrt{Q(z)} = z^{1/2}\frac{(z-1)(z-e^{2 \pi i/3})(z-e^{4\pi i/3})}{(z-e^{\pi i/3})^2(z-e^{3\pi i/3})^2(z-e^{5\pi i/3})^2} $$

\begin{figure}[ht]
\begin{minipage}[t]{0.43\linewidth}
\includegraphics[width=6cm]{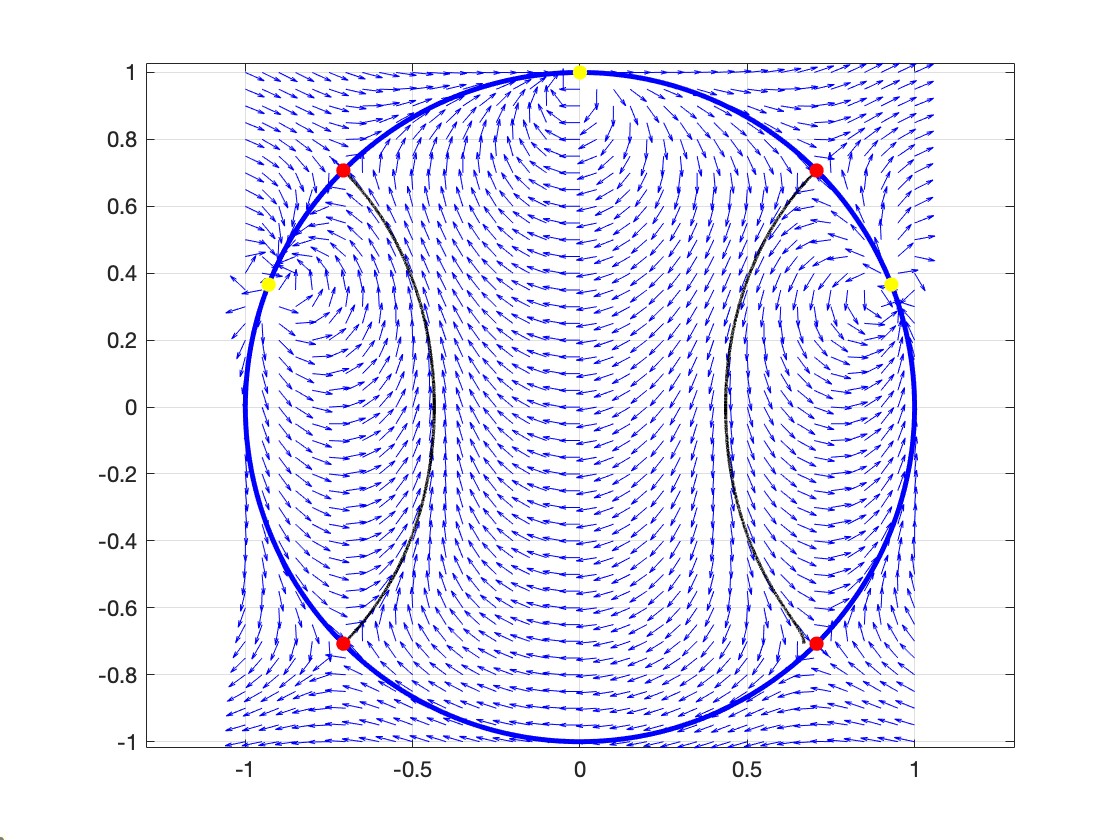}
    \caption{$x_k=e^{\frac{(2k+1)\pi i}{4}},k=0,1,2,3,$ $\xi_1 = i$, $\xi_2=-\frac{\sqrt[4]{3}}{\sqrt{2}} + \frac{-1 + \sqrt{3}}{2}i$, $\xi_3=\frac{\sqrt[4]{3}}{\sqrt{2}} + \frac{-1 + \sqrt{3}}{2}i$}
    \label{fig:43a}
\end{minipage}  
\begin{minipage}[t]{0.43\linewidth}
    \includegraphics[width=6cm]{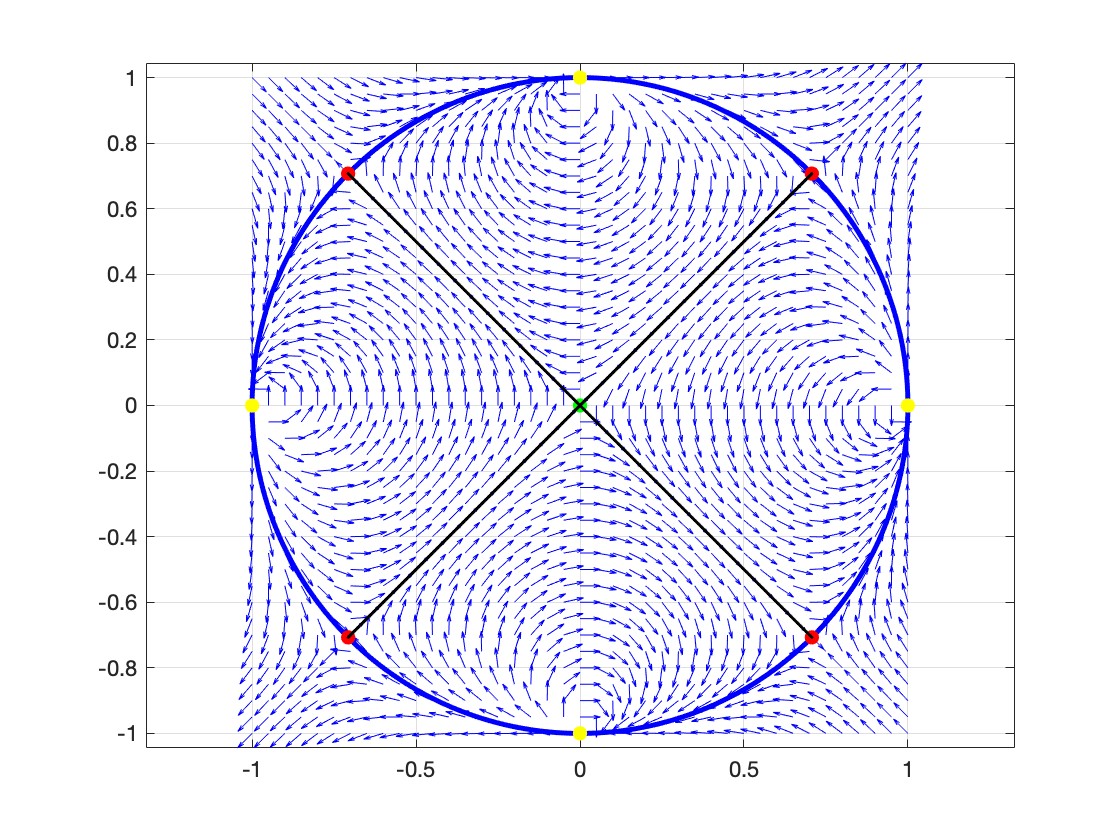}
    \caption{$x_k=e^{\frac{(2k+1)\pi i}{4}}$,$\xi_k = e^{i\frac{k\pi i}{4}}$,$k=0,1,2,3$}
    \label{fig:43a1}
\end{minipage}

\end{figure}
In Figure 4.13, $n=4$,$m=3$,$x_k=e^{\frac{(2k+1)\pi i}{4}},k=0,1,2,3,$ $\xi_1 = i$, $\xi_2=-\frac{\sqrt[4]{3}}{\sqrt{2}} + \frac{-1 + \sqrt{3}}{2}i$, $\xi_3=\frac{\sqrt[4]{3}}{\sqrt{2}} + \frac{-1 + \sqrt{3}}{2}i$. The SLE(0) curves connects $z_1$ and $z_4$, $z_2$ and $z_3$.

$$\sqrt{Q(z)} = \frac{ z^4+1}{(z-i)^2(z-\frac{\sqrt[4]{3}}{\sqrt{2}} - \frac{(1 - \sqrt{3}) i}{2})^2(z+\frac{\sqrt[4]{3}}{\sqrt{2}}- \frac{(1 - \sqrt{3}) i}{2})^2}$$

In Figure 4.14, $n=4$,$m=3$,$z_k=e^{\frac{(2k+1)\pi i}{4}}$, $\xi=e^{\frac{k\pi i}{2}}$,$k=0,1,2,3$. the SLE(0) curves connect $z_k$,$k=0,1,2,3$ to $0$.
$$\sqrt{Q(z)} = z\frac{z^4+1}{z^4-1} $$

\section{Multiple radial SLE(0) system with spin}

\subsection{Residue-free quadratic differentials with prescribed zeros}

\begin{defn}[Quadratic differentials with prescribed zeros and spin] \label{trace quadratic differential spin}
Let \( \boldsymbol{\theta} = \{\theta_1, \theta_2, \ldots, \theta_n \} \) be distinct points on the unit circle \( \partial \mathbb{D} \). We define \( \mathcal{QD}(\boldsymbol{\theta}) \) to be the class of meromorphic quadratic differentials on \( \mathbb{C}  \) of the form
\[
Q(\theta) \, d\theta^2 = 
\frac{\prod_{k=1}^{m} e^{2i\zeta_{k}}}{\prod_{j=1}^{n} e^{i\theta_j}} 
\, e^{i(2m - n) \theta}
\, \frac{\prod_{j=1}^{n} \left(e^{i\theta} - e^{i\theta_j}\right)^2}{\prod_{k=1}^{m} \left(e^{i\theta} - e^{i\zeta_k}\right)^4}
\, d\theta^2,
\]
satisfying the following conditions:
\begin{enumerate}
    \item symmetric under the involution \( \theta^* =\bar{\theta} \), meaning
    \[
    \overline{Q(\theta^*)}\overline{(d\theta^*)^2} = Q(\theta)d\theta^2.
    \]

    \item  distinct zeros at \( \{\theta_1, \theta_2, \ldots, \theta_n\} \), each of order 2.

    \item  distinct finite poles at \( \{\zeta_1, \ldots, \zeta_m\} \), each of order 4, and the residues vanish (Residue-free condition):
    \[
    \text{Res}_{\zeta_j}(\sqrt{Q(\theta)}d\theta) = 0, \quad \text{for } j = 1, \ldots, m.
    \]

\end{enumerate}

Here, the poles \( \{\zeta_1, \ldots, \zeta_m\} \) are finite, meaning they do not coincide with \( \infty \). 

\end{defn}
\begin{thm}[Traces as horizontal trajectories in angular coordinates] \label{traces as horizontal trajectories angular}
Let \( \boldsymbol{\theta} = \{\theta_1, \theta_2, \ldots, \theta_n\} \) be distinct angular coordinates on the unit circle, i.e., \( z_j = e^{i\theta_j} \in \partial \mathbb{D} \), and let \( \boldsymbol{\zeta} = \{\zeta_1, \zeta_2, \ldots, \zeta_m\} \) be positions of poles satisfying the conjudgation symmetry \( \zeta_k^* = -\zeta_k \) and the stationary relations.

Then there exists a quadratic differential \( Q(\theta)\,d\theta^2 \in \mathcal{QD}(\boldsymbol{\theta}) \), with double zeros at \( \theta_1, \ldots, \theta_n \) and poles of order 4 at \( \zeta_1, \ldots, \zeta_m \), such that the hulls \( K_t \) generated by the multiple radial Loewner flow with driving functions \( \boldsymbol{\theta}(t) \) and screening charges \( \boldsymbol{\zeta}(t) \) are a subset of the horizontal trajectories of \( Q(\theta)\,d\theta^2 \) whose limiting ends are at \( \boldsymbol{\theta} \), up to any time \( t \) prior to a collision among poles and zeros.

Moreover, for such times \( t \),
\[
Q(\boldsymbol{\theta}) \circ h_t^{-1} \in \mathcal{QD}(\boldsymbol{\theta}(t)),
\]
where \( h_t \) denotes the covering map associated with the Loewner evolution, and \( \boldsymbol{\theta}(t) \) are the angles of the time-evolved growth points.
\end{thm}
\begin{proof}
The proof proceeds by adapting the argument used in Theorem (\ref{angular integral of motion}), now expressed entirely in angular coordinates. Specifically, we apply the angular version of the integral of motion (see Corollary~\ref{angular integral of motion}) to show that the time-evolved hulls \( K_t \) remain embedded in the horizontal trajectories of a quadratic differential \( Q(\theta) d\theta^2 \in \mathcal{QD}(\boldsymbol{\theta}) \), as claimed.
\end{proof}

\subsection{Field integral of motion and horizontal trajectories as flow lines }
\label{field integral of motion with spin}
In this section, we generalize the integral of motion for multiple radial SLE(0) systems to the case where the spin $\eta$ is non-zero.

We begin by considering the following integral of motion $N_t(z)$:  let $z_1,z_2,\ldots,z_n$ be distinct points on the unit circle, and $z\in \overline{\mathbb{D}}$. Let
$$N_t(z)=e^{-(m-\frac{n}{2})(\int_{0}^{t}\sum_{j}\nu_j(s)ds)}g_t(z)^{m-\frac{n}{2}-1-\frac{\eta i}{2}}g'_{t}(z)\frac{\prod_{k=1}^{n}(g_t(z)-z_k(t))}{\prod_{j=1}^{m}(g_t(z)-\xi_j(t))^2}.$$ However, the term $g_t(z)^{m-\frac{n}{2}-1-\frac{\eta i}{2}}$ is multivalued and $N_t(z)$ is in fact not well-defined. To resolve this technical issue, we will write this expression in angular coordinates.

\begin{thm}
In angular coordinates, let \( \xi_k = e^{i\zeta_k} \), \( z_k = e^{i\theta_k} \), and let \( h_t(z) \) be the covering map of the radial Loewner flow \( g_t(z) \), i.e.,(
$e^{i h_t(z)} = g_t(e^{i z})$.)
Then for each \( z \in \overline{\mathbb{H}} \), define the observable
\begin{equation}
\label{eq:Nang}
N_t^{\mathrm{ang}}(z) = e^{-(m - \frac{n}{2}) t}
\cdot \frac{\prod_{j=1}^{m} e^{i \zeta_j(t)}}{\prod_{k=1}^{n} e^{i \frac{\theta_k(t)}{2}}}
\cdot e^{i(m - \frac{n}{2} - 1) h_t(z)} \cdot e^{\frac{\eta}{2} h_t(z)} \cdot h_t'(z)
\cdot \frac{\prod_{k=1}^{n} (e^{i h_t(z)} - e^{i \theta_k(t)})}{\prod_{j=1}^{m} (e^{i h_t(z)} - e^{i \zeta_j(t)})^2}
\cdot e^{-i z}.
\end{equation}
Then \( N_t^{\mathrm{ang}}(z) \) is an integral of motion on the time interval \( [0, \tau_z \wedge \tau) \), where \( \tau \) is the first collision time of any poles or critical points, and \( \tau_z \) is the swallowing time of the point \( z \) under the multiple radial Loewner flow with parametrization \( \nu_j(t) = 1 \), \( \nu_k(t) = 0 \) for \( k \neq j \).
\end{thm}

\begin{proof}
The expression \( N_t^{\mathrm{ang}}(z) \) can be factorized as the product of a part depending only on time,
\[
A^{\mathrm{ang}}(t) = \frac{\prod_{j=1}^{m} e^{i \zeta_j(t)}}{\prod_{k=1}^{n} e^{i \frac{\theta_k(t)}{2}}}
\cdot e^{-(m - \frac{n}{2})t},
\]
and a part depending on \( z \),
\[
B^{\mathrm{ang}}_t(z) = e^{i(m - \frac{n}{2} - 1) h_t(z)} \cdot e^{\frac{\eta}{2} h_t(z)} \cdot h_t'(z)
\cdot \frac{\prod_{k=1}^{n} (e^{i h_t(z)} - e^{i \theta_k(t)})}{\prod_{j=1}^{m} (e^{i h_t(z)} - e^{i \zeta_j(t)})^2}
\cdot e^{-i z}.
\]
By direct computation,
\[
\frac{d}{dt} \log A^{\mathrm{ang}}(t) = -\frac{i \eta}{2}, \quad 
\frac{d}{dt} \log B^{\mathrm{ang}}_t(z) = \frac{i \eta}{2}.
\]
These terms cancel, and hence
\[
\frac{d}{dt} \log N_t^{\mathrm{ang}}(z)
= \frac{d}{dt} \log A^{\mathrm{ang}}(t) + \frac{d}{dt} \log B^{\mathrm{ang}}_t(z) = 0.
\]
Therefore, \( N_t^{\mathrm{ang}}(z) \) is conserved under the flow.
\end{proof}

\begin{thm}
In angular coordinates, define \( \xi_k = e^{i\zeta_k} \), \( z_k = e^{i\theta_k} \), and let \( h_t(z) \) be the covering map of the Loewner flow \( g_t(z) \), i.e., \( e^{i h_t(z)} = g_t(e^{i z}) \). For any \( z \in \overline{\mathbb{H}} \), define:
\begin{align}
A^{\mathrm{ang}}(t) &= \frac{\prod_{j=1}^{m} e^{i\zeta_j(t)}}{\prod_{k=1}^{n} e^{i\frac{\theta_k(t)}{2}}}, \\
B^{\mathrm{ang}}_t(z) &= e^{-(2m - n)\int_0^t \sum_j \nu_j(s) ds} \cdot g_t(z)^{2m - n - 2} \cdot e^{i(m - \frac{n}{2} - 1 + \frac{\eta}{2}) h_t(z)} \cdot h_t'(z) \cdot e^{i h_t(z)} \notag \\
&\qquad \cdot \frac{\prod_{k=1}^{n} \left( e^{i h_t(z)} - e^{i \theta_k(t)} \right)}{\prod_{j=1}^{m} \left( e^{i h_t(z)} - e^{i \zeta_j(t)} \right)^2}, \\
N^{\mathrm{ang}}_t(z) &= A^{\mathrm{ang}}(t) \cdot B^{\mathrm{ang}}_t(z).
\end{align}

Then \( N^{\mathrm{ang}}_t(z) \) defines a field integral of motion for the multiple radial SLE(0) Loewner flows with driving weights \( \nu_j(t) \), on the interval \( [0, \tau_t \wedge \tau) \), where \( \tau \) is the first collision time among the poles or driving points.
\end{thm}

\begin{proof}
The computation is a deformation of the zero-spin case (\( \eta = 0 \)), with the additional spin term contributing to the angular prefactor. By direct differentiation:
\begin{align}
\frac{d}{dt} \log A^{\mathrm{ang}}(t) &= -\frac{i \eta}{2} \sum_{j=1}^{n} \nu_j(t), \\
\frac{d}{dt} \log B^{\mathrm{ang}}_t(z) &= \frac{i \eta}{2} \sum_{j=1}^{n} \nu_j(t), \\
\frac{d}{dt} \log N^{\mathrm{ang}}_t(z) &= \frac{d}{dt} \log A^{\mathrm{ang}}(t) + \frac{d}{dt} \log B^{\mathrm{ang}}_t(z) = 0.
\end{align}
Hence, \( N^{\mathrm{ang}}_t(z) \) is preserved under the flow and is therefore a field integral of motion.
\end{proof}

\subsection{Classical limit of martingale observables* } \label{Martingale Observable}
In this section, we discuss how the field integral of motion is heuristically derived as the classical limit of martingale observables constructed as the correlation functions of conformal fields.

Based on the SLE-CFT correspondence, we can couple the multiple radial SLE($\kappa$) system to a conformal field theory constructed via vertex operators, following the approach outlined in  \cite{KM13,KM21} 

\begin{defn}[$n$-leg operator with screening charges]
    Consider the following charge distribution on the Riemann sphere

$$
\boldsymbol{\beta}=b \delta_{0}+b\delta_{\infty}
$$
\begin{equation}
\boldsymbol{\tau_1}=\sum_{j=1}^{n} a \delta_{z_j}-\sum_{k=1}^m 2 a \delta_{\xi_k}+ (b+(m-\frac{n}{2})a-\frac{i\eta a}{2}) \delta_{0}+(b+(m-\frac{n}{2})a
+\frac{i\eta a}{2}) \delta_{\infty}
\end{equation}

$$
\boldsymbol{\tau_2}=-\frac{\sigma}{2} \delta_{0}-\frac{\sigma}{2}\ \delta_{\infty}+\sigma \delta_z,
$$
where the parameter $\sigma= \frac{1}{a}$.

The $n$-leg operator with screening charges $\boldsymbol{\xi}$ and background charge $\boldsymbol{\beta}$ is given by the OPE exponential: 

\begin{equation}
\mathcal{O}_{\boldsymbol{\beta}}[\boldsymbol{\tau_1}]=\frac{C_{(b)}[\boldsymbol{\tau_1}+\boldsymbol{\beta}]}{C_{(b)}[\boldsymbol{\beta}]} \mathrm{e}^{\odot i \Phi[\boldsymbol{\tau_1}]}.
\end{equation}

\end{defn}

\begin{defn}[Screening fields]

For each link pattern $\alpha$, we can choose closed contours $\mathcal{C}_1, \ldots, \mathcal{C}_n$ along which we may integrate the $\boldsymbol{\xi}$ variables to screen the vertex fields.
Let $\mathcal{S}$ be the screening operator, we define the screening operation as 
$$
\mathcal{S}_{\alpha}\mathcal{O}_{\boldsymbol{\beta}}[\boldsymbol{\tau_1}]=\oint_{\mathcal{C}_1} \ldots \oint_{\mathcal{C}_n} \mathcal{O}_{\boldsymbol{\beta}}[\boldsymbol{\tau_1}].
$$

Meanwhile, we integrate the correlation function $\mathbf{E}\mathcal{O}_{\boldsymbol{\beta}}[\boldsymbol{\tau_1}]=\Phi_\kappa(\boldsymbol{z}, \boldsymbol{\xi})$ , the conformal dimension is 1 at the $\boldsymbol{\xi}$ points, i.e. since $\lambda_b(-2 a)=1$. This leads to the partition function for the corresponding multiple radial SLE($\kappa$) system:
$$
\mathcal{J}_{\alpha}^{\eta}(\boldsymbol{z}):=\mathbf{E}\mathcal{S}_{\alpha}\mathcal{O}_{\boldsymbol{\beta}}[\boldsymbol{\tau_1}]=\oint_{\mathcal{C}_1} \ldots \oint_{\mathcal{C}_n} \Phi_\kappa(\boldsymbol{z}, \boldsymbol{\xi}) d \xi_n \ldots d \xi_1 .
$$

\end{defn}

\begin{thm}[Martingale observable]
 For any tensor product $X$ of fields in the OPE family $\mathcal{F}_{\boldsymbol{\beta}}$ of $\Phi_{\boldsymbol{\beta}}$,
\begin{equation}
M_{t,\kappa}(X)=\frac{\mathbf{E}\mathcal{S}_{\alpha} \mathcal{O}_{\boldsymbol{\beta}}[\boldsymbol{\tau_1}] X}{\mathbf{E}\mathcal{S}_{\alpha} \mathcal{O}_{\boldsymbol{\beta}}[\boldsymbol{\tau_1}]} \| g_t^{-1}
\end{equation}
is a local martingale, where $g_t(z)$ is the Loewner map for multiple radial SLE($\kappa$) system associated to $\mathcal{J}_{\alpha}^{\eta}(\boldsymbol{z})=\mathbf{E}\mathcal{S}_{\alpha} \mathcal{O}_{\boldsymbol{\beta}}[\boldsymbol{\tau_1}]$.

\end{thm}

\begin{cor}
Let the divisor $ \boldsymbol{\tau_2}=-\frac{\sigma}{2} \delta_{0}-\frac{\sigma}{2}\ \delta_{\infty}+\sigma \delta_z$ where the parameter $\sigma= \frac{1}{a}$, and insert $ X = \mathcal{O}_{\boldsymbol{\beta}}[\boldsymbol{\tau_2}] $.

\begin{equation}
M_{t,\kappa}(z)=\frac{\mathbf{E}\mathcal{S} \mathcal{O}_{\boldsymbol{\beta}}[\boldsymbol{\tau_1}]\mathcal{O}_{\boldsymbol{\beta}}[\boldsymbol{\tau_2}]}
{\mathbf{E}\mathcal{S} \mathcal{O}_{\boldsymbol{\beta}}[\boldsymbol{\tau_1}]} \| g_t^{-1}
\end{equation}
is local martingale where $g_t(z)$ is the Loewner map for multiple radial SLE($\kappa$) system associated to $\mathcal{Z}_\kappa(\boldsymbol{z})=\mathbf{E}\mathcal{S} \mathcal{O}_{\boldsymbol{\beta}}[\boldsymbol{\tau_1}]$.
\end{cor}

Explicit computation shows that
$$
\begin{aligned}
    &\mathbf{E}\oint_{\mathcal{C}_1} \ldots \oint_{\mathcal{C}_n} \mathcal{O}_{\boldsymbol{\beta}}[\boldsymbol{\tau_1}]\mathcal{O}_{\boldsymbol{\beta}}[\boldsymbol{\tau_2}]\\
    &=\oint_{\mathcal{C}_1} \ldots \oint_{\mathcal{C}_n} \prod_{1 \leq i<j \leq n}(z_i-z_j)^{a^2} \prod_{1 \leq i<j \leq m}(\xi_i-\xi_j)^{4 a^2} \prod_{i=1}^{n} \prod_{j=1}^m\left(z_i-\xi_j\right)^{-2 a^2} \\
    & \prod_j z_j^{a(b-\frac{n-2m}{2}a-\frac{i\eta a}{2}-\frac{\sigma}{2})} \prod_k \xi_k^{-2a(b-\frac{n-2m}{2}a-\frac{i\eta a}{2}-\frac{\sigma}{2})} z^{\sigma(b-\frac{n-2m}{2}a-\frac{i\eta a}{2}-\frac{\sigma}{2})}
g^{\prime}(z_j) ^{\lambda_b(a)}g^{\prime}(z) ^{\lambda_b(\sigma)} \\
& (z-z_j)^{\sigma a}(z-\xi_k)^{-2\sigma a}|g^{\prime}(0)|^{\lambda_b(b+\frac{2m-n}{2}a+\frac{i\eta a}{2}-\frac{\sigma}{2})+\lambda_b(b+\frac{2m-n}{2}a-\frac{i\eta a}{2}-\frac{\sigma}{2})}
\end{aligned}
$$
$$
\begin{aligned}
    &\mathbf{E}\oint_{\mathcal{C}_1} \ldots \oint_{\mathcal{C}_n} \mathcal{O}_{\boldsymbol{\beta}}[\boldsymbol{\tau_1}] \\
    &=\oint_{\mathcal{C}_1} \ldots \oint_{\mathcal{C}_n} \prod_{1 \leq i<j \leq n}(z_i-z_j)^{a^2} \prod_{1 \leq i<j \leq m}(\xi_i-\xi_j)^{4 a^2} \prod_{i=1}^{n} \prod_{j=1}^m\left(z_i-\xi_j\right)^{-2 a^2} \\
    & \prod_j z_j^{a(b-\frac{n-2m}{2}-\frac{i\eta a}{2})} \prod_k \xi_k^{-2a(b-\frac{n-2m}{2}a-\frac{i\eta a}{2})}  g^{\prime}(z_j) ^{\lambda_b(a)}
    \\
    & |g^{\prime}(0)|^{\lambda_b(b-\frac{2m-n}{2}a
+\frac{i\eta a}{2})+\lambda_b(b-\frac{2m-n}{2}a-\frac{i\eta a}{2})}
\end{aligned}.
$$

\begin{conjecture}
 
As $\kappa \rightarrow 0$, the Coulomb gas contour integrals concentrate on the critical points of the master function.

\begin{equation}
\begin{aligned}
N_t(z)=& M_{t,0}(z)=\lim_{\kappa\rightarrow 0} M_{t,\kappa}(z)=\lim_{\kappa \rightarrow 0}\frac{\mathbf{E}\oint_{\mathcal{C}_1} \ldots \oint_{\mathcal{C}_n} \mathcal{O}_{\boldsymbol{\beta}}[\boldsymbol{\tau_1}]\mathcal{O}_{\boldsymbol{\beta}}[\boldsymbol{\tau_2}]}
{\mathbf{E}\oint_{\mathcal{C}_1} \ldots \oint_{\mathcal{C}_n} \mathcal{O}_{\boldsymbol{\beta}}[\boldsymbol{\tau_1}]}\\
&=|g^{\prime}(0)|^{-(m-\frac{n}{2})}\frac{\prod_{j=1}^{m}\xi_k}{ \sqrt{\prod_{k=1}^{n}z_k}}
z^{m-\frac{n}{2}-1-\frac{\eta i}{2}}g'(z)\frac{\prod_{k=1}^{n}(z-z_k)}{\prod_{j=1}^{m}(z-\xi_j)^2}
\end{aligned}
\end{equation}
\end{conjecture}

which is exactly the integral of motion we use.

$M_{t,\kappa}(z)$ is a $(\lambda_b(\sigma),0)$ differential with respect to $z$, where $\lambda_b(\sigma)= \frac{1}{2a^2}-\frac{b}{a}$.
By taking the limit $\kappa \rightarrow 0$,
$\lim_{\kappa \rightarrow 0}\lambda_b(\sigma)=1$, thus $M_{t,0}(z)$ is a (1,0) differential.

\begin{remark}
   The integral of motion $N_{t}(z)$ can be verified through direct computation.
\end{remark}

\subsection{Examples: spin}
In this section, we provide a series of figures to illustrate the trace configurations arising from various multiple radial $\mathrm{SLE}(0)$ systems with spin.

\begin{remark}
In the case of multiple radial SLE(0) with spin $\eta$, for $\boldsymbol{z}$ and $\boldsymbol{\xi}$, the quadratic differential $Q(z)dz^2$ can be written as
$$Q(z)dz^2= \frac{\prod_{j=1}^{m}\xi_{k}^{2}}{\prod_{k=1}^{n}z_{k}}
z^{2m-n-2-\eta i}\frac{\prod_{k=1}^{n}(z-z_k)^2}{\prod_{j=1}^{m}(z-\xi_j)^4}dz^2. $$

\end{remark}

\begin{figure}[h]
\centering
\begin{minipage}[t]{0.43\linewidth}
    \centering
    \includegraphics[width=6cm]{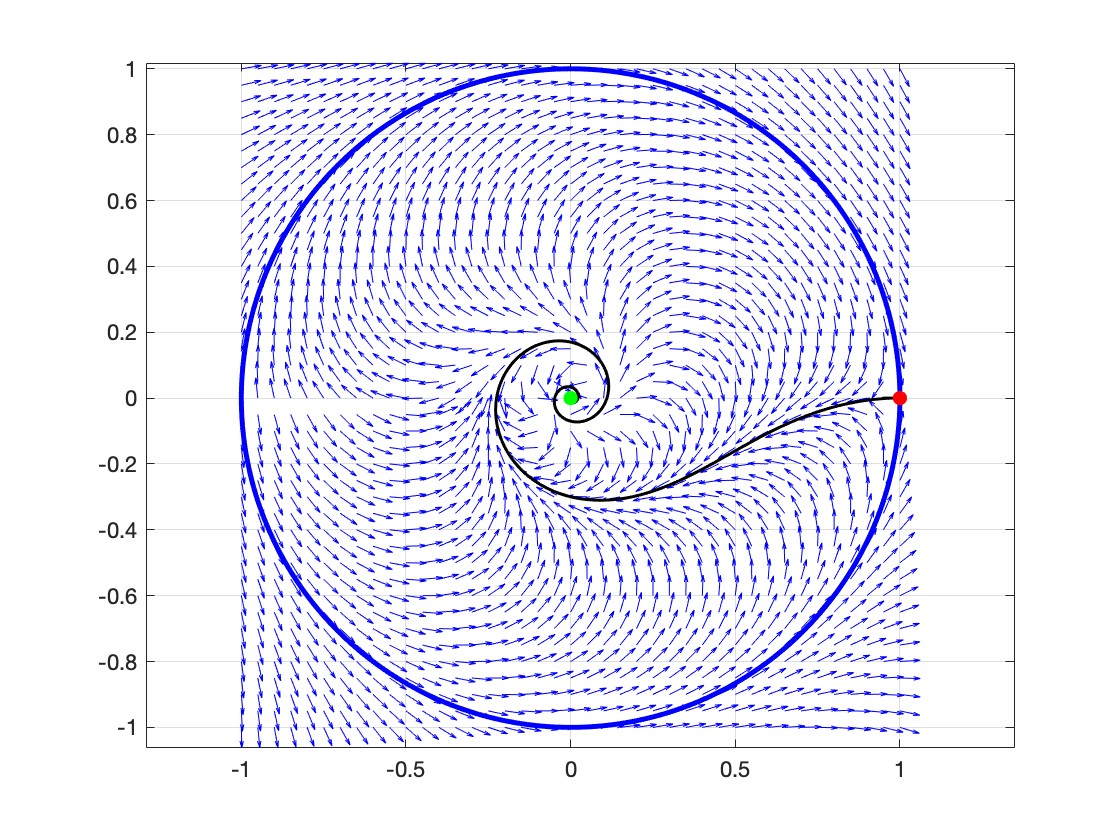}
    \caption{$n = 1$, $\eta = -4$}
\end{minipage}
\hfill
\begin{minipage}[t]{0.43\linewidth}
    \centering
    \includegraphics[width=6cm]{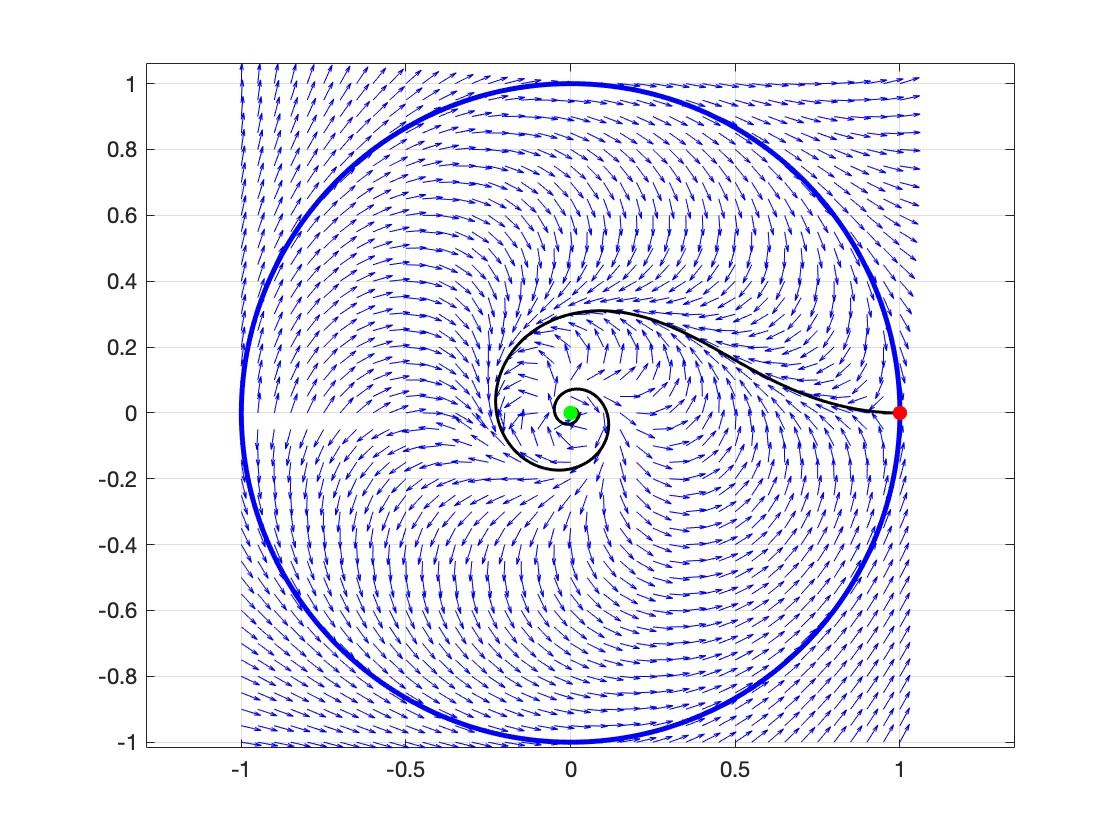}
    \caption{$n = 1$, $\eta = 4$}
\end{minipage}
\end{figure}

\textbf{Figure 5.1:} \( n = 1 \), \( z_1 = 1 \), \( \eta = -4 \). A clockwise spiral connects \( z_1 \) to 0.
\[
\sqrt{Q(z)} = z^{-3/2 + 2i}(z - 1)
\]

\textbf{Figure 5.2:} \( \eta = 4 \). A counterclockwise spiral connects \( z_1 \) to 0.
\[
\sqrt{Q(z)} = z^{-3/2 - 2i}(z - 1)
\]

---

\begin{figure}[h]
\centering
\begin{minipage}[t]{0.43\linewidth}
    \includegraphics[width=6cm]{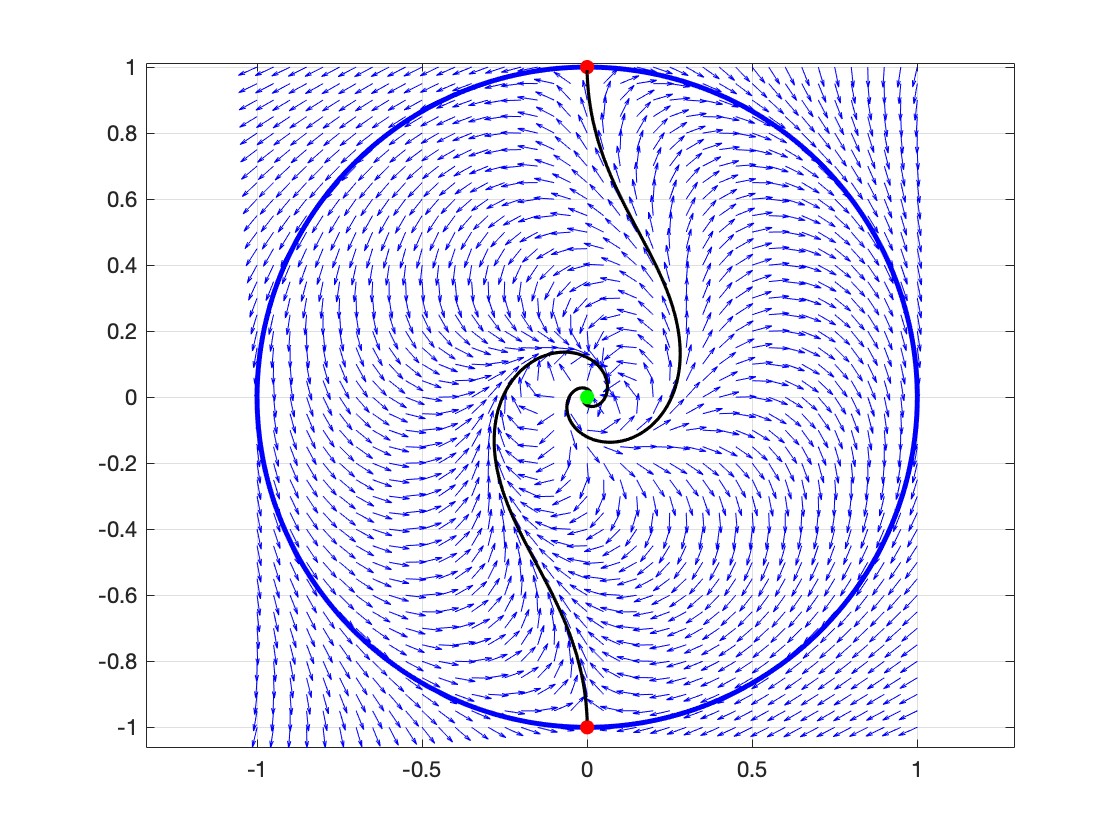}
    \caption{$n = 2$, $m = 0$, $\eta = -4$}
\end{minipage}
\hfill
\begin{minipage}[t]{0.43\linewidth}
    \includegraphics[width=6cm]{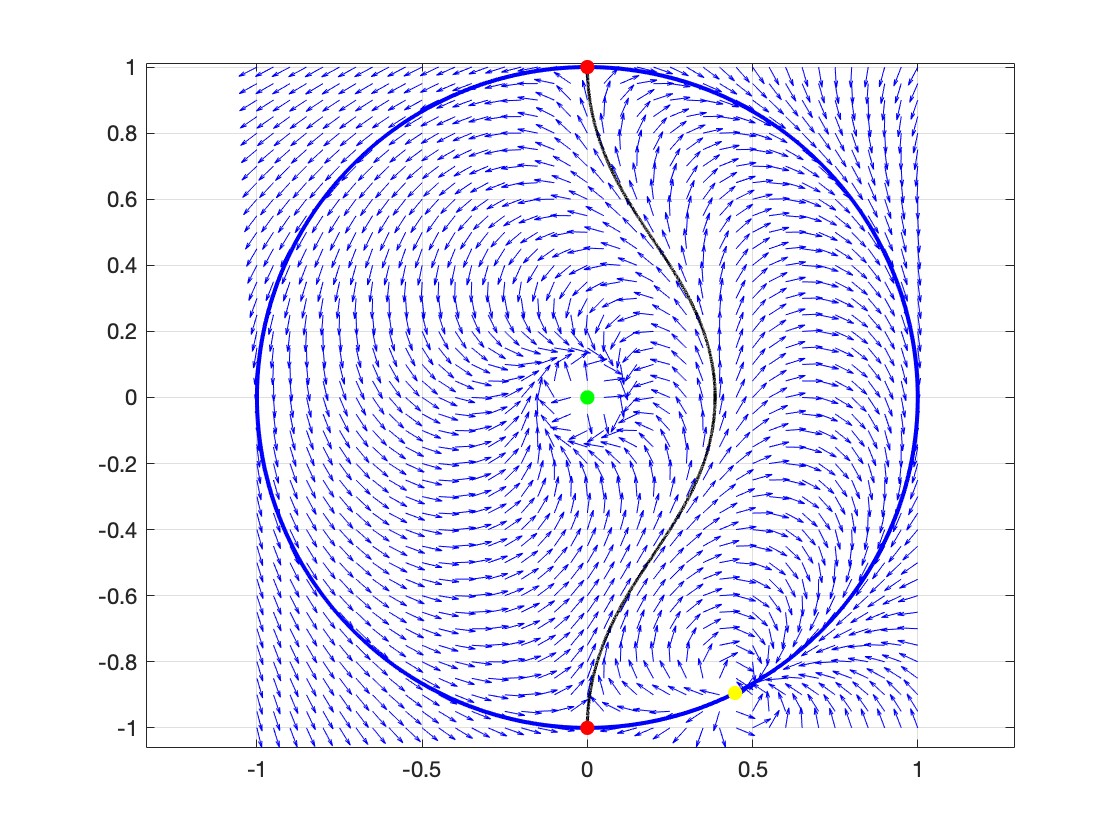}
    \caption{$n = 2$, $m = 1$, $\eta = -4$}
\end{minipage}
\end{figure}

\textbf{Figure 5.3:} \( z_1 = i, z_2 = -i \), two spirals connect \( z_1, z_2 \) to 0.
\[
\sqrt{Q(z)} = z^{-2 + 2i}(z - i)(z + i)
\]

\textbf{Figure 5.4:} Pole \( \xi = \frac{\sqrt{-4 - 2i}}{\sqrt{4 - 2i}} \). The link pattern remains stable under spin perturbation; the pole moves clockwise as \( \eta < 0 \). A closed orbit is observed.
\[
\sqrt{Q(z)} = i z^{-1 + 2i} \frac{(z - i)(z + i)}{\left(z - \frac{\sqrt{-4 - 2i}}{\sqrt{4 - 2i}}\right)^2}
\]

---

\begin{figure}[h]
\centering
\begin{minipage}[t]{0.43\linewidth}
    \includegraphics[width=6cm]{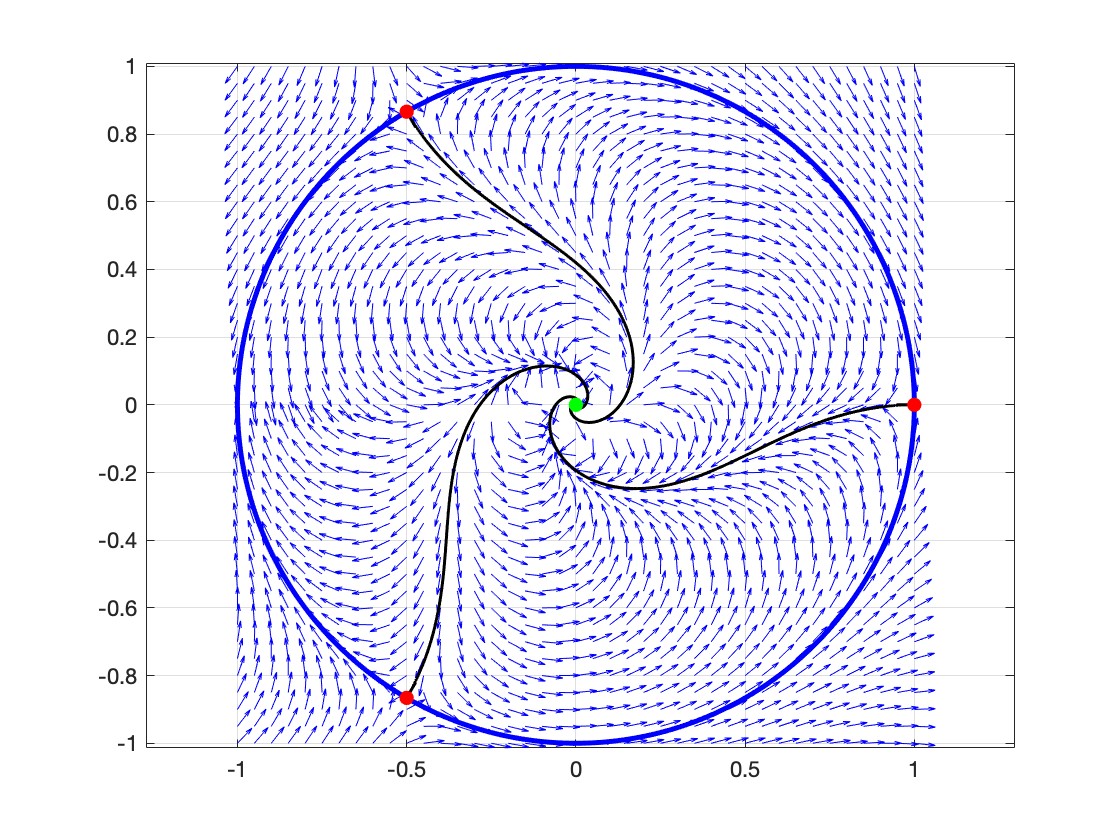}
    \caption{$n = 3$, $m = 0$, $\eta = -4$}
\end{minipage}
\hfill
\begin{minipage}[t]{0.43\linewidth}
    \includegraphics[width=6cm]{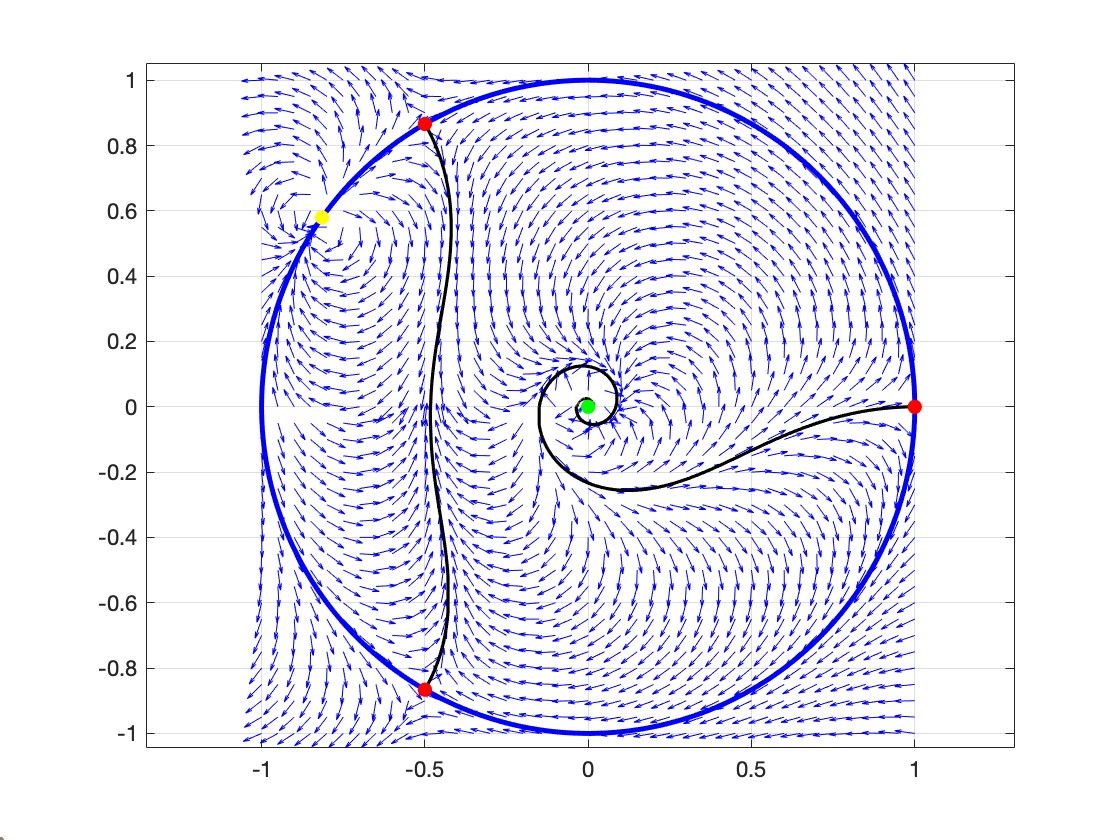}
    \caption{$n = 3$, $m = 1$, $\eta = -4$}
\end{minipage}
\end{figure}

\textbf{Figure 5.5:} \( z_k = e^{2k\pi i / 3} \). Three spirals connect each \( z_k \) to 0.
\[
\sqrt{Q(z)} = z^{-5/2 + 2i}(z - 1)(z - e^{2\pi i/3})(z - e^{4\pi i/3})
\]

\textbf{Figure 5.6:} Pole \( \xi = \frac{(4 + 3i)^{1/3}}{(4 - 3i)^{1/3}} \).
\[
\sqrt{Q(z)} = z^{-3/2 + 2i} \frac{(z - 1)(z - e^{2\pi i/3})(z - e^{4\pi i/3})}{(z - \xi)^2}
\]

---

\begin{figure}[h]
\centering
\begin{minipage}[t]{0.43\linewidth}
    \includegraphics[width=6cm]{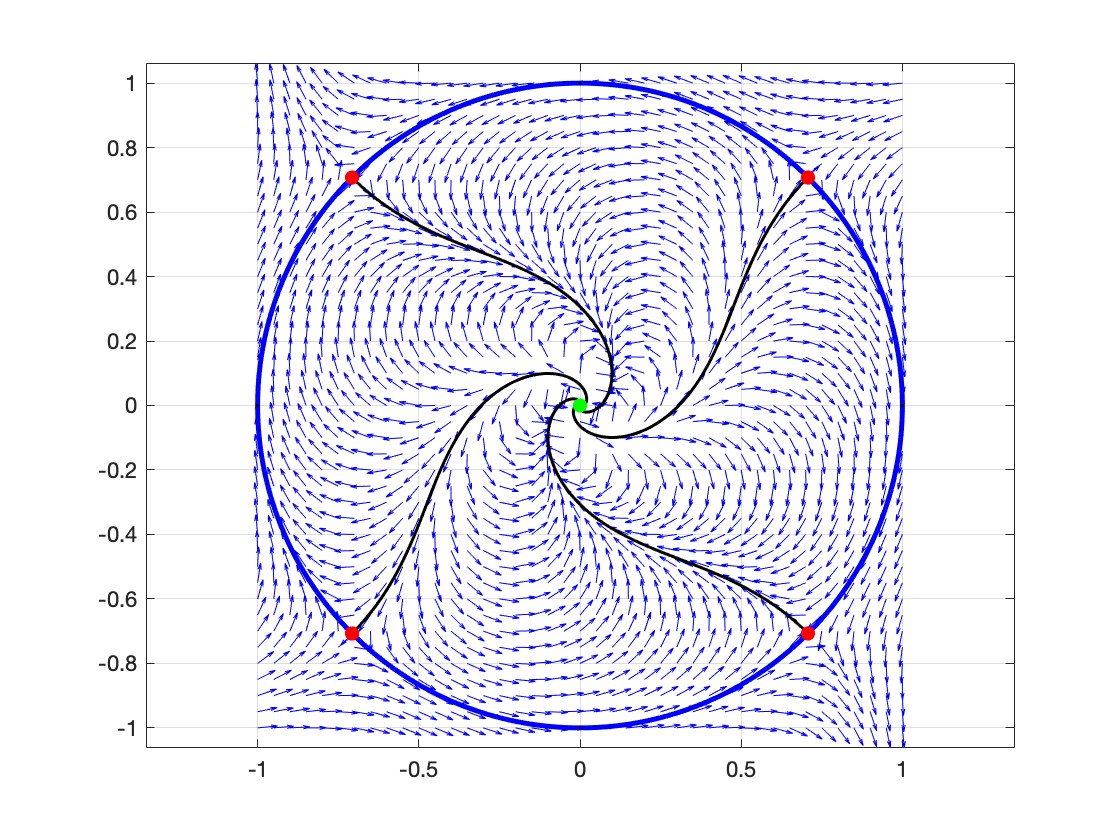}
    \caption{$n = 4$, $m = 0$, $\eta = -4$}
\end{minipage}
\hfill
\begin{minipage}[t]{0.43\linewidth}
    \includegraphics[width=6cm]{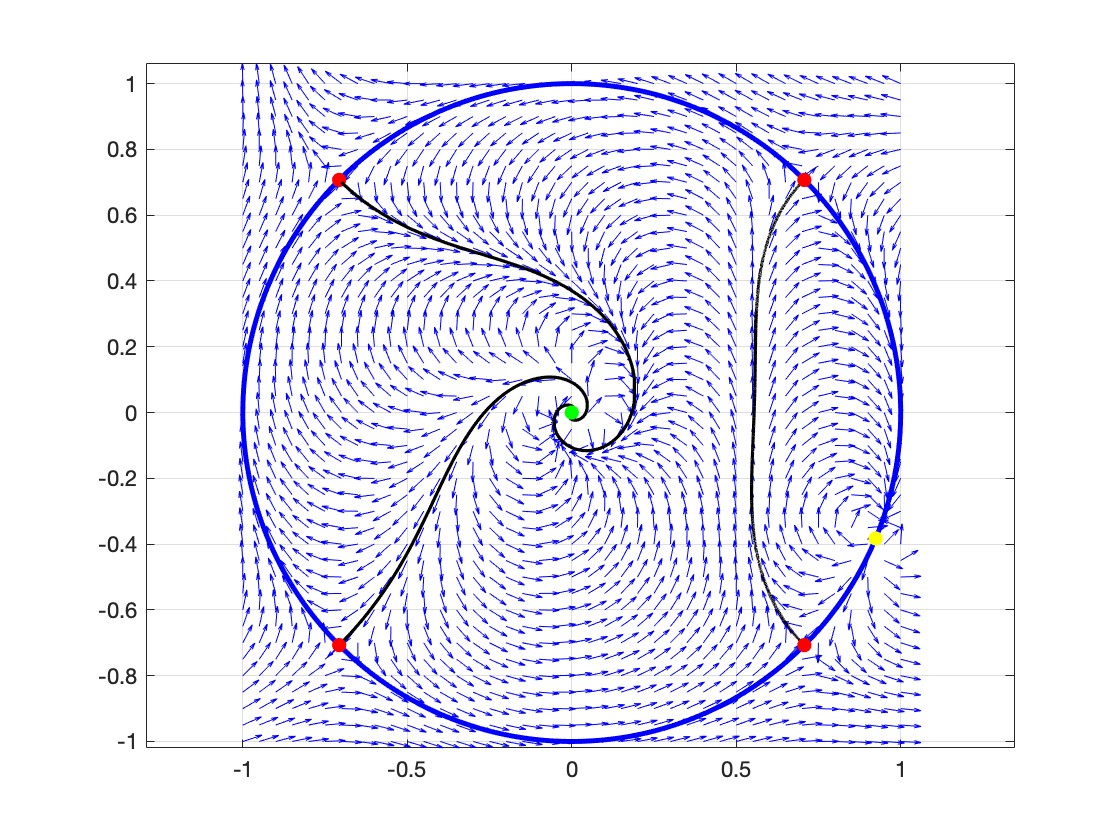}
    \caption{$n = 4$, $m = 1$, $\eta = -4$}
\end{minipage}
\end{figure}

\textbf{Figure 5.7:} \( z_k = e^{(2k+1)\pi i/4} \), \( \frac{\eta}{2} = 2 \).
\[
\sqrt{Q(z)} = z^{-3 + 2i} \prod_{k=0}^{3} \left(z - e^{(2k+1)\pi i/4} \right)
\]

\textbf{Figure 5.8:} Pole \( \xi = \frac{(-4 - 4i)^{1/4}}{(4 - 4i)^{1/4}} \).
\[
\sqrt{Q(z)} = -i \frac{(-4 - 4i)^{1/4}}{(4 - 4i)^{1/4}} z^{-2 + 2i} 
\frac{\prod_{k=0}^{3} (z - e^{(2k+1)\pi i/4})}{(z - \xi)^2}
\]

---

\begin{figure}[h]
\centering
\begin{minipage}[t]{0.43\linewidth}
    \includegraphics[width=6cm]{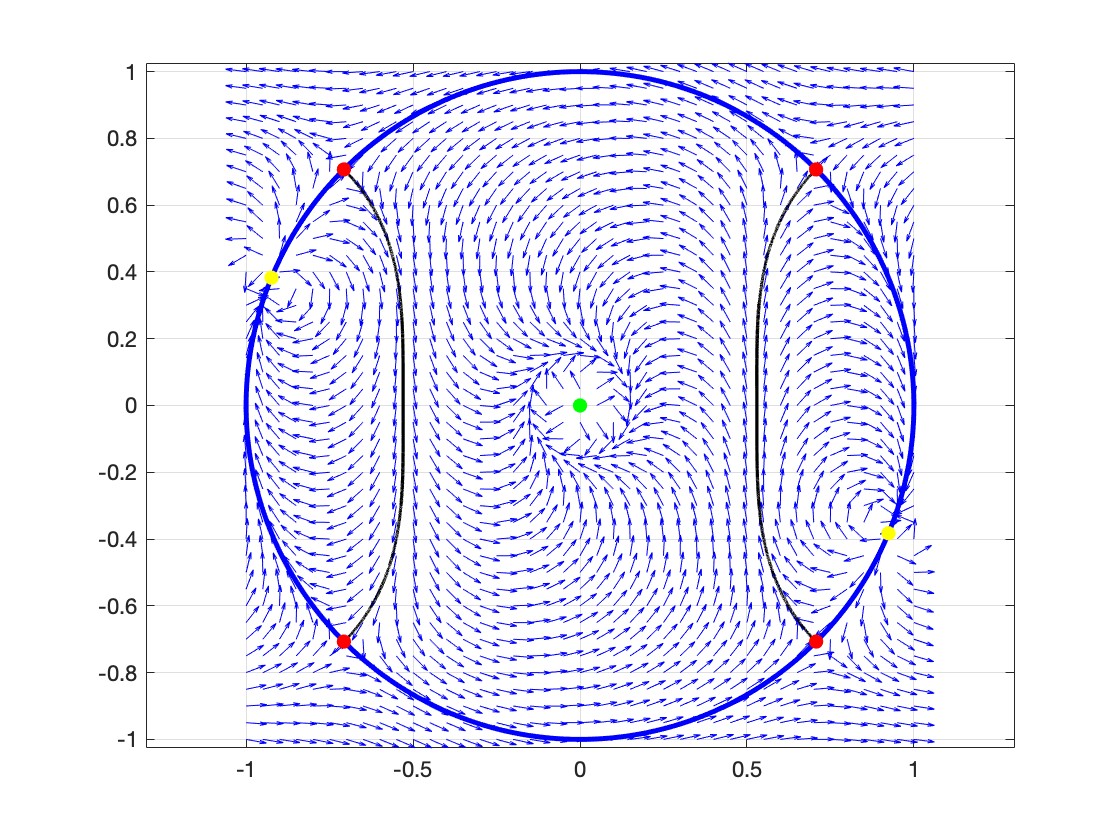}
    \caption{$n = 4$, $m = 2$, $\eta = -4$}
\end{minipage}
\hfill
\begin{minipage}[t]{0.43\linewidth}
    \includegraphics[width=6cm]{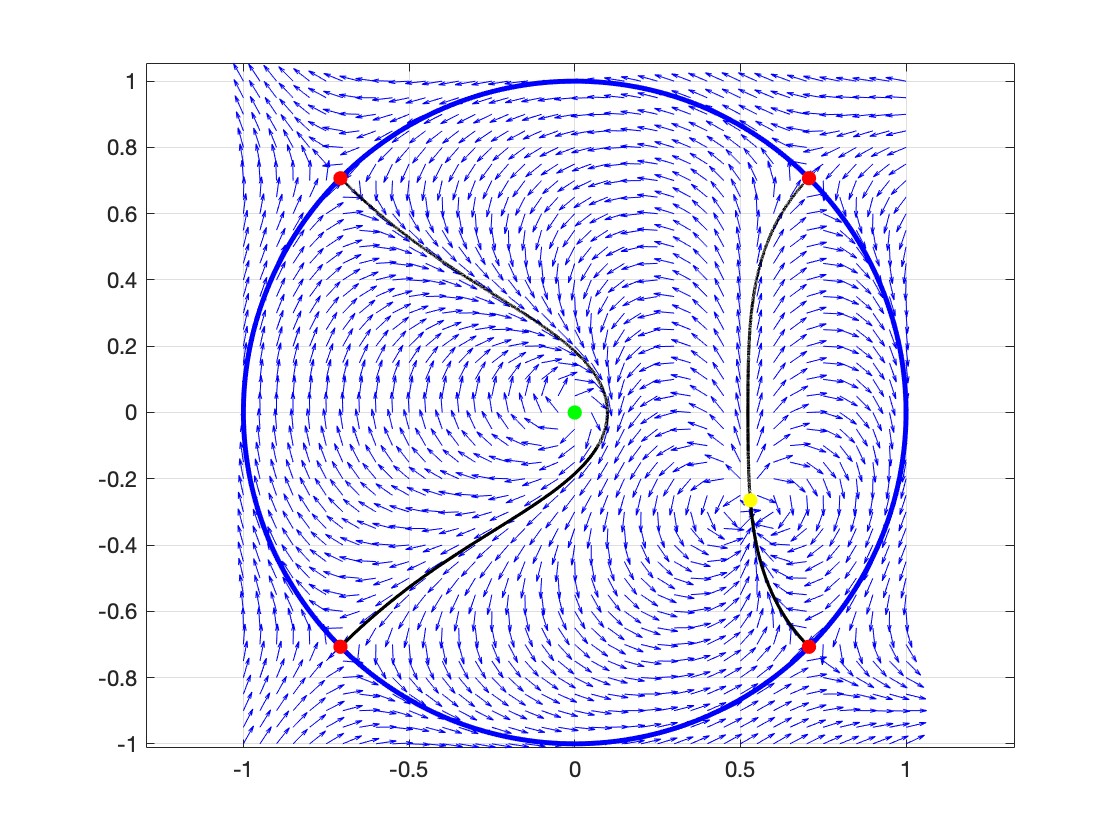}
    \caption{$n = 4$, $m = 2$, $\eta = -1$}
\end{minipage}
\end{figure}

\textbf{Figure 5.9:} Poles at  
\[
\xi_1 = \frac{(-4 - 4i)^{1/4}}{(4 - 4i)^{1/4}}, \quad 
\xi_2 = -\frac{(-4 - 4i)^{1/4}}{(4 - 4i)^{1/4}}
\]
The link pattern is stable under perturbation; a closed orbit appears.
\[
\sqrt{Q(z)} = -i \frac{(-4 - 4i)^{1/2}}{(4 - 4i)^{1/2}} z^{-2 + i}
\frac{\prod_{k=0}^{3}(z - e^{(2k+1)\pi i/4})}{(z - \xi_1)^2 (z - \xi_2)^2}
\]

\textbf{Figure 5.10:} \( \eta = -1 \). Let \( \xi_1 = 0.5299 - 0.2650i \), \( \xi_2 = 1.5097 - 0.7549i \) be the roots of
\[
\sum_{k=0}^3 \frac{\xi - e^{(2k+1)\pi i/4}}{\xi + e^{(2k+1)\pi i/4}} + i 
= 2\frac{\xi + 1/\xi^*}{\xi - 1/\xi^*}
\]
Then
\[
\sqrt{Q(z)} = (0.8 + 0.6i) z^{-1 + i/2} 
\frac{\prod_{k=0}^{3} (z - e^{(2k+1)\pi i/4})}{(z - \xi_1)^2 (z - \xi_2)^2}
\]

\section{Relations to Calogero-Sutherland system}

\subsection{Multiple radial SLE(0) and classical Calogero-Sutherland system}

In this section, we study the relations between the multiple radial SLE(0) and classical Calogero-Sutherland system.

\begin{defn}
The time evolution of a $n$-particle system on the circle is given by the Hamilton's equations:
$$\dot{\theta}_j= \frac{\partial{H}}{\partial{p_j}}, \dot{p}_j=-\frac{\partial{H}}{\partial{\theta_j}}, j=1,\ldots,n, $$
The Hamiltonian is of the form:
$H(\boldsymbol{\theta},\boldsymbol{p}) = \sum \frac{p_{j}^{2}}{2}+ U(\boldsymbol{\theta})$ where $U(\boldsymbol{\theta})$ is a smooth real-valued function on $R^n$.  The initial state of the system is encoded in a position vector $\boldsymbol{\theta}=(\theta_1,...,\theta_n) \in (\mathbb{R}/2\pi \mathbb{Z})^n$ and a momentum vector $p=(p_1,...,p_n)\in \mathbb{R}^n$ where $p_j = \dot{\theta}_j$, $j=1,2\ldots,n$. \\
We consider the special case where $U(\boldsymbol{\theta})$ is a sum of pair potentials
$$U(\boldsymbol{\theta}) = \sum_{j<k} V_{jk}(\theta_j-\theta_k).$$
For the Calogero-Sutherland system, the pair potential is given by $$V_{jk}= -\frac{2}{\sin^2(\frac{\theta_j-\theta_k}{2})}.$$

\end{defn}

The following theorem describes the evolution of the growth points $\boldsymbol{\theta}$ and $\boldsymbol{\zeta}$ for the multiple radial SLE(0) systems constructed using stationary relations.

\begin{thm}\label{Evolution of growth points and screening charges}
 Let $\boldsymbol{\theta}=\left\{\theta_1, \ldots, \theta_{n}\right\}$ be distinct real points and $\zeta=\left\{\zeta_1, \ldots, \zeta_{m}\right\}$ closed under conjugation and solve the stationary relation. Let $\boldsymbol{\theta}(t)$ and $\boldsymbol{\zeta}(t)$ evolve according to multiple radial SLE(0) system with common parametrization of capacity ($\nu_j(t)=1$). 
 
\begin{itemize}
\item[(i)]  The pair $(\boldsymbol{\theta}(t), \boldsymbol{\zeta}(t))$ forms the closed dynamical system satisfying
\begin{equation}\label{x derivative}
\dot{\theta}_j=2 \left(\sum_{k \neq j} \cot(\frac{\theta_j-\theta_k}{2})-\sum_{k=1}^{m} \cot(\frac{\theta_j-\zeta_k}{2}) \right),
\end{equation}
and

\begin{equation} \label{xi derivative}
\dot{\zeta}_k=2\left(-\sum_{l \neq k}\cot(\frac{\zeta_k-\zeta_l}{2})+\sum_{j=1}^{n}\cot(\frac{\zeta_k-\theta_j}{2})\right)
\end{equation}

\item[(ii)]$\boldsymbol{\theta}(t)$ evolve according to the classical Calegero-Sutherland Hamiltonian, in other words:
$$
\ddot{\theta}_j=-\sum_{k \neq j} \frac{\cos(\frac{\theta_j-\theta_k}{2})}{\sin^3(\frac{\theta_j-\theta_k}{2})} .
$$
\item[(iii)]$\zeta_k$ follows the second-order dynamics.
\begin{equation}
\ddot{\zeta}_k=-\sum_{l \neq k}  \frac{\cos(\frac{\zeta_k-\zeta_l}{2})}{\sin^3(\frac{\zeta_k-\zeta_l}{2})} .
\end{equation}
\item[(iv)]
The energy of the system is given by $$\mathcal{H}(\boldsymbol{\theta},\boldsymbol{p})=-\frac{n(2m-n)^2}{2}+\frac{n}{2}-\frac{n(n^2-1)}{6}$$

\end{itemize} 
\end{thm}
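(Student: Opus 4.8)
The plan is to reduce all four parts to the explicit first-order driving equations for the common parametrization $\nu_j\equiv 1$, and then to invoke only two external inputs: the preservation of the stationary relations along the flow (Theorem \ref{traces as horizontal trajectories}) and the summed null-vector equations of Theorem \ref{Stationary relation imply partition function}.

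For part (i), I specialize the multiple radial SLE(0) driving equation to $\nu_j\equiv 1$, giving $\dot\theta_j = \partial_{\theta_j}\log\mathcal{Z}(\boldsymbol\theta,\boldsymbol\zeta) + \sum_{k\neq j}\cot(\frac{\theta_j-\theta_k}{2})$, and substitute the Lemma's formula $\partial_{\theta_j}\log\mathcal{Z} = \sum_{k\neq j}\cot(\frac{\theta_j-\theta_k}{2}) - 2\sum_{l}\cot(\frac{\theta_j-\zeta_l}{2})$; this produces (\ref{x derivative}) at once. For the screening charges, each $\zeta_k$ is a passive marked point, so the Loewner flow gives $\dot\zeta_k = \sum_{j}\cot(\frac{\zeta_k-\theta_j}{2})$. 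Since the stationary relations are preserved in time by Theorem \ref{traces as horizontal trajectories}, I may substitute $\sum_{j}\cot(\frac{\zeta_k-\theta_j}{2}) = 2\sum_{l\neq k}\cot(\frac{\zeta_k-\zeta_l}{2})$ from (\ref{tri stat}) to rewrite the passive evolution in the manifestly symmetric form (\ref{xi derivative}); equivalently, (\ref{xi derivative}) collapses to the passive law under (\ref{tri stat}). This shows $(\boldsymbol\theta,\boldsymbol\zeta)$ is closed.

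For parts (ii) and (iii), I differentiate (\ref{x derivative}) in $t$ using $\frac{d}{dt}\cot(\frac{x}{2}) = -\frac12\csc^2(\frac{x}{2})\dot x$, obtaining $\ddot\theta_j = -\sum_{k\neq j}\csc^2(\frac{\theta_j-\theta_k}{2})(\dot\theta_j-\dot\theta_k) + \sum_{l}\csc^2(\frac{\theta_j-\zeta_l}{2})(\dot\theta_j-\dot\zeta_l)$. Substituting the first-order fields (\ref{x derivative}), (\ref{xi derivative}) and reducing every cotangent product via the addition formula $\cot\alpha\cot\beta = \cot(\alpha-\beta)(\cot\beta-\cot\alpha)-1$ and the three-point identity $\cot(\frac{a-b}{2})\cot(\frac{b-c}{2}) + \cot(\frac{b-c}{2})\cot(\frac{c-a}{2}) + \cot(\frac{c-a}{2})\cot(\frac{a-b}{2}) = 1$, the terms coupling $\theta_j$ to the charges $\zeta_l$ group, for each fixed $l$, into the stationary combination $\sum_{j}\cot(\frac{\zeta_l-\theta_j}{2}) - 2\sum_{l'\neq l}\cot(\frac{\zeta_l-\zeta_{l'}}{2})$, which vanishes by (\ref{tri stat}). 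What survives is the pure inter-$\theta$ force $-\sum_{k\neq j}\frac{\cos(\frac{\theta_j-\theta_k}{2})}{\sin^3(\frac{\theta_j-\theta_k}{2})}$, establishing (ii). Part (iii) is the mirror computation: differentiate (\ref{xi derivative}) and apply the same identities, with the stationary relations now eliminating the $\zeta$--$\theta$ cross terms and leaving the pure inter-$\zeta$ force. For part (iv), I use that the system is autonomous Hamiltonian, so the energy is conserved and may be evaluated once. Writing $\dot\theta_j = U_j + S_j$ with $U_j = \partial_{\theta_j}\log\mathcal{Z}$ and $S_j = \sum_{k\neq j}\cot(\frac{\theta_j-\theta_k}{2})$, I expand $\tfrac12\sum_j\dot\theta_j^2 = \tfrac12\sum_j U_j^2 + \sum_j U_j S_j + \tfrac12\sum_j S_j^2$. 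Summing the null-vector equations of Theorem \ref{Stationary relation imply partition function} over $j$ yields $\tfrac12\sum_j U_j^2 + \sum_j U_j S_j = nh + \tfrac32\sum_{j\neq k}\csc^2(\frac{\theta_j-\theta_k}{2})$ with $h = -\frac{(2m-n)^2}{2} + \frac12$, while the combinatorial identity $\sum_j S_j^2 = \sum_{j\neq k}\csc^2(\frac{\theta_j-\theta_k}{2}) - \frac{n(n^2-1)}{3}$ (from $\csc^2 = 1+\cot^2$ and the triple sum $\sum_{\text{distinct }j,k,l}\cot(\frac{\theta_j-\theta_k}{2})\cot(\frac{\theta_j-\theta_l}{2}) = -\frac{n(n-1)(n-2)}{3}$, itself a consequence of the three-point identity) supplies the constant $-\frac{n(n^2-1)}{6}$. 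The $\csc^2$-dependent contributions cancel against the Calogero--Sutherland pair potential, leaving the stated conserved value $nh - \frac{n(n^2-1)}{6} = -\frac{n(2m-n)^2}{2} + \frac{n}{2} - \frac{n(n^2-1)}{6}$.

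The main obstacle is the cancellation in (ii) and (iii): one must verify that, after substituting the first-order dynamics, the entire dependence on the screening charges reorganizes so that for each $\zeta_l$ the surviving coefficient is exactly the stationary relation (\ref{tri stat}), with no leftover $\theta$--$\zeta$ or $\zeta$--$\zeta$ terms. Tracking the three- and four-point cotangent products through the addition and three-point identities without sign errors is the delicate part, and it is precisely here that the stationary relations are indispensable rather than cosmetic. A more structural alternative, which I would use as a consistency check, is to exploit the conserved quadratic differential: by Theorem \ref{traces as horizontal trajectories} the pullback $Q\circ g_t^{-1}\in\mathcal{QD}(\boldsymbol z(t))$ retains double zeros at $z_j(t)$ for all admissible $t$, and differentiating the double-zero condition twice reproduces the same second-order dynamics while bypassing part of the trigonometric bookkeeping, at the cost of analyzing the induced motion of the poles $\xi_k(t)$.
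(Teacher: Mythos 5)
Your proposal follows the paper's own proof essentially step for step: parts (i)--(iii) use the same passive-pole Loewner evolution $\dot\zeta_k=\sum_j\cot(\frac{\zeta_k-\theta_j}{2})$ rewritten via the (flow-preserved) stationary relations, followed by the same differentiation and cotangent three-point identities that reduce all $\theta$--$\zeta$ couplings to the stationary combination and leave only antisymmetric triple sums that cancel. For part (iv) you inline the computation that the paper delegates to Theorem \ref{CS results kappa=0} --- summing the null-vector equations with $h=-\frac{(2m-n)^2}{2}+\frac12$ and using $\sum_j S_j^2=\sum_{j\neq k}\csc^2(\frac{\theta_j-\theta_k}{2})-\frac{n(n^2-1)}{3}$ --- and this correctly reproduces the stated value $nh-\frac{n(n^2-1)}{6}$; no gaps.
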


\begin{proof}[Proof of theorem (\ref{Evolution of growth points and screening charges})]
\

\begin{itemize}
\item[(i)]
 The evolution of $\theta_j(t)$ is
$$
\begin{aligned}
\dot{\theta}_j=&\left(\sum_{k \neq j} \cot(\frac{\theta_j-\theta_k}{2})-2\sum_{k=1}^{m} \cot(\frac{\theta_j-\zeta_k}{2})+\sum_{k \neq j} \cot(\frac{\theta_j-\theta_k}{2})\right) \\
=&2\sum_{k \neq j} \cot(\frac{\theta_j-\theta_k}{2})-2\sum_{k=1}^{m} \cot(\frac{\theta_j-\zeta_k}{2}),
\end{aligned}
$$
On the other hand, since the poles follow the Loewner flow we have $\zeta_k(t):=$ $g_t\left(\zeta_k(0)\right)$, and therefore
$$
\dot{\zeta}_k=\dot{g}_t\left(\zeta_k(0)\right)= \sum_{j=1}^{n} \cot(\frac{g_t\left(\zeta_k(0)\right)-\theta_j}{2})=\sum_{j=1}^{n} \cot(\frac{\zeta_k-\theta_j}{2}) .
$$
The stationary relation implies that
$$
\dot{\zeta}_k=2\sum_{l \neq k} \cot(\frac{\zeta_k-\zeta_l}{2})=-2\sum_{l \neq k}\cot( \frac{\zeta_k-\zeta_l}{2})+ 2\sum_{j=1}^{n} \cot(\frac{\zeta_k-\theta_j}{2}),
$$
\item[(ii)]
By differentiating, we have
$$
\ddot{\theta}_j=-\sum_{k \neq j} \frac{\dot{\theta}_j-\dot{\theta}_k}{\sin^2(\frac{\theta_j-\theta_k}{2})}+\sum_l \frac{\dot{\theta}_j-\dot{\zeta}_l}{\sin^2(\frac{\theta_j-\zeta_l}{2})} .
$$
Using the formula (\ref{x derivative}) for $\dot{\theta}_j, \dot{\theta}_k$ and the equality (\ref{xi derivative})  for $\dot{\zeta}_l$ we obtain
$$
\begin{aligned}
\ddot{\theta}_j= & -\frac{1}{2}\sum_{k \neq j} \frac{1}{\sin^2(\frac{\theta_j-\theta_k}{2})} \left( 2\cot(\frac{\theta_j-\theta_k}{2})\right)+\\
&-\frac{1}{2}\sum_{k \neq j} \frac{1}{\sin^2(\frac{\theta_j-\theta_k}{2})} \left(\sum_{l \neq j, k}\left(\cot(\frac{\theta_j-\theta_l}{2})-\cot(\frac{\theta_k-\theta_l}{2})\right)  +\sum_l\left(\cot(\frac{\zeta_l-\theta_j}{2})-\cot(\frac{\zeta_l-\theta_k}{2})\right) \right)\\ & +\frac{1}{2}\sum_l \frac{1}{\sin^2(\frac{\theta_j-\zeta_l}{2})}\left(\sum_{k \neq j} \cot(\frac{\theta_j-\theta_k}{2})+\sum_{s=1}^m \cot(\frac{\zeta_s-\theta_j}{2})+\sum_{s \neq l} \cot(\frac{\zeta_l-\zeta_s}{2})-\sum_{k=1}^{n} \cot(\frac{\zeta_l-\theta_k}{2})\right) .
\end{aligned}
$$
Rearranging terms gives
$$
\begin{aligned}
\ddot{\theta}_j & +\sum_{k \neq j} \frac{\cos(\frac{\theta_j-\theta_k}{2})}{\sin^3(\frac{\theta_j-\theta_k}{2})}- \frac{1}{2}\sum_{k \neq j} \sum_{l \neq j, k} \frac{1}{\sin(\frac{\theta_j-\theta_k}{2})\sin(\frac{\theta_j-\theta_l}{2})\sin(\frac{\theta_k-\theta_l}{2})} \\
& =-\frac{1}{2}\sum_{k \neq j} \sum_l \frac{1}{\sin\left(\frac{\theta_j-\theta_k}{2}\right)\sin\left(\frac{\theta_j-\zeta_l}{2}\right)\left(\frac{\theta_k-\zeta_l}{2}\right)}+\\
& +\frac{1}{2}\sum_l \frac{1}{\sin\left(\frac{\theta_j-\zeta_l}{2}\right)^2}\left(\sum_{k \neq j} \cot(\frac{\theta_j-\theta_k}{2})+\sum_m \cot
(\frac{\zeta_m-\theta_j}{2})-\sum_{m \neq l} \cot(\frac{\zeta_l-\zeta_m}{2})\right) .
\end{aligned}
$$
The last term on the right hand side used the stationary relation and then use the stationary relation again to obtain
$$
\begin{aligned}
&\frac{1}{2}\sum_l \frac{1}{\sin\left(\frac{\theta_j-\zeta_l}{2}\right)^2}\left(\sum_{k \neq j} \cot(\frac{\theta_j-\theta_k}{2})+\sum_m \cot
(\frac{\zeta_m-\theta_j}{2})-\sum_{m \neq l} \cot(\frac{\zeta_l-\zeta_m}{2})\right)  \\
& =\frac{1}{2}\sum_l \frac{1}{\sin\left(\frac{\theta_j-\zeta_l}{2}\right)^2} \sum_{m \neq l}\left(\cot(\frac{\zeta_l-\zeta_m}{2})+\cot(\frac{\zeta_m-\theta_j}{2})\right)\\
&=\frac{1}{2}\sum_l \sum_{m \neq l} \frac{1}{\sin\left(\frac{\theta_j-\zeta_l}{2}\right)\sin\left(\frac{\theta_j-\zeta_m}{2}\right)\sin\left(\frac{\zeta_l-\zeta_m}{2}\right)} .
\end{aligned}
$$
Combining all of the above, we obtain
$$
\begin{aligned}
\ddot{\theta}_j+\sum_{k \neq j} \frac{\cos(\frac{\theta_j-\theta_k}{2})}{\sin^3(\frac{\theta_j-\theta_k}{2})} & =\frac{1}{2}\sum_{k \neq j} \sum_{l \neq j, k} \frac{1}{\sin\left(\frac{\theta_j-\theta_k}{2}\right)\sin\left(\frac{\theta_j-\theta_l}{2}\right)\sin\left(\frac{\theta_k-\theta_l}{2}\right)} \\
& +\frac{1}{2}\sum_l \sum_{m \neq l} \frac{1}{\sin\left(\frac{\theta_j-\zeta_l}{2}\right)\sin\left(\frac{\theta_j-\zeta_m}{2}\right)\sin\left(\frac{\zeta_l-\zeta_m}{2}\right)}
\end{aligned}
$$
The right-hand side is canceled by symmetry.

\item[(iii)]

Differentiating the equality (\ref{xi derivative}), we have
$$
\ddot{\zeta}_k=-\sum_{l \neq k} \frac{\dot{\zeta}_k-\dot{\zeta}_l}{ \sin^2(\frac{\zeta_k-\zeta_l}{2})} .
$$
Now by using the first equality of (6.4) again for $\dot{\zeta}_k, \dot{\zeta}_l$ we obtain
$$
\ddot{\zeta}_k=-\frac{1}{2}\sum_{l \neq k} \frac{1}{\sin^2(\frac{\zeta_k-\zeta_l}{2})}\left(\cot(\frac{\zeta_k-\zeta_l}{2})+\sum_{m \neq k, l} \cot(\frac{\zeta_k-\zeta_m}{2})-\cot(\frac{\zeta_l-\zeta_k}{2})-\sum_{m \neq k, l} \cot(\frac{\zeta_l-\zeta_m}{2})\right)
$$
Rearranging terms gives
$$
\ddot{\zeta}_k=-\sum_{l \neq k}  \frac{\cos(\frac{\zeta_k-\zeta_l}{2})}{\sin^3(\frac{\zeta_k-\zeta_l}{2})}+ \frac{1}{2}\sum_{l \neq k} \sum_{m \neq k, l} \frac{1}{\sin(
\frac{\zeta_k-\zeta_l}{2})\sin(\frac{\zeta_k-\zeta_m}{2})\sin(\frac{\zeta_l-\zeta_m}{2})} .
$$
The last term is canceled by symmetry.
\item[(iv)]
  For a multiple radial SLE(0) system with $n$ growth points and $m$ screening charges that solve the stationary relations, by equation (\ref{m screening null vector constant}) in the proof of the theorem (\ref{Stationary relation imply partition function}), 
  $$U_j=\sum_{k \neq j} \cot(\frac{\theta_j-\theta_k}{2})-2\sum_{k=1}^{m} \cot(\frac{\theta_j-\zeta_k}{2})$$
  satisfies the null vector equation (\ref{null vector equation for kappa 0}) with constant
    $$h_{m,n}=-\frac{(2m-n)^2}{2}+\frac{1}{2}$$
   Plugging into equation (\ref{null Hamiltonian}) and equation (\ref{CS and Null}), we obtain the desired result.
\end{itemize}
\end{proof}

\begin{proof}[Proof of theorem (\ref{CS results kappa=0})]
For multiple radial SLE(0) system with common parametrization of capacity (i.e. $\nu_j(t)=1$ for $j=1,2,\ldots,n$), let $\left\{\left(\theta_j, U_j\right), j=1, \ldots,  n\right\}$ are related to $\left\{\left(\theta_j, p_j\right), j=1, \ldots,  n\right\}$ via
$$
p_j=\left(U_j+\sum_{k \neq j}\cot(\frac{\theta_j-\theta_k}{2}) \right)
$$
where $U_j$ solves the null vector equations (\ref{null vector equation for kappa 0}).
\begin{itemize}
 
\item[(i)]

Solving for $U_j$ and inserting the result into the left-hand side of the null vector equation leads to the identity.

\begin{equation}\label{null Hamiltonian}
\begin{aligned}
h=&\frac{1}{2} U_j^2+\sum_k f_{k j} U_k-\sum_k \frac{3}{2}(1+ f_{ jk}^2) \\
&=\frac{1}{2} p_j^2- \sum_k\left(p_j+p_k\right) f_{j k}+\sum_k \sum_{l \neq k} f_{j k} f_{j l}-2\sum_k f_{j k}^2 +C_{n-1}^{2}+\frac{3}{2}(n-1) \\
&=\mathcal{H}_j(\boldsymbol{\theta}, \boldsymbol{p}) + C_{n-1}^{2}+\frac{3}{2}(n-1) 
\end{aligned} 
\end{equation}

where 
$$
f_{j k}=f_{j k}(\boldsymbol{\theta})= \begin{cases}0, & j=k \\ \cot(\frac{\theta_j-\theta_k}{2}), & j \neq k\end{cases}
$$

Therefore, $\mathcal{H}_j$ is preserved under the Loewner flow.

Futheremore, for each $c \in \mathbb{R}$, the submanifolds defined by the null vector Hamiltonian
\begin{equation}
N_c=\left\{(\boldsymbol{\theta}, \boldsymbol{p}): \mathcal{H}_j(\boldsymbol{\theta}, \boldsymbol{p})=c \text { for all } j\right\}
\end{equation}
are invariant under the Loewner flow

By direct computation, $\mathcal{H}_j$ is related to the Calogero-Sutherland Hamiltonian $\mathcal{H}$ by:
\begin{equation} \label{CS and Null}
 \sum_j \mathcal{H}_j=\mathcal{H}   
\end{equation}

Our next result shows that null vector Hamiltonian $\mathcal{H}_j$ has a nice interpretation in terms of the Lax pair for the Calogero-Sutherland system.

\begin{thm}
The Lax pair is two square matrices $L=L(\boldsymbol{\theta}, \boldsymbol{p})$ and $M=M(\boldsymbol{\theta}, \boldsymbol{p})$ each of size $ n \times  n$, and by \cite{Mos75} the entries are given by
$$
L_{j k}=\left\{\begin{array}{ll}
p_j, & j=k, \\
2f_{j k}, & j \neq k,
\end{array} \quad \text { and } \quad M_{j k}= \begin{cases}-\sum_l f_{j l}^2, & j=k \\
f_{j k}^2, & j \neq k\end{cases}\right.
$$
This leads to the following representation of $\mathcal{H}_j$ in terms of $L^2$.
\begin{equation}
\mathcal{H}_j=\frac{1}{2} \mathrm{e}_j^{\prime} L^2 \mathbf{1}    
\end{equation}

where $\mathrm{e}_j^{\prime}$ is the transpose of the $j$ th standard basis vector and 1 is the vector of all ones. 

Consequently, the $U_j, j=1, \ldots, n$, defined by solving the null vector equations for a given $\boldsymbol{\theta}$ iff the $\boldsymbol{p}$ variables satisfy $L^2(\boldsymbol{\theta}, \boldsymbol{p}) \mathbf{1}=\mathbf{0}$.
\end{thm}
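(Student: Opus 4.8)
The plan is to prove the two assertions in turn: first the algebraic identity $\mathcal{H}_j = \tfrac{1}{2}\mathrm{e}_j' L^2 \mathbf{1}$, and then deduce the equivalence with $L^2 \mathbf{1} = \mathbf{0}$ as an immediate corollary. For the first part I would compute the row sum $\mathrm{e}_j' L^2 \mathbf{1} = \sum_k (L^2)_{jk} = \sum_m L_{jm}\, s_m$, where $s_m = \sum_k L_{mk} = p_m + 2\sum_{k \neq m} f_{mk}$ is the $m$-th row sum of $L$. Separating the diagonal entry $L_{jj} = p_j$ from the off-diagonal entries $L_{jm} = 2 f_{jm}$ yields three groups of contributions: a pure-momentum term $p_j^2$, terms linear in $\boldsymbol{p}$ of the form $p_j f_{jk}$ and $p_m f_{jm}$, and pure potential terms quadratic in the $f$'s. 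The structural backbone throughout is the antisymmetry $f_{kj} = -f_{jk}$, coming from $\cot(-x) = -\cot(x)$; one must fix the sign convention of the off-diagonal entries of $L$ (equivalently, whether the off-diagonal entry is $2 f_{jk}$ or $2 f_{kj}$) consistently so that the linear-in-$\boldsymbol{p}$ part reproduces exactly the term $-\sum_{k\neq j}(p_j + p_k) f_{jk}$ of $\mathcal{H}_j$.

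The heart of the computation is the potential part, which after substitution takes the form $4\sum_{m \neq j}\sum_{k \neq m} f_{jm} f_{mk}$. I would first peel off the sub-sum $k = j$, which by antisymmetry contributes $4\sum_{m\neq j} f_{jm} f_{mj} = -4 \sum_{m \neq j} f_{jm}^2$ and supplies the $-2\sum_{k\neq j} f_{jk}^2$ term of $\mathcal{H}_j$ (up to the overall factor $\tfrac12$). The remaining ``chain'' sum $\sum_{m,k \neq j,\, m\neq k} f_{jm} f_{mk}$ runs over genuinely connected triples $j \to m \to k$, whereas $\mathcal{H}_j$ contains only ``star'' products $f_{jk} f_{jl}$ anchored at $j$. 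The bridge is the Calogero cotangent three-term identity $f_{jm} f_{mk} = f_{jk}(f_{jm} + f_{mk}) + 1$, i.e. the identity $\cot A \cot(A+B) + \cot B \cot(A+B) - \cot A \cot B = -1$ already used in the proof of Theorem~\ref{Stationary relation imply partition function}. Applying it converts each chain product into star products plus the constant $1$; the star products assemble into $\sum_k\sum_{l\neq k} f_{jk} f_{jl}$, and the residual mixed sums recombine, after a second use of the identity together with a relabeling $m \leftrightarrow k$ exploiting antisymmetry, so that all spurious additive constants cancel. I expect this constant-cancellation and the reorganization of the mixed chain sums to be the main obstacle: the manipulations are routine in spirit but delicate in the bookkeeping, and tracking the signs is exactly where the chosen orientation of $L$ is tested.

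Granting the identity $\mathcal{H}_j = \tfrac12 \mathrm{e}_j' L^2 \mathbf{1}$, the Lax characterization follows by stacking over $j$: the vector $L^2 \mathbf{1}$ has $j$-th coordinate $2\mathcal{H}_j(\boldsymbol{\theta}, \boldsymbol{p})$, so $L^2(\boldsymbol{\theta}, \boldsymbol{p}) \mathbf{1} = \mathbf{0}$ if and only if $\mathcal{H}_j(\boldsymbol{\theta}, \boldsymbol{p}) = 0$ for every $j$. For the final equivalence I would invoke the identity (\ref{null Hamiltonian}) from part (i), which shows that the momenta $p_j = U_j + \sum_{k\neq j} f_{jk}$ solve the null vector equations (with a common constant $h$) precisely when $\mathcal{H}_j$ takes a single value independent of $j$; normalizing the constant so that this common value is zero, the condition becomes exactly $L^2\mathbf{1} = \mathbf{0}$. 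Thus solving the null vector system for a given $\boldsymbol{\theta}$ is equivalent to the $\boldsymbol{p}$ lying on the variety cut out by $L^2(\boldsymbol{\theta}, \boldsymbol{p})\mathbf{1} = \mathbf{0}$, which is the asserted characterization.
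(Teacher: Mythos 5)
Your strategy is the same as the paper's: both amount to expanding $\mathrm{e}_j^{\prime}L^2\mathbf{1}$ term by term (the paper organizes it through the splitting $L=P-X_1$ with $X_1$ antisymmetric, you through the row sums $s_m=\sum_k L_{mk}$, which is only a cosmetic difference), reading off the $p_j^2$ and $(p_j+p_k)f_{jk}$ contributions, and then reducing the remaining quadratic-in-$f$ ``chain'' sum to the ``star'' sum via the cotangent three-term identity. You are also right to flag the sign convention: with $L_{jk}=2f_{jk}$ literally as written, the linear-in-$\boldsymbol{p}$ part of $\tfrac12\mathrm{e}_j^{\prime}L^2\mathbf{1}$ comes out as $+\sum_{k\neq j}(p_j+p_k)f_{jk}$, so the off-diagonal entries must be taken as $2f_{kj}=-2f_{jk}$; the paper's $L=P-X_1$ effects exactly this flip.

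The one concrete claim in your sketch that fails is that ``all spurious additive constants cancel.'' They do not. Writing $S=\sum_{m\neq j}\sum_{k\neq j,\,k\neq m}f_{jm}f_{mk}$ for the chain sum, the identity $f_{jm}f_{mk}=f_{jk}(f_{jm}+f_{mk})+1$ together with the relabeling you describe gives $S=\bigl(\sum_{k}\sum_{l\neq k}f_{jk}f_{jl}\bigr)-S+(n-1)(n-2)$, hence $2S=\sum_{k}\sum_{l\neq k}f_{jk}f_{jl}+(n-1)(n-2)$ and therefore $\tfrac12\mathrm{e}_j^{\prime}L^2\mathbf{1}=\mathcal{H}_j+(n-1)(n-2)$; a check with $n=3$ and $\boldsymbol{\theta}=(0,\pi/2,\pi)$ confirms the residual constant. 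This is not a defect of your method relative to the paper: the paper's own computation of $\mathrm{e}_j^{\prime}X_1^2\mathbf{1}$ likewise terminates with an uncancelled $\tfrac12 C_{n-1}^{2}$ in its final display and never reconciles it with the asserted identity, so the formula $\mathcal{H}_j=\tfrac12\mathrm{e}_j^{\prime}L^2\mathbf{1}$ is itself correct only up to an additive constant depending on $n$. The substance survives: every $\tfrac12\mathrm{e}_j^{\prime}L^2\mathbf{1}$ differs from $\mathcal{H}_j$ by the same constant, so $L^2\mathbf{1}=\mathbf{0}$ is still equivalent to all $\mathcal{H}_j$ taking one common prescribed value, i.e.\ to the null vector equations holding with the correspondingly normalized $h$ --- which is how you close the argument. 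You should simply carry the explicit constant through rather than asserting that it vanishes.
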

\begin{proof}
Write $L=P-X_1$, where $P=P(\boldsymbol{p})=\operatorname{diag}(\boldsymbol{p})$ is the square matrix with entries of $\boldsymbol{p}$ along its diagonal, and $X_1=X_1(\boldsymbol{\theta})$ is the square matrix with entries $\left(X_1\right)_{j k}=f_{j k}$. Note that $P$ is symmetric and $X_1$ is anti-symmetric. Then
$$
L^2=P^2-P X_1-X_1 P+X_1^2
$$
It is straightforward to compute the entries of $P^2-P X_1-X_1 P$ and see that they give the first two terms on the right-hand side of the Hamiltonian. For $X_1^2$ we have
$$
\begin{aligned}
\mathrm{e}_j^{\prime} X_1^2 1=\sum_k\left(X_1^2\right)_{j k}&=-4(\sum_k \sum_i f_{j l} f_{k l})\\
& =-4\left(\sum_l f_{j l}^2+\sum_{k\neq j} \sum_{l \neq j} f_{j l} f_{k l}\right) \\
& =-4\left(\sum_l f_{j 1}^2+\frac{1}{2} \sum_{k \neq j } \sum_{l \neq k}\left(f_{j l} f_{k l}+f_{j k} f_{l k}\right)\right) \\
& =-4\left(\sum_l f_{j 1}^2-\frac{1}{2} \sum_{k \neq j } \sum_{l \neq k} f_{j k} f_{j l} +\frac{1}{2}C_{n-1}^{2}\right).
\end{aligned}
$$
\end{proof}

\item[(ii)]
\begin{defn}[Poisson Bracket]
For any smooth function \( F = F(\boldsymbol{x}, \boldsymbol{p}) \) defined on phase space, the associated vector field is given by  
\[
X_F = \sum_{j=1}^{n} \frac{\partial F}{\partial p_j} \partial_{x_j} - \sum_{j=1}^{n} \frac{\partial F}{\partial x_j} \partial_{p_j}.
\]  
Given two smooth functions \( F = F(\boldsymbol{x}, \boldsymbol{p}) \) and \( G = G(\boldsymbol{x}, \boldsymbol{p}) \), the commutator of their associated vector fields satisfies  
\[
[X_F, X_G] = X_{\{F, G\}},
\]  
where \( \{F, G\} \), the Poisson bracket of \( F \) and \( G \), is defined by  
\[
\{F, G\} = \sum_{j=1}^{n} \left( \frac{\partial F}{\partial p_j} \frac{\partial G}{\partial x_j} - \frac{\partial F}{\partial x_j} \frac{\partial G}{\partial p_j} \right).
\]  
\end{defn}

By direct computation, for all \( j, k \), the null vector Hamiltonians \( \mathcal{H}_j \) and \( \mathcal{H}_k \) satisfy the Poisson bracket identity  
\[
\{\mathcal{H}_j, \mathcal{H}_k\} = \frac{1}{f_{jk}^2} \left(\mathcal{H}_k - \mathcal{H}_j\right).
\]  

By the definition of \( N_c \), we have \( \{\mathcal{H}_j, \mathcal{H}_k\} = 0 \) along \( N_c \).  

Thus, the vector fields \( X_{\mathcal{H}_j} \) induced by the Hamiltonians \( \mathcal{H}_j \) commute along the submanifolds \( N_c \).

\end{itemize}
\end{proof}

 The relationship between multiple radial SLE(0) and the classical Calogero-Sutherland system serves as the classical analog of its quantum counterpart.

From the perspective of partition functions, we expect that as \( \kappa \to 0 \), the following classical limits exist (at least for appropriately chosen \( \mathcal{Z}(\boldsymbol{\theta}) \)):
\[
\lim_{\kappa \to 0} \frac{\partial \log\left(\mathcal{Z}^{\kappa}(\boldsymbol{\theta})\right)}{\partial \theta_j} = U_j, \quad 
\lim_{\kappa \to 0} \frac{\partial \log\left(\tilde{\mathcal{Z}}^{\kappa}(\boldsymbol{\theta})\right)}{\partial \theta_j} = p_j.
\]

Through direct verification, we can show that a partition function \( \mathcal{Z}(\boldsymbol{\theta}) \) degenerates to a pair \( (U_j, p_j) \) satisfying Theorem~\ref{CS results kappa=0} as \( \kappa \to 0 \).

From the operator perspective, the quantum and classical Hamiltonians are connected through the process of quantization. Specifically, canonical quantization transforms classical position and momentum functions into their corresponding operators:
\[
\begin{aligned}
\theta_j & \Rightarrow \hat{\theta}_j,  \\
p_j & \Rightarrow \hat{p}_j = \kappa \frac{\partial}{\partial \theta_j}.
\end{aligned}
\]
The Poisson brackets are replaced by Lie brackets:
\[
\begin{aligned}
& \left\{p_i, \theta_j\right\} = \delta_{ij} \Rightarrow \frac{1}{\kappa}\left[\hat{p}_i, \hat{\theta}_j\right] = \delta_{ij}, \\
& \left\{\theta_i, \theta_j\right\} = 0 \Rightarrow \frac{1}{\kappa}\left[\hat{\theta}_i, \hat{\theta}_j\right] = 0, \\
& \left\{p_i, p_j\right\} = 0 \Rightarrow \frac{1}{\kappa}\left[\hat{p}_i, \hat{p}_j\right] = 0.
\end{aligned}
\]

The classical Hamiltonians \( \mathcal{H} \) and \( \mathcal{H}_j \) correspond to the quantum Hamiltonian operators \( \mathcal{L} \) and \( \mathcal{L}_j \), respectively. For further discussion on the quantization of Calogero-Sutherland systems, see~\cite{E07}.

\section*{Acknowledgement}
I express my sincere gratitude to Professor Nikolai Makarov for his invaluable guidance. I am also thankful to Professor Eveliina Peltola and Professor Hao Wu for their enlighting discussions.


\begin{thebibliography}{1000}

\bibitem[ABKM20]{ABKM20} T. Alberts, S. S. Byun, N-G. Kang, and N. Makarov. \emph{Pole dynamics and an integral of motion for multiple SLE(0)}. Preprint in arXiv:2011.05714, 2020.

\bibitem[ABW09]{ABW09}
A.~G Abanov, E. Bettelheim, and P. Wiegmann, \emph{Integrable
  hydrodynamics of {C}alogero{\textendash}{S}utherland model: bidirectional
  {B}enjamin{\textendash}{O}no equation}, Journal of Physics A: Mathematical
  and Theoretical \textbf{42} (2009), no.~13, 135201.

\bibitem[AC91]{AC:solitons_book}
M.~J. Ablowitz and P.~A. Clarkson, \emph{Solitons, nonlinear evolution
  equations and inverse scattering}, London Mathematical Society Lecture Note
  Series, vol. 149, Cambridge University Press, Cambridge, 1991. \MR{1149378}

\bibitem[AGK11]{AGK11}
A. Abanov, A. Gromov, and M. Kulkarni, \emph{Soliton solutions
  of a {C}alogero model in a harmonic potential}, Journal of Physics A:
  Mathematical and Theoretical \textbf{44} (2011), no.~29, 295203.


\bibitem[AMM77]{AMM:KdV}
H.~Airault, H.~P. McKean, and J.~Moser, \emph{Rational and elliptic solutions
  of the {K}orteweg-de {V}ries equation and a related many-body problem}, Comm.
  Pure Appl. Math. \textbf{30} (1977), no.~1, 95--148. \MR{649926}


\bibitem[AS60]{AS60}
L. V. Ahlfors, L. Sario,
\emph{Riemann Surfaces}, Princeton Universty Press, 1960 .

\bibitem[AHSY23]{AHSY23}
M. Ang, N. Holden, X. Sun, and P. Yu. \emph{Conformal welding of quantum disks and multiple SLE: the non-simple case}. Preprint in arXiv:2310.20583, 2023.

\bibitem[BB03a]{BB03a} 
M. Bauer and D. Bernard. \emph{Conformal field theories of stochastic Loewner evolutions}, Comm. Math. Phys., 239(3):493-521, 2003.



\bibitem[BBK05]{BBK05}
M. Bauer, D. Bernard, and Kalle Kyt{\"o}l{\"a}, \emph{Multiple
  {S}chramm-{L}oewner evolutions and statistical mechanics martingales}, J.
  Stat. Phys. \textbf{120} (2005), no.~5-6, 1125--1163. \MR{2187598}

\bibitem[BE21]{BE:canonical}
M. Bonk and A. Eremenko, \emph{Canonical embeddings of pairs of
  arcs}, Comput. Methods Funct. Theory \textbf{21} (2021), 825--830.

\bibitem[Ben91]{Benzinger}
H. Benzinger, \emph{Plane autonomous systems with rational vector
  fields}, Trans. Amer. Math. Soc. \textbf{326} (1991), no.~2, 465--483.
  \MR{992604}

\bibitem[BPW21]{BPW:uniqueness}
V. Beffara, E. Peltola, and Hao Wu, \emph{On the uniqueness of
  global multiple {SLE}s}, Ann. Probab. \textbf{49} (2021), no.~1, 400--434.

\bibitem[Cal71]{Calogero:quantum_CM_integrable}
F.~Calogero, \emph{Solution of the one-dimensional {$N$}-body problems with
  quadratic and/or inversely quadratic pair potentials}, J. Mathematical Phys.
  \textbf{12} (1971), 419--436. \MR{280103}

\bibitem[Car96]{Car96} J. L. Cardy. \emph{Scaling and renormalization in statistical physics}, vol. 5 of Cambridge Lecture Notes in Physics. Cambridge University Press, Cambridge, 1996.
\bibitem[Car03]{Car03} J. L. Cardy. Stochastic Loewner evolution and Dyson's circular ensembles. J. Phys. A, 36(24):L379-L386, 2003.

\bibitem[Car04]{Car04}
J. Cardy, \emph{Calogero-Sutherland model and bulk-boundary correlations in
  conformal field theory}, Phys. Lett. B \textbf{582} (2004), no.~1-2,
  121--126. \MR{2047300}

\bibitem[DC07]{DC07}
B.~Doyon and J.~Cardy, \emph{Calogero-{S}utherland eigenfunctions with mixed
  boundary conditions and conformal field theory correlators}, J. Phys. A
  \textbf{40} (2007), no.~10, 2509--2540. \MR{2305181}

\bibitem[Dub06]{Dub06}
Julien Dub{\'e}dat, \emph{Euler integrals for commuting {SLE}s}, J. Stat. Phys.
  \textbf{123} (2006), no.~6, 1183--1218. \MR{2253875}

\bibitem[Dub07]{Dub07}
\bysame, \emph{Commutation relations for {S}chramm-{L}oewner evolutions}, Comm.
  Pure Appl. Math. \textbf{60} (2007), no.~12, 1792--1847. \MR{2358649}

\bibitem[Dub09]{Dub09}
\bysame, \emph{SLE and the free field: partition functions and couplings}, J.
  Amer. Math. Soc. \textbf{22} (2009), no.~4, 995--1054. \MR{2525778}

\bibitem[Dub15a]{Dub15a} \bysame.\emph{SLE and Virasoro representations: localization}, Comm. Math. Phys., 336(2):695-760, 2015.
\bibitem[Dub15b]{Dub15b} \bysame.\emph{SLE and Virasoro representations: fusion}, Comm. Math. Phys., 336(2):761-809, 2015.

\bibitem[E07]{E07}
P. Etingof \emph{Calogero-Moser Systems and Representation Theory},
Zurich lectures in advanced mathematics, European Mathematical Society, 2007.
\bibitem[EG02]{EG02}
A.~Eremenko and A.~Gabrielov, \emph{Rational functions with real critical
  points and the {B}. and {M}. {S}hapiro conjecture in real enumerative
  geometry}, Ann. of Math. (2) \textbf{155} (2002), no.~1, 105--129.
  \MR{1888795}

\bibitem[EG11]{EG11}
\bysame, \emph{An elementary proof of the {B}.
  and {M}. {S}hapiro conjecture for rational functions}, Notions of positivity
  and the geometry of polynomials, Trends Math., Birkh\"{a}user/Springer Basel
  AG, Basel, 2011, pp.~167--178. \MR{3051166}

\bibitem[FK04]{FK04} R. Friedrich and J. Kalkkinen. \emph{On conformal field theory and stochastic Loewner evolution}, Nucl. Phys. B, 687(3):279-302, 2004.

\bibitem[FK15a]{FK15a}
S. Flores and P. Kleban, \emph{A solution space for a system of
  null-state partial differential equations: {P}art 1}, Comm. Math. Phys.
  \textbf{333} (2015), no.~1, 389--434. \MR{3294954}

\bibitem[FK15b]{FK15b}
\bysame, \emph{A solution space for a system of null-state partial differential
  equations: {P}art 2}, Comm. Math. Phys. \textbf{333} (2015), no.~1, 435--481.
  \MR{3294955}

\bibitem[FK15c]{FK15c}
\bysame, \emph{A solution space for a system of null-state partial differential
  equations: {P}art 3}, Comm. Math. Phys. \textbf{333} (2015), no.~2, 597--667.
  \MR{3296159}

\bibitem[FK15d]{FK15d}
\bysame, \emph{A solution space for a system of null-state partial differential
  equations: {P}art 4}, Comm. Math. Phys. \textbf{333} (2015), no.~2, 669--715.
  \MR{3296160}

\bibitem[FKZ12]{FKZ12}
 S. M. Flores, P. Kleban, and R. M. Ziff. \emph{Cluster pinch-point densities in polygons.} Journal of Physics A: Mathematical and Theoretical 45.50 (2012): 505002.
 
\bibitem[FLPW24]{FLPW24}
Y. Feng, M. Liu, E. Peltola, H. Wu . \emph{Multiple SLEs for $\kappa\in (0, 8) $: Coulomb gas integrals and pure partition functions}. arXiv preprint arXiv:2406.06522, 2024

\bibitem[FW03]{FW03} R. Friedrich and W. Werner. \emph{Conformal restriction, highest-weight representations and SLE}, Comm. Math. Phys., 243(1):105-122, 2003.

\bibitem[FWY23]{FWY23} Y. Feng, Hao Wu, Lu Yang. \emph{Multiple Ising interfaces in annulus and 2N-sided radial SLE}. Preprint in arxiv:2302.08124, 2023

\bibitem[GGJ07]{GGJ:phase_portrait}
A. Garijo, A. Gasull, and X. Jarque, \emph{Local and global
  phase portrait of equation {$\dot z=f(z)$}}, Discrete Contin. Dyn. Syst.
  \textbf{17} (2007), no.~2, 309--329. \MR{2257435}

\bibitem[Gol91]{Goldberg91}
L. Goldberg, \emph{Catalan numbers and branched coverings by the {R}iemann
  sphere}, Adv. Math. \textbf{85} (1991), no.~2, 129--144. \MR{1093002}

\bibitem[Gra07]{Graham:multiple_SLE}
K.~Graham, \emph{On multiple {S}chramm-{L}oewner evolutions}, J. Stat. Mech.
  Theory Exp. (2007), no.~3, P03008, 21. \MR{2318432}
\bibitem[GL98]{GL98} J.J. Graham and G.I. Lehrer, \emph{The representation theory of affine Temperley-Lieb algebras}, Enseign. Math., 44:173-218, 1998.

\bibitem[HL21]{HL21}
V. Healey, and G. Lawler. \emph{"N-sided radial Schramm–Loewner evolution."} Probability Theory and Related Fields 181.1-3 (2021): 451-488.

\bibitem[J12]{J12}
J. Jenkins, \emph{Univalent Functions and Conformal Mapping}, Volume 18 of Ergebnisse der Mathematik und ihrer Grenzgebiete. 2. Folge, Springer Berlin Heidelberg, 2012.
  
\bibitem[JL18]{JL18}
M. Jahangoshahi and G. Lawler, \emph{On the smoothness of the
  partition function for multiple {S}chramm-{L}oewner evolutions}, J. Stat.
  Phys. \textbf{173} (2018), no.~5, 1353--1368. \MR{3878346}

\bibitem[MZ24a]{MZ24a}
N. Makarov and J. Zhang, \emph{Multiple radial SLE(0) and classical Calegero-Sutherland system}, Preprint in arXiv:2410.21544, 2024.

\bibitem[MZ24b]{MZ24b}
N. Makarov and J. Zhang, \emph{Multiple radial SLE($\kappa$) and quantum Calegero-Sutherland system}, Preprint in arXiv:2505.14762, 2025

\bibitem[JZ25a]{JZ25a}
J. Zhang, \emph{Multiple chordal SLE(0) and classical Calegero-Moser system}, Preprint in arXiv:2505.17129, 2025

\bibitem[JZ25b]{JZ25b}
J. Zhang, \emph{Multiple chordal SLE($\kappa$) and quantum Calegero-Moser system}, Preprint in arXiv:2505.16093, 2025

\bibitem[JZ25t]{JZ25t}
J. Zhang, \emph{On multiple SLE systems and their deterministic limits}, Ph. D. thesis, California Institute of Technology, 2025.

\bibitem[KL07]{KL07}
M. Kozdron and G. Lawler, \emph{The configurational measure on
  mutually avoiding {SLE} paths}, Universality and renormalization, Fields
  Inst. Commun., vol.~50, Amer. Math. Soc., Providence, RI, 2007, pp.~199--224.
  \MR{2310306}

\bibitem[KM13]{KM13}
N-G Kang and N. Makarov, \emph{Gaussian free field and conformal
  field theory}, Ast\'erisque \textbf{353} (2013), viii+136. \MR{3052311}

\bibitem[KM17]{KM17}
\bysame, \emph{Calculus of conformal fields on a compact Riemann surface},
  arXiv:1708.07361 [math-ph] (2017).

\bibitem[KM21]{KM21}
\bysame, \emph{Conformal field theory on the {R}iemann sphere and its boundary
  version for {SLE}}, arXiv:2111.10057 [math-ph (2021).

\bibitem[KP16]{KP16}
K Kyt\"{o}l\"{a} and E. Peltola, \emph{Pure partition functions of
  multiple {SLE}s}, Comm. Math. Phys. \textbf{346} (2016), no.~1, 237--292.
  \MR{3528421}



\bibitem[Law04]{Lawler:trieste}
G. Lawler, \emph{Conformally invariant processes in the plane}, School
  and {C}onference on {P}robability {T}heory, ICTP Lect. Notes, XVII, Abdus
  Salam Int. Cent. Theoret. Phys., Trieste, 2004, pp.~305--351. \MR{2198851}

\bibitem[Law05]{Law05}
\bysame, \emph{Conformally invariant processes in the plane}, Mathematical
  Surveys and Monographs, vol. 114, American Mathematical Society, Providence,
  RI, 2005. \MR{2129588}

\bibitem[Law09a]{Law09a}
\bysame, \emph{Schramm-{L}oewner evolution ({SLE})}, Statistical mechanics,
  IAS/Park City Math. Ser., vol.~16, Amer. Math. Soc., Providence, RI, 2009,
  pp.~231--295. \MR{2523461}

\bibitem[Law09b]{Law09b}
\bysame, \emph{Partition functions, loop measure, and versions of
  {SLE}}, J. Stat. Phys. \textbf{134} (2009), no.~5-6, 813--837. \MR{2518970}

\bibitem[Lax68]{Lax68}
P. Lax, \emph{Integrals of nonlinear equations of evolution and solitary
  waves}, Comm. Pure Appl. Math. \textbf{21} (1968), 467--490. \MR{235310}

\bibitem[LSW02]{LSW02}
G. Lawler, O. Schramm, W. Werner, \emph{One-arm exponent for critical 2D percolation}.
Electronic Journal of Probability, Electron. J. Probab. 7(none), 1-13, (2002)
\bibitem[LSW03]{LSW:restriction}
G. Lawler, O. Schramm, and Wendelin Werner, \emph{Conformal restriction:
  the chordal case}, J. Amer. Math. Soc. \textbf{16} (2003), no.~4, 917--955.
  \MR{1992830}

\bibitem[LSW04]{LSW04}
\bysame,
\emph{Conformal invariance of planar loop erased random walks and uniform spanning trees}, Ann. Probab., 32(1B):939-995, 2004.

\bibitem[MnRVV95]{MV95}
Jes\'{u}s Muci\~{n}o Raymundo and Carlos Valero-Vald\'{e}s, \emph{Bifurcations
  of meromorphic vector fields on the {R}iemann sphere}, Ergodic Theory Dynam.
  Systems \textbf{15} (1995), no.~6, 1211--1222. \MR{1366317}

\bibitem[Mos75]{Mos75}
J.~Moser, \emph{Three integrable {H}amiltonian systems connected with
  isospectral deformations}, Advances in Math. \textbf{16} (1975), 197--220.
  \MR{375869}


\bibitem[MRW22]{MRW:piecewise}
D Marshall, S Rohde, and Yilin Wang, \emph{Piecewise geodesic jordan
  curves {I}: weldings, explicit computations, and schwarzian derivatives},
  arXiv:2202.01967 [math.CV] (2022).

\bibitem[MS16a]{MS16:imaginary1}
J. Miller and S. Sheffield, \emph{Imaginary geometry {I}: interacting
  {SLE}s}, Probab. Theory Related Fields \textbf{164} (2016), no.~3-4,
  553--705. \MR{3477777}

\bibitem[MS16b]{MS16:imaginary2}
\bysame, \emph{Imaginary geometry {II}: reversibility of {${\rm
  SLE}_\kappa(\rho_1;\rho_2)$} for {$\kappa\in(0,4)$}}, Ann. Probab.
  \textbf{44} (2016), no.~3, 1647--1722. \MR{3502592}

\bibitem[MS16c]{MS16:imaginary3}
\bysame, \emph{Imaginary geometry {III}: reversibility of {$\rm SLE_\kappa$}
  for {$\kappa\in(4,8)$}}, Ann. of Math. (2) \textbf{184} (2016), no.~2,
  455--486. \MR{3548530}

\bibitem[MS17]{MS16:imaginary4}
\bysame, \emph{Imaginary geometry {IV}: interior rays, whole-plane
  reversibility, and space-filling trees}, Probab. Theory Related Fields
  \textbf{169} (2017), no.~3-4, 729--869. \MR{3719057}

\bibitem[MTV09]{MTV09}
E. Mukhin, V. Tarasov, and A. Varchenko, \emph{The {B}. and {M}.
  {S}hapiro conjecture in real algebraic geometry and the {B}ethe ansatz}, Ann.
  of Math. (2) \textbf{170} (2009), no.~2, 863--881. \MR{2552110}
  
\bibitem[MV08]{MV08}
E. Mukhin and A.Varchenko, \emph{Quasi-polynomials and the Bethe Ansatz}, Geometry \& Topology Monographs 13, (2008), Pages 385–420.

\bibitem[Pol19]{Pol19}
A. Polychronakos, \emph{Feynman’s proof of the commutativity of the
  Calogero integrals of motion}, Annals of Physics \textbf{403} (2019), 145
  -- 151.

\bibitem[Pel19]{Pel19} E. Peltola, \emph{Towards a conformal field theory for Schramm-Loewner evolutions},
J. Math. Phys., 60(10):103305, 39, 2019.

\bibitem[Pel20]{Pel20}  E. Peltola, \emph{Basis for solutions of the Benoit \& Saint-Aubin PDEs with particular asymptotics properties}, Ann. Inst. Henri Poincaré D, 7(1):1-73, 2020.

\bibitem[PW19]{PW19}
E. Peltola and H. Wu, \emph{Global and local multiple {SLE}s for
  {$\kappa \leq 4$} and connection probabilities for level lines of GFF},
  Comm. Math. Phys. \textbf{366} (2019), no.~2, 469--536. \MR{3922531}

\bibitem[PW20]{PW20}
E. Peltola and Y. Wang, \emph{Large deviations of multichordal
  {SLE}$_{0+}$, real rational functions, and zeta-regularized determinants of
  {L}aplacians}, arXiv:2006.08574, to appear in J. Eur. Math. Soc.

\bibitem[Rui88]{Rui88}
S.~N.~M. Ruijsenaars, \emph{Action-angle maps and scattering theory for some
  finite-dimensional integrable systems. {I}. {T}he pure soliton case}, Comm.
  Math. Phys. \textbf{115} (1988), no.~1, 127--165. \MR{929148}

\bibitem[S85]{S85}
S, Lang,  \emph{$SL_2(\mathbb{R})$}, Graduate texts in mathematics, 105, Springer, 1985

\bibitem[S01]{S01}
O. Schramm, \emph{A Percolation Formula}, Electronic Communications in Probability, Electron. Commun. Probab. 6, 115-120, 2001

\bibitem[S02a]{S02a}
I. Scherbak, \emph{Asymptotic solutions to the $sl_2$ KZ equation and the intersection of Schubert classes}, Preprint in arXiv:math/0207218
, 2002.

\bibitem[S02b]{S02b}
\bysame, \emph{Rational Functions with Prescribed Critical Points}. GAFA, Geom. funct. anal. 12, 1365–1380 (2002). 

\bibitem[SV03]{SV03}
I. Scherbak and A. Varchenko, \emph{Critical points of functions, $sl_2$ representations, and Fuchsian differential equations 
with only univalued solutions}, Mosc. Math. J., 2003, Volume 3, Number 2, Pages 621–645.



\bibitem[Sch00]{Sch00}
O. Schramm. \emph{Scaling limits of loop-erased random walks and uniform spanning trees}, Israel J. Math., 118(1):221-288, 2000.

\bibitem[Sch07]{Sch07}
\bysame, \emph{Conformally invariant scaling limits: an overview and a collection of problems}, In International Congress of Mathematicians, vol. 1, pp. 513-543. Eur. Math. Soc., Zürich, 2006.

\bibitem[SS54]{SS54}
M. Schiffer and D. C. Spencer. \emph{Functionals of Finite Riemann Surfaces}, Princeton University Press, 1954 .

\bibitem[SKFZ11]{SKFZ11}
J. J. Simmons, P.Kleban, S. M. Flores,  R. M. Ziff. \emph{Cluster densities at 2D critical points in rectangular geometries}. Journal of Physics A: Mathematical and Theoretical, 44(38), 385002, 2011

\bibitem[Smi06]{Smi06}
S. Smirnov. \emph{Towards conformal invariance of $2 \mathrm{D}$ lattice models}, In International Congress of Mathematicians, vol. 2, pp. 1421-1451. Eur. Math. Soc., Zürich, 2006.

\bibitem[SS09]{SS09}
O. Schramm and S. Sheffield, \emph{Contour lines of the two-dimensional
  discrete {G}aussian free field}, Acta Math. \textbf{202} (2009), no.~1,
  21--137. \MR{2486487}

\bibitem[SS13]{SS13}
\bysame, \emph{A contour line of the continuum Gaussian free field}, Probab.
  Theory Related Fields \textbf{157} (2013), no.~1-2, 47--80. \MR{3101840}

\bibitem[SSW09]{SSW09}
O. Schramm, S. Sheffield, and D. B. Wilson. \emph{Conformal radii for conformal loop ensembles}. Comm. Math. Phys., 288(1):43-53, 2009.

\bibitem[SW05]{SW05}
O. Schramm and D. Wilson, \emph{S{LE} coordinate changes}, New York J.
  Math. \textbf{11} (2005), 659--669. \MR{2188260}


\bibitem[Wan19]{Wang:LE}
Y. Wang, \emph{The energy of a deterministic {L}oewner chain: reversibility
  and interpretation via {${\rm SLE}_{0+}$}}, J. Eur. Math. Soc. (JEMS)
  \textbf{21} (2019), no.~7, 1915--1941. \MR{3959854}

\bibitem[Wil98]{Wilson:collisions}
G. Wilson, \emph{Collisions of {C}alogero-{M}oser particles and an adelic
  {G}rassmannian}, Invent. Math. \textbf{133} (1998), no.~1, 1--41, With an
  appendix by I. G. Macdonald. \MR{1626461}

\bibitem[Wu20]{Wu:hypergeometric}
H. Wu, \emph{Hypergeometric {SLE}: conformal {M}arkov characterization and
  applications}, Comm. Math. Phys. \textbf{374} (2020), no.~2, 433--484.
  \MR{4072221}
\bibitem[WW24]{WW24}
Y. Wang, H. Wu. \emph{Commutation relations for two-sided radial SLE}. Preprint in arXiv:2405.07082, 2024

\end{thebibliography}
\end{document}